\documentclass[a4paper, 12pt]{amsart}
\usepackage[a4paper,hmargin=2.5cm,vmargin=2.5cm]{geometry}
\usepackage[T2A]{fontenc}
\usepackage[utf8]{inputenc}

\usepackage[russian,english]{babel}
\usepackage{amsmath,amssymb,amsthm,amsfonts,mathrsfs,amscd}
\usepackage[linkcolor=blue,citecolor=blue,colorlinks=true,urlcolor=blue]{hyperref}
\usepackage{comment}
\usepackage{tikz}
\usepackage{graphics}
\usepackage{xfrac}
\usepackage{faktor}
\usepackage{rotating}
\numberwithin{equation}{section}
\newtheorem{theorem}{Theorem}[section]
\newtheorem{lemma}[theorem]{Lemma}
\newtheorem{propos}[theorem]{Proposition}
\newtheorem{cor}[theorem]{Corollary}
\theoremstyle{definition}
\newtheorem{definition}[theorem]{Definition}

\newtheorem{proof}{Proof}
\let\origendproof\endproof
\def\endproof{\unskip\nobreak\hskip5pt plus 1fill$\square$\origendproof}
\newtheorem{ex}[theorem]{Example}
\newtheorem{rem}[theorem]{Remark}

\def\QQ{\mathbb Q}

\def\CC{\mathbb C}

\def\ZZ{\mathbb Z}
\def\PP{\mathbb P}

\def\top{\mathrm{top}}

\def\Gal{\mathop{\rm Gal}\nolimits}
\def\Bs{\mathop{\rm Bs}\nolimits}
\def\Pic{\mathop{\rm Pic}\nolimits}
\def\Cl{\mathop{\rm Cl}\nolimits}
\def\Sing{\mathop{\rm Sing}\nolimits}

\def\PGL{\mathop{\rm PGL}\nolimits}
\def\GL{\mathop{\rm GL}\nolimits}

\def\Cr{\mathop{\rm Cr}\nolimits}

\def\Aut{\mathop{\rm Aut}\nolimits}
\def\Bir{\mathop{\rm Bir}\nolimits}

\def\gcd{\mathop{\rm gcd}\nolimits}
\def\sch{\mathop{\rm sch}\nolimits}
\def\se{\mathop{\rm se}\nolimits}
\def\sm{\mathop{\rm sm}\nolimits}
\def\id{\mathop{\rm id}\nolimits}
\begin{document}
\title{Finite subgroups of Cremona group of rank 3 over the field of rational numbers}
\author{A.\,V.~Zaitsev}
\address{National research university ''Higher school of economics'', Laboratory of algebraic geometry, 6 Usacheva str., Moscow, 119048, Russia}
\email{\href{alvlzaitsev1@gmail.com}{alvlzaitsev1@gmail.com}}

%\date{03.11.2021}
%\udk{512.774.4}

\maketitle
\begin{abstract}
    We give an explicit bound on orders of finite subgroups of Cremona group of rank three over $\mathbb{Q}$.
\end{abstract}

\tableofcontents

\section{Introduction}

Our starting point is a famous Minkowski theorem from the $19^{\text{th}}$ century, which reveals a sharp multiplicative bound on orders of finite subgroups of general linear groups over a field of rational numbers. More precisely, the following theorem holds.

\begin{theorem}[{\cite[Minkowski's theorem]{Min}}]\label{theo: Minkowski}
    Let $n \geqslant 1$ be an integer and $p$ be a prime number. Define $$M(n,p) = \left\lfloor \frac{n}{p-1}\right\rfloor + \left\lfloor \frac{n}{p(p-1)}\right\rfloor + \left\lfloor \frac{n}{p^2(p-1)}\right\rfloor + \dots$$

    Then: 
    \begin{enumerate}
        \item If $G$ is a finite subgroup of $\GL_n(\QQ)$, then $\nu_p(|G|) \leqslant M(n,p)$, where $\nu_p$ is a $p$-adic valuation.
        \item There exists a finite $p$-subgroup $G$ of $\GL_n(\QQ)$ with $\nu_p(|G|) = M(n,p)$.
    \end{enumerate}
    In particular, if we denote by $\mathcal{P}$ the set of all prime numbers, then for any finite subgroup~$G$ of $\GL_n(\QQ)$ we have $$|G| \leqslant \prod_{p \in \mathcal{P}} p^{M(n,p)}.$$
\end{theorem}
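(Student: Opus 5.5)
The plan has three parts: a reduction to $\ZZ$-lattices and finite fields for the upper bound, an explicit wreath-product construction for sharpness, and the observation that the product form follows at once.

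\textbf{Reduction for the upper bound.} Let $G \subset \GL_n(\QQ)$ be finite. Averaging any lattice over $G$, i.e. taking $\sum_{g\in G} g\ZZ^n$, produces a $G$-stable lattice. Hence $G$ is conjugate into $\GL_n(\ZZ)$.

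For each prime $\ell \geqslant 3$, the kernel of the reduction map $\GL_n(\ZZ)\to\GL_n(\mathbb F_\ell)$ is torsion-free. The standard argument: if $g = 1+\ell^k A$ with $\ell\nmid A$ and $g^q=1$ for a prime $q$, expand binomially. If $q\neq \ell$, one gets $q\ell^k A\equiv 0 \pmod{\ell^{k+1}}$, a contradiction. If $q=\ell$, one gets $\ell^{k+1}A \equiv 0 \pmod{\ell^{k+2}}$, using $\ell\geqslant 3$. (For $\ell = 2$ one would reduce mod $4$ instead.) So $G$ embeds into $\GL_n(\mathbb F_\ell)$ for every prime $\ell\geqslant 3$.

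\textbf{Counting with Dirichlet.} Since $|\GL_n(\mathbb F_\ell)| = \ell^{n(n-1)/2}\prod_{i=1}^n(\ell^i-1)$, choosing $\ell \neq p$ gives
$$\nu_p(|G|) \leqslant \min_{\ell}\sum_{i=1}^n \nu_p(\ell^i-1).$$
By Dirichlet's theorem we pick $\ell$ to be a generator of $(\ZZ/p^2)^\times$ when $p$ is odd, and $\ell\equiv 3 \pmod 8$ when $p=2$. Then, by lifting-the-exponent,
$$\nu_p(\ell^i-1) = \begin{cases} 1+\nu_p\!\left(\frac{i}{p-1}\right) & \text{if } (p-1)\mid i,\\ 0 & \text{otherwise,}\end{cases}$$
for odd $p$. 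Summing over $i\leqslant n$ gives exactly $\lfloor n/(p-1)\rfloor + \lfloor n/(p(p-1))\rfloor + \dots = M(n,p)$.

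For $p=2$ this naive count gives more than $M(n,2)$: the sum equals $\lfloor n/2\rfloor + n + \nu_2(\lfloor n/2\rfloor!)$, whereas $M(n,2) = n + \lfloor n/2\rfloor + \lfloor n/4\rfloor + \dots$. I expect $p=2$ to be the \textbf{main obstacle}. My first attempt would be to intersect two reductions. Alternatively, I would use Minkowski's original approach: show that a $2$-group in $\GL_n(\QQ)$ is contained in a wreath product of $2$-groups coming from $\GL_{2^k}$ blocks, by decomposing $\QQ^n$ into irreducibles and inducing from cyclotomic characters. Then bound each block by its own count. The cleanest route I see is Schur's approach: a finite $p$-subgroup has a rational character, and the Sylow $p$-subgroup of $G$ is monomial over cyclotomic fields $\QQ(\zeta_{p^k})$ of degree $p^{k-1}(p-1)$. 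An induction on $n$ via Clifford theory then gives the bound $M(n,p)$ for all $p$ uniformly, because the recursion
$$M(n,p) = \max\bigl(M(n-m,p)+M(m,p),\ \dots\bigr)$$
matches the wreath-product orders.

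\textbf{Sharpness.} For part (2), first treat $\mu_{p}\subset\GL_{p-1}(\QQ)$, acting on $\QQ(\zeta_p)$ by multiplication; for $p=2$ use $\{\pm1\}$ on $\QQ$ together with the signed permutation group. Then build iterated wreath products $\mu_p\wr C_p\wr\cdots\wr C_p$ in $\GL_{(p-1)p^k}(\QQ)$, whose $p$-adic orders satisfy the recursion $a_{k+1} = p\,a_k+1$ with $a_0=1$. Finally, take direct sums of these blocks according to the base-$p$ expansion of $\lfloor n/(p-1)\rfloor$, and check that the total valuation equals $M(n,p)$. This is the Legendre-type identity $\sum_j \lfloor N/p^j\rfloor$ with $N=\lfloor n/(p-1)\rfloor$; for $p=2$ the extra factor $2^{n}$ from the signs supplies the additional $\lfloor n/1\rfloor$ term.

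\textbf{Product form.} The final inequality $|G| \leqslant \prod_{p} p^{M(n,p)}$ follows immediately from part (1), applied prime by prime.
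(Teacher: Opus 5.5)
The paper gives no proof of this statement: it is quoted from Minkowski and used as a black box, so your proposal has to be judged on its own. Your treatment of odd $p$ is Minkowski's classical argument and is correct: lattice averaging, torsion-freeness of $\ker(\GL_n(\ZZ)\to\GL_n(\mathbb F_\ell))$ for $\ell\geqslant 3$, then a Dirichlet prime $\ell$ generating $(\ZZ/p^2)^\times$ combined with lifting-the-exponent. The wreath-product and signed-permutation constructions for part (2) are also correct, and so is the deduction of the product bound.

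\textbf{The gap: part (1) for $p=2$.} You flag this case but do not close it.

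\emph{Your displayed count is off.} For $\ell\equiv 3\pmod 8$ one has $\nu_2(\ell^i-1)=1$ for odd $i$ and $\nu_2(\ell^{2j}-1)=3+\nu_2(j)$. So the sum is $n+2\lfloor n/2\rfloor+\nu_2(\lfloor n/2\rfloor!)=M(n,2)+\lfloor n/2\rfloor$. The expression you wrote is exactly $M(n,2)$, which contradicts your own remark that the count overshoots.

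\emph{Changing or combining primes $\ell$ is a dead end.} For every odd prime $\ell$ the same computation gives $\nu_2(|\GL_n(\mathbb F_\ell)|)\geqslant M(n,2)+\lfloor n/2\rfloor$. Knowing that $G$ embeds into several such groups only yields the minimum of these numbers, so ``intersecting two reductions'' cannot help. Already for $n=2$, every $\GL_2(\mathbb F_\ell)$ has Sylow $2$-subgroups of order at least $16$, while the correct bound is $8$.

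\emph{Your two fallback routes are not arguments as written.} The wreath-product containment is simply asserted. The Clifford-theory induction rests on an unspecified recursion ``$M(n,p)=\max(\dots)$'' and on monomiality over cyclotomic fields. At $p=2$ this is exactly where Schur-index-$2$ phenomena (quaternion groups) intervene.

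\textbf{The missing idea: use an invariant quadratic form.} Replace $\GL_n$ by an orthogonal group.
\begin{itemize}
\item The group $G\subset\GL_n(\ZZ)$ preserves the positive definite integral form $B(x,y)=\sum_{g\in G}(gx)\cdot(gy)$.
\item By Dirichlet, choose a prime $\ell\equiv 3\pmod 8$ with $\ell\nmid\det B$. Reduction mod $\ell$ then gives an injection $G\hookrightarrow O(\bar B)\subset\GL_n(\mathbb F_\ell)$.
\item Use the orders $|O_{2m+1}(\mathbb F_\ell)|=2\ell^{m^2}\prod_{i=1}^{m}(\ell^{2i}-1)$ and $|O^{\pm}_{2m}(\mathbb F_\ell)|=2\ell^{m(m-1)}(\ell^m\mp 1)\prod_{i=1}^{m-1}(\ell^{2i}-1)$.
\item For $\ell\equiv 3\pmod 8$ one has $\nu_2(\ell^{2i}-1)=3+\nu_2(i)$ and $\nu_2(\ell^m\mp 1)\leqslant 2+\nu_2(m)$.
\end{itemize}
Together these give $\nu_2(|O(\bar B)|)\leqslant M(n,2)$ in all cases, with equality for odd $n$. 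This supplies the missing half of part (1).
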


In 1905 Schur generalized the first part of this theorem to the case of an arbitrary number field in his paper~\cite{Schur}. A hundred years later, Serre proved similar theorems with explicit bounds for reductive groups (see~\cite[Theorem 5]{Serre red}) and, what is more interesting for us, for Cremona group of rank $2$  over $\textit{small}$ fields, in particular for $\Cr_2(\QQ)$, see~\hbox{\cite[Theorem 2.4]{Serre MSB}} (recall that Cremona group $\Cr_n(K)$ of rank $n$ over a field $K$ is the group of birational automotphisms of projective space $\PP_K^n$). Also, in the latter paper Serre asked a series of related questions, in particular: are there analogous bounds (or some other bounds) on orders of finite subgroups in higher dimensions? This question was answered positively by Prokhorov and Shramov in~\cite{PrShr higher dim} in more general case, but their bounds were no longer explicit.
\begin{theorem}[{see \cite[Theorem 1.4]{PrShr higher dim}}]\label{theo: Prokgorov Shramov any dim}
    Suppose that $K$ is a finitely generated field over~$\QQ$. Let $X$ be a variety over $K$. Then the group $\Bir(X)$ of birational automorphisms of $X$ over~$K$ has bounded finite subgroups.
\end{theorem}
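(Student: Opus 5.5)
This is the Prokhorov–Shramov theorem, and I would prove it by reducing, via regularization and an equivariant minimal model program, to three situations: bounded automorphism groups of Fano varieties, fibrations over lower-dimensional bases, and varieties that are not uniruled. The setup is an induction on $n=\dim X$.

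\textbf{Regularization.} Let $G\subset \Bir(X)$ be a finite subgroup. After replacing $X$ by a $G$-equivariant resolution of a regular model, we may assume $X$ is smooth projective and $G\subset \Aut(X)$. Since $K$ is finitely generated over $\QQ$, there is a place of good reduction at a prime $\ell$. A finite group acting faithfully on a smooth projective variety over a finite field injects, for $p$-parts with $p\neq \ell$, into automorphisms of étale cohomology or of the Picard lattice, and into the rational points of the reduction. This Minkowski-type mechanism (the analogue of Theorem~\ref{theo: Minkowski}, using Schur's extension to number fields) is the tool for bounding $p$-parts. The difficulty is that these bounds are not uniform under birational change of model, so the actual structure comes from the MMP.

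\textbf{Non-uniruled case.} Run the $G$-equivariant MMP, which ends in either a $G$-Mori fiber space $X'\to Z$ with $\dim Z<n$, or a model on which $K_{X'}$ is pseudo-effective.
\begin{itemize}
\item If $X$ is non-uniruled, the $G$-action on the maximal rationally connected fibration, or on the Albanese or pluricanonical maps, gives a homomorphism to automorphisms of a lower-dimensional variety. The kernel then acts on fibers, or by translations on an abelian part. Translations by rational points of an abelian variety over a finitely generated field form a finitely generated group (Mordell–Weil–Lang–Néron), so its torsion is finite.
\item For a Mori fiber space with $\dim Z>0$, the group fits into $1\to G_F\to G\to G_Z\to 1$. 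Here $G_Z\subset \Bir(Z)$ is bounded by induction, although $Z$ is defined over $K$ and may not be of the original type, so the induction must be stated for all varieties. The group $G_F$ acts on the generic fiber, a Fano variety over the finitely generated field $K(Z)$, and we apply the Fano case over that field.
\end{itemize}

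\textbf{Fano case.} It remains to treat $G$-Fano varieties, with terminal singularities and $G\mathbb{Q}$-factorial, over finitely generated fields. By Birkar's boundedness of Fano varieties (BAB), these form a bounded family, so they embed anticanonically into a fixed $\PP^N$. Hence $G\subset \PGL_{N+1}(L)$ for the relevant field $L$, and finite subgroups of $\PGL_{N+1}$ over a finitely generated field have bounded order by Minkowski–Schur–Serre applied to reductive groups.

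\textbf{Main obstacle.} The main obstacle is the fibration step. There the fiber field $K(Z)$ has higher transcendence degree, and boundedness over $K(Z)$ needs uniformity in the field. The first thing I would try is to specialize to a $K$-point of $Z$, or a point over a bounded extension, where the fiber is a Fano variety over a finitely generated field of the original type. This exploits that finite groups act faithfully on a general fiber. The point needed over a bounded-degree extension can be produced from the boundedness of the Fano family together with the fact that a finite subgroup of automorphisms of a fiber injects into $\Aut$ of a bounded variety over a bounded extension. Controlling that extension degree uniformly is the crux.
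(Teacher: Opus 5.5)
The paper does not prove this statement; it is quoted from Prokhorov--Shramov. Judged on its own, your sketch has a genuine gap at exactly the step you call the crux, and the specialization route cannot close it as stated. The base $Z$ is an output of the $G$-MMP, so it changes with $G$. It need not have $K$-points. And nothing bounds the residue degree of a point of $Z$ over which $G_F$ acts faithfully.

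The missing idea is that no specialization is needed; work directly on the generic fibre $X'_\eta$, a terminal Fano variety over $L=K(Z)$.

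By Birkar, there are $m$ and $N$ depending only on the dimension of $X'_\eta$ such that $-mK_{X'_\eta}$ is very ample and $h^0(-mK_{X'_\eta})\leqslant N$. So $G_F$ embeds in $\GL_N(L)$ through its linear action on $H^0(X'_\eta,-mK_{X'_\eta})$.

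Schur's bound in Serre's form (Theorem~\ref{theorem: schur}, valid over any field of characteristic $0$) sees the field only through $t_p$ and $m_p$. These do not change from $K$ to $L$:
\begin{itemize}
\item First reduce to $X$ geometrically integral: replace $K$ by its algebraic closure $K'$ in $K(X)$, using that the map $\Bir_K(X)\to\Aut(K'/K)$ has kernel $\Bir_{K'}(X)$.
\item Then $Z$ is geometrically integral, so $K$ is algebraically closed in $L$.
\item This gives $t_p(L)=t_p(K)$ by Lemma~\ref{lemma: natural irrationalities}, and $m_p(L)=m_p(K)$ because $K(\xi_p)$ is algebraically closed in $K(\xi_p)(Z)$.
\end{itemize}
Since $t_p(K)>N$ once $p-1>N\,[K\cap\overline{\QQ}:\QQ]$, this bounds $|G_F|$ in terms of the fibre dimension and $K\cap\overline{\QQ}$ alone.

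The second gap is uniformity of the induction. The step ``$G_Z\subset\Bir(Z)$ is bounded by induction'' gives a constant depending on $Z$, which again varies with $G$. The induction has to carry a constant $B(n,k_0)$ bounding finite subgroups of $\Bir(V)$ for all rationally connected $V$ of dimension $n$, over all finitely generated $L\supset\QQ$ with $L\cap\overline{\QQ}=k_0$. The observation above makes this provable through $G$-Mori fibre spaces, because their bases are again rationally connected and $L(Z)\cap\overline{\QQ}=k_0$.

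A general $X$ is then handled via its MRC fibration $X\dashrightarrow Y$. It is canonical, so $G$ maps to $\Bir(Y)$ for one fixed non-uniruled $Y$, and the kernel acts on the rationally connected generic fibre over $K(Y)$.

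That leaves the non-uniruled case, where your bullet lists tools rather than giving an argument:
\begin{itemize}
\item the MRC fibration of a non-uniruled variety is trivial;
\item the Iitaka fibration presupposes $\kappa\geqslant 0$, and non-vanishing is open in dimension $\geqslant 4$;
\item fibres of Kodaira dimension zero are not reduced to translations.
\end{itemize}

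Finally, drop the reduction-mod-$\ell$ paragraph. It is never used, and its injectivity claim is false: translations by torsion points of an abelian variety act trivially on \'etale cohomology.
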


A natural question in this direction is to find some explicit and effective bounds, and it is also natural to start with $\Cr_3(\QQ)$. Note, that the result about non-multiplicative bound on the orders of finite subgroups of $\Cr_2(\QQ)$ was obtained quite recently in~\cite{Ahmed A} and the obtained bound is sharp. 

\begin{theorem}[{\cite[Theorem 1.1]{Ahmed A}}]\label{theo: cr_2}
        Let $G$ be a finite subgroup of $\Cr_2(\QQ)$. Then $|G| \leqslant 432$. Moreover, there is a finite subgroup of $\Cr_2(\QQ)$ of order $432$. 
\end{theorem}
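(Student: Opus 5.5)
The statement is the bound $|G|\leqslant 432$ for finite $G\subset\Cr_2(\QQ)$, together with a subgroup attaining it. I would follow the $G$-equivariant minimal model program over $\QQ$.

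\textbf{Reduction to $G$-minimal surfaces.} Regularize the action: a finite $G\subset\Cr_2(\QQ)$ acts biregularly on some smooth projective rational surface $X$ over $\QQ$. Run the $G$-MMP over $\QQ$. Then we may assume $X$ is either a $G$-del Pezzo surface with $\Pic(X)^G$ of rank $1$, or a $G$-conic bundle $X\to B$ with $\Pic(X)^G$ of rank $2$, where $B$ is a conic (hence $\PP^1$, since $X$ is rational).

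\textbf{Conic bundle case.} We have an exact sequence $1\to G_F\to G\to G_B\to 1$.
\begin{enumerate}
\item $G_B$ is a finite subgroup of $\Aut(\PP^1_\QQ)=\PGL_2(\QQ)$. Such groups have order at most $12$; they are cyclic of order $1,2,3,4,6$ or dihedral of order at most $12$.
\item $G_F$ acts faithfully on the generic fibre, a conic over $\QQ(t)$. So $G_F$ embeds in $\PGL_2(\QQ(t))$, or in the automorphism group of a nonsplit form.
\end{enumerate}
Finite subgroups there are cyclic, dihedral, or $A_4,S_4,A_5$ (the last only if the needed roots of unity exist). Over $\QQ(t)$, cyclic orders are restricted to $1,2,3,4,6$ because $\QQ$ is algebraically closed in $\QQ(t)$. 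The catch is that dihedral groups can be large in the fibre direction: the fibrewise automorphism can be twisted by the base. So I would use the discriminant-curve and singular-fibre structure to bound $|G_F|$. The key observation is that a fibrewise group of large order must fix points on many fibres, and this forces many degenerate fibres, which $G_B$ must permute. I expect the total to stay well under $432$, roughly $12\cdot 24$ at most after the constraints.

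\textbf{Del Pezzo case.} Go degree by degree, $d=K_X^2\in\{1,\dots,9\}$. For $d\leqslant 5$, $G$ embeds into the Weyl group $W(E_{9-d})$, refined by the Galois action. For $d=6$, $\Aut(X)\subset T(\QQ)\rtimes(S_3\times\ZZ/2)$, where $T$ is a two-dimensional torus; finite subgroups of $T(\QQ)$ are bounded by Minkowski's theorem (Theorem~\ref{theo: Minkowski}) for $\GL_2$, giving at most $12$. This yields $|G|\leqslant 12\cdot 12=144$; with refinements the maximum should be attained at $d=6$ or for the quadric.

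For $\PP^1\times\PP^1$ and its forms, we have $\Aut\supset(\PGL_2(\QQ)\times\PGL_2(\QQ))\rtimes\ZZ/2$. Taking two copies of $D_{12}$ (dihedral of order $12$) gives $12\cdot12\cdot2=288$. For $\PP^2$, Minkowski's bound for $\GL_3(\QQ)$ gives $48$. To reach $432$, I would look at the torus case more carefully. Take $T$ a nonsplit two-dimensional torus containing $(\ZZ/6)^2$, or use $\PP^2$ with the Hessian-type group $(\ZZ/3)^2\rtimes\ldots$, which can be defined over $\QQ$ using a suitable torus. Then $432=16\cdot27$ arises as $(\ZZ/6)^2\rtimes D_{12}$ acting on a del Pezzo surface of degree $6$: $36\cdot12=432$. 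So the example is a twisted del Pezzo sextic whose torus has $\QQ$-points of order $6$ in each factor. For instance, take the torus $R_{\QQ(\omega)/\QQ}\mathbb G_m/\mathbb G_m$-type forms, where $\omega$ is a primitive cube root of unity, so that all of $\mu_6^2$ becomes rational after twisting.

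\textbf{Main obstacle.} The hardest step is the degree-$6$ analysis, or equivalently the sharp bound on torsion of the relevant $\QQ$-forms of tori together with their compatibility with the $S_3\times\ZZ/2$ action. One has to show simultaneously that the torsion is at most $36$ and that the extension attains $432$. I would first classify the two-dimensional tori over $\QQ$ through their Galois lattices in $\GL_2(\ZZ)$ and compute the finite subgroups of $T(\QQ)$ for each. The remaining low degrees, $d\leqslant 4$, I would dispose of using the Weyl group bounds combined with the condition $\Pic^G$ of rank $1$.
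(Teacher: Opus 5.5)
The paper does not prove this statement; it cites it. Its own version of the argument, in Lemmas~\ref{lemma: del Pezzo surfaces} and~\ref{lemma: bound of auto of rational normal surfaces}, uses your skeleton: regularize, run the $G$-MMP over $\QQ$, then treat $G$-conic bundles and $G$-minimal del Pezzo surfaces degree by degree. It imports the degree-$6$ bound and the example as Lemma~\ref{lemma: dp_6}. The trouble is that the step which actually decides the number $432$ is wrong as written and then left open.

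In degree $6$, the kernel $G\cap T(\QQ)$ of $G\to D_{12}$ consists of $\QQ$-points of a possibly non-split two-dimensional torus. Such a group need not embed in $\GL_2(\QQ)$, so Minkowski's bound for $\GL_2$ says nothing about it. Your own example refutes the resulting bound $144$. Twisting the split sextic by the central element $-1\in D_{12}$ through $\QQ(\sqrt{-3})$ gives the torus $T=(R^1_{\QQ(\sqrt{-3})/\QQ}\mathbb{G}_m)^2$, two copies of the norm-one torus, with $T(\QQ)_{\mathrm{tors}}=\mu_6^2$ of order $36$. All of $D_{12}$ still acts over $\QQ$ because $-1$ is central. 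This gives $\mu_6^2\rtimes D_{12}$ of order $432$ on a sextic with a $\QQ$-point, which is therefore $\QQ$-rational.

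So the whole upper bound amounts to proving $|T(\QQ)_{\mathrm{tors}}|\leqslant 36$ for every form of the torus whose Galois action factors through $D_{12}$ acting on the hexagonal lattice. You name this as the main obstacle and sketch a plan (run through the finite subgroups of $\GL_2(\ZZ)$), but you do not carry it out. Hence $|G|\leqslant 432$ is not proved.

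The low degrees are not handled either.
\begin{itemize}
\item The Weyl group bounds are far above $432$: $|W(D_5)|=1920$, $|W(E_6)|=51\,840$, $|W(E_7)|=2\,903\,040$, $|W(E_8)|=696\,729\,600$. The condition $\rho(X)^G=1$ gives no numerical bound at all.
\item You need the classification of $\Aut(\overline{X})$: at most $160$ for $d=4$ and at most $336$ for $d=2$.
\item For $d=1$, the base point of $|-K_X|$ is a $G$-fixed $\QQ$-point, so $G\subset\GL_2(\QQ)$ and $|G|\leqslant 12$.
\item For $d=3$ you need genuinely arithmetic input, since the Fermat cubic has $|\Aut(\overline{X})|=648>432$. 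The paper uses that a cubic surface over $\QQ$ has at most $120$ automorphisms.
\item For $\PP^2$, Minkowski for $\GL_3(\QQ)$ does not apply to $\PGL_3(\QQ)$. The companion matrix of $x^3-2$ has order $3$ in $\PGL_3(\QQ)$ but no lift of finite order, and Serre's bound for $\PGL_3(\QQ)$ is $10\,080$. The correct input is $|G|\leqslant 24$ from Lemma~\ref{lemma: P^2_Q}.
\end{itemize}

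Your conic-bundle ``catch'' is spurious. A dihedral group $D_{2n}$ contains $\mu_n$, so your own observation that cyclic orders lie in $\{1,2,3,4,6\}$ already bounds it. Moreover, $G_F$ acts faithfully on a smooth conic over $\QQ$ (Lemma~\ref{lemma: smooth rational fiber in fibrations}), so $|G_F|\leqslant 24$ by Corollary~\ref{corollary: automorphsms of conic over Q}. Together with $|G_B|\leqslant 12$ this gives $|G|\leqslant 288$ with no discriminant analysis.
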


In the present paper we give an explicit bound for $\Cr_3(\QQ)$, namely, the following theorem holds.

\begin{theorem}\label{theo: cr_3}
    Let $X$ be a $3$-dimensional variety over $\QQ$ with a $\QQ$-point such that~$\relpenalty=10000 \overline{X} = X \otimes_\QQ \overline{\QQ}$ is rationally connected. Let $G \subset \Bir(X)$ be a finite subgroup. Then $$|G| \leqslant  24\,103\,053\,950\,976\,000 < 10^{17}.$$ In particular, this applies to finite subgroups of $\Cr_3(\QQ)$.%2^{15}\cdot3^{6}\cdot5^{4}\cdot7^2\cdot11^{4} =
\end{theorem}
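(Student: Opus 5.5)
We follow the usual strategy: regularize, run a $G$-equivariant MMP, and bound the automorphism groups of the resulting $G$-Mori fibre spaces case by case, using Theorem~\ref{theo: Minkowski} for the local linear bounds.

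\textbf{Regularization and MMP.} Take an equivariant resolution: replace $X$ by a smooth projective model $X'$ on which $G$ acts biregularly. Since $X$ has a $\QQ$-point and $\overline{X}$ is rationally connected, the Lang--Nishimura lemma shows that $X'$ again has a $\QQ$-point. Run the $G$-equivariant MMP over $\QQ$. Because $X'$ is uniruled, it ends with a $G$-Mori fibre space $Y \to S$ with terminal $G\QQ$-factorial singularities, $\dim S\in\{0,1,2\}$. The models $Y$ and $S$ inherit rational points and rational connectedness of the geometric fibres, and $G$ now acts on $Y$ by automorphisms compatible with $S$. So we must bound $|G|$ in three cases, writing $1\to G_F\to G\to G_S\to 1$ whenever $\dim S>0$.

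\textbf{The fibration cases.}
\begin{itemize}
\item If $S$ is a curve, then $S\cong\PP^1_\QQ$, since it is a conic with a point. Hence $G_S$ is a finite subgroup of $\PGL_2(\QQ)$, so $|G_S|\le 12$. The generic fibre is a del Pezzo surface over $\QQ(t)$, and $G_F$ embeds into its automorphism group. This group is bounded by the Weyl group of the corresponding root system (at most $|W(E_8)|$ up to small factors), or better by a Minkowski-type bound for $\GL_n$ over $\QQ(t)$. I would sharpen this by restricting to a rational point of a fibre, or to a fibre over a $\QQ$-point fixed after passing to a controlled subgroup.
\item If $S$ is a surface, then $S$ is a rational surface over $\QQ$ with a point, and $G_S\subset\Bir(S)$ has $|G_S|\le 432$ by Theorem~\ref{theo: cr_2}. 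Here $G_F$ acts on a conic over $\QQ(S)$. Its $p$-parts are bounded by Minkowski for $\GL_2$ over a function field: $G_F$ is cyclic or dihedral of bounded order, with $p$-parts controlled by the roots of unity available in $\QQ(S)$, which are only $\pm1$. Thus $|G_F|$ is small, and the product bound is tiny.
\end{itemize}

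\textbf{The Fano case ($S$ a point).} This is the main obstacle. Here $Y$ is a terminal $G\QQ$-factorial Fano threefold with a $\QQ$-point. My plan is to bound each $\nu_p(|G|)$ separately, the way Serre bounds $\Cr_2(\QQ)$ multiplicatively.
\begin{itemize}
\item For a $p$-subgroup $G_p\subset G$, use a fixed-point argument. Reducing modulo a good prime, or using the rational point together with the Lefschetz/Hodge structure, one tries to show that $G_p$ fixes a point whose tangent space has dimension at most some $N$ (at most $4$ at terminal hypersurface-like points). Then $G_p$ embeds in $\GL_N(\QQ)$ or in a group of similar size, and Theorem~\ref{theo: Minkowski} bounds $\nu_p$.
\item Alternatively, use the faithful action of $G$ on the lattice $\Pic(\overline Y)$ or on $H^3$. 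It is faithful up to a connected kernel, whose finite subgroups are bounded by Serre's reductive theorem. Combine this with the fact that $\rho$ and $b_3$ are bounded for terminal Fano threefolds. Smooth Fano threefolds have $b_2\le 10$ and $b_3$ bounded, and the terminal case needs a Gorenstein/singular-point count bounding the rank of $\Cl$.
\end{itemize}
The difficulty is getting a uniform explicit rank bound for singular terminal Fanos, and handling primes $p\le 11$ where Minkowski gives large exponents. The target exponents $2^{15}3^65^47^211^4$ suggest taking, for each $p$, the maximum over these cases of Minkowski-type bounds for $\GL_n(\QQ)$ with $n$ around $10$--$12$. I would check that the Fano case dominates and gives exactly these exponents.

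Finally, multiply the $p$-parts and take the maximum over the three cases to get $|G|\le 24\,103\,053\,950\,976\,000$.
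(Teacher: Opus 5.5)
The reduction step is the paper's: equivariant resolution, Lang--Nishimura, $G$-equivariant MMP, and a split into three kinds of $G$-Mori fibre spaces. The fibration cases are essentially fine up to two fixable details. First, in the conic bundle case the base is only geometrically rational with a $\QQ$-point, so Theorem~\ref{theo: cr_2} does not apply directly. What you need is a bound for $\Aut$ of such a surface (Lemma~\ref{lemma: bound of auto of rational normal surfaces}). Second, in the del Pezzo fibration case a Weyl group bound only covers fibres of degree $\le 5$. The paper instead restricts $G_F$ to a smooth fibre with a $\QQ$-point (Lemma~\ref{lemma: smooth rational fiber in fibrations}) and applies Lemma~\ref{lemma: del Pezzo surfaces}.

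\textbf{The gap: the Fano case.} This case carries the whole content of the theorem, and there you list strategies rather than give an argument. The strategies also fail as stated:
\begin{itemize}
\item A $p$-subgroup of $\Aut(Y)$ need not fix any point, even over $\overline{\QQ}$. Already $\mu_2^2\subset\PGL_2(\QQ)$ has no fixed point on $\PP^1$, hence none on $\PP^1\times\PP^1\times\PP^1$ when it acts on one factor. So there is no tangent space to linearize at, and reducing modulo a prime does not produce one.
\item The action on $\Pic(\overline{Y})$ is trivial when $\rho(\overline{Y})=1$, and $H^3$ may vanish. The ``kernel'' is then all of $\Aut(Y)$, often a finite disconnected group. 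Bounding it is the original problem, and Serre's theorem for reductive groups gives nothing without an embedding into a reductive group, which you do not supply.
\item The exponent pattern you aim for is not the factorization of the stated number. In fact $24\,103\,053\,950\,976\,000=2^{22}\cdot3^8\cdot5^3\cdot7^2\cdot11\cdot13=\prod_p p^{M(12,p)}$, which is Minkowski's bound for $\GL_{12}(\QQ)$. A factor $11^4$ cannot arise from Theorem~\ref{theo: Minkowski} in dimension $\le 12$.
\end{itemize}

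\textbf{What the paper does instead.} Sections~\ref{sect: non-Gorenstein Fano} and~\ref{sect: Gorenstein Fano} split the Fano case three ways.
\begin{itemize}
\item If $Y$ is non-Gorenstein, the orbifold Riemann--Roch formula (Theorem~\ref{theo: Riemann-Roch}) bounds the number of points of index $>1$ on $\overline{Y}$ by $15$. Hence a subgroup of index $\le 15$ fixes such a point defined over a number field $K$ of degree $\le 15$. The index-one cover (Lemma~\ref{lemma: cyclic cover}) then embeds a $\mu_r$-extension of this subgroup into $\GL_3(K)$ or $\GL_4(K)$. That group is bounded by Schur's theorem together with the cyclotomic estimates of the appendix.
\item If $Y$ is Gorenstein with $\rho(\overline{Y})>1$, Prokhorov's classification (Theorem~\ref{theo: prokhorov on G-Fanos}) leaves eight families, each treated via its extremal contractions.
\item If $Y$ is Gorenstein with $\rho(\overline{Y})=1$, one runs through index and genus. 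In index $1$ with $-K_Y$ very ample, $G$ acts faithfully on $H^0(Y,-K_Y)^\vee\cong\QQ^{g+2}$ (Lemma~\ref{lemma: action on sections}). Genus $12$ is handled separately (Lemma~\ref{lemma: smooth of genus 12}). So the worst case is $g=10$, i.e.\ $G\subset\GL_{12}(\QQ)$, which is exactly where the stated bound comes from.
\end{itemize}

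The key tool you are missing is this linearization: through the anticanonical system in the Gorenstein case, and through a singular point over a small number field in the non-Gorenstein case. A fixed-point theorem for $p$-groups is not what is needed, and none is available.
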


This bound seems to be far from being sharp. For example, the largest finite subgroup of~$\Cr_3(\QQ)$ which we know is $(D_{12} \times D_{12} \times D_{12}) \rtimes \mathfrak{S}_3$. It acts faithfully on $\PP^1_{\QQ} \times \PP^1_{\QQ} \times \PP^1_{\QQ}$ and contains $10\,368$ elements. Moreover, the bound can be improved by a factor of $10^5$, if one can deal with some particular case of Fano threefolds, see Remark~\ref{remark: problems with i = 1, g = 10} and Theorem~\ref{theo: Cr_3 for future}.

The plan of the paper is as follows. In Section~\ref{sect: GL and PGL} we give a motivation for bounding orders finite subgroups of $\GL_n$ and $\PGL_n$ (in case of $\GL_n$ not only over $\QQ$, but over number fields), recall classical results and prove auxiliary claims. In Section~\ref{sect: MFS with dimB>0} we bound orders of finite subgroups of automorphism groups of Mori fiber spaces over bases of positive dimension. In Section~\ref{sect: non-Gorenstein Fano} we bound numbers of singularities on terminal non-Gorenstein Fano threefolds and bound orders of finite subgroups acting on such varieties in Proposition~\ref{prop: bound for non-Gorenstein}. In Section~\ref{sect: Gorenstein Fano} we bound orders of finite groups acting on terminal Gorenstein Fano threefolds in Propositions~\ref{prop: Gorenstein rho > 1} and~\ref{propos: Gorenstein, rho = 1} and prove Theorem~\ref{theo: cr_3}. Appendix~\ref{sect: appendix} contains the technical part of the paper, which is devoted to bounding of invariants of number fields of degrees at most $15$.

We will use the following notation. By $\mu_n$ we denote the cyclic group of order $n$. By~$D_{2n}$ we denote the dihedral group of order~${2n}$. By $\xi_n$ we denote a primitive root of unity of order $n$, in particular, $\xi_4$ denotes an imaginary unit. By $\nu_p$ we denote the $p$-adic valuation. By $\varphi$ we denote the Euler totient function. By $\overline{K}$ we denote the algebraic closure of a field $K$. If $K \subset L$ is a field extension and $X$ is a variety over $K$, then by~$X_L$ we denote the extension of scalars of $X$ to $L$, and we denote the extension of scalars of~$X$ to~$\overline{K}$ by~$\overline{X}$. Zariski tangent space to a variety~$X$ at a point $P$ we denote by~$T_PX$. 
By $\Pic(X)$ we denote the Picard group of a variety $X$. By $\Cl(X)$ we denote the Weil divisor class group of a variety $X$.
%Let $\pi: X \xrightarrow{} B$ be a morphism of algebraic varieties; then by $\Aut(X,\pi)$ we denote the group of automorphisms compatible with the morphism $\pi$, that is, the group of such automorphisms $g\in\Aut(X)$ that there exists~$g'\in\Aut(B)$ for which $\pi\circ g = g'\circ\pi$. % We write $k = (m)$ saying that integer number $k$ contains $m$ digits (for the convenience of comparing large numbers arising during the computations). 
By $\mathcal{P}$ we denote the set of all prime numbers.

\textbf{Acknowledgements.} I would like to thank my advisor Constantin Shramov for stating the problem, useful discussions and constant attention to this work. Also, I would like to thank Mikhail Panov for help with \LaTeX.

\section{Bounds for linear groups}\label{sect: GL and PGL}

Since we deal with finite subgroups of Cremona group, then it is clear that in particular we should consider all finite subgroups of automorphism groups of all rational Fano varieties. The connection with Fano varieties is even more obvious if we keep in mind the minimal model program, but we will see this in the next sections. Anyway, we have to bound orders of finite subgroups of automorphism groups of Fano varieties.

If we are given a Fano variety $X$ over $\QQ$ and we want to bound finite subgroups of~$\Aut(X)$, the first idea is to consider an $\Aut(X)$-equivariant embedding of $X$ to some projective space~$\PP^d_{\QQ}$ by some degree of the anticanonical system and to use the bound for~$\PGL_{d+1}(\QQ)$. Of~course, this strategy works well if number $d$ is not very large. In some cases, the bounds obtained in this way will be sufficient for our purposes. This is the motivation of bounding finite subgroups of projective linear groups.

Another possible strategy for studying finite groups acting on varieties is to use the following standard theorem (for proof see for example~\cite[Theorem 2.7]{Zai}) and to apply the bound for $\GL_n$. 

\begin{theorem}\label{theo: action with fixed point}
    Let $X$ be an irreducible algebraic variety over a field $K$ of characteristic~$0$. Let $G$ be a finite group acting faithfuly on $X$ with a fixed~$K$-point $P$. Then the natural homomorphism $$d:G\xrightarrow{}\GL(T_PX)$$ is injective.
\end{theorem}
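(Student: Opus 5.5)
The plan is to reduce everything to the local ring at $P$ and then argue degree by degree. Since $P$ is a $G$-fixed $K$-point, $G$ acts on the local ring $\mathcal{O}=\mathcal{O}_{X,P}$ by $K$-algebra automorphisms preserving the maximal ideal $\mathfrak{m}$. Hence $G$ also acts on each quotient $\mathfrak{m}^k/\mathfrak{m}^{k+1}$ and on the completion $\widehat{\mathcal{O}}$. Here $T_PX=(\mathfrak{m}/\mathfrak{m}^2)^\vee$, so $d(g)=\mathrm{id}$ means exactly that $g$ acts trivially on $\mathfrak{m}/\mathfrak{m}^2$. Let $g\in\ker d$ and set $H=\langle g\rangle$, a finite cyclic group. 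It suffices to show that $g$ acts trivially on $\mathcal{O}$. Then $g$ acts trivially on the function field $K(X)=\mathrm{Frac}(\mathcal{O})$, because $X$ is irreducible and we may replace $X$ by an affine $G$-invariant neighbourhood of $P$, or simply use that $\mathcal{O}\subset K(X)$ has fraction field $K(X)$. A birational automorphism that is trivial on $K(X)$ is trivial, and $G$ acts faithfully, so $g=1$.

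The key step is the averaging (Reynolds operator) trick, which is available because $\mathrm{char}\,K=0$ and $|H|$ is invertible. First I show by induction on $k$ that $g$ acts trivially on $\mathcal{O}/\mathfrak{m}^{k}$ for all $k$. The case $k=2$ is the hypothesis, together with the fact that $g$ fixes constants and $\mathcal{O}/\mathfrak{m}=K$. For the inductive step, note that $\mathfrak{m}^k/\mathfrak{m}^{k+1}$ is spanned by products of elements of $\mathfrak{m}$. Take $x\in\mathfrak{m}$. Triviality modulo $\mathfrak{m}^2$ gives $g(x)=x+y$ with $y\in\mathfrak{m}^2$. A product $x_1\cdots x_k$ is then sent to $\prod(x_i+y_i)\equiv x_1\cdots x_k \pmod{\mathfrak{m}^{k+1}}$. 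So $g$ is trivial on the associated graded ring $\bigoplus\mathfrak{m}^k/\mathfrak{m}^{k+1}$, and $g$ acts unipotently on each finite-dimensional space $\mathcal{O}/\mathfrak{m}^{k}$. But $g$ has finite order, and a unipotent operator of finite order in characteristic $0$ is the identity: its minimal polynomial divides both $(t-1)^N$ and $t^n-1$, and the latter is separable. Hence $g$ acts trivially on every $\mathcal{O}/\mathfrak{m}^k$.

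To conclude, Krull's intersection theorem for the noetherian local ring $\mathcal{O}$ gives $\bigcap_k\mathfrak{m}^k=0$. For $f\in\mathcal{O}$ we have $g(f)-f\in\mathfrak{m}^k$ for all $k$, so $g(f)=f$. The point needing the most care is the setup rather than the algebra. For a group of birational automorphisms we must make sure $G$ really acts by automorphisms near $P$. I interpret the hypothesis as $G$ acting regularly on $X$ (as in the statement, "acting on $X$"). In that case $g$ maps $\mathcal{O}_{X,P}$ to itself, and the argument above applies verbatim. The unipotent-plus-finite-order step is the crux, and it is exactly where characteristic $0$ is used; in characteristic $p$ the statement fails for $p$-groups.
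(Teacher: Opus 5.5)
Your argument is correct and complete. The paper does not prove this statement itself; it quotes it as a standard fact and points to the literature (Theorem 2.7 of the cited work of Zaitsev), so there is no internal proof to compare with.

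The argument usually given for this fact uses that $|G|$ is invertible to average a $K$-linear section of $\mathfrak{m}\to\mathfrak{m}/\mathfrak{m}^2$ into a $G$-equivariant one. This yields a $G$-stable subspace $V\subset\mathfrak{m}$ that generates every $\mathcal{O}/\mathfrak{m}^k$ as a $K$-algebra, and $g$ acts trivially on it. Your route is slightly more economical. Triviality on the associated graded ring follows formally from triviality modulo $\mathfrak{m}^2$. The only characteristic-$0$ input is then the linear-algebra fact that a unipotent operator of finite order $n$ is the identity, because $1$ is a simple root of $t^n-1$. Both routes finish the same way, with Krull's intersection theorem and the passage to the function field that you give.

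Some presentational points:
\begin{itemize}
\item Although you announce it, you never actually use a Reynolds operator; the separability of $t^n-1$ plays that role.
\item The ``induction on $k$'' is unnecessary. Triviality on all graded pieces already makes $g-1$ nilpotent on each $\mathcal{O}/\mathfrak{m}^k$ directly.
\item The completion $\widehat{\mathcal{O}}$ plays no role and can be dropped.
\end{itemize}
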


Of course, to apply this theorem we need a fixed rational point, which is almost never the case. However, working with singular varieties, whose singularities are isolated points, you can get a fixed rational point by considering an extension of the base field and passing to a subgroup, which will be enough for our purposes. This is the motivation of bounding finite subgroups of general linear groups (not only over $\QQ$, but over number fields). 

To find these bounds, we apply Schur's and Serre's theorems. But before formulating them, we should recall definitions of cyclotomic invariants $m$ and $t$. All fields we work with in the present paper have characteristic $0$, so we give all definitions and statements in this generality.
\begin{definition}(\cite[\S 1]{Serre MSB})\label{def: cyclotomic invariants}
    Let $K$ be a field of characteristic $0$ and $p \neq 2$ be a prime number. Let $\xi_n$ be an $n$-th primitive root of unity and, then we put 
    \begin{itemize}
        \item $t_p= t_{p}(K) = [K(\xi_p): K]$;
        \item $t_2= t_{2}(K) = [K(\xi_4): K]$;
        \item $m_p = m_{p}(K) = \sup\{n \mid \xi_{p^n} \in K(\xi_p)\}$;
        \item $m_2 = m_{2}(K) = \sup \{n \mid \xi_{2^n} \in K\}$, if $\xi_4 \in K$;
        \item $m_2 = m_{2}(K) = \sup \{n \mid \xi_{2^n} + \xi_{2^n}^{-1} \in K\}$, if $\xi_4 \not\in K$.
    \end{itemize}
\end{definition}

\begin{rem}\label{remark: m and t}
    %First of all, note that $m_2 \geqslant 2$ in any case. 
   In paper~\cite{Serre red}, Serre gives seemingly different definitions of cyclotomic invariants, but, as he mentions himself, they are equivalent to those in Definition~\ref{def: cyclotomic invariants}. Also, it is worth to note that $m_2 \geqslant 2$ for any $K$.
    %It is worth noting that $t$ divides $p-1$ for $p > 2$, and if $p = 2$ or $3$, then $t = 1$ or $2$. Also, note that for $p = 2$, we have $m \geqslant 2$.
\end{rem}
In addition, it is convenient to introduce the following invariants.

\begin{definition}\label{def: schur and serre invariants}
    Let $K$ be a field of characteristic $0$. Let $p$ be a prime number, then we put 
    \begin{itemize}
        \item $\nu^{\sch}_{p,n} = \nu^{\sch}_{p,n}(K) = m_p\left\lfloor \frac{n}{t_p}\right\rfloor + \left\lfloor \frac{n}{pt_p}\right\rfloor + \left\lfloor \frac{n}{p^2t_p}\right\rfloor + \dots$, if $p \neq 2$;
        \item $\nu^{\sch}_{p,n} = \nu^{\sch}_{p,n}(K) = m_2n+ \left\lfloor \frac{n}{2}\right\rfloor + \left\lfloor \frac{n}{2^2}\right\rfloor + \dots$, if $p=2$ and $\xi_4 \in K$;
        \item $\nu^{\sch}_{p,n} = \nu^{\sch}_{p,n}(K) = n + m_2\left\lfloor \frac{n}{2}\right\rfloor + \left\lfloor \frac{n}{2^2}\right\rfloor + \dots$, if $p = 2$ and $\xi_4 \not\in K$;
        \item $\nu^{\se}_{p,n} = \nu^{\se}_{p,n}(K) = m_p\left\lfloor\frac{n-1}{\varphi(t_p)}\right\rfloor + \nu_p((n-1)!)$.
    \end{itemize}
    Also, we define 
    $$ \nu^{\sch}_n = \nu^{\sch}_n(K) = \prod_{p \in \mathcal{P}} p^{\nu^{\sch}_{p,n}}, \;  \nu^{\se}_n = \nu^{\se}_n(K) = \prod_{p \in \mathcal{P}} p^{\nu^{\se}_{p,n}}.$$
    \end{definition}

Now, to bound orders of finite subgroups of $\GL_n$ let us recall Schur's theorem from~\cite{Schur}, but in Serre's formulation.

\begin{theorem}[{cf. \cite[Theorem $2'$]{Serre red}}]\label{theorem: schur}
    Let $K$ be a number field, $p$ be a prime number, and~$n$ be a positive integer number. Then for any finite subgroup $G \subset \GL_n(K)$ we have $$\nu_p(|G|) \leqslant \nu^{\sch}_{p,n}.$$
\end{theorem}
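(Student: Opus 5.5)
The plan is to reduce to $p$-subgroups and then run Schur's classical argument via traces. Fix $p$ and replace $G$ by a Sylow $p$-subgroup $P\subset G$. Then $\nu_p(|G|)=\nu_p(|P|)$, so it suffices to bound $|P|$ for a finite $p$-group $P\subset \GL_n(K)$.

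The first step is to decompose $V=K^n$ into $K$-irreducible $P$-representations and treat the $K$-irreducible case. By Maschke, $V$ splits into $K$-irreducibles. Since a product of the groups acting on the summands contains $P$, the $p$-part is bounded by the sum of the bounds over the summands. One then checks that $f(n)=\nu^{\sch}_{p,n}$ is superadditive, i.e. $f(a)+f(b)\le f(a+b)$, which holds because floors are superadditive. This reduces us to $V$ being $K$-irreducible.

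For an irreducible $p$-group representation the standard tool is induction from a cyclic or maximal abelian situation. Over $\overline K$ every irreducible representation of a $p$-group is monomial, induced from a one-dimensional character of some subgroup. Over $K$ one argues in Schur's way. Let $A$ be a maximal abelian normal subgroup of $P$ and look at $V|_A$. By Clifford theory, $V=\mathrm{Ind}_H^P W$, where $H$ stabilizes an isotypic component. On that component $A$ acts through a cyclic group $\mu_{p^k}$, embedded in $\GL_1(K(\xi_{p^k}))$. Over $K(\xi_p)$ the group $\mu_{p^k}$ exists only if $k\le m_p$, up to a further degree-$p^j$ extension. Concretely, $\xi_{p^{m_p+j}}$ has degree $t_pp^{j}$ over $K$ for $p\neq2$, so a faithful cyclic $p^{m_p+j}$-action needs dimension at least $t_pp^j$.

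This gives the term structure. A block of dimension $t_p$ carries a cyclic group of order at most $p^{m_p}$. The blocks are then permuted by a $p$-subgroup of a symmetric group, and by Legendre the Sylow $p$-subgroup of $\mathfrak S_{\lfloor n/t_p\rfloor}$ has exponent $\lfloor n/(pt_p)\rfloor+\lfloor n/(p^2t_p)\rfloor+\dots$. To make this rigorous I would instead use Schur's trace argument. Every element of $P$ has trace in $K\cap\QQ(\xi_{p^\infty})$, and a character-theoretic count (Blichfeldt/Schur) bounds $|P|$ by the order of the $p$-Sylow of the "generalized symmetric group" $\mu_{p^{m_p}}\wr\mathfrak S_{\lfloor n/t_p\rfloor}$. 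Its $p$-valuation is exactly $m_p\lfloor n/t_p\rfloor+\nu_p(\lfloor n/t_p\rfloor!)$, which equals $\nu^{\sch}_{p,n}$ since $\lfloor\lfloor n/t\rfloor/p^i\rfloor=\lfloor n/(p^it)\rfloor$.

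For $p=2$ the same argument works with $t_2=[K(\xi_4):K]$. When $\xi_4\notin K$, the extra summand $n$ accounts for $2$-dimensional blocks carrying generalized dihedral or quaternion groups $\xi_{2^{m_2+1}}$-type of order $2^{m_2+1}$. This matches $n+m_2\lfloor n/2\rfloor$ at the first level. The main obstacle is this $p=2$ non-split case together with making the induction/Clifford step rigorous over a non-closed $K$, where Schur indices may appear. I would first try to reduce to $K\subset\QQ(\xi_{p^\infty})$-traces and compare with the maximal $p$-subgroup of $\GL_n$ over the cyclotomic field, following Serre's proof in \cite{Serre red}.
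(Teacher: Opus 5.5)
\textbf{There is a genuine gap: the proposal assumes the key inequality rather than proving it.}

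Your preliminary reductions are fine: passing to a Sylow $p$-subgroup, splitting $K^n$ into $K$-irreducibles, and using that $n\mapsto\nu^{\sch}_{p,n}$ is superadditive. The problem is the one step that carries content.

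\begin{itemize}
\item \emph{The Clifford sketch does not give the wreath-product picture.} Let $H$ be the stabilizer of an $A$-isotypic component $W\cong U^{\oplus s}$. Then $H$ acts on $W$ semilinearly over the field $L=\mathrm{End}_A(U)\cong K(\xi_{p^k})$, i.e.\ through a subgroup of $\GL_s(L)\rtimes\Gal(L/K)$. It does not act by cyclic groups on blocks of size $t_p$ that are merely permuted. What you actually get is a recursion in which the cyclotomic invariants change from $K$ to $L$, and you never carry it out.
\item \emph{The replacement step is the theorem itself.} You then appeal to ``a character-theoretic count (Blichfeldt/Schur)'' bounding $|P|$ by the Sylow subgroup of $\mu_{p^{m_p}}\wr\mathfrak S_{\lfloor n/t_p\rfloor}$. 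That inequality is precisely the statement to be proved, so the proposal assumes its own conclusion.
\item \emph{The case $p=2$, $\xi_4\notin K$ is left open, and the heuristic offered there is wrong.} With the paper's definition of $m_2$, both $\QQ$ and $\QQ(\sqrt{-2})$ have $m_2=2$. Finite $2$-subgroups of $\GL_2(\QQ)$ have order at most $8$. But $\GL_2(\QQ(\sqrt{-2}))$ contains the semidihedral group of order $16=2^{m_2+2}$: let multiplication by $\xi_8$ and the nontrivial automorphism of $\QQ(\xi_8)/\QQ(\sqrt{-2})$ act on the $2$-dimensional $\QQ(\sqrt{-2})$-space $\QQ(\xi_8)$. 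So two-dimensional blocks are not controlled by $m_2$ alone, and the bound $n+m_2\lfloor n/2\rfloor+\dots$ is attained for some fields but not others. Your argument does not see this distinction.
\end{itemize}

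\textbf{What the paper does, and a self-contained fix.} The paper gives no proof; it quotes the result from Serre (his Theorem~$2'$) and remarks that for $p=2$ the stated form is not optimal. If you want a complete argument for exactly this formulation, Minkowski's reduction method suffices and avoids Clifford theory and Schur indices altogether.

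\begin{itemize}
\item For any prime $\mathfrak p$ of $K$, $G$ preserves the lattice $\sum_{g\in G}g\,\mathcal O_{\mathfrak p}^n$, so up to conjugacy $G\subset\GL_n(\mathcal O_{\mathfrak p})$.
\item If $\mathfrak p\nmid p$, the kernel of reduction modulo $\mathfrak p$ has no element of order $p$. Hence $\nu_p(|G|)\le\sum_{i=1}^n\nu_p(q^i-1)$ with $q=N\mathfrak p$.
\item By Chebotarev in $K(\xi_{p^{m_p+1}})/K$, the class of $q$ modulo $p^{m_p+1}$ can be any element of the image of $\Gal(K(\xi_{p^{m_p+1}})/K)$ in $(\ZZ/p^{m_p+1})^\times$.
\item Choose $q$ as follows:
  \begin{itemize}
  \item for $p$ odd, $q$ of order $t_p$ modulo $p$ with $\nu_p(q^{t_p}-1)=m_p$. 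This is possible because the kernel of the image modulo $p$ is $\{x\equiv1 \bmod p^{m_p}\}$, since $\xi_{p^{m_p+1}}\notin K(\xi_p)$;
  \item for $p=2$ and $\xi_4\in K$, $q$ with $\nu_2(q-1)=m_2$;
  \item for $p=2$ and $\xi_4\notin K$, $q\equiv 2^{m_2}-1\pmod{2^{m_2+1}}$.
  \end{itemize}
\item Lifting the exponent, $\sum_i\nu_p(q^i-1)$ equals $m_p\lfloor n/t_p\rfloor+\nu_p(\lfloor n/t_p\rfloor!)$, $m_2n+\nu_2(n!)$, and $n+m_2\lfloor n/2\rfloor+\nu_2(\lfloor n/2\rfloor!)$ in the three cases respectively. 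This is exactly $\nu^{\sch}_{p,n}$.
\end{itemize}
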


\begin{rem}
    The bound from Theorem~\ref{theorem: schur} could be improved a bit in case $p = 2$ for some types of fields. However, it will be difficult to identify these types in our approach, so we limit ourselves to the specified bound.
\end{rem}
    Theorem~\ref{theorem: schur} gives an obvious bound on orders of finite subgroups of $\GL_n$.
\begin{cor}\label{corollary: schur}
    Let $K$ be a number field. Then for any finite subgroup $G \subset \GL_n(K)$ we have $$|G| \leqslant \nu^{\sch}_n.$$
\end{cor}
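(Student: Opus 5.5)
This corollary follows directly from Theorem~\ref{theorem: schur} by assembling the prime-by-prime bounds into a bound on the whole order. Let $G \subset \GL_n(K)$ be finite and factor its order as
$$|G| = \prod_{p \in \mathcal{P}} p^{\nu_p(|G|)}.$$
For each prime $p$, Theorem~\ref{theorem: schur} gives $\nu_p(|G|) \leqslant \nu^{\sch}_{p,n}$. Since $p \geqslant 2 > 1$, the function $x \mapsto p^x$ is increasing, so $p^{\nu_p(|G|)} \leqslant p^{\nu^{\sch}_{p,n}}$ for every $p$. Taking the product over all primes then yields $|G| \leqslant \nu^{\sch}_n$.

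The one point to check is that the product defining $\nu^{\sch}_n$ is finite, so that the inequality compares actual integers. It suffices to show that $\nu^{\sch}_{p,n} = 0$ for all but finitely many $p$.

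Since $K$ is a number field, $t_p = [K(\xi_p):K] \geqslant [\QQ(\xi_p):\QQ]/[K:\QQ] = (p-1)/[K:\QQ]$. Hence $t_p > n$ once $p > n[K:\QQ]+1$. For such $p$ we have $p \neq 2$, and every floor term $\lfloor n/(p^k t_p)\rfloor$ in the definition of $\nu^{\sch}_{p,n}$ vanishes. Since $m_p$ is finite for a number field, the term $m_p\lfloor n/t_p\rfloor$ is $0$ as well.

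So only finitely many factors of $\nu^{\sch}_n$ differ from $1$, and the product is a well-defined positive integer. No step here is a real obstacle.
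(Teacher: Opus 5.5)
Your proof is correct and is the argument the paper has in mind: the paper treats the corollary as an immediate consequence of Theorem~\ref{theorem: schur}, obtained by multiplying the prime-by-prime bounds $p^{\nu_p(|G|)} \leqslant p^{\nu^{\sch}_{p,n}}$ over all primes. Your extra check that $\nu^{\sch}_{p,n}=0$ for $p > n[K:\QQ]+1$, so that the product defining $\nu^{\sch}_n$ is finite, is a detail the paper leaves implicit, and your argument for it is correct.
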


The following corollaries are important for applications. 
\begin{comment}
\begin{cor}\label{corollary: additive bound}
Let $K$ be a number field of degree $d \leqslant 21$. Let $G \subset \GL_4(K)$ be a finite subgroup. Then $$|G| \leqslant 510\,040\,903\,680\,000
 < 10^{15}.$$
\end{cor}

\begin{proof}
    By Corollary~\ref{corollary: schur} we have $|G| \leqslant \nu^{\sch}_4$ and the result follows from Proposition~\ref{proposition: bound of schur <= 21}.
\end{proof}
\end{comment}
%\begin{cor}\label{corollary: additive bound <=19}
%Let $K$ be a number field of degree $d \leqslant 19$. Let $G \subset \GL_4(K)$ be a finite subgroup. Then $$|G| \leqslant 2\,546\,633\,779\,200
% < 10^{13}.$$
%\end{cor}

%\begin{proof}
%    By Corollary~\ref{corollary: schur} we have $|G| \leqslant \nu^{\sch}_4$ and the result follows from Proposition~\ref{proposition: bound of schur <= 19}.
%\end{proof}

\begin{cor}\label{corollary: additive bound GL_4(<=7)}
    Let $K$ be a number field of degree $d \leqslant 7$. Let $G \subset \GL_4(K)$ be a finite subgroup. Then $$|G| \leqslant 1\,132\,185\,600 < 10^{10}.$$
\end{cor}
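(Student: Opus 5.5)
The plan is to reduce the statement to a finite optimisation over cyclotomic invariants. By Corollary~\ref{corollary: schur} we have $|G|\leqslant \nu^{\sch}_4(K)=\prod_p p^{\nu^{\sch}_{p,4}(K)}$, so it suffices to show that $\nu^{\sch}_4(K)\leqslant 1\,132\,185\,600=2^{11}\cdot 3^5\cdot 5^2\cdot 7\cdot 13$ for every number field $K$ with $[K:\QQ]\leqslant 7$.

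One cannot simply multiply the per-prime maxima, because the primes are coupled through the degree. For instance, $\QQ(\xi_8)$ gives $\nu^{\sch}_{2,4}=15$, which is larger than $11$, but for that field the $3$- and $5$-parts are tiny. So the argument must maximise the whole product over admissible fields.

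\textbf{Step 1: the invariants depend only on a small abelian subfield.} All $t_p$ and $m_p$ are determined by $K\cap\QQ(\xi_{p^\infty})$ (for $p=2$, by which of $\xi_4$ and $\xi_{2^k}+\xi_{2^k}^{-1}$ lie in $K$). Let $F=K\cap\QQ^{ab}$. Then $[F:\QQ]$ divides $[K:\QQ]$, so $[F:\QQ]\leqslant 7$.

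\textbf{Step 2: only finitely many primes contribute.} If $p$ is odd and $\nu^{\sch}_{p,4}>0$, then $t_p\leqslant 4$. On the other hand $t_p\geqslant (p-1)/[F\cap\QQ(\xi_p):\QQ]\geqslant (p-1)/7$. Hence $p\leqslant 29$, and moreover $(p-1)/t_p$ must be the degree of a subfield of $F$.

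\textbf{Step 3: bound the exponents.} For $p$ odd, $m_p\geqslant 2$ forces $F$ to contain a degree-$p$ subfield of $\QQ(\xi_{p^2})$. This is possible only for $p\in\{3,5,7\}$ (degrees $3,5,7$), and it uses up almost all of the degree budget. Similarly, $m_2\geqslant 3$ requires $\sqrt 2$ or $\xi_8$ in $F$.

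\textbf{Step 4: casework on $F$.} Split according to $[F:\QQ]\in\{1,\dots,7\}$. Since $F$ is abelian, in each case list the possible combinations of cyclotomic subfields: for $d=6$, say, $F$ could be $\QQ(\xi_7)$, $\QQ(\xi_9)$, the compositum of a quadratic and a cubic field, and so on. For each combination compute $\prod_p p^{\nu^{\sch}_{p,4}}$.

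For example, take $F\supset \QQ(\sqrt{-3})$, so $t_3=1$. If $F$ also contains the cubic subfield of $\QQ(\xi_9)$, then $m_3=2$, giving $\nu^{\sch}_{3,4}=2\cdot4+1=9$. Otherwise $m_3=1$ and $\nu^{\sch}_{3,4}=5$. I expect the extremal configuration, which produces $2^{11}3^5 5^2\cdot 7\cdot 13$, to come from a field of even degree containing $\xi_4$ or $\xi_3$ together with quadratic or quartic subfields of $\QQ(\xi_5)$ and $\QQ(\xi_{13})$ that make $t_5,t_{13}$ small. The bookkeeping is then to check that every other configuration (with $\xi_8$, with $\xi_9$, with $\xi_7$, with $\QQ(\xi_{11})^+$, etc.) gives a smaller product.

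\textbf{Main obstacle.} The hard part is Step 4: exhaustively and correctly enumerating the abelian fields of degree $\leqslant 7$ together with their invariants, without missing a compositum. I would organise this by the lattice of subfields of $\QQ(\xi_N)$ for $N$ built from primes $\leqslant 29$. To keep the checking mechanical I would first record a table of the maximal $\nu^{\sch}_{p,4}$ attainable given a fixed "degree cost" $[F\cap\QQ(\xi_{p^\infty}):\QQ]$, and then maximise the product subject to the costs dividing a common degree $\leqslant 7$. This is presumably the content of the technical results in Appendix~\ref{sect: appendix}.
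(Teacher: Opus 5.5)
Your opening reduction, $|G|\leqslant\nu^{\sch}_4(K)$ via Corollary~\ref{corollary: schur}, is exactly the paper's proof. The paper then quotes Proposition~\ref{proposition: bound of schur GL_4 <= 7}. That proposition is proved in the appendix by casework on $e_p=[K(\xi_p):\QQ(\xi_{p^{m_p}})]$: Lemmas~\ref{lemma: boundin of m_2} and~\ref{lemma: divisibility of gcd} let one prime with small $e_p$ constrain the others, $d=4,6$ are done by hand, $d=1,2,5,7$ are read off Table~\ref{tabular: tabular d<=21}, and $d=3$ is reduced to $d=6$ via $K\subset K(\sqrt2)$.

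Your route through $F_p=K\cap\QQ(\xi_{p^\infty})$ is correct in Steps 1--3. But Step 4 carries the entire content and is not carried out. Worse, the optimisation you propose for it has the wrong constraint. Asking only that each cost $[F_p:\QQ]$ divide a common $d\leqslant 7$ lets you take, for $d=6$, all of the following at once: $\QQ(\xi_4)$, $\QQ(\xi_9)$, $\QQ(\sqrt5)$, $\QQ(\xi_7)$, and the sextic subfields of $\QQ(\xi_{13})$ and $\QQ(\xi_{19})$. This returns $2^{11}\cdot3^9\cdot5^2\cdot7^4\cdot13^2\cdot19$, which is exactly the useless rough bound $B_{4,6}$ of Table~\ref{tabular: tabular d<=21}.

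\textbf{Missing idea: linear disjointness.} For $N=\prod_p p^{a_p}$ one has $\Gal(\QQ(\xi_N)/\QQ)\cong\prod_p(\ZZ/p^{a_p}\ZZ)^\times$. Hence the fields $\QQ(\xi_{p^\infty})$ for distinct primes are linearly disjoint over $\QQ$. So the compositum of the $F_p$ is a subfield of $K$ of degree $\prod_p[F_p:\QQ]$, and therefore $\prod_p[F_p:\QQ]$ divides $d$.

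With this constraint the problem becomes a tiny knapsack. For $d=6$, for example, you may use one $F_p$ of degree $6$, or one of degree $2$ and one of degree $3$ at two different primes. Running it over all $d\leqslant 7$:
\begin{itemize}
\item the maximum is $2^9\cdot3^2\cdot5\cdot7^4=55\,319\,040$, attained at $K=\QQ(\xi_7)$;
\item the runner-up is $\QQ(\xi_9)$, giving $2^9\cdot3^9\cdot5$.
\end{itemize}
This proves the corollary with a factor of about $20$ to spare.

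It also shows your guessed extremal field cannot exist:
\begin{itemize}
\item $2^{11}$ needs $\xi_4$ or $\sqrt2$;
\item $3^5$ needs $\sqrt{-3}$;
\item $5^2$ needs $\sqrt5$ (or a quintic subfield of $\QQ(\xi_{25})$);
\item a factor $13$ needs a subfield of $\QQ(\xi_{13})$ of degree at least $3$.
\end{itemize}
So such a field would have degree at least $24$. The paper's constant is a product of per-prime bounds obtained separately and is never attained.

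Once the disjointness constraint is added, your route is cleaner and sharper than the paper's $e_p$-casework. That casework captures the coupling between primes only partially, which is why it ends with a non-sharp number.
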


\begin{proof}
    By Corollary~\ref{corollary: schur} we have $|G| \leqslant \nu^{\sch}_4$ and the result follows from Proposition~\ref{proposition: bound of schur GL_4 <= 7}.
\end{proof}

\begin{cor}\label{corollary: additive bound GL_4(<=2)}
    Let $K$ be a number field of degree $d \leqslant 2$. Let $G \subset \GL_4(K)$ be a finite subgroup. Then $$|G| \leqslant 87\,091\,200 < 10^8.$$
\end{cor}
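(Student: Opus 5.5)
The approach is to apply Corollary~\ref{corollary: schur}, which gives $|G|\leqslant \nu^{\sch}_4(K)$, and then to bound each exponent $\nu^{\sch}_{p,4}(K)$ separately. The bound is uniform over all number fields $K$ with $[K:\QQ]\leqslant 2$. Different primes may attain their maximal exponent in different fields; this is harmless, since we only need an upper bound. The target number factors as
$$87\,091\,200 = 2^{11}\cdot 3^{5}\cdot 5^{2}\cdot 7,$$
so the task is to show $\nu^{\sch}_{2,4}\leqslant 11$, $\nu^{\sch}_{3,4}\leqslant 5$, $\nu^{\sch}_{5,4}\leqslant 2$, $\nu^{\sch}_{7,4}\leqslant 1$, and $\nu^{\sch}_{p,4}=0$ for all $p\geqslant 11$.

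\textbf{Odd primes.} The key inputs are degree estimates. Since $[K(\xi_p):\QQ]\geqslant \varphi(p)=p-1$ and $[K:\QQ]\leqslant 2$, we get $t_p\geqslant (p-1)/2$.
\begin{itemize}
\item For $p\geqslant 11$ this gives $t_p\geqslant 5>4$. Hence every term $\lfloor 4/(p^kt_p)\rfloor$ vanishes.
\item Next, $\xi_{p^2}\in K(\xi_p)$ would force $p(p-1)=\varphi(p^2)$ to divide $[K(\xi_p):\QQ]$, which is at most $2(p-1)$. This is impossible for odd $p$, so $m_p=1$.
\end{itemize}
The remaining cases are then direct:
\begin{itemize}
\item $p=3$: here $t_3\in\{1,2\}$, and the worst case $t_3=1$ (e.g.\ $K=\QQ(\xi_3)$) gives $4+\lfloor 4/3\rfloor=5$.
\item $p=5$: here $t_5\in\{2,4\}$, and $t_5=2$ (e.g.\ $K=\QQ(\sqrt5)$) gives $2+0=2$.
\item $p=7$: here $t_7\in\{3,6\}$, which gives at most $\lfloor 4/3\rfloor=1$.
\end{itemize}

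\textbf{The prime $2$.} This case splits according to Definition~\ref{def: schur and serre invariants}.
\begin{itemize}
\item If $\xi_4\in K$, then $K=\QQ(\xi_4)$, and $\xi_8\notin K$ since $[\QQ(\xi_8):\QQ]=4$. So $m_2=2$, and the exponent is $4\cdot 2+2+1=11$.
\item If $\xi_4\notin K$, note that $\xi_{16}+\xi_{16}^{-1}$ generates a field of degree $4$. Hence $m_2\leqslant 3$, with equality for $K=\QQ(\sqrt2)$. The exponent is $4+3\cdot 2+1=11$.
\end{itemize}
In both subcases the exponent is at most $11$.

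Multiplying the prime-by-prime maxima gives the stated number. The only real care needed is in pinning down $m_p$ and $t_p$ exactly, especially $m_2$ in the non-$\xi_4$ case; this is essentially the content of the appendix proposition the paper refers to. I expect no serious obstacle beyond bookkeeping the small primes.
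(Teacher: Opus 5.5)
Your proof is correct and follows essentially the paper's argument. The paper combines Corollary~\ref{corollary: schur} with the degree-only exponent bounds of Lemma~\ref{lemma: rough multiplicative bound}, which for $d=2$ give exactly your exponents $2^{11}\cdot 3^{5}\cdot 5^{2}\cdot 7$ (read off from Table~\ref{tabular: tabular d<=21}); you carry out that $d\leqslant 2$ specialization by hand, computing $m_p$, $m_2$ and $t_p$ directly rather than citing the general lemma.
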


\begin{proof}
    By Corollary~\ref{corollary: schur} we have $|G| \leqslant \nu^{\sch}_4$ and the result follows from Lemma~\ref{lemma: corollary: bound of schur GL_4 <= 2}.
\end{proof}

\begin{cor}\label{corollary: additive bound GL_3(<=15)}
    Let $K$ be a number field of degree $d \leqslant 15$. Let $G \subset \GL_3(K)$ be a finite subgroup. Then $$|G| \leqslant 240\,045\,120 < 10^{9}.$$
\end{cor}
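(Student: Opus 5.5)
By Corollary~\ref{corollary: schur}, any finite $G\subset\GL_3(K)$ satisfies $|G|\leqslant\nu^{\sch}_3(K)$. So the statement reduces to a purely arithmetic claim: $\nu^{\sch}_3(K)\leqslant 240\,045\,120$ for every number field $K$ with $[K:\QQ]\leqslant 15$. I would prove this in the appendix style, as with the $\GL_4$ corollaries, in a lemma or proposition bounding $\nu^{\sch}_3$ over fields of degree at most $15$. The plan is to show that the invariants $t_p(K)$ and $m_p(K)$ depend only on the cyclotomic part $K_{\mathrm{cyc}}=K\cap\QQ(\xi_\infty)$. This is an abelian field whose degree $d'$ divides some $d\leqslant 15$. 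It therefore suffices to bound $\nu^{\sch}_3$ over abelian subfields of cyclotomic fields of degree $d'\leqslant 15$.

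First I would determine which primes can contribute at all. For $p\neq 2$, the exponent $\nu^{\sch}_{p,3}$ is nonzero only if $t_p\leqslant 3$. Since $\QQ(\xi_p)$ has degree $p-1$ and $t_p=[K(\xi_p):K]$, we get $p-1\leqslant t_p\,d'$, hence $p-1\leqslant 45$. Moreover $t_p\mid p-1$. Similarly, $\xi_{p^{m_p}}\in K(\xi_p)$ forces $p^{m_p-1}(p-1)\leqslant t_p d'$. This gives a finite list of primes together with upper bounds on $m_p$ for each $t_p\in\{1,2,3\}$. For $p=2$ the analogous bound $2^{m_2-1}\leqslant d'$ holds, with the two cases $\xi_4\in K$ and $\xi_4\notin K$ treated separately.

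The crude bound obtained by maximising each $\nu^{\sch}_{p,3}$ separately is far too large. For example, taking $\xi_{16}\in K$ alone would already give $2^{13}$, whereas the target is $2^6\cdot3^7\cdot5\cdot7^3$. So the key step, and the main obstacle, is the joint optimisation. Large exponents at different primes require $K_{\mathrm{cyc}}$ to contain large pieces of different cyclotomic fields. Those pieces are linearly disjoint, so their degrees multiply and must divide $d'\leqslant 15$. I would therefore enumerate the possible values $d'\in\{1,\dots,15\}$. For each $d'$, I would list the ways of writing $d'$ as a product of contributions $[K_{\mathrm{cyc}}\cap\QQ(\xi_{p^\infty}):\QQ]$ over primes $p$, also tracking the degree $t_p$ coming from $\QQ(\xi_p)$ for each $p$. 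For each such factorisation I would compute $\prod_p p^{\nu^{\sch}_{p,3}}$ and take the maximum.

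I expect the maximum to be attained for a field containing $\QQ(\xi_7)$ or $\QQ(\xi_9)$-type pieces, which accounts for the factors $7^3$ and $3^7$, together with $\xi_4$ giving $2^6$. Checking that no other degree-$\leqslant15$ configuration beats $240\,045\,120$ is the tedious but routine case analysis I would carry out in the appendix.
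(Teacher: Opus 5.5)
Your proposal is correct. Its first step is exactly the paper's: $|G|\leqslant\nu^{\sch}_3(K)$ by Corollary~\ref{corollary: schur}. After that the routes differ.

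The paper works degree by degree in the appendix:
\begin{itemize}
\item the crude bound $B_{3,d}$, which depends only on $d$, settles $d\leqslant 3$ and all odd $d$;
\item for $d=8,10,12,14$ it splits according to whether some odd $p$ has $e_p=[K(\xi_p):\QQ(\xi_{p^{m_p}})]$ equal to $1$, equal to $2$, or at least $3$, using $p^{m_p-1}(p-1)e_p=dt_p$ and the lemmas linking $e_p$ to $m_2$ and to $t_q$ for $q\neq p$;
\item $d=4,6$ are pushed up to degree $12$ by monotonicity of $\nu^{\sch}$, adjoining $\sqrt[3]{2}$, $\sqrt2$ or $\xi_4$.
\end{itemize}
Inside each case the exponents are still bounded prime by prime. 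So $240\,045\,120=2^6\cdot3^7\cdot5\cdot7^3$ is a product of separate bounds, not a value attained by any field.

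Your reduction goes differently. The invariants at $p$ depend only on $K_p=K\cap\QQ(\xi_{p^\infty})$. The $K_p$ are linearly disjoint, so $\prod_p[K_p:\QQ]$ divides $d$. This turns the problem into an exact finite optimisation: for odd $p$, $K_p$ is the unique subfield of degree $ap^k$ with $a\mid p-1$, and then $t_p=(p-1)/a$ and $m_p=k+1$. This is more conceptual than the paper's $e_p$ bookkeeping, and it gives the sharp value. Carrying out the enumeration, the maximum of $\nu^{\sch}_3$ over degree at most $15$ is $2^5\cdot3^4\cdot7^3=889\,056$, attained by $\QQ(\xi_{21})$, far below the target.

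Your guess about the extremal field is therefore off. $\QQ(\xi_7)$ and $\QQ(\xi_9)$ cannot both lie in a field of degree at most $15$, since their compositum has degree $36$. Also, $\xi_4\in K$ contributes $2^7$, not $2^6$; the exponent $6$ comes from $m_2=3$ with $\xi_4\notin K$.

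Three small fixes for the write-up:
\begin{itemize}
\item Justify the claim for $m_p$. By Lemma~\ref{lemma: natural irrationalities}, $\xi_{p^n}\in K(\xi_p)$ if and only if $[\QQ(\xi_{p^n}):\QQ(\xi_{p^n})\cap K]=[\QQ(\xi_p):\QQ(\xi_p)\cap K]$, and this only involves $K\cap\QQ(\xi_{p^n})$.
\item For $p=2$ with $\xi_4\notin K$, the constraint is $2^{m_2-2}\mid d'$, not $2^{m_2-1}\leqslant d'$. Already $K=\QQ$ has $m_2=2$.
\item The $p$-primary degrees need only have product dividing $d'$; for example $\QQ(\sqrt{15})$ has every $K_p=\QQ$. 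So enumerate tuples with product at most $15$, not factorisations of $d'$.
\end{itemize}
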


\begin{proof}
    By Corollary~\ref{corollary: schur} we have $|G| \leqslant \nu^{\sch}_3$ and the result follows from Proposition~\ref{proposition: bound of schur GL_3 <= 15}.
\end{proof}

\begin{cor}\label{corollary: additive bound GL_3(<=2)}
    Let $K$ be a number field of degree $d \leqslant 2$. Let $G \subset \GL_3(K)$ be a finite subgroup. Then $$|G| \leqslant 2\,903\,040 < 10^7.$$
\end{cor}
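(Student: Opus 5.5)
The plan is to argue exactly as in Corollaries~\ref{corollary: additive bound GL_4(<=2)} and~\ref{corollary: additive bound GL_3(<=15)}. By Corollary~\ref{corollary: schur} we have $|G| \leqslant \nu^{\sch}_3(K)$. It therefore suffices to bound $\nu^{\sch}_{p,3}(K)$ prime by prime, uniformly over all number fields $K$ with $[K:\QQ]\leqslant 2$, and multiply the results. The target number factors as $2\,903\,040 = 2^{10}\cdot 3^4\cdot 5\cdot 7$, which tells us which exponents to aim for.

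\textbf{Odd primes.} Since $[\QQ(\xi_p):\QQ]=p-1$ and $[K:\QQ]\leqslant 2$, we get $t_p=[K(\xi_p):K]\geqslant (p-1)/2$. Every term of $\nu^{\sch}_{p,3}$ vanishes unless $t_p\leqslant 3$, which forces $p\leqslant 7$.
\begin{itemize}
\item For $p=3$ we have $t_3\in\{1,2\}$. Moreover $m_3=1$, since $\xi_9$ has degree $6$ over $\QQ$ and so cannot lie in $K(\xi_3)$, a field of degree at most $4$. The worst case is $t_3=1$, giving $\nu^{\sch}_{3,3}\leqslant 3+1=4$.
\item For $p=5$ we have $t_5\geqslant 2$ and $m_5=1$, since $\xi_{25}$ has degree $20$. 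Hence $\nu^{\sch}_{5,3}\leqslant \lfloor 3/2\rfloor = 1$.
\item For $p=7$ we have $t_7\geqslant 3$ and $m_7=1$. Hence $\nu^{\sch}_{7,3}\leqslant 1$.
\end{itemize}

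\textbf{The prime $2$.} First bound $m_2$. The element $\xi_{16}+\xi_{16}^{-1}$ has degree $4$ over $\QQ$, and $\xi_8$ also has degree $4$. So in either case of Definition~\ref{def: cyclotomic invariants} a field of degree at most $2$ gives $m_2\leqslant 3$ (in fact $m_2=2$ when $\xi_4\in K$). If $\xi_4\in K$, then $\nu^{\sch}_{2,3}=3m_2+1\leqslant 10$. If $\xi_4\notin K$, then $\nu^{\sch}_{2,3}=3+m_2\leqslant 6$. The crude uniform bound $m_2\leqslant 3$ already gives the exponent $10$ in the statement, so there is no need to use the sharper value $m_2=2$.

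Multiplying these maxima, even though they are not attained by a single field $K$, gives $|G|\leqslant 2^{10}\cdot3^4\cdot5\cdot7=2\,903\,040$. I expect the only delicate step to be the case analysis for $p=2$, that is, checking which of the two definitions of $m_2$ applies and bounding it correctly. This would be written out as an appendix lemma analogous to Lemma~\ref{lemma: corollary: bound of schur GL_4 <= 2}.
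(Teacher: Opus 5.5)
Your proof is correct and is essentially the paper's argument. Both reduce via Corollary~\ref{corollary: schur} to bounding each $\nu^{\sch}_{p,3}(K)$ uniformly for $[K:\QQ]\leqslant 2$. The paper reads these bounds off Lemma~\ref{lemma: rough multiplicative bound} and Table~\ref{tabular: tabular GL_3(<=15)}, whose $d=2$ entry is the sharper $2^7\cdot3^4\cdot5\cdot7=362\,880$. Your own remark that $m_2=2$ when $\xi_4\in K$ recovers exactly that exponent $7$ at $p=2$, so the stated factor $2^{10}$ is simply a weaker but still valid bound.
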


\begin{proof}
    By Corollary~\ref{corollary: schur} we have $|G| \leqslant \nu^{\sch}_3$ and the result follows from Lemma~\ref{lemma: corollary: bound of schur GL_3 <= 2}.
\end{proof}

\begin{rem}\label{roughness of bounds for <=2}
Note that the bounds in Corollaries~\ref{corollary: additive bound GL_4(<=7)} and~\ref{corollary: additive bound GL_3(<=15)} require much technical work, which is done in Appendix~\ref{section: appendix}, weather bounds in Corollaries~\ref{corollary: additive bound GL_4(<=2)} and~\ref{corollary: additive bound GL_3(<=2)} are quite rough, but they are enough for our purposes.
\end{rem}

To deal with $\PGL_n$ we use the following Serre's theorem for reductive groups in characteristic $0$.

\begin{theorem}[{\cite[Theorem 5]{Serre red}}]\label{theo: serre's theorem for reductive groups}
Let $K$ be a finitely generated field over $\QQ$. Let $\Gamma$ be a connected reductive group over $K$ of rank $r$ with Weyl group $W$. 

Let $G \subset \Gamma$ be a finite subgroup and $p$ be a prime number. Then $$\nu_p(|G|) \leqslant m_p\left\lfloor\frac{r}{\varphi(t_p)}\right\rfloor + \nu_p(W).$$
\end{theorem}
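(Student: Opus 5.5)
The plan is to reduce to a Sylow $p$-subgroup and split its order into a ``toral'' part and a ``Weyl'' part. Replace $G$ by a Sylow $p$-subgroup $P$; the claim then reads $\nu_p(|P|)\leqslant m_p\lfloor r/\varphi(t_p)\rfloor+\nu_p(|W|)$. In characteristic $0$ every element of $P$ is semisimple. The first step is to show that $P$ normalizes a maximal torus $T\subset\Gamma$ defined over $K$, so that $P\subset N_\Gamma(T)(K)$. Over $\overline K$ this is the classical fact that a finite supersolvable (in particular nilpotent) group of semisimple elements lies in the normalizer of a maximal torus. To descend to $K$, I would consider the variety of maximal tori normalized by $P$. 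This is a homogeneous space for the centralizer $Z_\Gamma(P)$, which is reductive, and one chooses a $K$-point on it; in characteristic $0$ every reductive group has a maximal torus over $K$, and this is what makes the descent possible. I expect this descent to need the most care, but it is Serre's standard argument, so I will treat it briefly.

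Given $P\subset N_\Gamma(T)(K)$, we have the exact sequence $1\to P\cap T(K)\to P\to W$, where $P\to W=N(T)/T$ is the image in the Weyl group. Hence
$$\nu_p(|P|)\leqslant \nu_p(|P\cap T(K)|)+\nu_p(|W|),$$
and it remains to prove $\nu_p(|T(K)_{\mathrm{tors},p}|)\leqslant m_p\lfloor r/\varphi(t_p)\rfloor$.

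This toral bound is the core of the argument. Let $Y=X_*(T)\cong\ZZ^r$ be the cocharacter lattice with its continuous $\Gal(\overline K/K)$-action $\rho$. Then $T(\overline K)[p^\infty]\cong Y\otimes\mu_{p^\infty}$, on which $\sigma$ acts by $\rho(\sigma)\otimes\chi(\sigma)$ with $\chi$ the cyclotomic character, and $T(K)[p^\infty]$ is the invariant submodule. Pick $\sigma$ whose image $a=\chi(\sigma)\in\ZZ_p^\times$ has order exactly $t_p$ modulo $p$ (for $p=2$, use the analogous element for $\xi_4$). An invariant vector of order $p$ is an element of $Y/pY$ on which $\rho(\sigma)$ acts by $a^{-1}$, a primitive $t_p$-th root of unity in $\mathbb F_p$. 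Since $\rho(\sigma)$ has finite order, its characteristic polynomial is a product of cyclotomic polynomials over $\ZZ$. The eigenvalue $a^{-1}$ mod $p$ can only come from factors $\Phi_d$ with $d=t_p p^j$, and each such factor has degree divisible by $\varphi(t_p)$. This bounds the $\mathbb F_p$-rank of $T(K)[p]$ by $\lfloor r/\varphi(t_p)\rfloor$.

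To bound the exponent, use all of $\Gal(K(\xi_{p^\infty})/K)$ rather than a single element. By the definition of $m_p$, its image under $\chi$ contains elements $\equiv 1\bmod p^{m_p}$ but not $\bmod\,p^{m_p+1}$ (adjusted for $p=2$). Such an element acts on $\rho$-invariant directions as a nontrivial scalar and therefore kills invariants of order exceeding $p^{m_p}$. Combining the rank bound with the exponent bound gives $|T(K)[p^\infty]|\leqslant p^{m_p\lfloor r/\varphi(t_p)\rfloor}$. The main obstacle is making this combination rigorous when $\rho$ is nontrivial on the relevant part of the lattice. There I would decompose $Y\otimes\QQ_p$ under the abelian group generated by $\rho(\sigma)$ and the cyclotomic twist, and count invariants isotypic component by isotypic component, following Serre.
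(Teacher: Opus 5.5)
The paper does not prove this statement; it only quotes Serre. So I am judging your argument on its own terms.

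\textbf{The toral bound has a real gap.} Your skeleton is sound. If a Sylow $p$-subgroup $P$ lies in $N_\Gamma(T)(K)$ for a maximal $K$-torus $T$, then $P/(P\cap T(K))$ embeds in $W$. Everything then reduces to the toral inequality $\nu_p(|T(K)_{\mathrm{tors}}|)\leqslant m_p\lfloor r/\varphi(t_p)\rfloor$, and your bound $\dim_{\mathbb{F}_p}T(K)[p]\leqslant\lfloor r/\varphi(t_p)\rfloor$ is fine. The problem is the exponent half, which is false: $T(K)[p^\infty]$ need not be killed by $p^{m_p}$. Take $K=\QQ$ and $p=3$, so $t_3=2$ and $m_3=1$, and let $T=R_{\QQ(\xi_9)/\QQ}\mathbb{G}_m$, so $r=6$. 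Then $\xi_9\in T(\QQ)$ has order $9$. An element $\tau$ with $\chi(\tau)\equiv 1 \pmod 3$ but not modulo $9$ acts nontrivially on $Y$, and this action cancels the cyclotomic twist. Large exponent is paid for by dimension ($\deg\Phi_9=6$), which no ``rank times exponent'' count can see. Splitting $Y\otimes\QQ_p$ into isotypic pieces does not directly help either, because $Y\otimes\ZZ_p$ need not split accordingly.

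What does work is a determinant count for a single element. Choose $\sigma$ so that $c=\chi(\sigma)$ topologically generates $\chi(\Gal(\overline K/K))$. For $p=2$, take instead $\nu_2(c-1)=m_2$ if $\xi_4\in K$ and $\nu_2(c+1)=m_2$ otherwise. Put $u=\rho(\sigma)$. Since $c$ is not a root of unity, $cu-1$ is invertible over $\QQ_p$. Hence $T(K)[p^\infty]$ lies in the kernel of $cu-1$ on $Y\otimes\QQ_p/\ZZ_p$, which has order $p^{\nu_p(\det(cu-1))}$. Write the characteristic polynomial of $u$ as $\prod_d\Phi_d^{a_d}$ with $\sum_d a_d\varphi(d)=r$. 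Then $\nu_p(\det(cu-1))=\sum_d a_d\,\nu_p(\Phi_d(c^{-1}))$. For odd $p$, $\nu_p(\Phi_d(c^{-1}))$ equals $m_p$ if $d=t_p$, equals $1$ if $d=t_pp^j$ with $j\geqslant 1$, and is $0$ otherwise. Since $\varphi(t_pp^j)\geqslant\varphi(t_p)$, the total is at most $m_p\lfloor r/\varphi(t_p)\rfloor$. For $p=2$ one has $\varphi(t_2)=1$, and each $\Phi_d$ contributes at most $m_2\deg\Phi_d$.

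\textbf{The descent to a $K$-torus is asserted, not proved.} The $K$-variety of maximal tori normalized by $P$ is in general only a finite union of $Z_\Gamma(P)$-orbits. For the Klein four-group in $\mathrm{PGL}_2$ it is three points, and $Z_\Gamma(P)^0$ is trivial. The existence of maximal $K$-tori in $Z_\Gamma(P)$ therefore gives no $K$-point on it. The claim is true in characteristic $0$, but it needs an actual argument. One option is induction on $|P|+\dim H$ for a $p$-group $P$ acting by $K$-automorphisms on a connected reductive $K$-group $H$. Pick $z\in Z(P)$ of order $p$. If $z$ acts trivially, pass to $P/\langle z\rangle$. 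Otherwise, apply induction to the $P$-stable reductive group $(H^z)^0$, which has smaller dimension. Then take the centralizer in $H$ of the resulting $P$-stable maximal $K$-torus; it is a maximal torus of $H$ by Steinberg's theorem on semisimple automorphisms. You need this automorphism version because $P$ need not act on $Z_\Gamma(z)^0$ by inner automorphisms.
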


\begin{cor}\label{corollary: Serre for PGL}
   Let $K$ be a number field, $p$ be a prime number, and $n$ be a positive integer number. Then for any finite subgroup $G \subset \PGL_n(K)$ we have
    $$\nu_p(|G|) \leqslant \nu^{\se}_{p,n}.$$
\end{cor}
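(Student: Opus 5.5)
We apply Theorem~\ref{theo: serre's theorem for reductive groups} to the connected reductive group $\Gamma = \PGL_n$ over the number field $K$, which is finitely generated over $\QQ$. Since $\PGL_n(K)$ is the group of $K$-points of $\Gamma$, any finite subgroup $G \subset \PGL_n(K)$ is a finite subgroup of $\Gamma$ in the sense of that theorem. We therefore only need the two invariants entering the bound: the rank and the Weyl group.

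The group $\PGL_n$ is semisimple of type $A_{n-1}$. A maximal torus is the image of the diagonal torus of $\GL_n$ modulo scalars, so it has dimension $n-1$, and the rank is $r = n-1$. The Weyl group is the same as that of $\GL_n$, namely the symmetric group $\mathfrak{S}_n$, of order $n!$.

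Substituting these into Theorem~\ref{theo: serre's theorem for reductive groups} gives
$$\nu_p(|G|) \leqslant m_p\left\lfloor\frac{n-1}{\varphi(t_p)}\right\rfloor + \nu_p(n!).$$
By Definition~\ref{def: schur and serre invariants}, however, $\nu^{\se}_{p,n}$ contains $\nu_p((n-1)!)$, not $\nu_p(n!)$. These differ exactly when $p \mid n$, so the direct substitution is weaker than the claim in that case. This discrepancy is the main obstacle.

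To close the gap, I would use the sharper form of Serre's bound. In \cite{Serre red} the Weyl-group term arises from $\nu_p$ of the normalizer quotient acting on the torus. For type $A_{n-1}$, Serre's refined statement for $\PGL_n$ (treated separately there, alongside the $\GL_n$ case) yields $\nu_p((n-1)!)$: a $p$-subgroup of $\PGL_n(K)$ lifts, up to central extension, to a monomial-type group whose permutation part fixes a coordinate after normalization.

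If I cannot cite the refined statement directly, I would argue by cases on whether $p$ divides $n$, via the exact sequence $1\to K^*\to\GL_n(K)\to\PGL_n(K)$ and the cohomological obstruction in $\mu_n$. If $p \nmid n$, then $\nu_p(n!) = \nu_p((n-1)!)$ and there is nothing to prove. If $p \mid n$, I would need to show that a Sylow $p$-subgroup cannot attain the full $\nu_p(n!)$ from the Weyl group simultaneously with the maximal toric contribution. I expect this case analysis to be the delicate step, and I would rely on Serre's explicit treatment of $\PGL_n$ in \cite{Serre red} rather than redo it.
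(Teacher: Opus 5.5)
Your diagnosis is right: $\PGL_n$ has rank $n-1$ and Weyl group $\mathfrak{S}_n$, so Theorem~\ref{theo: serre's theorem for reductive groups} gives $\nu_p(n!)$ and not $\nu_p((n-1)!)$. The step you propose to close the gap, however, cannot work. No refinement yielding $\nu_p((n-1)!)$ exists, because the statement is false in general when $p\mid n$.

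Take $K=\QQ$ and $n=p=2$. Then $\xi_4\notin\QQ$, $t_2=2$ and $\varphi(t_2)=1$. Also $m_2=2$, since $\xi_4+\xi_4^{-1}=0\in\QQ$ but $\xi_8+\xi_8^{-1}=\sqrt2\notin\QQ$. Hence
\[
\nu^{\se}_{2,2}(\QQ)=2\left\lfloor\tfrac{1}{1}\right\rfloor+\nu_2(1!)=2 .
\]
Yet $\PGL_2(\QQ)$ contains $D_8$, which is listed in Corollary~\ref{corollary: bound of finite subgroups of PGL_2(Q)}. Concretely:
\begin{itemize}
\item the image of $\begin{pmatrix}1&-1\\1&1\end{pmatrix}$ has order $4$ modulo scalars, since its square is $\begin{pmatrix}0&-2\\2&0\end{pmatrix}$ and its fourth power is $-4I$;
\item conjugation by $\mathrm{diag}(1,-1)$ sends it to its inverse up to a scalar.
\end{itemize}
So $\nu_2(|D_8|)=3>2$. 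This is exactly the situation your last paragraph hoped to exclude. Here $\mu_4$ lies in a non-split torus and contributes $m_2=2$, while the Weyl element simultaneously contributes $\nu_2(2!)=1$. The ``monomial lift fixing a coordinate'' mechanism you invoke therefore cannot be a theorem.

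What is provable is what your direct substitution already gives:
\[
\nu_p(|G|)\leqslant m_p\left\lfloor\frac{n-1}{\varphi(t_p)}\right\rfloor+\nu_p(n!).
\]
Your argument should stop there, and the corollary, together with the definition of $\nu^{\se}_{p,n}$, must be amended to use $\nu_p(n!)$. The paper's one-line proof obtains $\nu_p((n-1)!)$ only by asserting that the Weyl group of $\PGL_n$ is $\mathfrak{S}_{n-1}$, which is incorrect; you did not miss an idea there.

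The correction affects the paper's later numerical bounds only at primes dividing $n$:
\begin{itemize}
\item the exponent of $5$ in Corollary~\ref{corollary: PGL_5(Q) and PGL_6(Q)} rises by $1$;
\item the exponent of $3$ in Lemma~\ref{lemma: serr for PGL_3(<=2)} rises by $1$;
\item the exponent of $2$ in Lemma~\ref{lemma: serr for PGL_4(<=2)} rises by $2$.
\end{itemize}
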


\begin{proof}
   Since $\PGL_n(K)$ is reductive of rank $n-1$ with Weyl group isomorphic to $\mathfrak{S}_{n-1}$, corollary immediately follows from Definition~\ref{def: schur and serre invariants} and Theorem~\ref{theo: serre's theorem for reductive groups}.
\end{proof}
    Corollary~\ref{corollary: Serre for PGL} gives an obvious bound on orders of finite subgroups of $\PGL_n$.
\begin{cor}\label{corollary: Serre bound for PGL}
    Let $K$ be a number field. Then for any finite subgroup $G \subset \PGL_n(K)$ we have $$|G| \leqslant \nu^{\se}_n.$$
\end{cor}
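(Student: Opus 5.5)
Corollary~\ref{corollary: Serre for PGL} gives $\nu_p(|G|) \leqslant \nu^{\se}_{p,n}$ for every prime $p$ and every finite subgroup $G \subset \PGL_n(K)$. So I will multiply these prime-by-prime bounds, after first making sure the product defining $\nu^{\se}_n$ is really a finite product.

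First, the finiteness of $\nu^{\se}_n$. Since $K$ is a number field of degree $d=[K:\QQ]$, we have
$$t_p = [K(\xi_p):K] \geqslant [\QQ(\xi_p):\QQ]/d = (p-1)/d.$$
Hence $\varphi(t_p)\to\infty$ as $p\to\infty$, so $\left\lfloor \frac{n-1}{\varphi(t_p)}\right\rfloor = 0$ for all but finitely many $p$. Also $\nu_p((n-1)!)=0$ for $p>n-1$. So $\nu^{\se}_{p,n}=0$ for all sufficiently large $p$, and $\nu^{\se}_n$ is a well-defined positive integer. The invariant $m_p$ is also finite for each $p$, since $K(\xi_p)$ contains only finitely many roots of unity. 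This finiteness is implicitly assumed in Definition~\ref{def: schur and serre invariants}, but I will note it explicitly.

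Then, for any finite $G \subset \PGL_n(K)$, I factor
$$|G| = \prod_{p\in\mathcal{P}} p^{\nu_p(|G|)}.$$
Applying Corollary~\ref{corollary: Serre for PGL} to each factor gives $p^{\nu_p(|G|)} \leqslant p^{\nu^{\se}_{p,n}}$. Taking the product over all primes yields $|G| \leqslant \nu^{\se}_n$.

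There is no real obstacle. The only point needing care is the finiteness check above, which is routine.
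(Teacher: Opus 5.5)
Your argument is the same as the paper's. The paper gives no separate proof; it just says Corollary~\ref{corollary: Serre for PGL} ``gives an obvious bound'', i.e.\ multiply the prime-by-prime bounds. Your finiteness check ($t_p\geqslant (p-1)/d$, hence $\varphi(t_p)\to\infty$) is correct and a reasonable addition.

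The problem is the input you cite. Corollary~\ref{corollary: Serre for PGL} is misderived in the paper, and as a result the statement you are asked to prove is false as written. The Weyl group of $\PGL_n$ (root system $A_{n-1}$) is $\mathfrak{S}_n$, not $\mathfrak{S}_{n-1}$. So Serre's theorem gives $m_p\lfloor (n-1)/\varphi(t_p)\rfloor+\nu_p(n!)$, not $\nu_p((n-1)!)$.

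\textbf{Prime-wise counterexample.} Take $n=2$, $K=\QQ$. Then $\nu^{\se}_{2,2}(\QQ)=m_2\lfloor 1/\varphi(t_2)\rfloor=2$. But $D_8\subset\PGL_2(\QQ)$ (listed in Corollary~\ref{corollary: bound of finite subgroups of PGL_2(Q)}) has $\nu_2(|D_8|)=3$.

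\textbf{Counterexample to the product statement.} Take $K=\QQ(\xi_4)$. There $t_2=1$, $m_2=2$, $t_3=2$, $m_3=1$, and $\varphi(t_p)\geqslant 2$ for $p\geqslant 5$, so $\nu^{\se}_2(K)=2^2\cdot 3=12$. On the other hand $-1=\xi_4^2+0^2$, so Proposition~\ref{prop:Beauville} gives $\mathfrak{S}_4\subset\PGL_2(K)$. Explicitly, it is generated by $z\mapsto\xi_4 z$ and $z\mapsto (1+z)/(1-z)$. Since $|\mathfrak{S}_4|=24>12$, the bound fails.

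So the step ``apply Corollary~\ref{corollary: Serre for PGL} to each factor'' cannot be justified, and no proof of the statement as written exists. If you correct the definition to $\nu^{\se}_{p,n}=m_p\lfloor (n-1)/\varphi(t_p)\rfloor+\nu_p(n!)$, your proof goes through verbatim; in the finiteness check use $p>n$ instead of $p>n-1$.

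This correction also changes the numbers the paper later extracts from $\nu^{\se}$, so those applications need rechecking:
\begin{itemize}
\item In Corollary~\ref{corollary: PGL_5(Q) and PGL_6(Q)}, the $5$-part becomes $5^3$.
\item Over fields of degree at most $2$, the $2$-part of the $\PGL_4$ bound grows by $2^2$.
\item Over fields of degree at most $2$, the $3$-part of the $\PGL_3$ bound grows by a factor of $3$.
\end{itemize}
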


Over $\QQ$ Corollary~\ref{corollary: Serre bound for PGL} gives explicit bounds. Let us compute these bounds for $\PGL_5(\QQ)$, we will need this for applications. 

\begin{cor}\label{corollary: PGL_5(Q) and PGL_6(Q)}
    Let $G \subset \PGL_5(\QQ)$ be a finite subgroup. Then $$|G| \leqslant 2^{11}\cdot 3^5\cdot5^2\cdot7^2\cdot11\cdot13 = 87\,178\,291\,200 < 10^{11}.$$   
\end{cor}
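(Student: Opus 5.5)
The plan is to apply Corollary~\ref{corollary: Serre for PGL} with $K=\QQ$ and $n=5$, and to evaluate $\nu^{\se}_{p,5}(\QQ)=m_p\left\lfloor\frac{4}{\varphi(t_p)}\right\rfloor+\nu_p(4!)$ prime by prime.

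\emph{Cyclotomic invariants of $\QQ$.} For an odd prime $p$ we have $t_p=[\QQ(\xi_p):\QQ]=p-1$. Also $m_p=1$, since $\xi_{p^2}\notin\QQ(\xi_p)$ because $\varphi(p^2)>p-1$. For $p=2$ we have $t_2=[\QQ(\xi_4):\QQ]=2$. Since $\xi_4\notin\QQ$, $m_2$ is the largest $n$ with $\xi_{2^n}+\xi_{2^n}^{-1}\in\QQ$. For $n=2$ this sum is $0\in\QQ$, while for $n=3$ it is $\sqrt2\notin\QQ$. Hence $m_2=2$.

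\emph{Exponents prime by prime.}
\begin{itemize}
\item $p=2$: $\varphi(t_2)=1$, so $\nu^{\se}_{2,5}=2\cdot4+\nu_2(24)=8+3=11$.
\item $p=3$: $\varphi(2)=1$, so $\nu^{\se}_{3,5}=4+1=5$.
\item $p=5$: $\varphi(4)=2$, so $\nu^{\se}_{5,5}=2$.
\item $p=7$: $\varphi(6)=2$, so $\nu^{\se}_{7,5}=2$.
\item $p=11$: $\varphi(10)=4$, so $\nu^{\se}_{11,5}=1$.
\item $p=13$: $\varphi(12)=4$, so $\nu^{\se}_{13,5}=1$.
\end{itemize}

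\emph{All other primes contribute nothing.} For $p>5$ we have $\nu_p(4!)=0$, so a nonzero exponent requires $\varphi(p-1)\leqslant4$. The integers $k$ with $\varphi(k)\leqslant 4$ are exactly $1,2,3,4,5,6,8,10,12$. Of these, $p-1$ can be $1,2,4,6,10,12$, giving $p\in\{2,3,5,7,11,13\}$, all treated above. For instance, $p=17$ has $\varphi(16)=8$ and gives exponent $0$.

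Multiplying, Corollary~\ref{corollary: Serre bound for PGL} gives $|G|\leqslant 2^{11}\cdot3^5\cdot5^2\cdot7^2\cdot11\cdot13$. The arithmetic check that this product equals $87\,178\,291\,200$ is routine. The only step needing care is the small-totient enumeration, which is what guarantees that primes larger than $13$ do not appear.
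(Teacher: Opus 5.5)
Your proposal is correct and takes the same approach as the paper, which gives no separate proof but presents the statement as the explicit evaluation of $\nu^{\se}_5(\QQ)$ from Corollary~\ref{corollary: Serre bound for PGL}. Your values $t_p=p-1$ and $m_p=1$ for odd $p$, $t_2=m_2=2$, and your small-totient enumeration give exactly the exponents $11,5,2,2,1,1$, so you have simply written out the computation the paper leaves implicit.
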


The following two corollaries are also important for applications.

\begin{cor}\label{corollary: serr for PGL_3(<=2)}
    Let $K$ be a number field of degree $d \leqslant 2$. Let $G \subset \PGL_3(K)$ be a finite subgroup. Then $$|G| \leqslant 2\,620\,800 < 10^{7}.$$
\end{cor}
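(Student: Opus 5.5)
The plan is to apply Corollary~\ref{corollary: Serre for PGL} with $n=3$, prime by prime, and to maximize each exponent separately over all number fields $K$ with $[K:\QQ]\leqslant 2$. For $n=3$ the invariant becomes
$$\nu^{\se}_{p,3}(K)=m_p\left\lfloor\frac{2}{\varphi(t_p)}\right\rfloor+\nu_p(2!).$$
So $\nu_2(2!)=1$ and $\nu_p(2!)=0$ for odd $p$. The target number factors as $2\,620\,800=2^7\cdot 3^2\cdot 5^2\cdot 7\cdot 13$. I will show that the maximum exponents are exactly these. The maxima may be attained for different fields $K$, but each is an upper bound for every $K$ of degree at most $2$, so their product bounds $|G|$.

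\textbf{Bounding $m_p$.} For odd $p$, the field $\QQ(\xi_{p^2})$ has degree $p(p-1)$. On the other hand, $[K(\xi_p):\QQ]\leqslant 2(p-1)<p(p-1)$ for $p\geqslant 3$, so $\xi_{p^2}\notin K(\xi_p)$ and hence $m_p=1$. For $p=2$, both $\xi_{16}$ and $\xi_{16}+\xi_{16}^{-1}$ have degree at least $4$ over $\QQ$, so neither lies in $K$. Therefore $m_2\leqslant 3$, and equality holds for $K=\QQ(\sqrt2)$, since $\xi_8+\xi_8^{-1}=\sqrt2$.

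\textbf{Bounding $t_p$.} For odd $p$ we have $t_p\in\{p-1,(p-1)/2\}$, because $[K:\QQ]\leqslant 2$. The $p$-part is nontrivial only if $\varphi(t_p)\leqslant 2$, that is, $t_p\in\{1,2,3,4,6\}$.
\begin{itemize}
\item For $p=2$: $\varphi(t_2)=1$, giving exponent $3\cdot 2+1=7$.
\item For $p=3$: $t_3\leqslant 2$, giving exponent $2$.
\item For $p=5$: $t_5\in\{2,4\}$, giving at most $2$ (attained when $t_5=2$, e.g. $K=\QQ(\sqrt5)$).
\item For $p=7$: $t_7\in\{3,6\}$, so $\varphi(t_7)=2$ and the exponent is $1$.
\item For $p=11$: $t_{11}\in\{5,10\}$, so $\varphi(t_{11})=4$ and the exponent is $0$.
\item For $p=13$: $t_{13}\in\{6,12\}$, giving at most $1$.
\item For $p\geqslant 17$: $t_p\geqslant 8$, so $\varphi(t_p)\geqslant 4$ and the exponent is $0$.
\end{itemize}
The point to check for the primes $p\geqslant 17$ is that $(p-1)/2$ and $p-1$ lie in $\{1,2,3,4,6\}$ only for $p\in\{3,5,7,13\}$.

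Multiplying these prime-power bounds gives $|G|\leqslant 2^7\cdot3^2\cdot5^2\cdot7\cdot13=2\,620\,800<10^7$. The only delicate step is the $2$-part, because of the two-case definition of $m_2$. There I must make sure that no quadratic field gives $m_2=4$; this is exactly the degree argument for $\xi_{16}$ and $\xi_{16}+\xi_{16}^{-1}$ given above.
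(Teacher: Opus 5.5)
Your argument is the paper's own proof (Lemma~\ref{lemma: serr for PGL_3(<=2)}) in a slightly streamlined form. You get $m_p=1$ for every odd $p$ at once from $[K(\xi_p):\QQ]\leqslant 2(p-1)<p(p-1)$, and $t_p\in\{p-1,(p-1)/2\}$, where the paper splits into cases on $m_p$. Given the formula you start from, every exponent you compute is right.

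The gap is the formula itself, which you take, as the paper does, from Corollary~\ref{corollary: Serre for PGL}. The Weyl group of $\PGL_n$ is $\mathfrak{S}_n$, not $\mathfrak{S}_{n-1}$. So Theorem~\ref{theo: serre's theorem for reductive groups} only gives $\nu_p(|G|)\leqslant m_p\lfloor 2/\varphi(t_p)\rfloor+\nu_p(3!)$ for $G\subset\PGL_3(K)$. This differs from your formula only at $p=3$, and there your bound $\nu_3(|G|)\leqslant 2$ is false:
\begin{itemize}
\item Take $K=\QQ(\sqrt{-3})$, where $t_3=m_3=1$.
\item The monomial matrices with nonzero entries in $\mu_3$ and permutation part in $\langle(123)\rangle$ form a group of order $81$.
\item This group meets the scalars in $\mu_3$, so its image in $\PGL_3(K)$ has order $27$.
\end{itemize}
The same slip is visible for $n=2$. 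The formula with $\nu_2(1!)=0$ would force $\nu_2(|G|)\leqslant 2$ in $\PGL_2(\QQ)$, which contradicts $D_8\subset\PGL_2(\QQ)$ from Corollary~\ref{corollary: bound of finite subgroups of PGL_2(Q)}. With the correct term, your independent prime-by-prime maximization only gives $2^7\cdot3^3\cdot5^2\cdot7\cdot13=7\,862\,400$. That is still below $10^7$, but above the stated constant.

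To recover the statement, drop the independent maximization over fields; you already noticed that the maxima come from different $K$. For every $K$ with $[K:\QQ]\leqslant 2$, the corrected exponents are at most $5,3,1,1,0$ at $p=2,3,5,7,13$, and $0$ at all other primes. There are three possible enhancements:
\begin{itemize}
\item exponent $7$ at $p=2$ needs $m_2=3$, that is, $K=\QQ(\sqrt2)$;
\item exponent $2$ at $p=5$ needs $t_5=2$, that is, $K=\QQ(\sqrt5)$;
\item exponent $1$ at $p=13$ needs $t_{13}=6$, that is, $K=\QQ(\sqrt{13})$.
\end{itemize}
These fields are pairwise distinct, so at most one enhancement occurs for a given $K$. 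Hence $|G|\leqslant 2^5\cdot3^3\cdot5\cdot7\cdot13=393\,120\leqslant 2\,620\,800$, which proves the corollary as stated.
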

\begin{proof}
    By Corollary~\ref{corollary: Serre bound for PGL} we have $|G| \leqslant \nu^{\se}_3$ and the result follows from Lemma~\ref{lemma: serr for PGL_3(<=2)}.
\end{proof}
\begin{cor}\label{corollary: serr for PGL_4(<=2)}
    Let $K$ be a number field of degree $d \leqslant 2$. Let $G \subset \PGL_4(K)$ be a finite subgroup. Then $$|G| \leqslant 943\,488\,000 < 10^{9}.$$
\end{cor}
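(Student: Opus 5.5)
The plan is to apply Corollary~\ref{corollary: Serre for PGL} prime by prime. It gives, for any finite $G \subset \PGL_4(K)$,
$$\nu_p(|G|) \leqslant \nu^{\se}_{p,4}(K) = m_p\left\lfloor \frac{3}{\varphi(t_p)}\right\rfloor + \nu_p(3!).$$
So it suffices to bound $m_p(K)$ and $t_p(K)$ uniformly over all $K$ with $[K:\QQ] \leqslant 2$, and to take for each $p$ the maximum of $\nu^{\se}_{p,4}$ over such fields. This is presumably the content of Lemma~\ref{lemma: serr for PGL_3(<=2)} and its $\PGL_4$ analogue in the Appendix. The target number factors as $943\,488\,000 = 2^{10}\cdot 3^4\cdot 5^3\cdot 7\cdot 13$, so I aim to show exactly these exponents, with exponent $0$ for every other prime.

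\textbf{Cyclotomic invariants.} The basic inputs are as follows.
\begin{itemize}
  \item Since $[K:\QQ]\leqslant 2$, we have $t_p \geqslant \varphi(p)/2$ for odd $p$, and $t_2 \in \{1,2\}$.
  \item For odd $p$, $m_p \geqslant 2$ would force $\QQ(\xi_{p^2}) \subset K(\xi_p)$. Then $p(p-1) \leqslant 2t_p \leqslant 2(p-1)$, which is impossible, so $m_p = 1$.
  \item For $p = 2$, if $\xi_4 \in K$ then $K = \QQ(\xi_4)$ and $m_2 = 2$. Otherwise $\xi_{2^n}+\xi_{2^n}^{-1}$ generates a field of degree $2^{n-2}$, so $n \leqslant 3$. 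The value $m_2 = 3$ is attained for $K = \QQ(\sqrt2)$. Hence $m_2 \leqslant 3$.
\end{itemize}

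\textbf{Prime-by-prime bounds.}
\begin{itemize}
  \item $p=2$: here $\varphi(t_2)=1$, so $\nu^{\se}_{2,4} \leqslant 3\cdot 3 + 1 = 10$.
  \item $p=3$: here $t_3\in\{1,2\}$ and $m_3=1$, so $\nu^{\se}_{3,4} \leqslant 3 + 1 = 4$.
  \item $p=5$: here $t_5 \in \{2,4\}$, and $t_5=2$ occurs for $K = \QQ(\sqrt5)$. Then $\varphi(t_5) = 1$ gives $3$.
  \item $p=7$: here $t_7 \in \{3,6\}$, so $\varphi(t_7)=2$ and the bound is $\lfloor 3/2\rfloor = 1$.
  \item $p=11$: here $t_{11}\in\{5,10\}$ with $\varphi = 4$, giving $0$.
  \item $p=13$: here $t_{13} \in \{6,12\}$, with $t_{13} = 6$ for $K=\QQ(\sqrt{13})$. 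Then $\varphi(6) = 2$ gives $1$.
  \item $p \geqslant 17$: here $t_p \geqslant (p-1)/2 \geqslant 8$, so $\varphi(t_p) \geqslant 4$. The first term vanishes, and $\nu_p(3!) = 0$.
\end{itemize}
Multiplying these maxima gives exactly $2^{10}\cdot 3^4\cdot 5^3\cdot 7\cdot 13$. Corollary~\ref{corollary: Serre bound for PGL} then yields the bound. The maxima for different primes may come from different fields, but the bound only needs the product of the maxima.

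\textbf{Main obstacle.} The only real care is the case analysis for $p=2$ and for small $t_p$. First, one must confirm that $m_2 \leqslant 3$ in both the $\xi_4\in K$ and $\xi_4 \notin K$ branches of Definition~\ref{def: cyclotomic invariants}. Second, one must verify the list of $t$ with $\varphi(t) \leqslant 3$, namely $t \in \{1,2,3,4,6\}$. Combined with $t_p \geqslant (p-1)/2$ and $t_p \mid p-1$, this cuts the relevant primes down to $p \leqslant 13$.
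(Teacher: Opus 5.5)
Your argument is the paper's argument. The Corollary is deduced from Corollary~\ref{corollary: Serre bound for PGL} and Lemma~\ref{lemma: serr for PGL_4(<=2)}, which bounds $\nu^{\se}_{p,4}$ prime by prime exactly as you do and reaches the same exponents $10,4,3,1,1$ for $p=2,3,5,7,13$. Your observation that $m_p=1$ for every odd $p$ when $d\leqslant 2$ is a slightly cleaner version of the paper's case split on $m_3$ and on $m_p\geqslant 2$.

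There is, however, a genuine gap, and it is inherited from Corollary~\ref{corollary: Serre for PGL}. The Weyl group of $\PGL_n$ is $\mathfrak{S}_n$, not $\mathfrak{S}_{n-1}$. So Theorem~\ref{theo: serre's theorem for reductive groups} gives $\nu_p(|G|)\leqslant m_p\lfloor (n-1)/\varphi(t_p)\rfloor+\nu_p(n!)$, not $\nu_p((n-1)!)$.

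The version with $(n-1)!$ is actually false. For $n=2$, $K=\QQ$, $p=2$ it gives $\nu_2(|G|)\leqslant m_2=2$. But $D_8\subset\PGL_2(\QQ)$, as recorded in Corollary~\ref{corollary: bound of finite subgroups of PGL_2(Q)}.

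For $n=4$ the correction affects only the prime $2$, since $\nu_3(4!)=\nu_3(3!)$ and $\nu_p(4!)=0$ for $p\geqslant 5$. There, your step $\nu^{\se}_{2,4}\leqslant 3\cdot 3+1=10$ must become $3\cdot 3+\nu_2(24)=12$, and this value is reached for $K=\QQ(\sqrt2)$. So the method, correctly applied, proves only $|G|\leqslant 2^{12}\cdot 3^4\cdot 5^3\cdot 7\cdot 13=3\,773\,952\,000$. Reaching the exponent $10$ for $2$ would need an additional argument bounding $2$-subgroups of $\PGL_4(K)$ directly.

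The weaker bound is still below $10^{10}$, so it is harmless for Lemma~\ref{lemma: (1.2.2)} and for the main theorem. But as written, neither your proof nor the paper's establishes the number $943\,488\,000$.
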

\begin{proof}
    By Corollary~\ref{corollary: Serre bound for PGL} we have $|G| \leqslant \nu^{\se}_4$ and the result follows from Lemma~\ref{lemma: serr for PGL_4(<=2)}.
\end{proof}

\begin{comment}
Let us consider a bunch of examples of bounds for $\PGL_n(\QQ)$ and $\GL_n(\QQ)$ given by previous corollary, which will be important when working with smooth Fano threefolds of index $1$ and considering their anticanonical embeddings.

\begin{ex}\label{example: for PGL_14}
    Let $G$ be a finite subgroup of \textcolor{blue}{(мб нужны будут не все, но пока так)}
    \begin{enumerate}
    \item $\GL_{14}(\QQ)$, then $$|G| \leqslant 2^{26} \cdot 3^{13} \cdot 5^6 \cdot 7^6 \cdot 11^3 \cdot 13^3 \cdot 17 \cdot 19^2 \cdot 23 \cdot 29 \cdot 31 \cdot 37 \cdot 43 = (51);$$
        \item 
    $\PGL_{14}(\QQ)$ or $\GL_{13}(\QQ)$, then $$|G| \leqslant 2^{24}\cdot 3^{12}\cdot 5^{6} \cdot 7^6 \cdot 11^3 \cdot 13^3 \cdot 17 \cdot 19^2 \cdot 23 \cdot 29 \cdot 31 \cdot 37 \cdot 43 = (49);$$
    \item $\PGL_{12}(\QQ)$ or $\GL_{11}(\QQ)$, then $$|G| \leqslant 2^{20}\cdot 3^{10} \cdot 5^5 \cdot 7^5 \cdot 11^2 \cdot 13^2 \cdot 17 \cdot 19 \cdot 23 \cdot 31 = (35);$$
    \item $\PGL_{11}(\QQ)$ or $\GL_{10}(\QQ)$, then $$|G| \leqslant 2^{18} \cdot 3^9 \cdot  5^4 \cdot 7^4 \cdot 11^2 \cdot 13^2 \cdot 17 \cdot 19 \cdot 31 = (30);$$
    \item $\PGL_{10}(\QQ)$ or $\GL_{9}(\QQ)$, then $$|G| \leqslant 2^{16} \cdot 3^8 \cdot  5^4 \cdot 7^4 \cdot 11^2 \cdot 13^2 \cdot 17 \cdot 19 \cdot 31 = (28);$$
    \item $\PGL_{n}(\QQ)$ or $\GL_{n-1}(\QQ)$ with $n\leqslant 9$, then $$|G| \leqslant 2^{14} \cdot 3^7 \cdot  5^3 \cdot 7^3 \cdot 11 \cdot 13 \cdot 19 = (20);$$
    \end{enumerate}
\end{ex}

\end{comment}

For small ranks, we will
need the following well-known results.

\begin{lemma}\label{lemma: Burnside}
    Let $G \subset \GL_2(\QQ)$ be a finite subgroup. Then $|G| \leqslant 12$.
\end{lemma}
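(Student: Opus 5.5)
Our plan is to use Minkowski's theorem (Theorem~\ref{theo: Minkowski}) to restrict which primes can divide $|G|$ and how often, and then to rule out the remaining large groups by hand. For $n=2$ we have $M(2,p)=0$ unless $p-1\leqslant 2$, so only $p=2$ and $p=3$ occur. Computing directly, $M(2,2)=2+1=3$ and $M(2,3)=1$. Hence $|G|$ divides $2^3\cdot 3=24$. It then remains to show that $|G|\neq 24$.

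The key input is that every element of finite order in $\GL_2(\QQ)$ has order in $\{1,2,3,4,6\}$. Indeed, its characteristic polynomial has degree $2$ over $\QQ$, so a primitive $n$-th root of unity appearing as an eigenvalue forces $\varphi(n)\leqslant 2$.

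Next we use that $G$ preserves a lattice and a positive definite form. Averaging the standard inner product over $G$ gives a $G$-invariant positive definite quadratic form $q$ on $\QQ^2$. Extending scalars to $\mathbb R$, we get $G\subset O(q)\cong O_2(\mathbb R)$. Every finite subgroup of $O_2(\mathbb R)$ is cyclic or dihedral. Its rotation part is cyclic of order in $\{1,2,3,4,6\}$, so $G$ is isomorphic to $\mu_n$ or $D_{2n}$ with $n\leqslant 6$. In particular $|G|\leqslant 2\cdot 6=12$.

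Neither step is a serious obstacle. The only point needing care is that the rotation subgroup $G\cap SO(q)$ is cyclic and consists of elements of finite order in $\GL_2(\QQ)$, so its order is bounded by the maximal element order, which is $6$. The bound $12$ is sharp, as shown by $D_{12}$ acting as the symmetries of the hexagonal lattice. The Minkowski step is strictly unnecessary, but it is a useful consistency check.
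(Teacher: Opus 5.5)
Your proof is correct, but it takes a different route from the paper's. The paper's argument is a two-line citation. It uses Burnside's theorem that every finite subgroup of $\GL_2(\QQ)$ is conjugate into $\GL_2(\ZZ)$. It then appeals to the classification of finite subgroups of $\GL_2(\ZZ)$ in Newman's book, where the largest groups have order $12$.

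You do not use integrality at all; your mention of a lattice plays no role. Your steps are:
\begin{enumerate}
\item average to get a $G$-invariant positive definite form, which embeds $G$ in $O_2(\mathbb R)$;
\item use that finite subgroups of $O_2(\mathbb R)$ are cyclic or dihedral;
\item bound the cyclic rotation part by the crystallographic restriction $\varphi(n)\leqslant 2$.
\end{enumerate}

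What each route buys:
\begin{itemize}
\item The paper's citation gives brevity and, implicitly, the full list of possible groups up to conjugacy.
\item Your argument is self-contained and uses only $\QQ\subset\mathbb R$ plus degrees of cyclotomic polynomials.
\item Your argument also generalizes directly to subfields $K\subset\mathbb R$. There the generator of the rotation subgroup has trace $2\cos(2\pi k/n)\in K$ with $\gcd(k,n)=1$, so $\varphi(n)\leqslant 2$ is replaced by $\xi_n+\xi_n^{-1}\in K$. This is the same mechanism the paper uses for its $\PGL_2$ bounds.
\end{itemize}

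Two minor remarks:
\begin{itemize}
\item You say an element of order $n$ has a primitive $n$-th root of unity as an eigenvalue. This needs a word of justification, but you only need it for the rotation generator. That element is conjugate over $\mathbb R$ to a rotation by $2\pi k/n$ with $\gcd(k,n)=1$, so the claim is immediate there.
\item The Minkowski step by itself only gives $|G|\mid 24$ and does not rule out $24$. You correctly flag it as unnecessary, since the second argument gives $|G|\leqslant 12$ directly.
\end{itemize}
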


\begin{proof}
    By Burnside's theorem each finite subgroup of $\GL_2(\QQ)$ is conjugate to a finite subgroup of $\GL_2(\ZZ)$, see~\cite{Burn}. Therefore, the group $G$ embeds to $\GL_2(\ZZ)$ and
    by~\mbox{\cite[Chapter IX, §14]{New}} we have $|G| \leqslant 12$. 
\end{proof}

\begin{propos}[{\cite[Proposition 1.1]{Bea}}]\label{prop:Beauville}
Let $K$ be a field of characteristic $0$ and $\xi_m$ be a primitive $m$-th root of unity.
\begin{enumerate}
    \item $\emph{PGL}_2(K)$ contains $\mu_m$ and $D_{2m}$ if and only if $K$ contains~$\xi_m + \xi_m^{-1}$ $($in particular, always contains $D_6$, $D_8$ и $D_{12}$$)$;
    \item $\emph{PGL}_2(K)$ contains $\mathfrak{A}_4$ and $\mathfrak{S}_4$ if and only if $-1$ is a sum of two squares in $K$;
    \item $\emph{PGL}_2(K)$ contains $\mathfrak{A}_5$ if and only if $-1$ is a sum of two squares in $K$, and $K$ contains $\sqrt{5}$.
\end{enumerate}
\end{propos}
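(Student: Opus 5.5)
The plan is the classical argument via the natural action of $\PGL_2(K)$ on $\PP^1_K$ and the classification of finite subgroups of $\PGL_2(\overline{K})$. A finite subgroup of $\PGL_2(\overline{K})$ is cyclic $\mu_m$, dihedral $D_{2m}$, $\mathfrak{A}_4$, $\mathfrak{S}_4$ or $\mathfrak{A}_5$. So for each part it suffices to find an arithmetic criterion for when such a group is realized already over $K$.

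For part (1), an element of order $m$ in $\PGL_2(K)$ lifts to $g\in\GL_2(K)$. Its eigenvalues have ratio a primitive $m$-th root of unity $\xi_m$, so $\operatorname{tr}(g)^2/\det(g)=2+\xi_m+\xi_m^{-1}$ lies in $K$. Conversely, if $c=\xi_m+\xi_m^{-1}\in K$, the matrix $\begin{pmatrix}0&-1\\1&c+2\end{pmatrix}$ has trace squared over determinant equal to $c+2$, and the standard check shows its image has order exactly $m$ in $\PGL_2(K)$. For the dihedral group, I would note that the companion-type matrix $g$ above is conjugated to its inverse by a suitable involution with entries in $K$, for instance an anti-diagonal-type matrix built from $g$. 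The values $m=3,4,6$ give $c=-1,0,1\in\QQ$, which proves the parenthetical claim.

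For part (2), I would use that $\mathfrak{A}_4\supset V_4$, and that $V_4\subset\PGL_2(K)$ lifts to the quaternion units of a quaternion algebra over $K$. An embedding of $\mathfrak{A}_4$ forces the quaternion algebra to be $(-1,-1)_K$, since the order-$3$ element permutes the three involutions and so makes the squares of the lifts equal up to squares. Since $\PGL_2(K)=\mathrm{M}_2(K)^\times/K^\times$, we need $(-1,-1)_K$ to be split, which happens exactly when $-1$ is a sum of two squares in $K$. Conversely, if $(-1,-1)_K\cong \mathrm{M}_2(K)$, then the Hurwitz unit group $\{\pm1,\pm i,\pm j,\pm k,(\pm1\pm i\pm j\pm k)/2\}$ and its extension by the elements $(1+i)/\sqrt2$ up to scaling (the elements $1+i$ etc.\ suffice projectively) give $\mathfrak{A}_4$ and $\mathfrak{S}_4$ inside $\mathrm{M}_2(K)^\times/K^\times$.

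For part (3), I would repeat the argument with the icosians in $(-1,-1)_K$. These need $\sqrt5$, because elements of order $5$ force $\xi_5+\xi_5^{-1}=(-1\pm\sqrt5)/2\in K$ by part (1). Necessity follows from combining that with part (2), since $\mathfrak{A}_5\supset\mathfrak{A}_4$.

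The main obstacle is the quaternion-algebra identification in part (2), namely showing that the lifted Klein group really generates $(-1,-1)_K$, and not some other quaternion algebra, once the order-$3$ symmetry is imposed.
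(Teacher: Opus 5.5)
The paper gives no proof of this proposition; it is quoted from Beauville's note. So there is no internal argument to compare with, and your sketch has to stand on its own. Its architecture is sound: the invariant $\operatorname{tr}(g)^2/\det(g)$ for the cyclic and dihedral groups, and quaternion algebras with Hurwitz units and icosians for the polyhedral ones.

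\textbf{The witness in part (1) is wrong.} The matrix $g=\begin{pmatrix}0&-1\\1&c+2\end{pmatrix}$ has trace $c+2$ and determinant $1$, so $\operatorname{tr}(g)^2/\det(g)=(c+2)^2$, not $c+2$. These agree only for $m=2,3$, where $c+2\in\{0,1\}$.
\begin{itemize}
\item For $m=4$ you get $\begin{pmatrix}0&-1\\1&2\end{pmatrix}$, with characteristic polynomial $(x-1)^2$. This is a nontrivial unipotent, of infinite order in $\PGL_2(K)$.
\item For $m=6$, the eigenvalue ratio $r$ of $\begin{pmatrix}0&-1\\1&3\end{pmatrix}$ satisfies $r+r^{-1}=7$, again infinite order.
\end{itemize}
So the ``if'' direction of (1) is not established as written.

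The repair is to make trace and determinant both equal to $c+2$. For $m\geqslant 3$ (so $c+2\neq 0$) take $g=\begin{pmatrix}0&-(c+2)\\1&c+2\end{pmatrix}$. Then $\operatorname{tr}(g)^2/\det(g)=2+\xi_m+\xi_m^{-1}$, so the eigenvalue ratio satisfies $r+r^{-1}=\xi_m+\xi_m^{-1}$, i.e.\ $r=\xi_m^{\pm1}\neq1$. Hence $g$ is diagonalizable over $\overline{K}$ with distinct eigenvalues, and its image has order exactly $m$.

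Your vague ``anti-diagonal'' involution can then be made explicit. The matrix $s=\begin{pmatrix}0&c+2\\1&0\end{pmatrix}$ satisfies $s^2=(c+2)I$ and $sgs^{-1}=(c+2)g^{-1}$. Also $\bar s\notin\langle\bar g\rangle$, since otherwise $\bar g=\bar g^{-1}$. Hence $\langle\bar g,\bar s\rangle\simeq D_{2m}$. The case $m=2$ is covered separately by $\operatorname{diag}(1,-1)$ and $\begin{pmatrix}0&1\\1&0\end{pmatrix}$.

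\textbf{Parts (2) and (3) are correct.} The step you single out as the main obstacle is a short computation.
\begin{itemize}
\item Let $a,b\in\GL_2(K)$ lift two distinct involutions of $V_4$. Then $a^2=\alpha$ and $b^2=\beta$ are scalars.
\item The commutator $aba^{-1}b^{-1}=\varepsilon$ is a scalar with $\varepsilon^2=1$ (conjugate $b$ twice by $a$). If $\varepsilon=1$, then $b\in K[a]=K+Ka$. A non-scalar $x+ya$ has scalar square only if $x=0$, which forces $\bar b=\bar a$, a contradiction. Hence $ab=-ba$.
\item So $1,a,b,ab$ span a copy of $(\alpha,\beta)_K$, which by dimension is all of $\mathrm{M}_2(K)$, and $(ab)^2=-\alpha\beta$.
\item Take a lift $u$ of an element of order $3$, replacing it by $u^2$ if necessary. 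Then $uau^{-1}=\lambda b$ and $ubu^{-1}=\mu ab$ with $\lambda,\mu\in K^\times$.
\item Squaring gives $\alpha=\lambda^2\beta$ and $\beta=-\mu^2\alpha\beta$. Hence $\alpha\equiv\beta\equiv-1$ modulo squares, and $(\alpha,\beta)_K\simeq(-1,-1)_K$.
\item This algebra splits iff $x^2+y^2+z^2$ is isotropic over $K$, i.e.\ iff $-1$ is a sum of two squares.
\end{itemize}

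Your ``projectively'' remark for $\mathfrak{S}_4$ is justified: $(-1,-1)_K^\times/K^\times$ injects into the same group over $K(\sqrt2)$. There, the Hurwitz units together with $1+i$ generate the image of the binary octahedral group, of order $24$. The icosian argument for $\mathfrak{A}_5$, with necessity coming from (1) and (2), is fine.
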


\begin{cor}\label{corollary: bound of finite subgroups of PGL_2(Q)}
   All finite subgroups of $\PGL_2(\mathbb{Q})$ are: $\mu_2$, $\mu_3$, $\mu_4$, $\mu_6$, $D_4$, $D_6$, $D_8$ and~$D_{12}$. In particular, their orders are bounded by $12$.
\end{cor}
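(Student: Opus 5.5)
The plan is to apply Proposition~\ref{prop:Beauville} with $K=\QQ$ and read off which cyclic and dihedral groups occur. Then we use the classical classification of finite subgroups of $\PGL_2(\overline{\QQ})$ to see that nothing else can occur. Recall that every finite subgroup of $\PGL_2(\overline{\QQ})$ is isomorphic to $\mu_m$, $D_{2m}$, $\mathfrak{A}_4$, $\mathfrak{S}_4$ or $\mathfrak{A}_5$. Since $\PGL_2(\QQ)\subset\PGL_2(\overline{\QQ})$, any finite $G\subset\PGL_2(\QQ)$ is of one of these types, and it remains to decide which of them embed over $\QQ$.

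First we exclude the polyhedral groups. The number $-1$ is not a sum of two squares in $\QQ$, because it is not even a sum of squares in $\mathbb{R}$. Hence, by parts (2) and (3) of Proposition~\ref{prop:Beauville}, $\mathfrak{A}_4$, $\mathfrak{S}_4$ and $\mathfrak{A}_5$ do not embed into $\PGL_2(\QQ)$. Next we treat the cyclic and dihedral groups. By part (1), $\mu_m\subset\PGL_2(\QQ)$, or equivalently $D_{2m}\subset\PGL_2(\QQ)$, if and only if $\xi_m+\xi_m^{-1}\in\QQ$. Here $\xi_m+\xi_m^{-1}=2\cos(2\pi/m)$ generates the maximal real subfield of $\QQ(\xi_m)$, whose degree is $\varphi(m)/2$ for $m\geqslant 3$. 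So the condition is $\varphi(m)\leqslant 2$, that is, $m\in\{1,2,3,4,6\}$. This yields exactly the cyclic groups $\mu_2,\mu_3,\mu_4,\mu_6$ and the dihedral groups $D_4,D_6,D_8,D_{12}$, together with the trivial group. Here $D_4\cong\mu_2\times\mu_2$ arises from $m=2$, and $D_2\cong\mu_2$. All of these have order at most $12$, which gives the claimed bound.

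An alternative check of the bound uses Corollary~\ref{corollary: Serre for PGL} with $n=2$. It gives $\nu_2(|G|)\leqslant m_2\lfloor 1/\varphi(t_2)\rfloor=2$ and $\nu_3(|G|)\leqslant 1$, since $t_3=2$ and $m_3=1$. For $p\geqslant 5$ we have $\varphi(t_p)>1$, so $\nu_p(|G|)=0$. Hence $|G|\mid 12$, which is consistent with the list.

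The only point I expect to need care is the degree computation $[\QQ(\xi_m+\xi_m^{-1}):\QQ]=\varphi(m)/2$ for $m\geqslant 3$. This is standard: complex conjugation generates $\Gal(\QQ(\xi_m)/\QQ(\xi_m+\xi_m^{-1}))$, which has order $2$.
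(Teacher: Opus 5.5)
Your main argument is correct and is the paper's own route. The paper states this corollary without a separate proof, as an immediate consequence of Proposition~\ref{prop:Beauville} for $K=\QQ$: $-1$ is not a sum of two squares in $\QQ$, and $\xi_m+\xi_m^{-1}\in\QQ$ if and only if $\varphi(m)\leqslant 2$, cf.\ Lemma~\ref{lemma: degree of semi-syclic extension}.

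Your ``alternative check'', however, is false and should be removed. It concludes $\nu_2(|G|)\leqslant 2$, i.e.\ $|G|\mid 12$, but your own list contains $D_8$, and $8\nmid 12$. Explicitly, take the images of $\begin{pmatrix}1&-1\\1&1\end{pmatrix}$ and $\begin{pmatrix}1&0\\0&-1\end{pmatrix}$ in $\PGL_2(\QQ)$. The square of the first matrix is $2\begin{pmatrix}0&-1\\1&0\end{pmatrix}$ and its fourth power is $-4I$, so these two images generate a dihedral group of order $8$.

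You applied Corollary~\ref{corollary: Serre for PGL} faithfully, but that formula is misstated. Its proof takes the Weyl group of $\PGL_n$ to be $\mathfrak{S}_{n-1}$, whereas it is $\mathfrak{S}_n$. So Theorem~\ref{theo: serre's theorem for reductive groups} gives the term $\nu_p(n!)$, not $\nu_p((n-1)!)$. For $n=2$ this yields $\nu_2(|G|)\leqslant m_2+1=3$ and $\nu_3(|G|)\leqslant 1$, hence only $|G|\mid 24$. That is consistent with the list, but it does not by itself give the bound $12$. So the Serre-type estimate cannot replace the use of Proposition~\ref{prop:Beauville} here.
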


\begin{comment}

\begin{lemma}\label{lemma: eraising big primes for GL_n over K}
    Let $\QQ \subset K$ be a field extension of degree $d$ and $G$ be a finite subgroup of~$\GL_n(K)$. If $p > dn + 1$ is a prime number, then $\nu_p(|G|) = 0$. 
\end{lemma}

\begin{proof}
    Let us prove by a contradiction. Assume that $\nu_p(|G|) \geqslant 1$. Then there is a matrix~$A \in G$ of order $p$. Then $A$ has an eigenvalue equals to $\xi_p$, and this eigenvalue is a root of a characteristic polynomial of $A$, which has degree $n$. Therefore, $[K(\xi_p) : K] \leqslant n$ and $[K(\xi_p) : \QQ] \leqslant nd < p -1$, which is a contradiction with $\relpenalty = 10000 \binoppenalty = 10000 [\QQ(\xi_p): \QQ] = p - 1$.
\end{proof}

\begin{cor}\label{corollary: eraising big primes for GL_n over Q}
    Let $G$ be a finite subgroup of $\GL_n(\QQ)$. If $p > n + 1$ is a prime number, then $\nu_p(|G|) = 0$.
\end{cor}
\end{comment}

Also, the following bunch of simple lemmas will be needed.

\begin{lemma}\label{lemma: degree of semi-syclic extension}
Let $\xi_n$ be a primitive $n$-th root of unity, where $n > 2$. Then $$[\mathbb{Q}(\xi_n + \xi_n^{-1}):\mathbb{Q}] = \frac{\varphi(n)}{2},$$ where $\varphi$ is the Euler's totient function.
\end{lemma}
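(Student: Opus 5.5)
The plan is to compare the tower of fields $\QQ \subset \QQ(\xi_n + \xi_n^{-1}) \subset \QQ(\xi_n)$ and use the classical fact that $[\QQ(\xi_n):\QQ] = \varphi(n)$, which comes from the irreducibility of the $n$-th cyclotomic polynomial over $\QQ$. Write $\eta = \xi_n + \xi_n^{-1}$ and $L = \QQ(\eta)$. By multiplicativity of degrees in a tower, it suffices to show that $[\QQ(\xi_n):L] = 2$. Then $[L:\QQ] = \varphi(n)/2$ follows at once.

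For the upper bound I will observe that $\xi_n$ is a root of the polynomial
$$x^2 - \eta x + 1 \in L[x],$$
since $\xi_n^2 - (\xi_n+\xi_n^{-1})\xi_n + 1 = 0$. Hence $\QQ(\xi_n) = L(\xi_n)$ and $[\QQ(\xi_n):L] \leqslant 2$.

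For the lower bound I need $\xi_n \notin L$. Choose the embedding $\QQ(\xi_n)\subset\CC$ with $\xi_n = e^{2\pi i/n}$. Then $\eta = 2\cos(2\pi/n)$ is real, so $L \subset \mathbb{R}$. Since $n > 2$, the number $\xi_n$ is not real: its argument $2\pi/n$ lies strictly between $0$ and $\pi$. Therefore $\xi_n \notin L$, and so $[\QQ(\xi_n):L] = 2$.

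There is one point to check. The degree $[\QQ(\eta):\QQ]$ does not depend on which primitive root is chosen, because all primitive $n$-th roots of unity are Galois conjugate and their values of $\eta$ are therefore conjugate as well. This justifies fixing $\xi_n = e^{2\pi i/n}$ above. The hypothesis $n>2$ is used exactly here. For $n\leqslant 2$ we have $\xi_n = \pm 1 \in \QQ$, and the formula would fail.
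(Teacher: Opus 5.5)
Your proof is correct and takes the same approach as the paper: the tower $\QQ \subset \QQ(\xi_n+\xi_n^{-1}) \subset \QQ(\xi_n)$, the quadratic $x^2-(\xi_n+\xi_n^{-1})x+1$ for the upper bound, and the observation that $\QQ(\xi_n+\xi_n^{-1})$ embeds in $\mathbb{R}$ while $\xi_n$ is not real for the lower bound. Your remark that the result does not depend on which primitive root is chosen is a small point of care that the paper leaves implicit.
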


\begin{proof}
Consider the tower of fields $$\mathbb{Q} \subset \mathbb{Q}(\xi_n + \xi_n^{-1}) \subset \mathbb{Q}(\xi_n).$$ It is well-known that $[\mathbb{Q}(\xi_n):\mathbb{Q}] = \varphi(n)$. Let us show that $[\mathbb{Q}(\xi_n):\mathbb{Q}(\xi_n + \xi_n^{-1})] = 2$. Firstly, these fields do not coincide, since $\mathbb{Q}(\xi_n + \xi_n^{-1})$ is isomorphic to a subfield of $\mathbb{R}$, weather $\mathbb{Q}(\xi_n)$ is not. Secondly, $\xi_n$ is a root of the polynomial $x^2 - (\xi_n + \xi_n^{-1})x + 1$ of degree~$2$. That is, the degree of extension is equal to $2$.

Finally, we have $$[\mathbb{Q}(\xi_n + \xi_n^{-1}):\mathbb{Q}] = \frac{[\mathbb{Q}(\xi_m):\mathbb{Q}]}{[\mathbb{Q}(\xi_n):\mathbb{Q}(\xi_n + \xi_n^{-1})]} = \frac{\varphi(n)}{2}.$$
\end{proof}

\begin{lemma}\label{lemma: PGL_2(Q(sqrt(d)))}
Let $\QQ \subset K$ be a field extension of degree at most $2$. Let $G \subset \PGL_2(K)$ be a finite subgroup. Then $|G| \leqslant 24$.
\end{lemma}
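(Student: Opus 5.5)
The plan is to use the classification of finite subgroups of $\PGL_2(\overline{K}) = \PGL_2(\overline{\QQ})$: every finite subgroup of $\PGL_2(K)$ is also a finite subgroup of $\PGL_2(\overline{\QQ})$. Such a subgroup is therefore isomorphic to one of $\mu_m$, $D_{2m}$, $\mathfrak{A}_4$, $\mathfrak{S}_4$, $\mathfrak{A}_5$. The groups $\mathfrak{A}_4$ and $\mathfrak{S}_4$ have orders $12$ and $24$. If $G \cong \mathfrak{A}_5$, then by Proposition~\ref{prop:Beauville}(3) the field $K$ must contain $\sqrt{5}$ and $-1$ must be a sum of two squares in $K$. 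Since $[K:\QQ]\leqslant 2$, the first condition forces $K = \QQ(\sqrt{5})$. This field is real, so $-1$ is not a sum of two squares in it, and $\mathfrak{A}_5$ is excluded. It remains to bound the cyclic and dihedral cases by $24$, i.e.\ to show $m \leqslant 12$.

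For $\mu_m$ or $D_{2m}$ with $m>2$ contained in $\PGL_2(K)$, Proposition~\ref{prop:Beauville}(1) gives $\xi_m+\xi_m^{-1}\in K$. Hence $\QQ(\xi_m+\xi_m^{-1})\subset K$, and by Lemma~\ref{lemma: degree of semi-syclic extension} we get $\varphi(m)/2 \leqslant [K:\QQ]\leqslant 2$, so $\varphi(m)\leqslant 4$. The integers with $\varphi(m)\leqslant 4$ are $m\in\{1,2,3,4,5,6,8,10,12\}$, so $m\leqslant 12$. Consequently $|\mu_m|\leqslant 12$ and $|D_{2m}|\leqslant 24$; the cases $m\leqslant 2$ are trivial. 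Combining the cases gives $|G|\leqslant 24$.

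The only step needing care is justifying the classification input. It is classical that finite subgroups of $\PGL_2$ over an algebraically closed field of characteristic $0$ are exactly $\mu_m$, $D_{2m}$, $\mathfrak{A}_4$, $\mathfrak{S}_4$ and $\mathfrak{A}_5$. We also need the "only if" direction of Proposition~\ref{prop:Beauville}, which is stated there as an equivalence. I expect no real obstacle: the main point is simply noticing that $\QQ(\sqrt5)$ is real, which rules out $\mathfrak{A}_5$.
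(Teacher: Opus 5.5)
Your proof is correct and follows essentially the same argument as the paper: list the finite subgroups of $\PGL_2$, rule out $\mathfrak{A}_5$ because $K$ would have to equal $\QQ(\sqrt{5})$, which is real, and bound the cyclic and dihedral cases via $\varphi(m)\leqslant 4$, hence $m\leqslant 12$. You also treat $m\leqslant 2$ separately, since the degree formula for $\QQ(\xi_m+\xi_m^{-1})$ requires $m>2$; the paper leaves this small case check implicit.
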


\begin{proof}
There exist a rational number $d$ such that $K = \QQ(\sqrt{d})$. It follows from Proposition~\ref{prop:Beauville} that $G$ is either cyclic, or dihedral, or isomorphic to $\mathfrak{A}_4$, $\mathfrak{S}_4$ or~$\mathfrak{A}_5$. Note, that~$G$ can not be isomorphic to $\mathfrak{A}_5$. Indeed, suppose $G$ is isomorphic to $\mathfrak{A}_5$, then $\mathbb{Q}(\sqrt{d})$ should contain $\sqrt{5}$, and $-1$ should be decomposable in sum of two squares in  $\mathbb{Q}(\sqrt{d})$ by Proposition~\ref{prop:Beauville}. The first condition implies that $\mathbb{Q}(\sqrt{d})$ coincides with $\mathbb{Q}(\sqrt{5})$, which contradicts the second condition.

If $G$ is isomorphic to $\mathfrak{A}_4$ or $\mathfrak{S}_4$, then $|G| \leqslant 24$. If $G$ is isomorphic to $\mu_m$ or $D_{2m}$, then~$\mathbb{Q}(\sqrt{d})$ contains $\xi_m + \xi_m^{-1}$. Then we have $[\mathbb{Q}(\xi_m + \xi_m^{-1}):\mathbb{Q}] \leqslant 2$. Applying Lemma~\ref{lemma: degree of semi-syclic extension} we get the inequality $\varphi(m) \leqslant 4$, which implies $m \leqslant 12$ and \hbox{$|G| \leqslant 24$}.
\end{proof}

\begin{cor}\label{corollary: automorphsms of conic over Q}
    Let $G$ be a finite subgroup of $\Aut(C)$, where $C$ is a smooth conic over~$\mathbb{Q}$. Then $|G| \leqslant 24$.
\end{cor}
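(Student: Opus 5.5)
The idea is to reduce to Lemma~\ref{lemma: PGL_2(Q(sqrt(d)))} by trivialising the conic over a quadratic field. A smooth conic $C$ over $\QQ$ is a Brauer--Severi curve, so it becomes isomorphic to $\PP^1$ over some extension of degree at most $2$. To see this, take any line $L \subset \PP^2_{\QQ}$ defined over $\QQ$. The intersection $L \cap C$ is a degree $2$ subscheme defined over $\QQ$, so it contains a point $P$ defined over a field $K$ with $[K:\QQ] \leqslant 2$. A smooth conic with a $K$-point is isomorphic to $\PP^1_K$, via projection from $P$. Hence $C_K \cong \PP^1_K$.

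Next I would pass from $\Aut(C)$ to $\PGL_2(K)$. Extension of scalars gives an injective homomorphism $\Aut(C) \hookrightarrow \Aut(C_K)$: an automorphism defined over $\QQ$ that becomes the identity after base change was already the identity. Combining this with $\Aut(C_K) \cong \Aut(\PP^1_K) \cong \PGL_2(K)$, the finite group $G$ embeds as a finite subgroup of $\PGL_2(K)$.

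Finally, since $[K:\QQ]\leqslant 2$, Lemma~\ref{lemma: PGL_2(Q(sqrt(d)))} gives $|G|\leqslant 24$. This covers both the case $K=\QQ$ and the case of a genuine quadratic field.

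There is no real obstacle here. The only point needing care is justifying that $C$ acquires a point over a field of degree at most $2$, which the line-intersection argument above handles. An alternative would be to use the anticanonical embedding $C\subset \PP^2$, so that $G\subset \PGL_3(\QQ)$, but that gives a weaker bound, so the trivialisation route is the one to take.
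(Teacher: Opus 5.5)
Your proposal is correct and follows the same route as the paper: trivialise $C$ over a field $K$ with $[K:\QQ]\leqslant 2$, embed $G$ into $\PGL_2(K)$, and apply Lemma~\ref{lemma: PGL_2(Q(sqrt(d)))}. You also justify two points the paper only asserts, namely the line-intersection argument producing the quadratic point and the injectivity of $\Aut(C)\to\Aut(C_K)$.
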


\begin{proof}
    Since conic $C$ is isomorphic to $\PP^1$ over quadratic extension of $\mathbb{Q}$, then $G$ is a subgroup of $\PGL_2(K)$, where $\QQ \subset K$ is an extension of degree $2$. Therefore, we have~$\relpenalty=10000 |G| \leqslant 24$ by Lemma~\ref{lemma: PGL_2(Q(sqrt(d)))}.
\end{proof}

\begin{lemma}\label{lemma: PGL_2(L), L <= m}
    Let $\QQ \subset K$ be a field extension of degree at most $d$. Let $n$ be the maximal integer number with  the property $\varphi(n) \leqslant 2d$. Let $G \subset \PGL_2(K)$ be a finite subgroup. Then $|G| \leqslant \max\{60, 2n\}$. 
\end{lemma}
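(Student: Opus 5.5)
The plan is to run the argument of Lemma~\ref{lemma: PGL_2(Q(sqrt(d)))} again, dropping the special treatment of $\mathfrak{A}_5$. Since $K$ has characteristic $0$, every finite subgroup $G\subset \PGL_2(K)$ embeds into $\PGL_2(\overline{K})$. By the classical classification of finite subgroups of $\PGL_2(\overline{K})\cong \PGL_2(\CC)$ (the same input behind Proposition~\ref{prop:Beauville}), $G$ is isomorphic to one of $\mu_m$, $D_{2m}$, $\mathfrak{A}_4$, $\mathfrak{S}_4$, $\mathfrak{A}_5$. The polyhedral cases have order at most $60$, so it remains to bound the cyclic and dihedral cases.

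Suppose $G\cong\mu_m$ or $G\cong D_{2m}$. In either case $\PGL_2(K)$ contains $\mu_m$, so Proposition~\ref{prop:Beauville}(1) gives $\xi_m+\xi_m^{-1}\in K$. Hence $\QQ(\xi_m+\xi_m^{-1})\subset K$, and therefore $[\QQ(\xi_m+\xi_m^{-1}):\QQ]\leqslant [K:\QQ]\leqslant d$.

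If $m>2$, Lemma~\ref{lemma: degree of semi-syclic extension} turns this into $\varphi(m)/2\leqslant d$, that is, $\varphi(m)\leqslant 2d$. The set of integers with $\varphi(m)\leqslant 2d$ is finite, since $\varphi(m)\to\infty$. This is exactly what makes the maximal $n$ in the statement well defined, and it gives $m\leqslant n$. Consequently $|G|\leqslant 2m\leqslant 2n$.

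If $m\leqslant 2$, then $|G|\leqslant 4$, which is trivially at most $60$. Combining all cases yields $|G|\leqslant\max\{60,2n\}$.

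There is no serious obstacle here. The only point needing care is to justify the list of possible isomorphism types for $G$. Proposition~\ref{prop:Beauville} as stated only says when these groups occur; it does not claim the list is exhaustive. So the exhaustiveness has to come from the standard classification over $\overline{K}$, exactly as in the proof of Lemma~\ref{lemma: PGL_2(Q(sqrt(d)))}. A minor secondary point is that $n$ exists, which follows from the finiteness of $\{m:\varphi(m)\leqslant 2d\}$; one can also note that $\varphi(2)\leqslant 2d$ always holds, so the set is nonempty.
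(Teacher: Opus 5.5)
Your proof is correct and follows essentially the same route as the paper: reduce to the list $\mu_m$, $D_{2m}$, $\mathfrak{A}_4$, $\mathfrak{S}_4$, $\mathfrak{A}_5$, bound the last three by $60$, and in the cyclic or dihedral case use Proposition~\ref{prop:Beauville}(1) together with Lemma~\ref{lemma: degree of semi-syclic extension} to get $\varphi(m)\leqslant 2d$, hence $m\leqslant n$ and $|G|\leqslant 2n$. Your extra remarks are minor points that the paper leaves implicit: the list is exhaustive by the classification over $\overline{K}$, the case $m\leqslant 2$ is trivial, and $n$ exists.
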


\begin{proof}
    By Proposition~\ref{prop:Beauville}, we know that $G$ is either cyclic, or dihedral, or isomorphic to $\mathfrak{A}_4$, $\mathfrak{S}_4$ or~$\mathfrak{A}_5$. In the last three cases we have $|G| \leqslant 60$. If $G$ is isomorphic to $\mu_m$ or $D_{2m}$, then by Proposition~\ref{prop:Beauville} the field $K$ contains an element $\xi_m + \xi_m^{-1}$  and we have an inequality~$\binoppenalty = 10000 [\QQ(\xi_m + \xi_m^{-1}): \QQ] \leqslant d$. Applying Lemma~\ref{lemma: degree of semi-syclic extension}, obtain $\varphi(m) \leqslant 2d$, which implies~$\relpenalty = 10000 m \leqslant n$ and $|G| \leqslant 2n$.
\end{proof}

\begin{lemma}\label{lemma: PGL_2(L), L <= 4,6,24}
Let $\QQ \subset L$ be a field extension of degree at most $d$. Let $G \subset \PGL_2(L)$ be a finite subgroup. Then 
\begin{enumerate}
    \item $|G| \leqslant 60$ if $d=4$;
    \item $|G| \leqslant 84$ if $d=6$;
    \item $|G| \leqslant 180$ if $d = 12$; 
    \item $|G| \leqslant 420$ if $d = 24$.
\end{enumerate}
\end{lemma}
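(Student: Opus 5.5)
This will be a direct application of Lemma~\ref{lemma: PGL_2(L), L <= m}, which gives $|G| \leqslant \max\{60, 2n\}$, where $n$ is the largest integer with $\varphi(n) \leqslant 2d$. For each of the four values $d = 4, 6, 12, 24$ it only remains to find this maximal $n$, that is, the largest $n$ with $\varphi(n) \leqslant 8$, $12$, $24$, $48$ respectively, and then compare $2n$ with $60$.

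To find the maximal $n$ for a given bound $B = 2d$ we use the elementary estimate $\varphi(n) \geqslant \sqrt{n/2}$, or better, the prime factorisation. If $p^k \| n$, then $\varphi(n) = \prod p^{k-1}(p-1)$, so only primes with $p - 1 \leqslant B$ can occur, and each exponent is bounded. We then enumerate the products of admissible prime-power factors whose $\varphi$-values multiply to at most $B$, and pick the largest $n$. The expected answers are as follows.
\begin{itemize}
\item For $B=8$: $n = 30$, since $\varphi(30)=8$. Indeed, $\varphi(n) \leqslant 8$ forces $n \leqslant 30$; the candidates include $15, 16, 20, 24, 30$. This gives $|G| \leqslant \max\{60, 60\} = 60$.
\item For $B=12$: $n = 42$, since $\varphi(42)=12$. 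Larger candidates such as $60$ ($\varphi=16$) and $90$ ($\varphi=24$) fail. This gives $2n = 84$.
\item For $B=24$: $n = 90$, since $\varphi(90)=24$. The other candidates are $84$ ($\varphi=24$), $78$ ($\varphi=24$) and $72$ ($\varphi=24$); the value $210$ has $\varphi = 48$ and fails. This gives $180$.
\item For $B=48$: $n = 210$, since $\varphi(210)=48$. Checking, for instance, $180$ ($\varphi=48$) and $330$ ($\varphi=80$) confirms nothing larger works. This gives $420$.
\end{itemize}

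The only real obstacle is justifying maximality of $n$ in each case without missing a candidate. The cleanest way is to bound the admissible primes: $p - 1 \mid \varphi(n)$ forces $p \leqslant B+1$, so for $B=48$ we have $p \in \{2,3,5,7,11,13,17,\dots,47\}$, and the primes with $p-1 \mid$ some divisor of $\leqslant 48$. Then we bound the exponents, e.g.\ $2^k$ with $2^{k-1} \leqslant 48$. Writing $n = \prod p^{k_p}$, we use $n/\varphi(n) = \prod p/(p-1)$. This ratio is at most $\prod_{p \leqslant 11} p/(p-1) < 5$ unless very many primes occur, and including many primes makes $\varphi$ exceed $B$. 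Hence $n < 5B$, i.e.\ $n \leqslant 239$ for $B = 48$, and a short finite check of $\varphi$ on $211 \leqslant n \leqslant 239$ (all giving $\varphi(n) > 48$, e.g.\ $\varphi(240)=64$ is just outside) finishes the argument. The smaller cases are handled the same way with $n/\varphi(n) \leqslant 30/8$ and similar.
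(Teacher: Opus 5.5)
Your proposal is correct and is exactly the paper's argument: the paper simply records that $\max\{n \mid \varphi(n)\leqslant 8\}=30$, $\max\{n \mid \varphi(n)\leqslant 12\}=42$, $\max\{n \mid \varphi(n)\leqslant 24\}=90$ and $\max\{n \mid \varphi(n)\leqslant 48\}=210$, and then invokes Lemma~\ref{lemma: PGL_2(L), L <= m}. Your extra justification of maximality via $n/\varphi(n)=\prod_{p\mid n}p/(p-1)$ is sound; in fact, since $\varphi(n)\leqslant 48$ allows at most four distinct prime factors, it already gives $n\leqslant \tfrac{35}{8}\cdot 48=210$ with no finite check needed.
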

\begin{proof}
    Note that $\max\{n \mid \varphi(n) \leqslant 8\} = 30$, $\max\{n \mid \varphi(n) \leqslant 12\} = 42$, $\relpenalty=10000 \max\{n \mid \varphi(n) \leqslant 24\} = 90$ and~$\relpenalty=10000 \max\{n \mid \varphi(n) \leqslant 48\} = 210$. Now the result follows from Lemma~\ref{lemma: PGL_2(L), L <= m}. 
\end{proof}

\begin{lemma}\label{lemma: conic over L <=6}
    Let $\QQ \subset L$ be a field extension of degree at most $6$. Let $C$ be a conic over~$L$. Let $G \subset \Aut(C)$ be a finite subgroup. Then $|G| \leqslant 180$.
\end{lemma}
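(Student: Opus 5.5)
The plan is to reduce the statement to Lemma~\ref{lemma: PGL_2(L), L <= 4,6,24}, exactly as Corollary~\ref{corollary: automorphsms of conic over Q} was reduced to Lemma~\ref{lemma: PGL_2(Q(sqrt(d)))}. We take $C$ to be a smooth conic over $L$; this is the situation in which the lemma will be applied. Since $C$ has a point over some extension of $L$ of degree at most $2$, we first choose a point $P \in C$ defined over a quadratic extension $L' \supset L$. Such a point exists: intersect $C$ with any $L$-rational line, and the two intersection points are defined over a field of degree at most $2$ over $L$. Then $C_{L'}$ is a smooth conic with an $L'$-point, so $C_{L'} \cong \PP^1_{L'}$.

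Next, $G \subset \Aut(C)$ injects into $\Aut(C_{L'}) \cong \PGL_2(L')$, because extending scalars keeps the action faithful. The degree satisfies $[L':\QQ] = [L':L]\,[L:\QQ] \leqslant 2\cdot 6 = 12$. Applying Lemma~\ref{lemma: PGL_2(L), L <= 4,6,24} with $d = 12$ then gives $|G| \leqslant 180$.

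The only point needing care is that Lemma~\ref{lemma: PGL_2(L), L <= 4,6,24} is stated for degree \emph{at most} $d$. This covers the cases where $[L':\QQ]$ is strictly less than $12$, and also the case where $C$ already has an $L$-point, in which we may take $L' = L$. Its proof rests on Lemma~\ref{lemma: PGL_2(L), L <= m}, whose hypothesis is only an upper bound on the degree, so this causes no difficulty.

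I do not foresee a real obstacle. The one subtle step is justifying that a point of degree at most $2$ exists and that the automorphism group embeds faithfully after base change, and both are standard.
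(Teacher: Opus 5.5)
Your proposal is correct and matches the paper's proof: pass to a quadratic extension $L'$ of $L$ over which $C \simeq \PP^1_{L'}$, so that $G \subset \PGL_2(L')$ with $[L':\QQ] \leqslant 12$, and then apply Lemma~\ref{lemma: PGL_2(L), L <= 4,6,24} with $d = 12$. Your justification that a point of degree at most $2$ exists (by intersecting $C$ with an $L$-rational line), and that the action stays faithful after base change, fills in details the paper leaves implicit.
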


\begin{proof}
    Conic $C$ is isomorphic to $\PP^1$ over some quadratic extension of $L$. Therefore, $G$ is a finite subgroup of $\PGL_2(K)$, where $\QQ \subset K$ has degree at most $12$. applying Lemma~\ref{lemma: PGL_2(L), L <= 4,6,24}, obtain~$\relpenalty=10000 |G| \leqslant 180$.
\end{proof}

\begin{lemma}\label{lemma: GL_2(L), L <= 6}
    Let $\QQ \subset L$ be a field extension of degree at most $6$. Let $G \subset \GL_2(L)$ be a finite subgroup. Then $|G| \leqslant 1512$.
\end{lemma}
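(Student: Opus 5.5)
The plan is to split $G$ into its scalar part and its image in $\PGL_2(L)$, and bound each factor by results already proved.

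Consider the natural projection $\pi\colon \GL_2(L)\to\PGL_2(L)$. Its kernel is the group $L^*$ of scalar matrices, so we get an exact sequence
$$1 \to G\cap L^* \to G \to \pi(G) \to 1,$$
which gives $|G| = |G\cap L^*|\cdot|\pi(G)|$.

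\textbf{The image.} The group $\pi(G)$ is a finite subgroup of $\PGL_2(L)$ with $[L:\QQ]\leqslant 6$. By Lemma~\ref{lemma: PGL_2(L), L <= 4,6,24}, case $d=6$, we get $|\pi(G)|\leqslant 84$. That lemma is stated for degree at most $d$, so it covers all $[L:\QQ]\leqslant 6$.

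\textbf{The kernel.} The group $G\cap L^*$ is a finite subgroup of the multiplicative group of a field, hence cyclic, say $\mu_m$. So $L$ contains a primitive $m$-th root of unity $\xi_m$. Then $\QQ(\xi_m)\subset L$, and therefore $\varphi(m)=[\QQ(\xi_m):\QQ]\leqslant 6$. Checking the finitely many $m$ with $\varphi(m)\leqslant 6$ (any $m$ with a prime factor $p\geqslant 11$ or with $\nu_2(m)\geqslant 4$ already has $\varphi(m)\geqslant 8$), the largest such value is $m=18$. Hence $|G\cap L^*|\leqslant 18$.

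\textbf{Conclusion.} Combining the two bounds gives
$$|G|\leqslant 18\cdot 84 = 1512.$$
There is no real obstacle here. The only step needing care is the elementary check that $\max\{m \mid \varphi(m)\leqslant 6\}=18$. It is done by listing the $m$ with $\varphi(m)\in\{1,2,4,6\}$ and confirming that $m=20$, $16$, $15$ and all larger candidates have $\varphi(m)\geqslant 8$.
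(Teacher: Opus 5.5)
Your proposal is correct and follows the paper's proof essentially verbatim. The paper also projects $G$ to $\PGL_2(L)$, bounds the scalar kernel $\mu_l\subset L^{\times}$ by $18$ (the largest order of a root of unity in a field of degree at most $6$), and bounds the image by $84$ via Lemma~\ref{lemma: PGL_2(L), L <= 4,6,24}, giving $18\cdot 84=1512$; your explicit check that $\max\{m\mid\varphi(m)\leqslant 6\}=18$ supplies a step the paper states without proof.
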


\begin{proof}
    If we denote the image of $G$ under standard projection to $\PGL_2(L)$ by $\overline{G}$, then there is a short exact sequence $$0 \rightarrow \mu_l \rightarrow G \rightarrow \overline{G} \rightarrow 1,$$ where $\mu_l \subset L^{\times}$. The largest degree of a primitive root in $L^{\times}$ is $18$, therefore $|\mu_l| \leqslant 18$. Also, applying Lemma~\ref{lemma: PGL_2(L), L <= 4,6,24}, obtain $|\overline{G}| \leqslant 84$. Finally, we have $$|G| = |\mu_r|\cdot|\overline{G}| \leqslant 1512.$$
\end{proof}

%\begin{lemma}\label{lemma: normality of G cap SL_3(Q)}
%    Let $G$ be a subgroup of $GL_n(\QQ)$, and $\varphi: \GL_n(\QQ) \rightarrow \SL_n(\RR)$ be a homomorphism defined by formula 
%$$\varphi(x) = \frac{x} { \sqrt[n]{\det(x)}}.$$ If $H = \varphi(G)$ and $H' = H \cap \SL_n(Q)$, then $H'$ is a normal subgroup of $H$.
%\end{lemma}

%\begin{proof}
%    Note, that $H'$ consists of elements
%\end{proof}

\section{$G$-mori fiber spaces with base of positive dimension}\label{sect: MFS with dimB>0}

In this section, we give a sharp bound on orders of finite subgroups acting faithfully on conic bundles and del Pezzo fibrations. But first, let us fix the notation and recall some definitions. 

From now on $G$ always denotes a finite group unless otherwise stated. If $G$ acts on a variety~$X$, then~$X$ is called a $G$-variety (action is assumed to be faithful unless otherwise stated). Morphisms between two $G$-varieties are assumed to be~$G$-equivariant. By~$G\QQ$-factorial variety we mean a $G$-variety such that any of its $G$-invariant Weil divisors is~\hbox{$\QQ$-Cartier}. A smooth $G$-surface $S$ is said to be $G$-minimal if any $G$-equivariant birational morphism from $S$ to a normal surface is an isomorphism, or, equivalently, if~$G$-invariant Picard number $\rho(S)^G$ equals 1.

Let $X$ be a terminal $G\QQ$-factorial variety of dimension $3$. Recall that a $G$-equivariant morphism $$f: X \rightarrow \mathcal{B}$$ is called a $G$-\textit{Mori fiber space} if
\begin{enumerate}
    \item $\mathcal{B}$ is a normal variety with $\dim \mathcal{B} < \dim X$;
    \item fibers of $f$ are connected;
    \item the anticanonical class $-K_X$ is $f$-ample;
    \item relative $G$-invariant Picard number $\rho(X/\mathcal{B})^G$ equals 1.
\end{enumerate}
If $\mathcal{B}$ is a point, then $-K_X$ is ample, and $X$ is said to be a $G\QQ$-Fano threefold with~$\relpenalty=10000 \rho(X)^G = 1$. If $\mathcal{B}$ is a curve, then a general fiber of $f$ is a del Pezzo surface; in this case~$X$ is said to be a $G\QQ$-del Pezzo fibration. If~$\mathcal{B}$ is a surface, then a general fiber of $f$ is a conic; in this case $X$ is said to be
a~$G\QQ$-conic bundle.

With each surjective $G$-equivaraint morphism with connected fibers $X \xrightarrow{} \mathcal{B}$ we associate an exact sequence of groups $$1 \xrightarrow{}{} G_F\xrightarrow{}{} G\xrightarrow{}{} G_B\xrightarrow{}{} 1,$$ where $G_B$ is a finite subgroup in $\Aut(\mathcal{B})$, and $G_F$ acts on $X$ preserving fibers.

The following lemma gives an alternative way to look at the group $G_F$ in some cases.

%\begin{lemma}\label{lemma: bound on cardinality of G in Bir(X), X - rational}
%Let $X$ be a normal geometrically rational surface over $\mathbb{Q}$ with $\mathbb{Q}$-point, and~\mbox{$G \subset \Aut(X)$} be a finite subgroup. Then $|G| \leqslant ?$.
%\end{lemma}

%\begin{lemma}
 %   Let $X \rightarrow \mathcal{B}$ be a rational $G$-Mori fiber space over $\mathbb{Q}$. Then there exist a smooth fiber $Y$ with $\mathbb{Q}$-point such that $G_F \subset \Aut(Y)$. 
%\end{lemma}

%\begin{proof}
 %   With each element $g \in G$ we can associate closed subset $Z_g \subset X$ consisting of fixed points under action of $g$. 
  %  Consider an open subset $U \subset X$ given by the formula $$U = X \setminus \bigcup\limits_{g\in G} Z_g,$$ where $Z_g$ 
%\end{proof}

\begin{lemma}\label{lemma: smooth rational fiber in fibrations}
Let $X$ be a smooth projective $G$-variety over a field $K$ of characteristic $0$. Let $$f: X \rightarrow \mathcal{B}$$ be a~$G$-equivariant morphism with connected fibers. Then there exists a smooth fiber~$Y$, such that $G_F \subset \Aut(Y)$. Moreover, if $K$-points are dense in $X$, then Y can be chosen to contain a $K$-point.
\end{lemma}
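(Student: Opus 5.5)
The plan is to find a fiber that is preserved by all of $G_F$, is smooth, and on which $G_F$ acts faithfully. Since $G_F$ acts fiberwise, every fiber $X_b=f^{-1}(b)$ over a point $b\in\mathcal{B}$ (closed point, possibly with residue field larger than $K$; we will handle rationality separately) is $G_F$-invariant. So the real task is to choose $b$ so that the fiber is smooth and the restriction homomorphism $G_F\to\Aut(X_b)$ is injective.

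First, by generic smoothness (characteristic $0$), there is a dense open subset $U_1\subset\mathcal{B}$ over which $f$ is smooth, so every fiber over $U_1$ is smooth. Connectedness of fibers then makes these fibers smooth irreducible varieties.

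Second, for each nontrivial $g\in G_F$ let $Z_g\subset X$ be its fixed locus. Since the action is faithful and $X$ is irreducible, $Z_g$ is a proper closed subset. It is $G_F$-invariant up to conjugation, but we only need that it is closed and proper. The element $g$ acts trivially on $X_b$ exactly when $X_b\subset Z_g$. Let $W_g=\{b\in\mathcal{B}: X_b\subset Z_g\}$. Because $f$ is proper and surjective, $\dim f^{-1}(W_g)\le \dim Z_g<\dim X$, and all fibers have dimension $\dim X-\dim\mathcal{B}$. Hence $W_g$ is contained in a proper closed subset of $\mathcal{B}$; concretely, take the closure of $f(Z_g)$ if that is proper, or otherwise use upper semicontinuity of fiber dimension of $Z_g\to\mathcal{B}$. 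In that second case a general fiber of $Z_g\to\mathcal{B}$ has dimension less than the fiber dimension of $f$, so over a dense open set no fiber is contained in $Z_g$. Let $U_2=\mathcal{B}\setminus\bigcup_{g\neq 1}\overline{W_g}$, which is dense open because $G_F$ is finite. For $b\in U=U_1\cap U_2$ the fiber $Y=X_b$ is smooth and $G_F$ embeds into $\Aut(Y)$.

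For the rational-point statement, note that $f^{-1}(U)$ is a dense open subset of $X$. If $K$-points are dense in $X$, pick a $K$-point $x\in f^{-1}(U)$ and set $b=f(x)$. Then $b$ is a $K$-point of $\mathcal{B}$, $Y=X_b$ is a variety over $K$ containing $x$, and the previous paragraph applies. Without that hypothesis we simply take any closed point of $U$ (or a $K$-point of $U$ if one exists) and regard $Y$ over its residue field.

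The main obstacle is the dimension argument showing $W_g$ is not dense. I would handle it via semicontinuity of fiber dimension for $Z_g\to\mathcal{B}$, treating separately the irreducible components of $Z_g$ that dominate $\mathcal{B}$ and those that do not.
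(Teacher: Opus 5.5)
Your proof is correct, and its skeleton is the same as the paper's: generic smoothness over an open $V\subset\mathcal{B}$, then discarding the fixed loci $Z_g$ of nontrivial elements, then taking the fiber through a $K$-point of the remaining dense open set.

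The one real difference is where you discard the fixed loci.

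\textbf{Your route.} You remove from the base the set $W_g=\{b : X_b\subset Z_g\}$. This forces the dimension and semicontinuity argument that you flag as the main obstacle.

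\textbf{The paper's route.} The paper removes the fixed loci from the total space. It sets $U=X_V\setminus\bigcup_{g\neq 1}Z_g$ and takes any fiber $Y$ meeting $U$. Such a $Y$ contains a point moved by every nontrivial $g$, so no $g\neq 1$ in $G_F$ acts trivially on $Y$. No dimension theory is needed. Choosing $Y$ through a $K$-point of $U$ gives the second assertion at once.

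\textbf{What each buys.} Your version produces an explicit open subset of $\mathcal{B}$ all of whose fibers work, which is not needed here. If you want it, non-density of $W_g$ also follows without dimension counts. Indeed $W_g=\mathcal{B}\setminus f(X\setminus Z_g)$, and $f(X\setminus Z_g)$ is a dense constructible set by Chevalley's theorem.

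One small inaccuracy: not all fibers of $f$ have dimension $\dim X-\dim\mathcal{B}$, only those over a dense open subset. Your component-by-component argument for $Z_g\to\mathcal{B}$ already handles this correctly, so nothing breaks.
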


\begin{proof}
Firstly, consider open subset $V \subset \mathcal{B}$ over which fibers are smooth  (such $V$ exists because a fiber over generic point is smooth and we can apply~\cite[Proposition 17.7.8]{Grothendieck}), and denote preimage by $X_V$. Secondly, with each element $g \in G$ associate closed subset~$\relpenalty=10000 Z_g \subset X$ consisting of fixed points under action of $g$. Consider an open subset $U \subset X_V$ given by the formula $$U = X_V \setminus \bigcup\limits_{g\in G} Z_g$$ and denote any fiber intersecting $U$ by $Y$. Then $Y$ is smooth and $G_F \subset \Aut(Y)$ by the construction of $U$. If $K$-points are dense in $X$, we can find {$K$-point}~$P$ in $U$, and take fiber $Y$ passing through~$P$. 
\end{proof}

Since we work over algebraically nonclosed fields, the following Chatelet's theorem is useful.

\begin{theorem}[{see for example~\cite[Theorem 5.1.3]{Phil Gille}}]\label{theo: Chatelet's theorem}
    Let $X$ be a Severi--Brauer variety of dimension $n$ over a field $K$. Then $X$ is isomorphic to $\PP^n_K$ if and only if $X$ contains a~\mbox{$K$-point}.
\end{theorem}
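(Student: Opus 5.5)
The implication ``$X \cong \PP^n_K \Rightarrow X$ has a $K$-point'' is immediate, since $\PP^n_K$ has the point $(1:0:\dots:0)$. The plan for the converse is to descend the hyperplane bundle from $\overline{X} \cong \PP^n_{\overline{K}}$ to $X$, and then use its complete linear system to produce an isomorphism $X \to \PP^n_K$. Throughout, $\Gamma = \Gal(\overline{K}/K)$.

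First, $\Pic(\overline{X}) \cong \ZZ$ is generated by the class $h = [\mathcal{O}(1)]$. Every $\sigma \in \Gamma$ acts on $\Pic(\overline{X})$ by an automorphism that preserves ampleness, so it fixes $h$, and hence $h \in \Pic(\overline{X})^\Gamma$. Next, the Hochschild--Serre spectral sequence for $\mathbb{G}_m$ on $X$, together with $\overline{K}[\overline{X}]^\times = \overline{K}^\times$, gives the standard exact sequence
$$0 \to \Pic(X) \to \Pic(\overline{X})^\Gamma \to \mathrm{Br}(K) \to \mathrm{Br}(X).$$
The key step is to show that $\mathrm{Br}(K) \to \mathrm{Br}(X)$ is injective. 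This is where the $K$-point $P$ enters: the pullback along $P \colon \mathrm{Spec}\, K \to X$ is a retraction $\mathrm{Br}(X) \to \mathrm{Br}(K)$ of the structure map. Consequently $\Pic(X) \to \Pic(\overline{X})^\Gamma$ is surjective, so there is a line bundle $L$ on $X$ with $L_{\overline{K}} \cong \mathcal{O}(1)$.

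By flat base change, $H^0(X, L) \otimes_K \overline{K} \cong H^0(\PP^n_{\overline{K}}, \mathcal{O}(1))$, so $h^0(X,L) = n+1$. Moreover $L$ is globally generated, since this can be checked after base change. The complete linear system therefore defines a morphism $\phi_L \colon X \to \PP^n_K$. After extending scalars to $\overline{K}$, this morphism is the map given by $|\mathcal{O}(1)|$ on $\PP^n_{\overline{K}}$, which is an isomorphism. Since being an isomorphism descends along the faithfully flat extension $K \subset \overline{K}$, $\phi_L$ is an isomorphism.

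The main obstacle I expect is the descent step, namely identifying the obstruction in $\mathrm{Br}(K)$ and checking that the rational point kills it; this requires the low-degree terms of the spectral sequence. If I wanted to avoid that machinery, I would first try a more geometric argument: induct on $n$, blowing up $P$ and projecting from it, or descending the Galois-invariant line through $P$. However, the Brauer-group argument is the cleanest, and in the paper it suffices to cite it, as is done via \cite[Theorem 5.1.3]{Phil Gille}.
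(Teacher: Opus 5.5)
Your proof is correct (in the paper's characteristic-$0$ setting; over an imperfect field the Hochschild--Serre step should use the separable closure $K^{s}$). The paper gives no proof of its own here and only cites Gille--Szamuely. Your key step is nonetheless exactly the mechanism the paper uses in the remark on $\iota(X)=\iota(\overline{X})$ at the start of the section on Gorenstein Fano threefolds. There, too, the sequence $0\to\Pic(X)\to\Pic(\overline{X})^{\Gamma}\to\mathrm{Br}(K)\to\cdots$ is split by the rational point. Your formulation, with target $\mathrm{Br}(X)$ and retraction $P^{*}$, is the most direct way to make that splitting precise.

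The standard textbook argument, as I recall it (stated there in terms of twisted-linear subvarieties), differs from yours only in how $\mathcal{O}(1)$ is descended. It uses the point geometrically to produce a divisor $D\subset X$ over $K$ whose base change is a hyperplane. For instance, a $K$-rational hyperplane of $T_PX$ determines a Galois-stable hyperplane of $\overline{X}\simeq\PP^n$ through $P$. Then $\mathcal{O}_X(D)$ plays the role of your $L$, and the linear-system and descent step is identical to yours.

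That route is elementary, and it is the clean version of the geometric alternative you mention: it is a hyperplane through $P$, not a line, that yields the line bundle directly. Your route costs a spectral sequence but gives more. It identifies the obstruction as a class in $\mathrm{Br}(K)$, and it shows that injectivity of $\mathrm{Br}(K)\to\mathrm{Br}(X)$ alone suffices. By restriction--corestriction, even a zero-cycle of degree $1$ on $X$ then forces $X\simeq\PP^n_K$.
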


Now, we bound orders of finite subgroups acting on geometrically rational surfaces with rational point. In particular, we need to bound orders of finite subgroups of automorphism groups of all del Pezzo surfaces. But before proving the general case, let us write down lemmas on del Pezzo surfaces of degree $6$ and $\PP^2_{\QQ}$.

\begin{lemma}[{\cite[Lemma 6.12]{Ahmed A}}]\label{lemma: P^2_Q}
    Let $G \subset \PGL_3(\QQ)$ be a finite subgroup. Then $|G| \leqslant 24$.
\end{lemma}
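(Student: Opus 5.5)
The plan is to reduce to finite subgroups of $\SL_3(\QQ)$, where a classical bound is available, and then to control separately the part of $G$ that does not lift.

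\textbf{Step 1: the determinant map.} For $g\in G$ choose a representative $A_g\in\GL_3(\QQ)$. Rescaling $A_g$ by $\lambda\in\QQ^\times$ multiplies $\det A_g$ by $\lambda^3$. Hence $g\mapsto \det A_g$ gives a well-defined homomorphism
$$\delta\colon G\to \QQ^\times/(\QQ^\times)^3 .$$
Its image $H$ is a finite subgroup of a group of exponent $3$, so $H$ is an elementary abelian $3$-group.

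\textbf{Step 2: the kernel lifts.} Put $G_0=\ker\delta$. For $g\in G_0$ there is a unique representative of determinant $1$, because $1$ is the only cube root of unity in $\QQ$. Uniqueness makes these representatives multiply correctly, so $G_0$ embeds into $\SL_3(\QQ)$. By Burnside's theorem, as in Lemma~\ref{lemma: Burnside}, $G_0$ is conjugate into $\GL_3(\ZZ)$. The maximal finite subgroup there is $W(B_3)=\{\pm 1\}\times\mathfrak S_4$, of order $48$, and the determinant-one finite subgroups have order at most $24$. So $|G_0|\leqslant 24$.

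\textbf{Step 3: elements outside $G_0$.} Take $g\notin G_0$ and let $n$ be its order, so $A_g^n=\mu I$ for some $\mu\in\QQ^\times$. Then $(\det A_g)^n=\mu^3$. Since $\QQ^\times/(\QQ^\times)^3$ has exponent $3$ and $\delta(g)\neq 1$, this forces $3\mid n$. Replacing $g$ by a power, we may assume $A^3=\lambda I$ with $\lambda\notin(\QQ^\times)^3$. Then $x^3-\lambda$ is irreducible over $\QQ$, so $\QQ[A]\cong\QQ(\sqrt[3]{\lambda})$ is a field and $\QQ^3$ is a one-dimensional vector space over it. Consequently the centralizer of $g$ in $\PGL_3(\QQ)$ is $\QQ(\sqrt[3]{\lambda})^\times/\QQ^\times$. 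Its finite part is trivial, since $\QQ(\sqrt[3]{\lambda})$ is a real field whose only roots of unity are $\pm1$.

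\textbf{Step 4: bounding $H$ and assembling.} From Step 3, $g$ is self-centralizing among torsion elements. In particular $g$ acts on $G_0$ by conjugation without fixed points other than the identity. I will also use Corollary~\ref{corollary: Serre for PGL} for $p=3$, which gives $\nu_3(|G|)\leqslant 2$, and the fact that every element of $G_0$ has order in $\{1,2,3,4\}$. Together these should give $|H|\leqslant 3$. If $H\neq 1$, there is an order-$3$ element acting fixed-point-freely on $G_0$ while normalizing it. Such a $G_0$ must be a subgroup of $\mathfrak S_4$ admitting a fixed-point-free automorphism of order $3$, so $G_0$ is trivial or $\mu_2^2$. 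Then $|G|\leqslant 3\cdot 4=12$, and in all cases $|G|\leqslant 24$.

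\textbf{Main obstacle.} The difficult step is this last case analysis. I need to show rigorously that a non-liftable element of order $3$ cannot coexist with a large $G_0$, and that $H$ cannot have rank at least $2$. My first attempt will use the centralizer computation of Step 3, the $3$-adic bound $\nu_3(|G|)\leqslant 2$, and the explicit list of subgroups of $\mathfrak S_4$. If that becomes messy, the fallback is to rely on the direct classification argument behind \cite[Lemma~6.12]{Ahmed A}.
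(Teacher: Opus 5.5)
The paper gives no proof of this lemma; it is quoted from \cite[Lemma 6.12]{Ahmed A}. Your route through $\delta\colon \PGL_3(\QQ)\to\QQ^\times/(\QQ^\times)^3$, whose kernel is the image of $\mathrm{SL}_3(\QQ)$, is a sensible self-contained strategy. Steps~1--2 reach correct conclusions, but Steps~3--4 contain false statements and an unjustified reduction.

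\textbf{Step 3.} The torsion of $L^\times/\QQ^\times$, with $L=\QQ(\sqrt[3]{\lambda})$, is not trivial: the class of $\sqrt[3]{\lambda}$, which is $g$ itself, has order $3$. Torsion in this quotient means $x^k\in\QQ^\times$, not $x^k=1$, so ``the only roots of unity in $L$ are $\pm1$'' does not control it. The sentence also contradicts the self-centralizing claim you use next.

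What that argument does prove is $C_{G_0}(g)=1$. A determinant-one lift $B$ of $h\in C_{G_0}(g)$ satisfies $BAB^{-1}=cA$ with $c\in\QQ$ and $c^3=1$, so $B$ commutes with $A$. Hence $B$ lies in $L^\times$ and has finite order, so $B=\pm I$, and $\det B=1$ gives $B=I$.

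Getting $|H|\leqslant 3$ needs a separate argument. If $|H|\geqslant 9$, Corollary~\ref{corollary: Serre for PGL} forces a Sylow $3$-subgroup to be isomorphic to $H\cong\mu_3^2$. Two commuting generators have lifts $A,A'$ with $A'\in\QQ[A]=L$ and $A'^3=\lambda'$, where $\lambda,\lambda'$ are independent modulo cubes. The cubic field $L$ would then contain two independent cube roots, which Kummer theory over $\QQ(\xi_3)$ forbids.

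Likewise, ``replacing $g$ by a power'' needs the $3$-part of the order of $g$ to be exactly $3$. Serre's bound only caps it at $9$, so you must exclude an element $h$ of order $9$ with $\delta(h)\neq1$. This is easy but needed. A lift satisfies $A^9=d^3 I$ with $d=\det A$ a non-cube. Then $x^9-d^3=(x^3-d)(x^6+dx^3+d^2)$ with the sextic irreducible over $\QQ$. So the minimal polynomial of $A$ is $x^3-d$, and $h$ has order $3$, a contradiction.

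\textbf{Step 4.} The assertion that elements of $G_0$ have order in $\{1,2,3,4\}$, hence $G_0\subseteq\mathfrak S_4$, is false. $W(B_3)$ is not the only maximal finite subgroup of $\GL_3(\ZZ)$. The hexagonal one, $D_{12}\times\mu_2$, has determinant-one part $D_{12}$. Concretely, $\mathrm{diag}(R,1)$, with $R$ the companion matrix of $x^2-x+1$, has order $6$ in $\mathrm{SL}_3(\QQ)$ and in $\PGL_3(\QQ)$.

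The conclusion survives without any subgroup list. Conjugation by $g$ permutes $G_0\setminus\{1\}$ in orbits of size $3$, so $|G_0|\equiv 1 \pmod 3$. Also $|G_0|$ divides $24$, by Theorem~\ref{theo: Minkowski} applied to $G_0\times\{\pm I\}\subset\GL_3(\QQ)$. Hence $|G_0|\in\{1,4\}$ and $|G|\leqslant 12$ whenever $H\neq 1$.

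With these repairs your argument does prove the lemma. As written, though, the self-centralizing step is misjustified, and the final case analysis rests on an incorrect description of finite subgroups of $\mathrm{SL}_3(\QQ)$.
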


\begin{lemma}[{\cite[Lemma 4.1, Lemma 4.2]{Ahmed A}}]\label{lemma: dp_6}
     Let $X$ be a del Pezzo surface of degree $6$ over~$\QQ$. Let $G \subset \Aut(X)$ be a finite subgroup. Then $|G| \leqslant 432$. Moreover, there exist a rational del Pezzo surface $S$ such that the group $\Aut(S)$ contains a finite subgroup of order~$432$.
\end{lemma}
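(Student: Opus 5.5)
The plan is to use the toric structure of a sextic del Pezzo surface. Over $\overline{\QQ}$, $\overline{X}$ is the blow-up of $\PP^2$ at three points. Its six $(-1)$-curves form a hexagon, and there is an exact sequence
$$1 \rightarrow T(\QQ) \rightarrow \Aut(X) \rightarrow \Aut(\text{hexagon})^{\Gal} \subset D_{12},$$
where $T$ is the $\QQ$-form of the two-dimensional torus $\overline{X}\setminus\{\text{hexagon}\}$. It is a torsor under $T$, but since $X$ is assumed to have a $\QQ$-point off the boundary, or by passing to the trivial torsor, we may take it to be $T$ itself. The image in $D_{12}=\mathfrak{S}_3\times\mu_2$ is the centralizer of the Galois image, so it has order at most $12$. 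For $G\subset\Aut(X)$ finite, this gives $|G|\leqslant 12\cdot|G\cap T(\QQ)|$. It then suffices to show $|T(\QQ)_{\mathrm{tors}}|\leqslant 36$, since $432=12\cdot 36$.

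To bound the torsion I would write $T(\QQ)_{\mathrm{tors}}=\bigcup_n T[n](\QQ)$. The character lattice $\ZZ^2$ carries a Galois action through a finite subgroup $\Gamma\subset\GL_2(\ZZ)$, which by Lemma~\ref{lemma: Burnside} has order at most $12$. An $n$-torsion $\QQ$-point is then a $\Gal$-equivariant homomorphism $\ZZ^2\to\mu_n(\overline{\QQ})$. I would split into cases according to the splitting field. If $T$ is split, the torsion is $\{\pm1\}^2$, of order $4$. If $T$ is split by a quadratic field $\QQ(\sqrt d)$, it is isogenous to a product of $\mathbb{G}_m$ and a norm-one torus. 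The torsion of the norm-one torus is $\mu(\QQ(\sqrt d))$, of order at most $6$ (the extreme case being $\QQ(\xi_3)$). The worst subcase here is $R_{K/\QQ}\mathbb{G}_m$ with $K=\QQ(\xi_3)$, giving torsion of order $6$. The remaining tori are split by larger fields, for instance $R^1_{K/\QQ}\mathbb{G}_m$ for a cubic $K$ (torsion trivial or tiny) and the tori with $\Gamma=D_{12}$ from the hexagon itself. For these I would compute $T[n]^{\Gal}$ directly using the action of $\Gamma$ on $(\ZZ/n)^2\otimes\mu_n$.

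Finally I would combine the torsion with the component group more carefully. The crude product $12\cdot 6$ is below $432$, so the bound is easy. To reach exactly $432$ and the sharpness statement, the point is that $G\cap T$ may be the full $3$-torsion or $6$-torsion over a field where the Galois action twists $\mu_6$ appropriately. The extremal example should be $T$ with character lattice twisted by $\Gal(\QQ(\xi_3)/\QQ)$ acting as $-\id$ combined with the hexagon symmetry, giving $G\cap T\simeq\mu_6^2$ of order $36$ and $G\cong\mu_6^2\rtimes D_{12}$ of order $432$. For existence I would exhibit this surface explicitly, e.g.\ as a suitable twist of the blow-up of $\PP^2$ at three points. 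I expect the main obstacle to be this case analysis of Galois-twisted tori: showing that no twist gives more than $36$ rational torsion points compatible with a $12$-element symmetry group, and checking that the $\QQ$-point hypothesis reduces the torsor to $T$ itself.
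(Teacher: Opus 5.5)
\emph{Note.} The paper gives no proof of this lemma; it only cites [Ahmed A, Lemmas 4.1, 4.2]. Your proposal is therefore assessed on its own merits.

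\textbf{The reduction is sound.} The identity component of the automorphism group scheme of $X$ is a $\QQ$-torus $T$ whether or not the open orbit is a trivial torsor. So you do not need a $\QQ$-point, which the statement does not assume anyway. The image of $\Aut(X)$ in $D_{12}$ lies in the centralizer $C_{D_{12}}(\Gamma)$ of the Galois image $\Gamma\subset D_{12}$. Hence $|G|\leqslant |C_{D_{12}}(\Gamma)|\cdot|T(\QQ)_{\mathrm{tors}}|$.

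\textbf{The gap.} The step that carries all the content, $|T(\QQ)_{\mathrm{tors}}|\leqslant 36$, is never proved, and the case analysis you sketch fails exactly at the extremal case. A torus split by a quadratic field $K$ need not be isogenous to $\mathbb{G}_m\times R^1_{K/\QQ}\mathbb{G}_m$. If $\Gal(K/\QQ)$ acts on the character lattice by $-\mathrm{id}$ (that is, $\Gamma$ is the centre of $D_{12}$), then $T\cong (R^1_{K/\QQ}\mathbb{G}_m)^2$. For $K=\QQ(\xi_3)$ this gives $T(\QQ)_{\mathrm{tors}}=\mu_6^2$. Consequently:
\begin{itemize}
\item the claim ``torsion at most $6$ in the quadratic case'' is false;
\item the ``crude product $12\cdot 6$'' bounds nothing;
\item your later (correct) example $\mu_6^2\subset T(\QQ)$, in a torus split by a quadratic field, contradicts your own case analysis.
\end{itemize}

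\textbf{The other splitting fields.} ``Compute $T[n]^{\Gal}$ directly'' is not yet an argument, because $n$ is unbounded. Larger splitting fields also do not automatically give tiny torsion. For $\Gamma=\mu_6$ split by $\QQ(\xi_7)$ there is a rational point of order $7$. For $\Gamma=\{\pm1,\pm s\}$ split by $\QQ(\xi_8)$ one gets $T(\QQ)_{\mathrm{tors}}=\mu_8$.

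The missing finiteness input is as follows. With $N$ the cocharacter lattice,
$T[\ell^j](\QQ)=\{v\in N/\ell^jN:\ \rho(\sigma)v=\chi_\ell(\sigma)^{-1}v \text{ for all }\sigma\}$.
So a point of exact order $\ell^j$ forces $\QQ(\xi_{\ell^j})\subset L$, the splitting field of $T$. Hence $\Gal(\QQ(\xi_{\ell^j})/\QQ)$ is an abelian quotient of $\Gamma\subset D_{12}$, and $\ell^j\in\{2,3,4,7,8,9\}$.

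A short check over the subgroups $\Gamma\subset D_{12}$, pairing torsion with the centralizer, then finishes:
\begin{itemize}
\item $|C_{D_{12}}(\Gamma)|=12$ only for $\Gamma\subset\{\pm1\}$, and there the torsion is at most $\mu_2^2$, $\mu_4^2$ or $\mu_6^2$.
\item Whenever $\Gamma$ contains a rotation of order $3$, the $2$-part vanishes and the torsion is cyclic of order at most $7$, while the centralizer has order at most $6$.
\item For $\Gamma=\langle s\rangle$ or $\langle s,-1\rangle$, the centralizer has order $4$ and the torsion has order at most $12$.
\end{itemize}
This gives $|G|\leqslant 432$, with the extremal case $\Gamma=\{\pm1\}$ and $L=\QQ(\xi_3)$.

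\textbf{Existence part.} You also need to check that $S$ is rational. Twisting the split toric surface by the central involution $t\mapsto t^{-1}$ fixes the identity point of the torus. So $S$ has a $\QQ$-point in the open orbit and is therefore rational. The standard $D_{12}$ commutes with the twist and descends, which gives $\mu_6^2\rtimes D_{12}\subset\Aut(S)$ of order $432$.
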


%\begin{lemma}[{\cite[Lemma 4.2]{Ahmed A}}]\label{lemma: dp6}
%    Let $X$ be a del Pezzo surface over $\QQ$ and $G \subset \Aut(X)$ be a finite subgroup. Then $|G| \leqslant 432$.
%\end{lemma}

The following lemma follows from results of~\cite{Ahmed A}, but we prove it for self-containedness. Also, for some del Pezzo surfaces in~\cite{Ahmed A} rationality was required, while we assume only existence of a $\QQ$-point.

\begin{lemma}\label{lemma: del Pezzo surfaces}
    Let $X$ be a del Pezzo surface over $\mathbb{Q}$ with a $\mathbb{Q}$-point and $G \subset \Aut(X)$ be a finite subgroup. Then~{$\relpenalty 10000 |G| \leqslant 432$}.
\end{lemma}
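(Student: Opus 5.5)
We argue by cases on the degree $K_X^2 = d \in \{1,\dots,9\}$, using for each degree a $G$-equivariant linear or fixed-point structure defined over $\QQ$.

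\emph{Degree 9.} Here $X$ is a Severi--Brauer surface with a $\QQ$-point, so $X\cong\PP^2_\QQ$ by Theorem~\ref{theo: Chatelet's theorem}. Then $G\subset\PGL_3(\QQ)$ and $|G|\leqslant 24$ by Lemma~\ref{lemma: P^2_Q}.

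\emph{Degree 8.} If $\overline{X}$ is the blow-up of $\PP^2$ at a point, the exceptional curve is $\Aut(X)$-invariant and defined over $\QQ$. Contracting it gives a $G$-equivariant map to $\PP^2_\QQ$ with a $G$-fixed $\QQ$-point. Thus $G$ embeds into $\PGL_3(\QQ)$, giving $|G|\leqslant 24$. Otherwise $X$ is a form of $\PP^1\times\PP^1$. Let $G_0\subset G$ be the subgroup of index at most $2$ preserving the two rulings. Then $G_0$ embeds into $\Aut(C_1)\times\Aut(C_2)$, where the $C_i$ are either conics over $\QQ$ or a single conic over a quadratic field (Weil restriction case).
\begin{itemize}
\item If both rulings are defined over $\QQ$, the $\QQ$-point forces $C_i\cong\PP^1_\QQ$. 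Corollary~\ref{corollary: bound of finite subgroups of PGL_2(Q)} then gives $|G_0|\leqslant 144$ and $|G|\leqslant 288$.
\item In the Weil restriction case, $G_0\subset\PGL_2(K)$ with $[K:\QQ]=2$, using the point again. Lemma~\ref{lemma: PGL_2(Q(sqrt(d)))} gives $|G_0|\leqslant 24$, so $|G|\leqslant 48$.
\end{itemize}

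\emph{Degree 7.} The $(-1)$-curves form a chain of length $3$, and its middle curve is defined over $\QQ$ and $G$-invariant. Contracting reduces to degree $8$ with a $G$-fixed point, which bounds $G$ by the previous case.

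\emph{Degree 6.} This is Lemma~\ref{lemma: dp_6}, giving $|G|\leqslant 432$.

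\emph{Degree 5.} Here $\Aut(\overline{X})\cong\mathfrak{S}_5$, so $|G|\leqslant 120$.

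\emph{Degree 4.} We have $\Aut(\overline{X})\subset \mu_2^4\rtimes W$, with the image in $W(D_5)$ of order at most $160$. A finer bound over $\QQ$ comes from the action on the pencil of quadrics: the five degenerate quadrics give a homomorphism $G\to\mathfrak{S}_5$, with kernel $\mu_2^4$ and image constrained to be small. In any case $|G|\leqslant 16\cdot 10 = 160$ is a plausible crude bound, and $\leqslant 432$ certainly holds via $|\Aut(\overline X)|\leqslant 160$.

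\emph{Degree 3.} The largest automorphism group of a smooth cubic over $\overline{\QQ}$ is of order $648$ (the Fermat cubic). I expect this to be the main obstacle. For the Fermat cubic the group is $\mu_3^3\rtimes\mathfrak{S}_4$, and over $\QQ$ most of $\mu_3^3$ is not defined. Concretely, $G$ acts on $H^0(-K_X)$, so $G\subset\PGL_4(\QQ)$, indeed $G\subset\GL_4(\QQ)$ after lifting, since the anticanonical linearization is canonical. Minkowski's Theorem~\ref{theo: Minkowski} then gives $\nu_3(|G|)\leqslant 2$. Hence $|G|$ divides $648/3=216$.

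\emph{Degrees 2 and 1.} Here $|\Aut(\overline{X})|$ is at most $336\cdot2=672$ and $144$ respectively; only degree $2$ needs care. Since $G$ acts linearly on $H^0(-K_X)\cong\QQ^3$, the image $\overline{G}$ of $G$ in $\Aut$ of the quartic lies in $\GL_3(\QQ)$. By Theorem~\ref{theo: Minkowski}, $\nu_7(|\overline{G}|)=0$ and $|\overline{G}|\leqslant 48$ (the orders of finite subgroups of $\GL_3(\QQ)$ are at most $48$). Hence $|G|\leqslant 96$.

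Combining all cases, the maximum is $432$, attained in degree $6$.
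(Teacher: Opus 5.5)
Your proof is correct and uses the same degree-by-degree skeleton as the paper. In degrees $9$, $8$, $6$, $5$ and $4$ your inputs are the same as the paper's. In degree $4$, cite $\Aut(\overline{X})\cong\mu_2^4\rtimes\Gamma$ with $|\Gamma|\leqslant 10$ instead of calling $160$ ``plausible''; your sentence about an image in $W(D_5)$ of order at most $160$ is garbled, since $|W(D_5)|=1920$.

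The routes differ in four places.
\begin{itemize}
\item \textbf{Degree 7.} The paper contracts the invariant pair of disjoint end curves to $\PP^2_{\QQ}$. You contract the middle curve to a form of $\PP^1\times\PP^1$ with a $G$-fixed $\QQ$-point. Both work, and using the fixed point, Theorem~\ref{theo: action with fixed point} would even give $|G|\leqslant 12$.
\item \textbf{Degree 3.} The paper quotes Vikulova's bound $|\Aut(X)|\leqslant 120$ for cubic surfaces over $\QQ$. You instead use the faithful representation on $H^0(X,-K_X)\cong\QQ^4$ together with Theorem~\ref{theo: Minkowski}, which is more self-contained. Two corrections are needed. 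First, in the Fermat case $\nu_3(|G|)\leqslant 2$ gives $|G|\mid 72$, not merely $216$. Second, for non-Fermat cubics you must invoke the classification fact that the next largest automorphism group over $\overline{\QQ}$ is $\mathfrak{S}_5$, of order $120$. Knowing only that $648$ is the maximum, even combined with Minkowski, would not exclude orders such as $480$ or $576$.
\item \textbf{Degree 2.} The maximum of $|\Aut(\overline{X})|$ is $336$, attained by $\mu_2\times\mathrm{PSL}_2(\mathbb{F}_7)$, not $672$. Since $336\leqslant 432$, the paper stops there. Your $\GL_3(\QQ)$ argument is therefore unnecessary, though valid, and it gives the sharper bound $|G|\leqslant 96$.
\item \textbf{Degree 1.} You rely on the classification bound $144$. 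The paper instead observes that the base point of $|-K_X|$ is a $G$-fixed $\QQ$-point, so $G\subset\GL_2(\QQ)$ and $|G|\leqslant 12$. This is more elementary and sharper, and needs no classification.
\end{itemize}
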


\begin{proof}
    Let us consider surfaces case by case, depending on their degree $d$.

    Let $d = 9$. In this case $X$ is a Severi--Brauer surface containing $\mathbb{Q}$-point. By Theorem~\ref{theo: Chatelet's theorem}, we obtain that $X$ is isomorphic to $\mathbb{P}^2_{\mathbb{Q}}$. By Lemma~\ref{lemma: P^2_Q}, we obtain $|G| \leqslant 24$.

    Let $d = 8$. There are two cases, either~$\overline{X}$ is isomorphic to a blow up of $\PP^2$ at a point, or~$\overline{X}$ is isomorphic to a smooth quadric. In the former case there is a \mbox{$G$-equivariant} blow down to $\PP^2_{\QQ}$, and we have $|G| \leqslant 24$ as in the case of degree $9$. By Theorem~\ref{theo: action with fixed point} we have~$\relpenalty=10000 G \subset \GL_2(\QQ)$ and $|G| \leqslant 12$ by Lemma~\ref{lemma: Burnside}. In the latter case, since~$X$ contains a~$\mathbb{Q}$-point, it is isomorphic either to~$\mathbb{P}^1_{\mathbb{Q}} \times \mathbb{P}^1_{\mathbb{Q}}$, or to Weil scalar restriction $R_{L/\mathbb{Q}}(\mathbb{P}^1_L)$, where~$\relpenalty=10000 \mathbb{Q} \subset L$ is a quadratic extension (see~\cite[Lemma 7.3]{SV}). In the former case automorphism group is isomorphic to~\mbox{$(\PGL_2(\mathbb{Q}) \times \PGL_2(\mathbb{Q})) \rtimes \mu_2$}, and the largest finite group it consists of $288$ elements due to Corollary~\ref{corollary: bound of finite subgroups of PGL_2(Q)}. In the latter case automorphism group is isomorphic to~\mbox{$\PGL_2(L) \rtimes \mu_2$} and the order of it's largest finite subgroup is bounded by~$48$ due to Lemma~\ref{lemma: PGL_2(Q(sqrt(d)))}. Therefore, in this case we have~$\relpenalty = 10000 |G| \leqslant 288$.

    Let $d = 7$. There is a $G$-equivariant contraction to $\PP^2_{\QQ}$, and we have $|G| \leqslant 24$ as in the case of degree $9$.
    
    Let $d = 6$. By Lemma~\ref{lemma: dp_6} we have~$|G| \relpenalty = 10000 \leqslant 432$.
    
    Let $d = 5$. By \cite[Theorem 8.5.6]{D} we have $\Aut(\overline{X}) \simeq \mathfrak{S}_5$, therefore~\mbox{$|G| \leqslant 120$}.

    Let $d=4$. By \cite[Theorem 8.6.6]{D} we have $\Aut(\overline{X}) \simeq \mu_2^4 \rtimes \Gamma$, where $|\Gamma| \leqslant 10$, therefore~$|G| \leqslant 160$.

    Let $d = 3$. By~\cite[Theorem 3]{Vikulova} we have $|\Aut(X)| \leqslant 120$, therefore~\mbox{$|G| \leqslant 120$}.

    Let $d = 2$. The~\cite[Table 8.9]{D} shows that $|\Aut(\overline{X})| \leqslant 336$, therefore $|G| \leqslant 336$.

    Let $d = 1$. In this case there is a $\mathbb{Q}$-point on $X$ which is invariant with respect to all automorphisms, namely, the intersection point of all divisors from the complete linear system $\mathopen|-K_X\mathclose|$. Therefore, $G$ embeds in~$\GL_2(\mathbb{Q})$ by Theorem~\ref{theo: action with fixed point}, and $|G| \leqslant 12$ by Lemma~\ref{lemma: Burnside}. \end{proof}

The following lemma serves the most general case we need for surfaces.

\begin{lemma}\label{lemma: bound of auto of rational normal surfaces}
    Let $\mathcal{B}$ be a normal projective geometrically rational surface over~$\mathbb{Q}$ with a~$\mathbb{Q}$-point. Let $G \subset \Aut(X)$ be a finite subgroup. Then $|G| \leqslant 432$.
\end{lemma}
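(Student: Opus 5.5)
The approach is to reduce to smooth surfaces, run the $G$-equivariant minimal model program for surfaces over $\QQ$, and then feed in Lemma~\ref{lemma: del Pezzo surfaces} together with the conic bundle bounds. (The statement has a typo: $G \subset \Aut(\mathcal{B})$ is meant.)

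First, take the $G$-equivariant minimal resolution $\widetilde{\mathcal{B}} \to \mathcal{B}$. It is defined over $\QQ$, is unique, and is therefore $G$-equivariant, so $G \subset \Aut(\widetilde{\mathcal{B}})$. The surface $\widetilde{\mathcal{B}}$ is smooth, projective and geometrically rational. It still has a $\QQ$-point: either the given point is smooth, or the exceptional locus over a singular $\QQ$-point contains a $\QQ$-point. To justify the latter, one can use that rational singularities have geometrically rational resolution trees, and that a $\QQ$-point on a smooth projective geometrically rational surface forces density of $\QQ$-points. Alternatively, I would use the Lang–Nishimura lemma: the existence of a smooth $\QQ$-point is a birational invariant for smooth projective varieties, and one obtains it from $\mathcal{B}$ since rational surface singularities are resolved through chains of curves defined over $\QQ$. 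I expect this existence of a $\QQ$-point on the resolution to be the main technical obstacle. If it fails, I would instead try to work directly on $\mathcal{B}$ using a $G$-fixed singular point and Theorem~\ref{theo: action with fixed point}, applied to the subgroup stabilizing that point, which has small index in $G$.

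Next, run the $G$-MMP on $\widetilde{\mathcal{B}}$. By Lang–Nishimura, the $\QQ$-point is preserved under birational maps between smooth projective surfaces. The output $S$ is either a $G$-minimal del Pezzo surface with a $\QQ$-point, or a $G$-conic bundle $S \to C$ with $\rho(S)^G = 2$. All contractions are $G$-equivariant birational morphisms, so $G$ embeds into $\Aut(S)$.

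In the del Pezzo case, Lemma~\ref{lemma: del Pezzo surfaces} gives $|G| \leqslant 432$ directly.

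In the conic bundle case, use the exact sequence $1 \to G_F \to G \to G_B \to 1$. The base $C$ has a $\QQ$-point, namely the image of the one on $S$, so $C \simeq \PP^1_\QQ$ by Theorem~\ref{theo: Chatelet's theorem}, and $|G_B| \leqslant 12$ by Corollary~\ref{corollary: bound of finite subgroups of PGL_2(Q)}. For $G_F$, Lemma~\ref{lemma: smooth rational fiber in fibrations} together with density of $\QQ$-points gives a smooth fiber $Y$ with a $\QQ$-point and $G_F \subset \Aut(Y)$. Then $Y \simeq \PP^1_\QQ$, so $|G_F| \leqslant 12$ and $|G| \leqslant 144 \leqslant 432$.

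Density of $\QQ$-points on $S$ holds because $S$ is a $\QQ$-unirational surface: a conic bundle over $\PP^1$ with a rational point, or a del Pezzo surface of degree at least $3$ with a point. Alternatively, it suffices to use Zariski density of fibers over $\QQ$-points of $\PP^1_\QQ$ that contain rational points, which follows from the conic bundle structure and Hilbert irreducibility–type arguments. If that is hard, it is enough to pick any fiber over a $\QQ$-point where $G_F$ acts faithfully: such a fiber is a conic over $\QQ$, so Corollary~\ref{corollary: automorphsms of conic over Q} gives $|G_F| \leqslant 24$ and hence $|G| \leqslant 288 \leqslant 432$. Faithfulness holds off a finite set of fibers, and $\PP^1_\QQ$ has infinitely many $\QQ$-points, so this route avoids density entirely.
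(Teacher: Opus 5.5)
Your first step fails: when the given $\QQ$-point of $\mathcal{B}$ is singular, the resolution need not have any $\QQ$-point. Lang--Nishimura needs a \emph{smooth} $\QQ$-point on the source, and the exceptional curves over a singular $\QQ$-point can be pointless conics. On the quadric cone $\{x^2+y^2+z^2=0\}\subset\PP^3_\QQ$ the vertex is the only $\QQ$-point, and the blow-up of the vertex maps onto the pointless conic $\{x^2+y^2+z^2=0\}\subset\PP^2_\QQ$. So neither ``the exceptional locus contains a $\QQ$-point'' nor ``the resolution curves are defined over $\QQ$'' gives you a point. The paper's proof makes exactly the same move (``with $\QQ$-point due to the Lang--Nishimura theorem''), and that move is valid only when $\mathcal{B}$ has a smooth $\QQ$-point. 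This holds in the paper's only application, Lemma~\ref{lemma: GQ-conic and GQ-del Pezzo}, where $\QQ$-points of $\mathcal{B}$ are dense. Under that extra hypothesis your argument coincides with the paper's. Your final version of the conic-bundle case is in fact a cleaner justification than the paper's appeal to Lemma~\ref{lemma: smooth rational fiber in fibrations}: you take a smooth fiber over a $\QQ$-point of $\PP^1_\QQ$ on which $G_F$ acts faithfully, which gives $|G|\leqslant 12\cdot 24$. It also makes your unirationality and density remarks unnecessary.

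Your fallback through a singular fixed point cannot close the gap, because without a smooth $\QQ$-point the statement is false. Take $C=\{x^2+y^2+z^2=0\}\subset\PP^2_\QQ$, and let $H\simeq\mathfrak{S}_4\subset\Aut(C)$ be the image of the signed permutation matrices. Let $G=(H\times H)\rtimes\mu_2$ act on $C\times C$, so $|G|=1152$. The $24$ graphs $\Gamma_\phi$, $\phi\in H$, are curves of bidegree $(1,1)$ defined over $\QQ$, and $G$ permutes them. For $\phi\neq\psi$, the graphs $\Gamma_\phi$ and $\Gamma_\psi$ meet transversally exactly at the points $(x,\phi(x))$ with $x$ fixed by $\phi^{-1}\psi$. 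Blow up the reduced $G$-invariant $\QQ$-subscheme $Z$ formed by all these intersection points. On each graph, $Z$ consists of the $6+8+12=26$ points $(x,\phi(x))$ with $x$ in a non-free $H$-orbit on $\overline{C}\simeq\PP^1$. Hence the strict transforms become pairwise disjoint $(-24)$-curves, each isomorphic to $C$. Contract them: they give $\frac{1}{24}(1,1)$ points, so by Artin the contraction is projective, and it is defined over $\QQ$ and $G$-equivariant. The result is a normal projective geometrically rational surface whose $\QQ$-points are exactly the $24$ singular points, which $G$ permutes transitively with stabilizers of order $48$. Yet $|G|=1152>432$. So the hypothesis must be strengthened to a smooth $\QQ$-point, or to density of $\QQ$-points; with that hypothesis your proof goes through, as does the paper's.
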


\begin{proof}
Taking $G$-equivariant resolution of singularities, we obtain a smooth geometrically rational $G$-surface $S$ with~\mbox{$\mathbb{Q}$-point} due to the Lang--Nishimura theorem (see for example~\mbox{\cite[Theorem 3.6.11]{Poo}}). 
Furthermore, applying the minimal model program with an action of the group $G$, we can assume that $S$ is either a $G$-minimal del Pezzo surface or has a conic bundle structure $\pi: S\xrightarrow{}{} B$, and~$G\subset\Aut(S,\pi)$, see~\cite[Theorem 1G]{Isk}. Note that in the latter case, the curve $B$ is geometrically rational and contains a $\mathbb{Q}$-point, since~$S$ contains a $\mathbb{Q}$-point. Hence, we have an isomorphism $B\simeq\mathbb{P}^1_{\mathbb{Q}}$.

If $S$ is a del Pezzo surface, then by Lemma~\ref{lemma: del Pezzo surfaces}, we have $|G| \leqslant 432$. If $S$ has a conic bundle structure, then we have a standard exact sequence $$1 \xrightarrow{}{} G_F\xrightarrow{}{} G\xrightarrow{}{} G_B\xrightarrow{}{} 1,$$ where $G_B$ is a finite subgroup in $\Aut(B) \simeq \PGL_2(\mathbb{Q})$, and $G_F$ is a finite subgroup of automorphism group of a smooth conic by Lemma~\ref{lemma: smooth rational fiber in fibrations}. Applying Corollary~\ref{corollary: bound of finite subgroups of PGL_2(Q)} and Corollary~\ref{corollary: automorphsms of conic over Q} we obtain inequalities $|G_B| \leqslant 12$ and $|G_F| \leqslant 24$, which implies inequality~$\relpenalty=10000 |G| \leqslant 288$. \end{proof}

The last lemma of this section serves the case of rational threefolds, which has a structure of a $G\QQ$-Mori fiber space over the base of positive dimension.

\begin{lemma}\label{lemma: GQ-conic and GQ-del Pezzo}
    Let $G$ be a finite group and $X$ be a terminal $G\QQ$-factorial rational threefold over $\QQ$ with a structure of either a $G\QQ$-conic bundle or a $G\QQ$-del Pezzo fibration~$\relpenalty =10000 f: X \rightarrow{}{} \mathcal{B}$. Then~$\relpenalty = 10000 |G| \leqslant 5184$ and this bound is sharp. 
\end{lemma}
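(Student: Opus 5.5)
We use the exact sequence $1 \to G_F \to G \to G_B \to 1$ attached to $f$ and bound $|G_F|$ and $|G_B|$ separately. Since $X$ is rational over $\QQ$, its $\QQ$-points are dense, and so is $\mathcal{B}$. Also $\mathcal{B}$ is geometrically rationally connected, hence geometrically rational because $\dim \mathcal{B} \leqslant 2$.

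To control $G_F$ we first pass to a $G$-equivariant resolution $\widetilde{X} \to X$. The composition $\widetilde{X} \to \mathcal{B}$ is $G$-equivariant with connected fibers, the group $G_F$ is unchanged, and $\QQ$-points remain dense in $\widetilde{X}$. By Lemma~\ref{lemma: smooth rational fiber in fibrations} there is a smooth fiber $Y$ containing a $\QQ$-point with $G_F \subset \Aut(Y)$. Over the locus where $f$ is smooth the fibers of $\widetilde{X}$ and $X$ coincide, so we may take $Y$ to be a smooth conic (conic bundle case) or a smooth del Pezzo surface (del Pezzo fibration case).

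\textbf{Conic bundle case.} Here $\dim \mathcal{B} = 2$ and $\mathcal{B}$ is a normal projective geometrically rational surface with a $\QQ$-point. By Lemma~\ref{lemma: bound of auto of rational normal surfaces}, $|G_B| \leqslant 432$. The fiber $Y$ is a smooth conic with a $\QQ$-point, hence $Y \simeq \PP^1_\QQ$ by Theorem~\ref{theo: Chatelet's theorem}. Corollary~\ref{corollary: bound of finite subgroups of PGL_2(Q)} then gives $|G_F| \leqslant 12$. Therefore $|G| \leqslant 12 \cdot 432 = 5184$.

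\textbf{Del Pezzo fibration case.} Here $\mathcal{B}$ is a geometrically rational curve with a $\QQ$-point, so $\mathcal{B} \simeq \PP^1_\QQ$ and $|G_B| \leqslant 12$. The fiber $Y$ is a del Pezzo surface with a $\QQ$-point, so Lemma~\ref{lemma: del Pezzo surfaces} gives $|G_F| \leqslant 432$. Again $|G| \leqslant 12 \cdot 432 = 5184$.

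\textbf{Sharpness.} We take $X = \PP^1_\QQ \times S$, where $S$ is the rational del Pezzo surface of degree $6$ from Lemma~\ref{lemma: dp_6} carrying a group $H$ of order $432$. Let $G = D_{12} \times H$, acting factorwise. This is rational. Viewing it as a del Pezzo fibration over $\PP^1_\QQ$, we need $\rho(X/\PP^1)^G = 1$, which amounts to $\rho(S)^H = 1$. This should hold for the order-$432$ group, since it must act on the hexagon of $(-1)$-curves through the full symmetry $D_{12}$. Alternatively we can view $X$ as a conic bundle over $S$ with $\rho(X/S)^G = 1$, which holds automatically. In either description $X$ is a $G\QQ$-Mori fiber space with $|G| = 12 \cdot 432 = 5184$. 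The conic bundle description is safer, since for it one only needs $X$ to be $G\QQ$-factorial, which is clear for a smooth variety.

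The main obstacle is the reduction to a smooth fiber with a $\QQ$-point when $X$ is singular. The steps needing care are keeping track that passing to the resolution does not change $G_F$, and that the chosen fiber is a genuine smooth conic or del Pezzo surface of $f$ itself.
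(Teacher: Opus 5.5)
Your proof is correct and follows the paper's argument: the same exact sequence $1 \to G_F \to G \to G_B \to 1$, the bounds $12$ and $432$ from Corollary~\ref{corollary: bound of finite subgroups of PGL_2(Q)} and Lemmas~\ref{lemma: bound of auto of rational normal surfaces} and~\ref{lemma: del Pezzo surfaces} in the two cases, and the example $S \times \PP^1_{\QQ}$ with the group $H \times D_{12}$. You also fill in two points the paper leaves implicit: passing to an equivariant resolution so that Lemma~\ref{lemma: smooth rational fiber in fibrations}, which is stated for smooth varieties, applies literally, and using the conic bundle structure over $S$ to check that the example really is a $G\QQ$-Mori fiber space.
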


\begin{proof}
    Immediately note, that $\QQ$-points are dense on $X$, since $X$ is rational. Consider a standard short exact sequence $$1 \xrightarrow{}{} G_F\xrightarrow{}{} G\xrightarrow{}{} G_B\xrightarrow{}{} 1.$$ 
    
    If $f$ is a conic bundle, then $\mathcal{B}$ is a normal projective geometrically rationally connected, and therefore, geometrically rational surface with a $\QQ$-point. Also we have $G_B \subset \Aut(\mathcal{B})$ and~$G_F \subset \PGL_2(\QQ)$ by Lemma~\ref{lemma: smooth rational fiber in fibrations}. Applying Lemma~\ref{lemma: bound of auto of rational normal surfaces} and Corollary~\ref{corollary: bound of finite subgroups of PGL_2(Q)} we obtain~$\relpenalty = 10000 |G| \leqslant 5184$. 

    If $f$ is a del Pezzo fibration, then $\mathcal{B}$ is a smooth projective geometrically rational curve with a $\QQ$-point, which implies that $\mathcal{B} \simeq \PP^1_{\QQ}$. Also we have $G_B \subset \PGL_2(\QQ)$ and~$ \relpenalty= 10000 G_F \subset \Aut(Y)$, where $Y$ is a smooth del Pezzo surface with $\QQ$-point by Lemma~\ref{lemma: smooth rational fiber in fibrations}. Again, applying Lemma~\ref{lemma: bound of auto of rational normal surfaces} and Corollary~\ref{corollary: bound of finite subgroups of PGL_2(Q)} we obtain~$ \relpenalty = 10000 |G| \leqslant 5184$. 

    It remains to present a threefold, containing a finite subgroup of order $5148$ in its automorphism group. To do this, take a del Pezzo surface $S$ from Lemma~\ref{lemma: dp_6} and put~$\relpenalty=10000 X = S \times \PP^1_{\QQ}$. Since $S$ is rational, $X$ is also rational and has both structures of a conic bundle and a del Pezzo fibration. There is a finite subgroup $H \subset \Aut(S)$ containing $432$ elements according to Lemma~\ref{lemma: dp_6}. Therefore, $\Aut(X)$ contains a subgroup isomorphic to~$\binoppenalty=10000 H \times D_{12}$, containing $5148$ elements.
\end{proof}

%\begin{color}{blue}
%Если с леммами 3.1 и 3.3 все в порядке, то они вроде покрывают случаи, когда ММП для трифолда заканчивается на расслоении Мори с одномерной и двумерной базой. 
%\end{color}

%\section{Action on a tangent space to a singular point}

%As we have already mentioned in Section~\ref{sect: GL and PGL}, if we want to understand something about groups acting on a variety, it is useful to look at an induced action on a tangent space to a fixed point. No one guarantees that finite groups we are studying act on varieties with fixed points. But in case of singular varieties, we can get a fixed point by considering an extension of the base field and passing to subgroups.

\section{Non-Gorenstein Fano threefolds}\label{sect: non-Gorenstein Fano}
 From now on, we bound orders of finite groups acting on $G\mathbb{Q}$-factorial terminal Fano threefold. Recall that Fano variety is a projective variety whose anticanonical class is ample. Recall that variety is said to be Gorenstein if its canonical class is a Cartier divisor. In this section, we bound numbers of singular points on terminal non-Gorenstein Fano threefolds and bound orders of finite groups acting on such varieties.

Let us immediately note that singularities of a terminal threefold are isolated points. In particular, there are finitely many singular points on such threefolds. We start with the following lemma.

\begin{lemma}\label{lemma: passing to stabilizer and acting on a tangent space}
    Let $X$ be a variety over $\QQ$ such that $\overline{X}$ has $d$ singular points. Let $P$ be a singular point on $\overline{X}$, and $G \subset \Aut(X)$ be a finite subgroup. Then there exists a field extension $\QQ \subset K$ with $[K:\QQ] \leqslant d$ such that $P$ is a $K$-point. Also, there is a subgroup~$G_{\bullet} \subset G$ with~$[G:G_{\bullet}] \leqslant d$ such that~$G_{\bullet}$ acts on $X_K$ fixing~$P$ and, in particular, embeds in $\GL(T_PX_{K})$.
\end{lemma}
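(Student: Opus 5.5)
The plan is to use the Galois action on the finite set $\Sing(\overline{X})$ to produce the field $K$, and the action of $G$ on the same set to produce $G_{\bullet}$. Then Theorem~\ref{theo: action with fixed point} finishes the argument.

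\emph{The field $K$.} The singular locus of $X$ is a closed subscheme defined over $\QQ$, and its base change to $\overline{\QQ}$ is $\Sing(\overline{X})$. This is a finite set of $d$ closed points (we use that the singularities are isolated, as they are in our setting). The absolute Galois group $\Gal(\overline{\QQ}/\QQ)$ acts on this set. We take $K$ to be the residue field of the closed point of $X$ lying under $P$; equivalently, $K$ is the fixed field of the stabilizer of $P$ in the Galois group. Then $[K:\QQ]$ is the size of the Galois orbit of $P$, which is at most $d$. In $X_K$, the point $P$ becomes a $K$-point.

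\emph{The subgroup $G_{\bullet}$.} Every automorphism of $X$ is defined over $\QQ$, so it maps singular points to singular points. Hence $G$ acts on $\Sing(\overline{X})$ by permutations. We take $G_{\bullet}$ to be the stabilizer of $P$ in $G$. By the orbit--stabilizer theorem, $[G:G_{\bullet}]$ equals the size of the $G$-orbit of $P$, which is at most $d$.

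\emph{The embedding.} The group $G_{\bullet}$ acts on $X_K$ by base change. This action is still faithful, and it fixes the $K$-point $P$. The variety $X_K$ may fail to be irreducible, since $K$ need not be algebraically closed in the function field. To handle this, we pass to the geometrically irreducible component through $P$; for the threefolds we care about, $X$ is geometrically irreducible anyway. Theorem~\ref{theo: action with fixed point}, applied over $K$, then gives an injection $G_{\bullet}\hookrightarrow \GL(T_PX_K)$.

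The only real subtlety is the last step. The $\QQ$-automorphisms must commute with the Galois action, so that they act $K$-linearly on $T_PX_K$, and the irreducibility hypothesis of Theorem~\ref{theo: action with fixed point} must hold. Both are routine, so I expect no serious obstacle.
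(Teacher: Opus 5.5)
Your proposal is correct and follows essentially the same route as the paper: $K$ is the fixed field of the Galois stabilizer of $P$ (equivalently, the residue field of the closed point under $P$), $G_{\bullet}$ is the stabilizer of $P$ under the permutation action of $G$ on the $d$ singular points, and Theorem~\ref{theo: action with fixed point} then gives the embedding into $\GL(T_PX_K)$. Your extra remark on the irreducibility of $X_K$ is a point the paper leaves implicit, since in all its applications $X$ is geometrically irreducible.
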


\begin{proof}    
    The group $\text{Gal} (\overline{\QQ}/\QQ)$ acts on singular points of $\overline{X}$. Denote by $H$ the stabilizer of~$P$, then $H$ is an open (and therefore closed) subgroup of index at most~$d$. Put~$\relpenalty= 10000 K=(\overline{\QQ})^H$, then $\QQ \subset K$ is a finite extension of degree at most $d$, see~\hbox{\cite[\href{https://stacks.math.columbia.edu/tag/0BML}{Theorem 0BML}]{stacks-project}} and $P$ is a $K$-point. If we consider $G$ as a finite subgroup of~$\Aut(X_K)$, then $G$ acts on singular $K$-points and we denote by $G_{\bullet}$ the stabilizer of~$P$. Clearly, we have $[G:G_{\bullet}] \leqslant d$, and $P$ is fixed under the action of $G_{\bullet}$ on $X_K$. Moreover,~$G_{\bullet}$~embeds in $\GL(T_PX_{K})$ by Theorem~\ref{theo: action with fixed point}.
\end{proof}

Note that the result of Lemma~\ref{lemma: passing to stabilizer and acting on a tangent space} is more pleasant for us as soon as the number of singularities is smaller (for example, if there is only one singular point on $\overline{X}$, then Lemma~\ref{lemma: passing to stabilizer and acting on a tangent space} provides a representation over $\QQ$ of a whole given finite subgroup). In this regard, it is important for us to bound the number of singular points on Fano threefolds. To do this, we use the orbifold Riemann--Roch theorem from~\cite{Reid}. Recall that the index of a singular point $P$ on a variety $X$ is a minimal positive integer number $r$ such that~$rK_X$ is Cartier at $P$.

\begin{theorem}[{\cite[Corollary 10.3]{Reid}}]\label{theo: Riemann-Roch}
    Let $X$ be a projective threefold over algebraically closed field with canonical singularities and let $$B_X = \{ P_{\alpha} \in X_{\alpha} \; \textit{of index} \; r_{\alpha} \}$$ be the basket for $X$ and $K_X$ in the sense of~\cite[Theorem 10.2(3)]{Reid}. Then $$\chi(\mathcal{O}_X) = \frac{1}{24}(-K_X)\cdot c_2(X) + \frac{1}{24}\sum_{P_{\alpha} \in B_X} \left( r_{\alpha} - \frac{1}{r_{\alpha}}\right).$$
\end{theorem}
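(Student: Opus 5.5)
The plan is to derive this identity from Reid's singular Riemann--Roch formula for the plurigenera $\chi(\mathcal{O}_X(nK_X))$, using Serre duality at $n=1$ and comparing linear terms.

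\textbf{Reduction to terminal singularities.} Let $X$ have canonical singularities and take a crepant partial resolution $\pi\colon Y\to X$ with $Y$ terminal, so that $K_Y=\pi^*K_X$. Canonical singularities are rational, hence $\chi(\mathcal{O}_Y)=\chi(\mathcal{O}_X)$. The basket of $X$ is, by \cite[Theorem 10.2(3)]{Reid}, defined through such a $Y$ together with a deformation of its index $>1$ points to cyclic quotient points. Moreover $K_Y\cdot c_2(Y)=K_X\cdot c_2(X)$, since $c_2(X)$ is defined via $Y$ and $K_Y$ is a pullback. So it suffices to treat $X$ terminal, with basket points of type $\frac{1}{r}(1,-1,b)$.

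\textbf{Step 1: the plurigenus formula.} For $X$ terminal, take a resolution $f\colon \widetilde X\to X$. Compute $\chi(\mathcal{O}_X(D))$ for $D=nK_X$ by Hirzebruch--Riemann--Roch on $\widetilde X$, adding a local correction term $c_P(D)$ at each singular point. These corrections depend only on the analytic type of the point and on the local class of $D$ in $\Cl(\mathcal{O}_{X,P})\simeq\ZZ/r$. Deforming each index $r$ point to a basket of cyclic quotient points $\frac{1}{r}(1,-1,b)$ does not change these contributions. A direct toric (or Lefschetz-type) computation for cyclic quotients then yields Reid's formula
$$\chi(\mathcal{O}_X(nK_X))=\frac{n(n-1)(2n-1)}{12}K_X^3+(1-2n)\chi(\mathcal{O}_X)+\sum_{P_\alpha\in B_X} \ell_{\alpha}(n),$$
where $\ell_\alpha(n)$ is an explicit sum of fractional parts $\overline{jb}(r-\overline{jb})/2r$ over $j=1,\dots,n-1$.

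\textbf{Step 2: bringing in $c_2$.} Run the same comparison with the smooth Riemann--Roch polynomial
$$\chi=\frac{D^3}{6}-\frac{D^2K}{4}+\frac{D(K^2+c_2)}{12}+\chi(\mathcal{O}).$$
Here $c_2$ enters only through the linear term $\frac{1}{12}D\cdot c_2$. Evaluate at $n=1$, i.e.\ $D=K_X$, and use Serre duality $\chi(\mathcal{O}_X(K_X))=-\chi(\mathcal{O}_X)$. Matching the singular formula with the HRR polynomial on the linear term then forces the relation
$$24\chi(\mathcal{O}_X)=-K_X\cdot c_2(X)+\sum_\alpha\left(r_\alpha-\frac1{r_\alpha}\right).$$
The local term $r-\frac1r$ comes from evaluating the cyclic-quotient correction for $D=K$, which is $-\frac{r^2-1}{12r}$ per point, doubled through the duality step.

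\textbf{Main obstacle.} The hard part is the local computation of $c_P$ for $\frac1r(1,-1,b)$, together with the verification that $c_P$ is invariant under the deformation to the basket. For the computation I would first try equivariant Riemann--Roch (holomorphic Lefschetz) on the smooth cover $\CC^3\to\CC^3/\mu_r$. There the correction becomes the Dedekind-type sum $\frac1r\sum_{\zeta\ne1}\frac{\zeta^{\,\cdot}}{(1-\zeta)(1-\zeta^{-1})(1-\zeta^b)}$, which evaluates to $\frac{r^2-1}{12r}$. For the invariance, I would use that $\chi$ is constant in flat families and that the local contributions are determined by that constancy.
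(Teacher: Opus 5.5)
The paper gives no proof of this statement; it imports it from \cite[Corollary 10.3]{Reid}. There the identity comes from exactly the computation you reach in Step 2. Take the singular Riemann--Roch formula with basket corrections,
\[
\chi(\mathcal{O}_X(D))=\tfrac{1}{12}D(D-K_X)(2D-K_X)+\tfrac{1}{12}D\cdot c_2(X)+\chi(\mathcal{O}_X)+\sum_{Q\in B_X}c_Q(D),
\]
put $D=K_X$ (the cubic terms cancel), and use $\chi(\mathcal{O}_X(K_X))=-\chi(\mathcal{O}_X)$ together with $c_Q(K_X)=-\frac{r_Q^2-1}{12r_Q}$. Your reduction from canonical to terminal singularities is fine, provided $K_X\cdot c_2(X)$ is defined through the terminal model. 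So the decisive step is right, but the surrounding argument has real problems.

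\textbf{The plurigenus formula is circular.} It contains no $c_2$. It is what the formula above becomes once the term $\frac{n}{12}K_X\cdot c_2(X)$ and the linear parts $-n\frac{r^2-1}{12r}$ of the $c_Q(nK_X)$ are combined into $-2n\chi(\mathcal{O}_X)$. That combination is precisely the identity you are proving. At $n=1$ the plurigenus formula is just Serre duality, so no ``matching of linear terms'' against it can force anything. Likewise, the local toric or Lefschetz computation does not yield the plurigenus formula. It yields the corrections $c_Q(D)$ in the formula that carries the $\frac1{12}D\cdot c_2$ term, and that is the formula to evaluate at $D=K_X$.

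\textbf{The local numbers are slightly off.} A single Lefschetz sum at $\frac1r(1,-1,b)$ is
\[
\frac1r\sum_{\zeta^r=1,\ \zeta\neq1}\frac{1}{(1-\zeta)(1-\zeta^{-1})(1-\zeta^{b})}=\frac{r^2-1}{24r}.
\]
To see this, pair $\zeta$ with $\zeta^{-1}$ and use $\sum_{k=1}^{r-1}\sin^{-2}(\pi k/r)=\frac{r^2-1}{3}$. The sum for $D=K_X$ is its negative, and $-\frac{r^2-1}{12r}$ is the difference of the two. So the duality step halves the correction rather than doubling it: $\frac1{24}(r-\frac1r)=-\frac12c_Q(K_X)$. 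This also shows that, on the basket model, Kawasaki's formula at $D=0$ already gives the identity without any duality. Finally, the local class group at a terminal point need not be $\ZZ/r$; only the subgroup generated by $K_X$ is, and that is all you use.

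\textbf{The deformation step would not work as stated.} The deformation of a terminal point to its basket is a deformation of the analytic germ. There is no reason for it to extend to a flat, let alone $\QQ$-Gorenstein, deformation of the projective threefold, so constancy of $\chi$ in families cannot be invoked. You would need an intrinsically local description of $c_P(D)$ and a proof that it is deformation invariant. That is exactly what the statement imports by taking the basket ``in the sense of \cite[Theorem 10.2(3)]{Reid}''. The efficient and correct proof is therefore to quote that Riemann--Roch formula and carry out the two-line evaluation at $D=K_X$.
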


\begin{cor}\label{corollary: number of singularities on non-Gorenstein Fano threefold}
    Let $X$ be a terminal non-Gorenstein Fano threefold. Let $N$ be the number of singular points on $\overline{X}$. Then, one has $N \leqslant 15$. Moreover, if there are no cyclic quotient singularities on $\overline{X}$, then $N \leqslant 7$.
\end{cor}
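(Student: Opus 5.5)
We will apply Theorem~\ref{theo: Riemann-Roch} to $\overline{X}$. Since $X$ is a Fano threefold with terminal (hence rational) singularities, Kawamata--Viehweg vanishing gives $H^i(\overline{X},\mathcal{O}_{\overline{X}})=0$ for $i>0$, so $\chi(\mathcal{O}_{\overline{X}})=1$. We also use Kawamata's inequality $(-K_X)\cdot c_2(X)>0$ for terminal Fano threefolds; this is the Bogomolov--Miyaoka-type positivity, valid for $\QQ$-Fano threefolds with terminal singularities. Substituting both into the formula, we get
$$\sum_{P_\alpha\in B_X}\left(r_\alpha-\frac{1}{r_\alpha}\right) = 24 - (-K_X)\cdot c_2(X) < 24.$$

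Next we relate the basket to the actual singular points. Each non-Gorenstein terminal point $P$ of index $r_P\geqslant 2$ contributes at least one basket point of index $r_P$. More precisely, its contribution is $a_P\geqslant 1$ points of index $r_P$, where $a_P$ is its axial weight. A cyclic quotient singularity $\frac{1}{r}(1,-1,b)$ contributes exactly one basket point. A non-cyclic-quotient terminal point of index $r\geqslant 2$ has axial weight $a_P\geqslant 2$ (for index $2$ points of type $cAx/2$ and others one also has $a_P\geqslant 2$), so it contributes at least $2(r-1/r)\geqslant 3$. Since $r-1/r\geqslant 3/2$ for $r\geqslant 2$, counting the non-Gorenstein points $N'$ gives $\frac{3}{2}N'<24$, hence $N'\leqslant 15$. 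In the case without cyclic quotient points, every point contributes at least $3$, so $3N'<24$ and $N'\leqslant 7$.

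The main obstacle is the Gorenstein singular points. These have index $1$ and are invisible in the basket, so the inequality above does not bound them at all. I would try to argue that the count $N$ in the statement refers to non-Gorenstein singular points, since this is what the subsequent application of Lemma~\ref{lemma: passing to stabilizer and acting on a tangent space} to a non-Gorenstein point requires: the Galois orbit of a point of a given index consists of points of the same index. Alternatively, if all singular points must be counted, I would pass to a $\QQ$-factorial terminalization or smoothing of the Gorenstein points. I expect the intended statement to be the non-Gorenstein count, with the Gorenstein points handled separately in the later argument. The careful step is checking the axial-weight lower bound $a_P\geqslant 2$ for each non-cyclic type in Mori's classification ($cA/r$, $cAx/2$, $cAx/4$, $cD/2$, $cD/3$, $cE/2$). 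For $cAx/4$ the basket contains $\frac14$ and $\frac12$ points, giving $15/4+3/2>3$, so the bound still holds there.
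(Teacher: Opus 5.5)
Your argument is the paper's own: $\chi(\mathcal{O}_{\overline{X}})=1$, Kawamata's inequality $-K_X\cdot c_2(X)>0$, and Reid's basket form of Riemann--Roch. In it every non-Gorenstein point contributes at least $\frac32$, and every non-Gorenstein point that is not a cyclic quotient contributes at least $3$.

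Your worry about Gorenstein points is justified, and it applies equally to the paper: its inequality $\frac32 N\leqslant\sum(r_j-\frac1{r_j})$ tacitly assumes that all $N$ singular points have index at least $2$. In fact the bound is false if $N$ counts all singular points. Take $x_1,x_2,x_3$ of weight $1$, $y_1,y_2$ of weight $2$, and general forms $g,h$ of weighted degree $4$. Then $X=\{y_1g+y_2h=0\}\subset\PP(1,1,1,2,2)$ is a terminal Fano threefold with $-K_X=\mathcal{O}_X(1)$. It has three points of type $\frac12(1,1,1)$ on the line $x_1=x_2=x_3=0$ and $16$ nodes at $y_1=y_2=g=h=0$, so $19$ singular points in total.

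So the count of non-Gorenstein points that you bound is the right reading; it is the standard ``at most $15$ non-Gorenstein points'' bound. It is also all that Proposition~\ref{prop: bound for non-Gorenstein} uses, because Galois conjugates and $G$-translates of a point of index $r$ again have index $r$. You should drop the terminalization/smoothing fallback, since no such argument can rescue a false bound.
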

\begin{proof}
    Since $X$ is Fano, we have $\chi\left( \mathcal{O}_{\overline{X}} \right) = 1$ by Kawamata--Viehweg theorem and~$\relpenalty=10000 \binoppenalty = 10000 -K_{\overline{X}}\cdot c_2\left(\overline{X}\right) > 0$ (see~\cite{Kaw}). Then Theorem~\ref{theo: Riemann-Roch} gives an inequality $$\frac{3}{2}N \leqslant \sum \left( r_j - \frac{1}{r_j}\right) < 24,$$ which implies $N \leqslant 15$. If there are no cyclic quotient singularities on $\overline{X}$, then each basket of each point contains at least two points (one can check this deforming the equations from Mori's classification theorem~\hbox{\cite[\S 6, Theorem(S. Mori)]{Reid}}) and actually we have $N \leqslant 7$.
\end{proof}

The following lemma is needed in cases where a given singular threefold does not contain cyclic quotient singularities.

\begin{lemma}\label{lemma: cyclic cover}
    Let $K$ be a field of characteristic $0$. Let $X$ be a terminal threefold over $K$ and $P \in X$ be a singular $K$-point of index $r$. Let $G \subset \Aut(X)$ be a finite subgroup, which fixes $P$. Then there is a short exact sequence $$1 \rightarrow \mu_r \rightarrow \tilde{G} \rightarrow G \rightarrow 1,$$ where $\tilde{G} \subset \GL_4(K)$. Moreover, if $P$ is a cyclic quotient singularity, then $\tilde{G} \subset \GL_3(K)$.
\end{lemma}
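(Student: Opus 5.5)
The plan is to use the index-one cover of the singularity and lift the action of $G$ to it. Since $P$ has index $r$, the divisor $rK_X$ is Cartier near $P$. Replacing $X$ by a sufficiently small $G$-invariant affine (or local/henselian, or formal) neighbourhood $U$ of $P$, we may choose a nowhere vanishing section $s$ of $\mathcal{O}_U(rK_U)$ and form the canonical cover
$$\pi\colon \tilde U=\operatorname{Spec}_U\bigoplus_{i=0}^{r-1}\mathcal{O}_U(iK_U)\to U,$$
using the multiplication $\mathcal{O}(iK)\otimes\mathcal{O}(jK)\to\mathcal{O}((i+j)K)$, divided by $s$ when $i+j\geqslant r$. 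The point $\tilde P=\pi^{-1}(P)$ is a single $K$-point, $\mu_r$ acts on $\tilde U$ with quotient $U$, and by Reid's theory $\tilde P$ is an index one terminal point. Such a point is either smooth or an isolated cDV, hence hypersurface, singularity, so $\dim T_{\tilde P}\tilde U\leqslant 4$. Moreover, $\tilde P$ is smooth exactly when $P$ is a cyclic quotient singularity, and then $\dim T_{\tilde P}\tilde U=3$.

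The next step is to define $\tilde G$ and show it is an extension of $G$ by $\mu_r$. Every $g\in G$ acts on the sheaves $\mathcal{O}_U(iK_U)$ by pullback of forms, but it need not preserve $s$: we have $g^*s=u_g s$ for some unit $u_g$. I would let $\tilde G$ be the group of pairs consisting of $g\in G$ and an automorphism of $\tilde U$ over $g$ that is compatible with the graded structure. Concretely, such a lift is determined by choosing an $r$-th root of $u_g$, so that the algebra automorphism acts on the degree one piece by $\omega\mapsto v_g^{-1}g^*\omega$ with $v_g^r=u_g$.

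To guarantee that these roots exist, I would shrink $U$ to a henselian local neighbourhood, or pass to completions. Since $u_g(P)$ need not be an $r$-th power in $K$, the cleaner route is to first replace $s$ by a $G$-semi-invariant generator. Averaging $\sum_g (g^*s)\cdot(\text{appropriate unit})$ is unavailable, so instead I would normalise: the $G$-action on the one-dimensional $K$-space $\mathcal{O}(rK)\otimes k(P)$ is a character $\chi$. Choosing $s$ as a $\chi$-eigenvector, possible by projecting with $\frac1{|G|}\sum\chi(g)^{-1}g^*$ in the local ring, which is $G$-stable, gives $u_g=\chi(g)\in K^\times$ up to units $\equiv1$ mod $\mathfrak m_P$. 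In the complete local ring these units have unique $r$-th roots $\equiv1$. Roots of the constants $\chi(g)$ may still fail to lie in $K$. To handle this, I would define $\tilde G$ as the set of graded automorphisms of $\bigoplus\mathcal{O}(iK)$ lifting elements of $G$ and acting $K$-linearly. Then the kernel of $\tilde G\to G$ is exactly $\mu_r(K)$, which sits inside $\mu_r$, and surjectivity holds precisely when the roots exist. This is the main obstacle. If needed, I would take $\tilde G$ as the extension defined by the Kummer-type cocycle, replacing $\mu_r$ by the appropriate cyclic group, while keeping the linear embedding below. Since $\tilde G$ fixes $\tilde P$, the bound on $|\tilde G|$ is what is used later.

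Finally, $\tilde G$ acts faithfully on the irreducible $\tilde U$ over $K$ with fixed $K$-point $\tilde P$. By Theorem~\ref{theo: action with fixed point}, applied to the local/formal setting, where the same argument with powers of $\mathfrak m$ works, $\tilde G$ embeds into $\GL(T_{\tilde P}\tilde U)$. This group is $\GL_4(K)$, or $\GL_3(K)$ in the cyclic quotient case; if the tangent space has dimension less than $4$ we embed it further in $\GL_4(K)$. The faithfulness of $\mu_r$ on $\tilde U$ holds because $\mu_r$ acts on the degree $i$ piece by the character $\zeta\mapsto\zeta^i$.
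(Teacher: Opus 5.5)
Your route is the paper's: shrink to a $G$-invariant neighbourhood with $rK_U\sim 0$, take the index-one cover $\pi\colon W\to U$, and note that $Q=\pi^{-1}(P)$ is a $K$-point. That point is smooth or a hypersurface point, and smooth exactly in the cyclic quotient case. Then lift $G$ to an extension by $\mu_r$ fixing $Q$ and apply Theorem~\ref{theo: action with fixed point} to $T_QW$. Those steps are fine.

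The gap is exactly the one you flag yourself, and your patch does not close it. Over $K$, the $K$-automorphisms of $W$ over $U$ form only $\mu_r(K)$, which is smaller than $\mu_r$ when $\xi_r\notin K$. Moreover, $g\in G$ lifts to a $K$-automorphism of $W$ only if $\chi(g)\in(K^\times)^r$, where $\chi$ is the character of $G$ on the fibre of $\mathcal{O}(rK_X)$ at $P$. This character does not depend on $s$: replacing $s$ by $us$ multiplies $g^*s/s$ by $g^*u/u$, which equals $1$ at the fixed point $P$. So no other $K$-form of the cover helps. The ``Kummer-type'' extension you fall back on acts on $W$ only after base change to the field $K'$ obtained by adjoining $\xi_r$ and $r$-th roots of the values of $\chi$. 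The fixed-point argument then gives $\tilde G\subset\GL_4(K')$, not $\GL_4(K)$. Replacing $\mu_r$ by some other cyclic group also changes the statement, and the factor $1/r$ is precisely what Proposition~\ref{prop: bound for non-Gorenstein} uses.

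In fact no argument can close this gap, since the lemma is false as stated over non-closed fields. Take $K=\QQ$ and $X=\PP_{\QQ}(2,3,5,7)$, a terminal $\QQ$-factorial Fano threefold of Picard rank $1$. Let $P=(0:0:0:1)$, a $\QQ$-point of type $\frac17(2,3,5)=\frac17(1,-1,5)$, i.e. a terminal cyclic quotient singularity of index $7$, and let $G=\{1\}$. The lemma would then give a cyclic subgroup of order $7$ in $\GL_3(\QQ)$. But even $\GL_4(\QQ)$ has no element of order $7$, since $[\QQ(\xi_7):\QQ]=6$.

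The paper's proof has the same hole: its claim that each $g\in G$ ``admits $r$ lifts to $\Aut(W)$'' is true over $\overline{K}$ but not over $K$. What your argument (and the paper's) actually proves is the lemma over $\overline{K}$, or over $K$ under the extra assumptions $\xi_r\in K$ and $\chi(G)\subset(K^\times)^r$. In general it only yields $1\to\mu_r(K)\to\tilde G\to G'\to 1$ with $\tilde G\subset\GL_4(K)$ (resp.\ $\GL_3(K)$) and $G'=\{g\in G\mid\chi(g)\in(K^\times)^r\}$, or else the full extension by $\mu_r$ inside $\GL_4(K')$.
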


\begin{proof}
    Replacing $X$ by a smaller $G$-invariant neighborhood $U$, we may assume that~$rK_U \sim 0$. Consider one-index cover $\pi \colon W \to U$ from~\cite[Proposition 3.6]{Reid}. Then~$Q  = \pi^{-1}(P)$ is a terminal $K$-point of index $1$. Therefore, $Q$ is a hypersurface singularity by~\cite[Corollary 3.12(i)]{Reid}. Moreover, $W$ is smooth at $Q$ if $P$ is a cyclic singularity. By construction of $W$ each element $g \in G$ admits $r$ lifts to $\Aut(W)$. Therefore, we have an exact sequence $$1 \rightarrow \mu_r \rightarrow \tilde{G} \rightarrow G \rightarrow 1,$$ where $\tilde{G} \subset \Aut(W)$, which fixes $Q$. By Theorem~\ref{theo: action with fixed point} we have $\tilde{G} \subset \GL_4(K)$ in general case and $\tilde{G} \subset \GL_3(K)$ if $Q$ is smooth.
\end{proof}

Suppose we are given a finite group $G$ acting on a terminal \hbox{non-Gorenstein} $G\QQ$-factorial Fano threefold $X$. In this case, our strategy is to pass to a small index subgroup, which acts on $X$ with a fixed singular $K$-point, where $\mathbb{Q} \subset K$ is an extension of small degree.

\begin{propos}\label{prop: bound for non-Gorenstein}
    Let $G$ be a finite group. Let $X$ be a terminal $G\QQ$-factorial non-Gorenstien Fano threefold over $\QQ$. Then $$|G| \leqslant 3\,962\,649\,600 < 10^{10}.$$
\end{propos}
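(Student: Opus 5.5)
We take the non-Gorenstein singular points of $\overline{X}$, bound how many there are, and then pass, via Lemma~\ref{lemma: passing to stabilizer and acting on a tangent space}, to a subgroup fixing one of them over a small number field. The group $G$ together with $\Gal(\overline{\QQ}/\QQ)$ permutes the set $\Sigma$ of non-Gorenstein points of $\overline{X}$. This set is nonempty and, by Corollary~\ref{corollary: number of singularities on non-Gorenstein Fano threefold}, $|\Sigma|\leqslant 15$. Running the argument of Lemma~\ref{lemma: passing to stabilizer and acting on a tangent space} on $\Sigma$ rather than on all singular points, we pick $P\in\Sigma$ and obtain:
\begin{itemize}
\item a field $K$ with $[K:\QQ]\leqslant N$;
\item a subgroup $G_\bullet$ with $[G:G_\bullet]\leqslant N$ fixing the $K$-point $P$,
\end{itemize}
where $N=|\Sigma|$. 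We then apply Lemma~\ref{lemma: cyclic cover} at $P$, whose index is $r\geqslant 2$. This gives an extension $1\to\mu_r\to\tilde G\to G_\bullet\to 1$ with $\tilde G\subset\GL_3(K)$ if $P$ is a cyclic quotient point and $\tilde G\subset\GL_4(K)$ otherwise. Since $\tilde G$ contains $\mu_r$, we get $|G_\bullet|\leqslant|\tilde G|/r\leqslant|\tilde G|/2$, and therefore
\[
|G|\leqslant N\cdot|\tilde G|/2 .
\]

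The case split is by the type of singularity. If every point of $\Sigma$ is non-cyclic, then Corollary~\ref{corollary: number of singularities on non-Gorenstein Fano threefold} gives $N\leqslant 7$. So $[K:\QQ]\leqslant 7$ and $\tilde G\subset\GL_4(K)$, and Corollary~\ref{corollary: additive bound GL_4(<=7)} yields
\[
|G|\leqslant 7\cdot 1\,132\,185\,600/2 = 3\,962\,649\,600,
\]
which is exactly the claimed bound. So the factor $1/2$ coming from $\mu_r$ must be used here. If some point of $\Sigma$ is a cyclic quotient point, we restrict to the Galois- and $G$-invariant subset of cyclic quotient points and choose $P$ there. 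Then $N\leqslant 15$ and $\tilde G\subset\GL_3(K)$ with $[K:\QQ]\leqslant 15$, and Corollary~\ref{corollary: additive bound GL_3(<=15)} gives
\[
|G|\leqslant 15\cdot 240\,045\,120/2=1\,800\,338\,400,
\]
which is smaller. Mixed situations are covered by this rule: whenever cyclic points exist we choose $P$ among them.

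The main obstacle is the non-cyclic count $N\leqslant 7$. It relies on the basket of a non-cyclic terminal point containing at least two points, so that each such point contributes at least $3$ to $\sum(r-1/r)<24$. We take this from Corollary~\ref{corollary: number of singularities on non-Gorenstein Fano threefold}. One must also check that the orbit-stabilizer argument of Lemma~\ref{lemma: passing to stabilizer and acting on a tangent space} works for an invariant subset of singular points, and not only for the full set; it does, since both group actions preserve the index and the type of a point. Finally, the number-field bounds of Corollaries~\ref{corollary: additive bound GL_4(<=7)} and~\ref{corollary: additive bound GL_3(<=15)}, which come from the appendix, are taken as given.
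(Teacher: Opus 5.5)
Your proposal is correct and follows the paper's proof essentially verbatim. You pass to the stabilizer of a singular point over a field of degree at most $7$ (when there are no cyclic quotient points) or at most $15$ (when there is one), lift to $\GL_4(K)$ or $\GL_3(K)$ via Lemma~\ref{lemma: cyclic cover}, and divide by $r\geqslant 2$ before applying Corollaries~\ref{corollary: additive bound GL_4(<=7)} and~\ref{corollary: additive bound GL_3(<=15)}. Working with the invariant set $\Sigma$ of non-Gorenstein points (and, in the cyclic case, of cyclic quotient points) is a small but genuine tightening: the orbifold Riemann--Roch count only controls points of index $r\geqslant 2$, and dividing by $r\geqslant 2$ requires $P$ to be non-Gorenstein, which the paper's wording leaves implicit.
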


\begin{proof}
Denote by $N$ the number of singularities on $\overline{X}$. If there are no cyclic quotient singularities on $\overline{X}$, then $N \leqslant 7$ by Corollary~\ref{corollary: number of singularities on non-Gorenstein Fano threefold}. Then by Lemma~\ref{lemma: passing to stabilizer and acting on a tangent space}, there is a field extension~$\QQ \subset K$ of degree~$d \relpenalty=10000 \leqslant 7$ and a subgroup $G_{\bullet} \subset G$ of index $i \leqslant 7$ such that~$X_K$ contains a singular~\hbox{$K$-point} $P$ and $G_{\bullet}$ acts on $X_K$ fixing $P$. Therefore, applying Lemma~\ref{lemma: cyclic cover}, we obtain an exact sequence $$1 \rightarrow \mu_r \rightarrow \Tilde{G}_{\bullet}\rightarrow G_{\bullet} \rightarrow 1,$$
where $r \geqslant 2$ is the index of $P$ and $\Tilde{G}_{\bullet} \subset \GL_4(K)$. Now, applying Corollary~\ref{corollary: additive bound GL_4(<=7)} we obtain~$|\tilde{G}_{\bullet}| \leqslant 1\,132\,185\,600$, which implies $$|G| \leqslant  \frac{i}{r}\cdot1\,132\,185\,600\leqslant \frac{7}{2} \cdot 1\,132\,185\,600 =  3\,962\,649\,600 < 10^{10}.$$

If there is a cyclic singular point $P$, then by Lemma~\ref{lemma: passing to stabilizer and acting on a tangent space}, there is a field extension~$\QQ \subset K$ of degree~$d \relpenalty=10000 \leqslant 15$ and a subgroup $G_{\bullet} \subset G$ of index $i \leqslant 15$ such that $P$ is a $K$-point and~$G_{\bullet}$ acts on $X_K$ fixing $P$. Again, applying Lemma~\ref{lemma: cyclic cover}, we obtain an exact sequence $$1 \rightarrow \mu_r \rightarrow \Tilde{G}_{\bullet}\rightarrow G_{\bullet} \rightarrow 1,$$
where $r \geqslant 2$ is an index of $P$ and $\Tilde{G}_{\bullet} \subset \GL_3(K)$. Now, applying Corollary~\ref{corollary: additive bound GL_3(<=15)} we obtain~$|\tilde{G}_{\bullet}| \leqslant 240\,045\,120 < 10^{9}$, which implies $$|G| \leqslant \frac{i}{r}\cdot 240\,045\,120 \leqslant \frac{15}{2}\cdot 240\,045\,120 = 1\,800\,338\,400 < 10^{10}.$$
\end{proof}

\section{Gorenstein Fano threefolds}\label{sect: Gorenstein Fano}
In this section we bound orders of finite groups acting on terminal Gorenstein Fano threefolds and prove Theorem~\ref{theo: cr_3}. Let us briefly recall the definitions of important invariants and fix the notation.

Let~$X$ be a Gorenstein Fano variety of dimension $n$. The \textit{index} $\iota(X)$ of $X$ is the maximal integer $i$ such that the equality~$-K_X = iD$ holds for some class of Cartier divisors $D$. The class of Cartier divisors~$H$ which satisfies the equality $-K_X = \iota(X)\cdot H$ is uniquely defined and is called the fundamental class. The number $d(X) = H^n$ is called the \textit{degree} of~$X$ and the number $g(X) = \frac{1}{2}H^n + 1$ is called the \textit{genus}.
\begin{rem}
    If $X$ is a Gorenstein variety over algebraically non-closed field $K$ of characteristic $0$, then clearly~$\iota(X)$ divides $\iota(\overline{X})$, but a priori these numbers could differ. However, if $X$ contains a~\mbox{$K$-point}, then $\iota(X) = \iota(\overline{X})$. To see this consider the exact sequence of groups \hbox{(see~\cite[Exercise 3.3.5(iii)]{G-S})}: $$0 \xrightarrow{} \text{Pic}(X) \xrightarrow{} \text{Pic}(X_{\overline{K}})^{\Gal(\overline{K}/K)} \xrightarrow{} \text{Br}(K) \xrightarrow{} \text{Br}(K(X))$$ and note that the last homomorphism has a section since $X$ contains a $K$-point. In particular, the last homomorphism is injective and we have an isomorphism $$\text{Pic}(X) \simeq \text{Pic}(X_{\overline{K}})^{\Gal(\overline{K}/K)},$$ which implies $\iota(X) = \iota(\overline{X})$, $d(X) = d(\overline{X})$ and $g(X) = g(\overline{X})$.

    Further, when working with varieties, which contain rational points, we will assume these equalities without references.
\end{rem}

The main advantage in the Gorenstein case is the existence of smoothing. 
\begin{theorem}[{\cite[Theorem 11]{Nam}}, {\cite[Theorem 1.4]{JR}}, {\cite[Proposition 2.5]{KuzPro}}]\label{theo: smoothing}
    Let $X$ be a terminal Gorenstein Fano threefold over $\CC$ with $N$ singular points. Then there exist a smoothing, that is, a flat projective morphism $$f: \mathfrak{X} \rightarrow B \ni 0$$ such that the general fiber $\mathfrak{X}_b$ is smooth and the central fiber $X_0$ is isomorphic to $X$. Moreover, $\Pic(X) \simeq \Pic(\mathfrak{X}_b)$, $\iota(X) = \iota(\mathfrak{X}_b)$, $g(X) = g(\mathfrak{X}_b)$ and $d(X) = d(\mathfrak{X}_b)$. Also there is a bound $$ N \leqslant 21 - \frac{1}{2}\chi_{\top}(\mathfrak{X}_b) = 20 - \rho(\mathfrak{X}_b) + h^{1,2}(\mathfrak{X}_b).$$
\end{theorem}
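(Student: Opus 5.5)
The statement bundles three separate results. I plan to prove it in three steps: existence of the smoothing, invariance of the numerical invariants, and the bound on $N$. Throughout I work over $\CC$ and regard $X$ as a compact complex space with isolated hypersurface (cDV) singularities.

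\emph{Step 1: existence of a smoothing.} I would follow Namikawa. Since $X$ is Gorenstein terminal, its singularities are isolated hypersurface singularities, so $\mathcal{E}xt^1(\Omega_X,\mathcal{O}_X)$ is a skyscraper sheaf. Its stalks are the local Tjurina algebras, and each local deformation space is smooth. Kawamata--Viehweg vanishing for $-K_X$ ample, together with Serre duality, should give $H^2(X,T_X)=0$; this is the one point where terminality and the Fano condition are used. Then the local-to-global spectral sequence for $\mathrm{Ext}^\bullet(\Omega_X,\mathcal{O}_X)$ shows two things: the global deformation functor is unobstructed, and the map $\mathrm{Ext}^1(\Omega_X,\mathcal{O}_X)\to \bigoplus_P \mathrm{Ext}^1(\Omega_{X,P},\mathcal{O}_{X,P})$ to the product of local deformation spaces is surjective. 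Every isolated hypersurface singularity has a local smoothing. Surjectivity lets me choose a one-parameter global deformation that restricts to a smoothing at every $P$ simultaneously. Projectivity of the family follows because $-K_X$ extends: the relevant obstruction lies in $H^2(\mathcal{O}_X)=0$.

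\emph{Step 2: invariance of $\Pic$, $\iota$, $g$, $d$.} Both $X$ and $\mathfrak{X}_b$ satisfy $H^1(\mathcal{O})=H^2(\mathcal{O})=0$, so $\Pic = H^2(\cdot,\ZZ)$ on each. The fibre $\mathfrak{X}_b$ is obtained from $X$ by replacing a small ball around each $P$ with its Milnor fibre. The link and Milnor fibre are simply connected in dimension $3$ and the Milnor fibre is a bouquet of $3$-spheres, so Mayer--Vietoris gives $H^2(X,\ZZ)\simeq H^2(\mathfrak{X}_b,\ZZ)$ (following Jahnke--Radloff). This isomorphism is compatible with $-K$, hence preserves divisibility, so $\iota$ is preserved. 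Flatness preserves $H^3$, so $d$ and $g$ are preserved as well.

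\emph{Step 3: the bound on $N$ (Kuznetsov--Prokhorov), which I expect to be the main obstacle.} Let $\mu_P$ be the Milnor number at $P$ and $\sigma=\operatorname{rk}\Cl(X)-\rho(X)$ the defect. The Milnor fibre of a threefold singularity has $\chi_{\top}=1-\mu_P$, so
\[
\chi_{\top}(\mathfrak{X}_b)=\chi_{\top}(X)-\sum_P\mu_P .
\]
A comparison of Betti numbers via Mayer--Vietoris should give the following. The images of the vanishing cycles in $H^3(\mathfrak{X}_b)$ span a subspace $V$ with
\[
\dim V=\sum_P \mu_P-\sigma .
\]
Vanishing cycles have zero mutual intersections, so $V$ is isotropic for the unimodular symplectic form on $H^3(\mathfrak{X}_b)$, whose rank is $2h^{1,2}(\mathfrak{X}_b)$. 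Hence $\dim V\leqslant h^{1,2}$. Using $N\leqslant\sum\mu_P$, this yields $N\leqslant h^{1,2}+\sigma$.

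It remains to show $\sigma\leqslant 20-\rho$, i.e. $\operatorname{rk}\Cl(X)\leqslant 20$. I would take a general smooth K3 surface $S\in|-K_X|$ avoiding the singular points. Restriction $\Cl(X)\to\Pic(S)$ is well defined there, and I would aim to show it is injective by a Lefschetz-type argument. Since $\operatorname{rk}\Pic(S)\leqslant 20$, this gives the bound. The equality with $21-\tfrac12\chi_{\top}$ then follows from $\chi_{\top}(\mathfrak{X}_b)=2+2\rho-2h^{1,2}$.

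The delicate points are the exact form of the Betti-number comparison (the $\sigma$ term) and the injectivity of $\Cl(X)\to\Pic(S)$. For the latter I would try Ravindra--Srinivas-type Grothendieck--Lefschetz statements for normal threefolds.
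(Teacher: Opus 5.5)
The paper gives no proof of this statement. It quotes Namikawa for the smoothing and the bound on $N$, Jahnke--Radloff for the invariance of $\Pic$, and Kuznetsov--Prokhorov for the formulation. Your three-step split mirrors those sources. Step~2 is fine and is essentially the Jahnke--Radloff argument: links are simply connected and Milnor fibres are bouquets of $3$-spheres, so Mayer--Vietoris gives $H^2(X,\ZZ)\simeq H^2(\mathfrak X_b,\ZZ)$ compatibly with $-K$.

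Step~1 has a genuine gap. The vanishing $H^2(X,T_X)=0$ is not a consequence of Kawamata--Viehweg plus Serre duality. Even for smooth $X$ you would need Akizuki--Nakano for $H^1(\Omega^1_X\otimes\omega_X)$. For singular $X$, the sheaf $T_X$ has depth $2$ at each singular point, with $H^2_P(T_X)\simeq T^1_P$. So $H^2(X,T_X)^\vee\simeq\operatorname{Ext}^1(T_X,\omega_X)$ involves the nonzero local terms $\mathcal{E}xt^1(T_X,\omega_X)_P\simeq (T^1_P)^\vee$, which no vanishing theorem disposes of. Worse, by your own local-to-global sequence, $H^2(X,T_X)=0$ already forces $\operatorname{Ext}^1(\Omega_X,\mathcal O_X)\to\bigoplus_P T^1_P$ to be surjective. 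You are therefore assuming exactly the simultaneous-smoothing statement that is the hard content of Namikawa's theorem.

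Step~3 rests on a false claim: vanishing cycles at the same point need not have zero mutual intersections. Cycles at different points are disjoint. But $H_3(F_P)\to H_3(\mathfrak X_b)$ preserves the skew intersection form of the Milnor lattice, and this form is often nonzero. For the terminal Gorenstein point $x^2+y^2+z^2+w^3=0$ one has $\mu_P=2$, and by Sebastiani--Thom the two vanishing cycles satisfy $\delta_1\cdot\delta_2=\pm1$, as for the cusp $z^2+w^3$. So $V$ can contain hyperbolic planes, and $\sum_P\mu_P-\sigma\leqslant h^{1,2}$ does not follow.

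The repair is to use one cycle per point:
\begin{itemize}
\item Choose $0\neq\delta_P\in H_3(F_P)$ for each $P$. These are pairwise disjoint, and each is isotropic because the form is skew. So their span is isotropic and has dimension $\leqslant h^{1,2}$.
\item The kernel of $\bigoplus_P\QQ\delta_P\to H_3(\mathfrak X_b)$ lies inside the $\sigma$-dimensional kernel of $\bigoplus_P H_3(F_P)\to H_3(\mathfrak X_b)$.
\item Hence $N-\sigma\leqslant h^{1,2}$. This still uses $b_4(X)=\operatorname{rk}\Cl(X)$, which you only assert.
\end{itemize}

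For $\sigma\leqslant 20-\rho$ you also need a smooth $S\in|-K_X|$ disjoint from $\Sing(X)$. That requires $\Bs|-K_X|\cap\Sing(X)=\varnothing$, which you do not justify. A Cartier divisor through a singular point of $X$ is singular there. Shin's description of $\Bs|-K_X|$ allows a singular base point, and the paper itself handles this case in the genus~$7$ lemma. Without it you only have a Du Val K3 surface, and injectivity of $\Cl(X)\to\Pic(\tilde S)$ is no longer a standard Lefschetz statement. In any case, you leave injectivity unproved.
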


The following theorem specifies families of threefolds, which are interesting in terms of applications to our problem. 

\begin{theorem}[{cf. \cite[Theorem 6.6]{Pro II}}]\label{theo: prokhorov on G-Fanos}
    Let $K$ be a field of characteristic $0$. Let $X$ be a terminal Gorenstein Fano threefold over $K$ such that $\rho(\overline{X}) > 1$. Let $G$ and $G'$ be two finite groups acting on $\overline{X} = X \otimes_K \overline{K}$ through the first and through the second factor respectively. If $\Cl(\overline{X})^{G\times G'} \simeq \ZZ$, then $\overline{X}$ belongs to one of $8$ families from Table~\ref{table: possible minimal by Pro}.
\end{theorem}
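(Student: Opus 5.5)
The plan is to reduce everything to the smooth case via the smoothing of Theorem~\ref{theo: smoothing}, and then invoke Prokhorov's classification of $G$-Fano threefolds of higher Picard rank. First we pass to $\overline{X}$ (or to $\CC$, since everything is defined over a finitely generated field). Terminal Gorenstein Fano threefolds are known to be nodal-free in general, but their singularities are isolated hypersurface singularities. By Theorem~\ref{theo: smoothing} there is a smoothing $\mathfrak{X} \to B$ with $\Pic(\overline{X}) \simeq \Pic(\mathfrak{X}_b)$ and with the same index and genus. Moreover $\Cl(\overline{X})$ contains $\Pic(\overline{X})$, and the rank of $\Cl(\overline{X})/\Pic(\overline{X})$ is the defect of $\overline{X}$.

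The key input is the condition $\Cl(\overline{X})^{G\times G'} \simeq \ZZ$. In Prokhorov's setting $G$ is geometric and $G'$ is Galois, so together they form an ``arithmetic'' group acting on $\Cl(\overline{X})\otimes \QQ$ with invariant rank one. This forces the action on $\Pic(\overline{X})\otimes\QQ$, which is a subrepresentation, to also have one-dimensional invariants, namely $\QQ K_X$. Hence the group acts on $\Pic(\mathfrak{X}_b)$ with invariant rank one. Here I would use that the monodromy and deformation identify the action on $\Pic$ compatibly with the intersection form and the Mori cone; this is the step that needs care. Under this identification $\mathfrak{X}_b$ is a smooth Fano threefold with $\rho>1$ carrying a group of lattice automorphisms preserving $K$, the cubic form and the extremal rays, with $\rho^{\text{inv}}=1$.

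The main step is then the Mori--Mukai classification. Among smooth Fano threefolds with $\rho \geqslant 2$, we list those whose Mori cone admits a symmetry group acting transitively enough to leave only $K$ invariant. Concretely, all extremal rays must lie in one orbit, and all contractions of the same type. I would run through the Mori--Mukai tables and keep the families where every extremal contraction is of the same type with equal invariants, discarding the asymmetric ones. Prokhorov did exactly this in \cite[Theorem 6.6]{Pro II}, producing families such as $W_6$, the $(1,1)\cap(1,1)$ divisor, $(\PP^1)^3$, the blow-up of $V_7$-type, double covers of $\PP^1\times\PP^1\times\PP^1$, $\PP^1\times S_6$, and so on.

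The main obstacle is the passage from $\overline{X}$ to the smoothing. The classification must be applied to $\mathfrak{X}_b$, but the symmetry lives on $\overline{X}$. One must argue that the group action on $\Pic(\overline{X})$ transports to an action on $\Pic(\mathfrak{X}_b)$ preserving the Mori cone, since the specialization of extremal rays is compatible (Jahnke--Radloff). One must also rule out the case where the defect contributes: the $\Pic$-invariants could be rank one even when $\rho(\overline{X})=1$, but that case is excluded by hypothesis. This reduces the statement to Prokhorov's smooth list, which is recorded in Table~\ref{table: possible minimal by Pro}.
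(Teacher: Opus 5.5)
The paper gives no proof of this statement. It imports Prokhorov's classification, adapted to $G\times G'$ and with a new description of (1.2.3). So the question is whether your sketch reproduces that classification, and it does not: there is a structural gap.

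Everything after your first paragraph happens on the smoothing $\mathfrak{X}_b$. A lattice-level symmetry argument on $\Pic(\mathfrak{X}_b)$ can at best show that $\mathfrak{X}_b$ lies in one of the Mori--Mukai families 2-6, 2-12, 2-21, 2-32, 3-1, 3-13, 3-27, 4-1. Table~\ref{table: possible minimal by Pro}, however, describes $\overline{X}$ itself:
in (1.2.4) $\overline{X}$ is a \emph{smooth} $(1,1)$-divisor;
in (1.2.7) $\overline{X}\simeq\PP^1\times\PP^1\times\PP^1$;
in (1.2.3) the singular members are exactly blow-ups of a quadric cone along two conics.
The paper's later lemmas for these families rely on exactly these descriptions.

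Your reduction cannot produce them. When you pass from $\Cl(\overline{X})^{G\times G'}$ to $\Pic(\overline{X})^{G\times G'}$, you discard precisely the information that controls singular members. Consider $W=\{x_0y_0+x_1y_1=0\}\subset\PP^2\times\PP^2$.
It is a terminal Gorenstein Fano threefold with a single node, and $\rho(W)=2$ by Grothendieck--Lefschetz.
Its smoothing lies in 2-32, and swapping the two factors makes $\Pic(W)^G$ of rank one.
So $W$ passes every test in your argument, yet it is not in (1.2.4). The theorem excludes it only because the planes $\{[0:0:1]\}\times\PP^2$ and $\PP^2\times\{[0:0:1]\}$ lie in the same ruling of the quadric cone at the node. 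Hence their sum is an $\Aut(W)$- and Galois-invariant Weil class that is not $\QQ$-Cartier, and $\Cl(W)^{G\times G'}$ has rank at least $2$.

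To close the gap you have to work on $\overline{X}$ directly:
use the compatibility of Mori cones and extremal contractions under smoothing only to determine the types and targets of the extremal contractions of $\overline{X}$ itself (targets such as $\PP^2$, $\PP^3$, $\PP^1\times\PP^1$, a quadric);
reconstruct $\overline{X}$ from the product of these contractions (a $(1,1)$-divisor in $\PP^2\times\PP^2$, a complete intersection in $\PP^3\times\PP^3$, a $(1,1,1,1)$-divisor in $(\PP^1)^4$, and so on);
then use the full hypothesis $\Cl(\overline{X})^{G\times G'}\simeq\ZZ$, i.e.\ equivariant $\QQ$-factoriality rather than invariant Picard rank one, to exclude or identify the non-$\QQ$-factorial degenerations in each family.

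The combinatorial step you defer is also misdescribed:
Rank-one invariants only force the orbit sums of extremal rays to be proportional; they do not force all rays into a single orbit.
$\PP^1\times S_6$, which you list, is not in the table and in fact violates the hypothesis. The ray of $\PP^1\times\{\mathrm{pt}\}$ is the only fiber-type ray, hence invariant, and together with the sum of the other rays it spans an invariant subspace of rank two. The same happens for $V_7$, whose two rays have different types.
Terminal Gorenstein Fano threefolds are certainly not free of nodes; the nodal genus-$12$ threefold treated later in the paper is an example.
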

\begin{rem}\label{remark: family (1.2.3) in Pro II}
  In~\cite{Pro II} family $(1.2.3)$ of smooth threefolds has a different description than in Table~\ref{table: possible minimal by Pro}, but this is the same family.
\end{rem}

\begin{table}[h]
\def\hline{\noalign{\hrule}}
\def\0{\hphantom0}
\def\d#1{\0\hbox to 0pt{\hss#1\hss}\0}
\def\vt#1{\vtop{\hsize=.6\textwidth\parindent0pt\normalbaselines
\parfillskip0pt plus 1fil\leftskip0pt\rightskip0pt
#1\unskip\nobreak\hskip0pt\lower3pt\copy\strutbox\par}}
{\offinterlineskip\tabskip0pt\relax
\halign to\textwidth{\vrule #\tabskip0pt plus 1fil\relax
&\hfil#\hfil&\vrule #%
&\hfil#\hfil&\vrule #%
&\hfil#\hfil&\vrule #%
&\hfil#\hfil&\vrule #%
&\hfil#\hfil&\vrule #%
&\hfil#\hfil&\vrule #%
&\hfil\vt{#}\hfil&\raise3pt\copy\strutbox\lower3pt\copy\strutbox
\vrule #\tabskip0pt\relax\cr
\hline
&\multispan3{\hfil No in\hfil}&&\multispan7{\hfil \hbox to 0pt{\hss Value \hss}\hfil}&&\omit&\cr
\multispan{13}{\hrulefill}&\omit&\omit\vrule\cr
&\cite{Pro II}&&\hbox to 33pt{\hss \cite{Isk Pro},\hskip1pt\cite{fanography}\hss}&&$\rho$&&$\iota$&&$g$&&\hbox to 0pt{\hss$h^{1,2}$\hss}&&\centering \leavevmode\smash{\raise10pt\hbox{Description of $X$}}&\cr
\hline
&(1.2.1)&&\href{https://www.fanography.info/2-6}{2-6\0}&&\d2&&\d1&&\07&&\d9&&Let $Z_6 \subset \mathbb{P}^8$ be the image of the Segre embedding of $\mathbb{P}^2 \times \mathbb{P}^2$ and let $Y_6 \subset \mathbb{P}^9$ be the projective cone over $Z_6$. Then $X$ is an intersection of $Y_6$ with a hyperplane and a quadric.&\cr
\hline
&(1.2.2)&&\href{https://www.fanography.info/2-12}{2-12}&&2&&1&&11&&3&&$X$ is an intersection of three divisors of bidegree $(1, 1)$ in $\mathbb{P}^3 \times \mathbb{P}^3$.&\cr
\hline
&(1.2.3)&&\href{https://www.fanography.info/2-21}{2-21}&&2&&1&&15&&0&& a) $X$ is smooth and isomorphic to a blow up of a smooth quadric $Q \subset \PP^4$ in a twisted quartic; \endgraf
b) $X$ is singular and isomorphic to a blow up of a quadratic cone $\relpenalty=10000 Q' \subset \PP^4$ with center a union of two conics that do not pass through the vertex and meet each other transversely.&\cr
\hline
&(1.2.4)&&\href{https://www.fanography.info/2-32}{2-32}&&2&&2&&25&&0&&$X \subset \mathbb{P}^2 \times \mathbb{P}^2$, a smooth divisor of bidegree $(1, 1)$.&\cr
\hline
&(1.2.5)&&\href{https://www.fanography.info/3-1}{3-1\0}&&3&&1&&\07&&8&&$X$ is a double cover of $\mathbb{P}^1 \times \mathbb{P}^1 \times \mathbb{P}^1$ whose branch locus is a member of $\mathopen|-K_{\mathbb{P}^1 \times \mathbb{P}^1 \times \mathbb{P}^1}\mathclose|$.&\cr
\hline
&(1.2.6)&&\href{https://www.fanography.info/3-13}{3-13}&&3&&1&&16&&0&&$X$ is an intersection of divisors of tridegrees $(0,1,1)$, $(1,0,1)$, $(1,1,0)$ in $\mathbb{P}^2 \times \mathbb{P}^2 \times \mathbb{P}^2$.&\cr
\hline
&(1.2.7)&&\href{https://www.fanography.info/3-27}{3-27}&&3&&2&&25&&0&&$X = \mathbb{P}^1 \times \mathbb{P}^1 \times \mathbb{P}^1$.&\cr
\hline
&(1.2.8)&&\href{https://www.fanography.info/4-1}{4-1\0}&&4&&1&&13&&1&&$X \subset \mathbb{P}^1 \times \mathbb{P}^1 \times \mathbb{P}^1 \times \mathbb{P}^1$ is a divisor of multidegree $(1,1,1,1)$.&\cr
\hline
}}\medskip
\caption{\centering Families of terminal Gorenstein Fano threefolds $X$ with $\rho(X) > 1$.}
\label{table: possible minimal by Pro}
\end{table}

The following lemma is useful for threefolds with small genus.

\begin{lemma}\label{lemma: action on sections}
    Let $k$ be a field of characteristic $0$. Let $X$ be a Gorenstein variety over $k$ such that the map $\Phi_{\mathopen|-K_X\mathclose|}$ given by the anticanonical linear system is regular and has degree~$d$. Let~$\relpenalty=10000 G \subset \Aut(X)$ be a subgroup. Then $G$ has a representation in $\GL(H^0(X,-K_X)^{\vee})$ with kernel of order at most $d!$. In particular, if $-K_X$ is very ample, then $G$ embeds to~$\GL(H^0(X,-K_X)^{\vee})$. 
\end{lemma}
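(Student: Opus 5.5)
The plan is to linearize the action of $G$ on the anticanonical sheaf and then control the kernel through the finite map $\Phi = \Phi_{\mathopen|-K_X\mathclose|}$. Every automorphism $g$ of $X$ preserves the canonical class, so $g^*\omega_X \simeq \omega_X$ canonically: pulling back top-degree differential forms on the smooth locus, which has complement of codimension at least $2$ since $X$ is normal, gives a canonical isomorphism, and $\omega_X$ is invertible because $X$ is Gorenstein. Dualizing, $-K_X$ is canonically $G$-linearized, with no choice of scalar involved. This makes $H^0(X,-K_X)$ a genuine (not just projective) representation of $G$, and hence so is its dual $V = H^0(X,-K_X)^{\vee}$, giving a homomorphism $\rho\colon G \to \GL(V)$.

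The map $\Phi\colon X \to \PP(V)$ is then $G$-equivariant, where $G$ acts on $\PP(V)$ through $\rho$. Let $g \in \ker\rho$. Then $g$ acts trivially on $\PP(V)$, so $\Phi\circ g = \Phi$, which means $g$ preserves every fiber of $\Phi$. Thus $\ker\rho$ acts on $X$ over the image $\Phi(X)$.

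Next I will use that $\Phi$ is generically finite of degree $d$. Over $\overline{k}$, a general point of the image has a fiber consisting of exactly $d$ reduced points, and $\ker\rho$ permutes these points. An automorphism of the irreducible variety $X$ that fixes a dense set of points is the identity, and general fibers cover a dense open subset of $X$. So I will take the permutation action of $\ker\rho$ on a single general fiber and argue that it is faithful. To do this, choose the general point outside the images of the proper closed sets $\{x : gx = x\}$ for the finitely many nontrivial $g \in \ker\rho$. If $\ker\rho$ is infinite, I will instead note that it embeds into the group of deck transformations of the generic fiber, which is $\Aut(k(X)/k(\Phi(X)))$, and this group has order at most $[k(X):k(\Phi(X))] = d$. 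Either way, $\ker\rho$ embeds into $\mathfrak{S}_d$, so $|\ker\rho| \leqslant d!$. In fact the field-theoretic argument gives the sharper bound $d$, which is stronger than what the lemma needs.

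If $-K_X$ is very ample, then $\Phi$ is an embedding, so $d = 1$ and $\ker\rho$ is trivial. Equivalently, an automorphism that commutes with a closed embedding and acts trivially on the ambient projective space is the identity. Hence $G$ embeds into $\GL(H^0(X,-K_X)^{\vee})$.

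The main subtlety is the linearization step. One must make sure the action on sections is a genuine linear representation and not merely projective; otherwise $\rho$ would only land in $\PGL(V)$ and its kernel would need an extra central factor. I will handle this through the canonical action on differential forms described in the first step, which requires no choice of scalar. A related point is that $\ker\rho$ acts trivially on $\PP(V)$ itself, not just on $\Phi(X)$, but only the restriction to $\Phi(X)$ is used in the argument.
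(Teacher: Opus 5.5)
Your proof is correct and is essentially the paper's argument. The natural action on anticanonical sections over the smooth locus gives a genuine linear representation, elements of its kernel act trivially on the image of $\Phi_{\mathopen|-K_X\mathclose|}$, and faithfulness on a general fiber then embeds the kernel into $\mathfrak{S}_d$. Your function-field remark, embedding the kernel into $\Aut(k(X)/k(\Phi(X)))$, is a mild improvement: it sharpens $d!$ to $d$ and removes the finiteness of $G$ that the paper's proof tacitly assumes (``since $G$ is finite''), although the paper only applies the lemma to finite groups.
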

\begin{proof}
    Denote by $X^{\sm}$ the smooth locus of $X$. Then  the group $G$ acts linearly on a space of vector fields on $X^{\sm}$ giving a group homomorphism $\alpha: G \rightarrow \GL(H^0(X,-K_X))$ and, therefore, a group homomorphism $\alpha^{\vee}:  G \rightarrow \GL(H^0(X,-K_X)^{\vee})$. It is left to show that~$\mathopen|\ker\alpha^{\vee}\mathclose| \leqslant d!$. 

Denote the image of $\Phi_{\mathopen|-K_X\mathclose|}$ by $Y$. Since the map $\Phi_{\mathopen|-K_X\mathclose|}$ is $G$-equivariant, then we have a short exact sequence of groups $$1 \xrightarrow{}{} G'\xrightarrow{}{} G\xrightarrow{}{} G_Y\xrightarrow{}{} 1,$$ where $G_Y$ acts faithfully on $Y$ and $G'$ permutes points in fibers. Since $G$ is finite, there exist a fiber over $\overline{k}$ such $G'$ acts faithfully on this fiber, i.e. there is an embedding of~$G'$ to a symmetric group $\mathfrak{S}_d$ and $|G'| \leqslant d!$. Note that elements from $\mathopen|\ker\alpha^{\vee}\mathclose|$ act trivially on~$Y$. Indeed, if we have an element $g\in G$ such that $\alpha^{\vee}(g) = \id$. Then composing with a natural projection $$p: \GL(H^0(X,-K_X)^{\vee}) \rightarrow \PGL(H^0(X,-K_X)^{\vee})$$ we obtain an equality $(p \circ \alpha^{\vee})(g) = \id$ and, in particular, $\relpenalty=10000 g = (p \circ \alpha^{\vee})(g)|_Y = \id$. Therefore, we have~$\relpenalty=10000 \ker \alpha^{\vee} \subset G'$ and $\mathopen|\ker\alpha^{\vee}\mathclose| \leqslant d!$.
\end{proof}

%Now, let us bound orders of finite subgroups of automorphism groups of threefolds of introduced types.

\begin{lemma}\label{lemma: (1.2.1) and (1.2.5)}
    Let $X$ be a terminal Gorenstein Fano threefold over $\QQ$ with $g(X)=7$. Let~$\relpenalty=10000 G \subset \Aut(X)$ be a finite subgroup. Then $$|G| \leqslant 5\,573\,836\,800 < 10^{10}.$$ 
\end{lemma}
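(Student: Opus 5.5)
The plan is to reduce to a linear action fixing a point defined over a field of degree at most $2$. Then Corollary~\ref{corollary: additive bound GL_4(<=2)} applies, at the cost of a subgroup index of at most $64$. The target bound factors exactly in this way: $5\,573\,836\,800 = 64\cdot 87\,091\,200$.

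First, by Theorem~\ref{theo: smoothing}, $\overline{X}$ degenerates from a smooth Fano threefold $\mathfrak{X}_b$ with $\iota=1$, $g=7$ and the same Picard group. The smooth Fano threefolds with these invariants are:
\begin{itemize}
\item the prime Fano threefold $V_{12}$ ($\rho=1$, $h^{1,2}=7$);
\item family 2-6 ($\rho=2$, $h^{1,2}=9$), entry $(1.2.1)$ of Table~\ref{table: possible minimal by Pro};
\item family 3-1 ($\rho=3$, $h^{1,2}=8$), entry $(1.2.5)$ of Table~\ref{table: possible minimal by Pro}.
\end{itemize}
In each case Theorem~\ref{theo: smoothing} bounds the number $N$ of singular points of $\overline{X}$ by $20-\rho+h^{1,2}\leqslant 27$.

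Second, I split according to whether $X$ is smooth or singular.

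\emph{Singular case.} The group $G$ permutes the $N$ singular points, and $\Gal(\overline{\QQ}/\QQ)$ commutes with this action. I would look for a $G$-orbit of singular points whose points are defined over a field of degree at most $2$, after passing to a subgroup of index at most $64$. The natural route is to exploit the extra structure. For 2-6 there are two contractions onto $\PP^2$-type targets. For 3-1 the double cover of $(\PP^1)^3$ gives three projections. In each case $G$ and the Galois group act on the set of extremal rays through a quotient of order at most $6$, so I can pass to a subgroup and a quadratic (or trivial) extension preserving all rays. Then Lemma~\ref{lemma: passing to stabilizer and acting on a tangent space} and Theorem~\ref{theo: action with fixed point} give an embedding of a subgroup $G_\bullet$ into $\GL(T_PX_K)$ with $[K:\QQ]\leqslant 2$. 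Since $X$ is Gorenstein terminal, $P$ is a hypersurface singularity, so $\dim T_PX_K=4$. Corollary~\ref{corollary: additive bound GL_4(<=2)} then gives $|G_\bullet|\leqslant 87\,091\,200$, hence $|G|\leqslant 64\cdot 87\,091\,200$ provided $[G:G_\bullet]\leqslant 64$.

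\emph{Smooth case.} When $\rho(\overline{X})>1$, I would instead use the fibration structures, as in Lemma~\ref{lemma: GQ-conic and GQ-del Pezzo}, together with the conic and $\PGL_2$ bounds of Section~\ref{sect: GL and PGL}. When $\rho=1$ ($V_{12}$), I would use Lemma~\ref{lemma: action on sections}: $-K_X$ is very ample, so $G\subset \GL_9(\QQ)$. This embedding alone is far too weak. Better, the automorphism group of a smooth $V_{12}$ is known to be small (finite, of small order), so its finite subgroups sit well under the bound.

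The main obstacle is the singular case: producing a singular point defined over a field of degree at most $2$ that is fixed by a subgroup of index at most $64$, when $N$ can be as large as $27$. I would first try to show that the Galois and $G$-orbits of singular points respect the combinatorial structure (the two or three projections), so that orbits split into pieces of size at most $2$ over a quadratic field. If this fails, I would fall back on Corollary~\ref{corollary: additive bound GL_4(<=7)} for orbits of size at most $7$, checking numerically that $7\cdot 1\,132\,185\,600$ still fits under the claimed bound.
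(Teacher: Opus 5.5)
There is a genuine gap, exactly at the step you flag as the main obstacle, and your fallback does not close it.

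Nothing in the proposal produces a singular point defined over a field of degree at most $2$ and fixed by a subgroup of index at most $64$. Letting $G$ and $\Gal(\overline{\QQ}/\QQ)$ act on the two or three extremal rays only makes the contractions defined over a small field. It gives no control over the orbits of the (up to $27$) singular points, and those orbits can be as long as $N$. Lemma~\ref{lemma: passing to stabilizer and acting on a tangent space} then only yields a field of degree up to $27$, for which no usable Schur-type bound for $\GL_4$ is available.

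The fallback also fails numerically: $7\cdot 1\,132\,185\,600 = 7\,925\,299\,200 > 5\,573\,836\,800$. Moreover, orbits of length at most $7$ are not guaranteed in the first place. The factorization $5\,573\,836\,800 = 64\cdot 87\,091\,200$ is a numerical coincidence; the actual shape of the bound is $2\cdot 2^{16}\cdot 3^5\cdot 5^2\cdot 7$. (Secondary issues: your list of smooth threefolds with $\iota=1$, $g=7$ omits families 2-5 and 9-1. The smooth $\rho>1$ case via fibrations is only sketched and would need rational points that the statement does not assume.)

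The missing idea is the tool you dismissed. Since $h^0(X,-K_X) = g+2 = 9$, Theorem~\ref{theo: Minkowski} for $\GL_9(\QQ)$ gives $2^{16}\cdot 3^5\cdot 5^2\cdot 7 = 2\,786\,918\,400$. This is not too weak at all: it is exactly half of the claimed bound. The paper argues via the anticanonical map for every $X$ with $g=7$, smooth or singular, of any Picard rank.

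\emph{Base-point-free case.} If $\Bs|-K_X|=\varnothing$, the anticanonical morphism has degree $1$ or $2$ (Iskovskikh--Prokhorov, Proposition~2.1.15). Lemma~\ref{lemma: action on sections} then gives a representation of $G$ on $H^0(X,-K_X)^{\vee}\simeq\QQ^9$ with kernel of order at most $2$, whence $|G|\leqslant 2\cdot 2\,786\,918\,400$.

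\emph{Nonempty base locus.} If $\Bs|-K_X|\neq\varnothing$, then by Iskovskikh--Prokhorov, Proposition~2.4.1, it is one of two things.
It may be a single singular point. That point is then a $G$-fixed $\QQ$-point, so $G\subset\GL_4(\QQ)$ and $|G|\leqslant 5760$.
Otherwise it is a smooth conic $C$. The image of $G$ in $\Aut(C)$ has order at most $24$ by Corollary~\ref{corollary: automorphsms of conic over Q}. The kernel fixes a smooth point of $C$ defined over a quadratic field $K$, hence embeds in $\GL_3(K)$ and has order at most $2\,903\,040$ by Corollary~\ref{corollary: additive bound GL_3(<=2)}.

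No counting of singular points and no $\QQ$-point hypothesis is needed.
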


\begin{proof}
We have $\dim H^0(X,-K_X) = g(X)+2 = 9.$ Then consider an anticanonical map $$\Phi_{\mathopen|-K_X\mathclose|}: X \dashrightarrow \PP^8.$$ 
    
    If $\Bs\mathopen|-K_X\mathclose| \neq \varnothing$, then there are two possibilities according to~\cite[Proposition 2.4.1]{Isk Pro}. Either $\Bs\mathopen|-K_X\mathclose|$ consists of one singular rational point $P$, or $\Bs\mathopen|-K_X\mathclose|$ is a smooth conic~$C$. In the former case group $G$ fixes point $P$, therefore $|G| \leqslant 5760$ by Theorem~\ref{theo: action with fixed point} and Theorem~\ref{theo: Minkowski}. In the latter case there exist a short exact sequence $$1 \rightarrow G' \rightarrow G \rightarrow G'' \rightarrow 1,$$ where $G'$ acts trivially on $C$ and $G'' \subset \Aut(C)$. We have $|G''| \leqslant 24$ by Corollary~\ref{corollary: automorphsms of conic over Q}, so it is left to bound the order of $G'$. Consider a field extension $\QQ \subset K$ of order $2$ such that~$C_K$ is rational and choose $K$-point $Q$ in $C_K \setminus \Sing(X)$. Then $Q$ is fixed by $G'$ and~$\relpenalty=10000 G' \subset \GL_3(K)$ by Theorem~\ref{theo: action with fixed point}. Therefore, $\relpenalty = 10000|G'| \leqslant 2\,903\,040$ by Corollary~\ref{corollary: additive bound GL_3(<=2)} and $$|G| = |G''|\cdot|G'| \leqslant 24 \cdot 2\,903\,040 = 69\,672\,960 < 10^8.$$

    If $\Bs\mathopen|-K_X\mathclose| = \varnothing$, then $\Phi_{\mathopen|-K_X\mathclose|}$ has degree $1$ or $2$ by~\cite[Proposition 2.1.15]{Isk Pro}. Therefore, by Lemma~\ref{lemma: action on sections} group $G$ has a representation in $H^0(X,-K_X) \simeq \QQ^9$ with kernel of order at most $2$. Therefore, $$|G| \leqslant 2\cdot 2^{16}\cdot3^5\cdot5^2\cdot7 = 
5\,573\,836\,800 < 10^{10}$$ by Theorem~\ref{theo: Minkowski}.
\end{proof}

In the following $8$ lemmas we bound finite subgroups of forms of varieties from Table~\ref{table: possible minimal by Pro}.

\begin{lemma}\label{lemma: (1.2.7)}
    Let $\QQ \subset K$ be a field extension of degree at most $4$ (possibly,~$\relpenalty=10000 K = \QQ$). Let~$X$ be a terminal Gorenstein Fano threefold over $K$ such that $\overline{X} \simeq \PP^1 \times \PP^1 \times \PP^1$. Let~$\relpenalty=10000 G \subset \Aut(X)$ be a finite subgroup. Then $$|G| \leqslant 444\,528\,000 < 10^9.$$
\end{lemma}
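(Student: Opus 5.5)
The plan is to use the structure $\Aut(\overline{X}) \simeq (\PGL_2(\overline{\QQ}))^3 \rtimes \mathfrak{S}_3$. I will bound separately the part of $G$ that preserves each of the three rulings and the permutation part. Note that $444\,528\,000 = 6\cdot 420^3$. This suggests the target: an index-$\leqslant 6$ subgroup embeds into a product of three groups, each of order at most $420$, the bound from Lemma~\ref{lemma: PGL_2(L), L <= 4,6,24}(4) for $d=24$.

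\textbf{Step 1 (the permutation part).} The three projections $\overline{X} \to \PP^1$ are permuted both by $G$ and by $\Gal(\overline{\QQ}/K)$, and these actions commute. Let $G_0 \subset G$ be the kernel of the action of $G$ on the set of the three projections. Then $[G:G_0] \leqslant 6$, so it suffices to prove $|G_0| \leqslant 420^3$.

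\textbf{Step 2 (descending the projections).} Fix one projection $p_i$. Its stabilizer in $\Gal(\overline{\QQ}/K)$ has index at most $3$, so it is $\Gal(\overline{\QQ}/L_i)$ for a field $K \subset L_i$ with $[L_i:\QQ] \leqslant 12$. By Galois descent, $p_i$ is defined over $L_i$ as a morphism $X_{L_i} \to C_i$, where $C_i$ is a form of $\PP^1$ over $L_i$, i.e.\ a smooth conic. This is where some care is needed: one has to check that the contraction of the ruling descends, for instance by taking $C_i$ to be the base of the extremal contraction of $X_{L_i}$ corresponding to that ray, which is Galois-stable over $L_i$. Since $G_0$ is defined over $K$ and preserves $p_i$, it acts on $C_i$, giving a homomorphism $G_0 \to \Aut(C_i)$.

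\textbf{Step 3 (bounding each factor and concluding).} The conic $C_i$ becomes isomorphic to $\PP^1$ over a quadratic extension $F_i \supset L_i$ with $[F_i:\QQ] \leqslant 24$. Hence the image of $G_0$ in $\Aut(C_i)$ is a finite subgroup of $\PGL_2(F_i)$, of order at most $420$ by Lemma~\ref{lemma: PGL_2(L), L <= 4,6,24}. An element of $G_0$ acting trivially on all three factors acts trivially on $\overline{X} = \PP^1\times\PP^1\times\PP^1$. So $G_0$ embeds into the product of its three images, giving $|G_0| \leqslant 420^3$ and hence $|G| \leqslant 6\cdot 420^3 = 444\,528\,000$.

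The main obstacle is Step 2, namely making the descent of each projection to a conic over $L_i$ rigorous and $G_0$-equivariant. The counting steps are routine. The argument is deliberately crude: when the rulings are not all defined over $K$, some of the $L_i$ coincide up to conjugacy and the bound could be sharpened, but the uniform bound of $420$ per factor already suffices.
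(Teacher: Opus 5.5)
Your proof is correct and reaches the paper's number, $6\cdot 420^3=444\,528\,000$, but you decompose the group differently.

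\textbf{The paper's route.} It passes to the field $L$ cut out by the kernel of the Galois action on all three extremal rays, so $[L:\QQ]\leqslant 24$. It takes the stabilizer $G'$ of a single ray $R$ (index $\leqslant 3$) and contracts $R$ to a surface $X'$, which it identifies with $\PP^1_L\times\PP^1_L$ using an $L$-point. It then applies the fibration sequence $1\to G'_F\to G'\to G'_B\to 1$ from Lemma~\ref{lemma: smooth rational fiber in fibrations}, with $G'_F\subset\PGL_2(L)$ and $G'_B\subset(\PGL_2(L)\times\PGL_2(L))\rtimes\mu_2$. This gives $3\cdot 420\cdot(2\cdot 420^2)$.

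\textbf{Your route.} You remove the whole permutation action at once (index $\leqslant 6$). You then descend each one-factor projection separately to a conic over a field of degree $\leqslant 12$. You pay for a possible lack of rational points on that conic with a quadratic splitting field, which again lands you in degree $\leqslant 24$.

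\textbf{What each approach buys.} Yours needs no rational points. The paper's step ``$X'$ contains an $L$-point'' is not among the lemma's hypotheses as stated; neither is the existence of a fiber with an $L$-point, needed for $G'_F\subset\PGL_2(L)$. Both do hold in the paper's applications, where $X$ has a rational point. Your argument proves the statement as written. The paper's route, in exchange, reuses its fibration lemma and needs no separate descent step.

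\textbf{One correction to your Step~2.} The projection $p_i$ onto a single factor does not contract a single extremal ray. It contracts the two-dimensional extremal face spanned by the classes of lines in the other two factors. Contracting one ray instead gives a $\PP^1$-bundle over a form of $\PP^1\times\PP^1$. That face has the same Galois stabilizer as the class of a line in the $i$-th factor, so it is stable over $L_i$. Its contraction is the morphism $X_{L_i}\to C_i$ you want. Equivalently, the Galois-invariant base-point-free class $p_i^*\mathcal{O}(1)$ defines a morphism to a Severi--Brauer curve over $L_i$. The $G_0$-action then descends to $C_i$ by rigidity, since $p_i$ has connected fibers.
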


\begin{proof}
    The Mori cone of $\overline{X}$ contains three extremal rays corresponding to three $\PP^1$-bundle structures over $\PP^1\times \PP^1$. The group $\Gal(\overline{\QQ}/K)$ acts on these extremal rays, denote by~$\relpenalty = 10000 H \subset \Gal(\overline{\QQ}/K)$ the kernel of this action. Then $H$ has index at most $6$, field of invariants $L = (\overline{\QQ})^{H}$ has degree at most $4 \cdot 6 = 24$ over~$\QQ$ and $\rho(X_K) = 3$. Also, each extremal contraction is defined over $L$. Consider one of them
    $$f : X_L \rightarrow X',$$ which corresponds to a ray $R$.
    Then $X'$ is a surface over $L$ with $\rho(X') = 2$, containing an~\hbox{$L$-point}, such that $\overline{X'} \simeq \PP^1 \times \PP^1$. By~\cite[Lemma 7.3]{SV} we have $X' \simeq \PP^1_L \times \PP^1_L$. Moreover, group $G$ also acts on the Mori cone, and if we denote by $G' \subset G$ the stabilizer~$R$, then~$[G:G'] \leqslant 3$ and $f$ is $G'$-equivariant. By Lemma~\ref{lemma: smooth rational fiber in fibrations} there exists a short exact sequence $$1 \rightarrow G'_F \rightarrow G' \rightarrow G'_B \rightarrow 1,$$ where $G'_F \subset \PGL_2(L)$ and $G'_B \subset (\PGL_2(L) \times \PGL_2(L)) \rtimes \mu_2$. Then $|G'_F| \leqslant 420$ and~$\relpenalty=10000 |G'_B| \leqslant 420\cdot420\cdot2 = 352\,800$ both by Lemma~\ref{lemma: PGL_2(L), L <= 4,6,24}. Finally, $$|G| = [G:G']\cdot|G'| \leqslant 3 \cdot 420 \cdot 352\,800  = 444\,528\,000 < 10^9.$$
\end{proof}

\begin{lemma}\label{lemma: (1.2.2)}
    Let $X$ be a terminal Gorenstein Fano threefold over $\QQ$ with a $\QQ$-point such that $\overline{X}$ is isomorphic to an intersection of three divisors of bidegree $(1,1)$ in~$\PP^3 \times \PP^3$. Let~$\relpenalty=10000 G \subset \Aut(X)$ be a finite subgroup. Then $$|G| \leqslant 1\,886\,976\,000 < 10^{10}.$$
\end{lemma}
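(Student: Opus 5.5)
The bound $1\,886\,976\,000 = 2\cdot 943\,488\,000$ suggests the plan: pass to a subgroup of index at most $2$ and a field of degree at most $2$ on which both projections of $\overline{X}\subset \PP^3\times\PP^3$ are defined. Then embed that subgroup in $\PGL_4(K)$ and apply Corollary~\ref{corollary: serr for PGL_4(<=2)}.

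First, describe the two contractions. By Theorem~\ref{theo: smoothing}, $\Pic(\overline{X})\simeq\Pic(\mathfrak{X}_b)\simeq\ZZ^2$. The Mori cone (equivalently, the nef cone) of $\overline{X}$ has two extremal faces, spanned by the pullbacks $H_1,H_2$ of the hyperplane classes of the two factors. The corresponding morphisms $\pi_i\colon \overline{X}\to\PP^3$ are the restrictions of the projections. They are birational onto $\PP^3$: for the smooth member $\mathfrak{X}_b$ these are the blow-ups of $\PP^3$ in a sextic curve of genus $3$. The intersection numbers $H_1^3=H_2^3=1$ are constant in the flat family, so the same holds on $\overline{X}$.

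Second, descend to a field. The group $\Gal(\overline{\QQ}/\QQ)$ acts on the set $\{H_1,H_2\}$. Let $K$ be the fixed field of the kernel of this action, so $[K:\QQ]\leqslant 2$. Since $X$ has a $\QQ$-point, the isomorphism $\Pic(X_K)\simeq\Pic(\overline{X})^{\Gal(\overline{\QQ}/K)}$ from the Remark in this section shows that $H_1$ is defined over $K$. Hence the contraction $\pi_1\colon X_K\to Y$ is defined over $K$, and $Y$ is a $K$-form of $\PP^3$, i.e.\ a Severi--Brauer threefold. The image of the $\QQ$-point of $X$ is a $K$-point of $Y$, so $Y\simeq\PP^3_K$ by Theorem~\ref{theo: Chatelet's theorem}. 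Similarly, $G$ acts on the two rays; let $G'\subset G$ be the stabilizer of the ray of $H_1$, so $[G:G']\leqslant 2$. Then $\pi_1$ is $G'$-equivariant.

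Third, conclude. Because $\pi_1$ is birational, any element of $G'$ acting trivially on $Y$ acts trivially on a dense open subset of $X_K$, hence trivially on $X_K$. Thus $G'\hookrightarrow\Aut(\PP^3_K)=\PGL_4(K)$, and Corollary~\ref{corollary: serr for PGL_4(<=2)} gives $|G'|\leqslant 943\,488\,000$. Therefore $|G|\leqslant 2\cdot 943\,488\,000=1\,886\,976\,000$.

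The main obstacle I expect is the singular case: checking that the two nef classes of the singular $\overline{X}$ still give birational morphisms onto $\PP^3$. Here I would rely on the embedding in $\PP^3\times\PP^3$, which is part of the hypothesis, together with the deformation invariance of $H_i^3=1$. Degree $1$ onto $\PP^3$ forces birationality.
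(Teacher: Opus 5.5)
Your proposal is correct and follows the paper's proof. You descend to a field $K$ of degree at most $2$ over which one $\PP^3$-contraction is defined, identify its target with $\PP^3_K$ via Chatelet's theorem, pass to the stabilizer $G'$ of the ray (index at most $2$), and apply Corollary~\ref{corollary: serr for PGL_4(<=2)}. Your extra justifications, namely birationality of $\pi_1$ via $H_1^3=1$ (which can also be read off directly from the complete intersection in $\PP^3\times\PP^3$) and faithfulness of $G'\to\PGL_4(K)$, fill in details the paper leaves implicit.
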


\begin{proof}
The Mori cone of $\overline{X}$ contains two extremal rays corresponding to divisorial contractions to $\PP^3$. The group $\Gal(\overline{\QQ}/\QQ)$ acts on these extremal rays, denote by~$\relpenalty = 10000 H \subset \Gal(\overline{\QQ}/\QQ)$ the kernel of this action. Then $H$ has index at most $2$ and the field of invariants $K = (\overline{\QQ})^{H}$ has degree at most $2$ over~$\QQ$. Also, each extremal contraction is defined over $K$. Consider one of them
$$f : X_K \rightarrow X',$$ which corresponds to a ray $R$. Then $X'$ is a Severi--Brauer threefold with a $K$-point. By Theorem~\ref{theo: Chatelet's theorem}, we obtain that $X'$ is isomorphic to $\mathbb{P}^3_{K}$. Moreover, group $G$ also acts on the Mori cone, and if we denote by $G' \subset G$ the stabilizer of $R$, then~$[G:G'] \leqslant 2$, contraction~$f$ is $G'$-equivariant and $G' \subset \PGL_4(K)$. Then $|G'| \leqslant 943\,488\,000$ by Corollary~\ref{corollary: serr for PGL_4(<=2)} and $$|G| = [G:G'] \cdot |G'| \leqslant 1\,886\,976\,000 < 10^{10}.$$
\end{proof}

\begin{lemma}\label{lemma: (1.2.4)}
Let~$X$ be a smooth Fano threefold over $\QQ$ with a $\QQ$-point such that $\overline{X}$ is isomorphic to a divisor of bidegree $(1,1)$ in $\PP^2 \times \PP^2$. Let~$\relpenalty = 10000 G \subset \Aut(X)$ be a finite subgroup. Then $$|G| \leqslant  125\,798\,400 < 10^{9}.$$
\end{lemma}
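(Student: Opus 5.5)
The plan is to follow the proof of Lemma~\ref{lemma: (1.2.2)}: reduce to a subgroup of small index acting on $\PP^2_K$ over a field $K$ with $[K:\QQ]\leqslant 2$, then apply Corollary~\ref{corollary: serr for PGL_3(<=2)}.

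\textbf{Step 1: pass to a field where both contractions are defined.} The Mori cone of $\overline{X}$ has exactly two extremal rays. They correspond to the two projections $p_1,p_2$ of $\overline{X}\subset \PP^2\times\PP^2$, and each is a $\PP^1$-bundle over $\PP^2$. The group $\Gal(\overline{\QQ}/\QQ)$ permutes these two rays. Let $H$ be the kernel of this action; it has index at most $2$. Put $K=(\overline{\QQ})^H$, so $[K:\QQ]\leqslant 2$ and both contractions are defined over $K$.

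\textbf{Step 2: identify the bases.} Consider one contraction $f\colon X_K\to X'$ corresponding to a ray $R$. Then $X'$ is a Severi--Brauer surface over $K$, and it contains a $K$-point, namely the image of the $\QQ$-point of $X$. By Theorem~\ref{theo: Chatelet's theorem}, $X'\simeq \PP^2_K$. The same argument applies to the other contraction $f_2\colon X_K\to X''$, giving $X''\simeq\PP^2_K$.

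\textbf{Step 3: pass to a subgroup fixing the rays.} The group $G$ acts on the two rays. Let $G'\subset G$ be the stabilizer of $R$; then $[G:G']\leqslant 2$. Since there are only two rays, $G'$ fixes both, so $f$ and $f_2$ are both $G'$-equivariant. In particular $G'$ acts on $X_K$ compatibly with the embedding $X_K\hookrightarrow X'\times X''=\PP^2_K\times\PP^2_K$ given by $(f,f_2)$.

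\textbf{Step 4: show $G'$ acts faithfully on $X'$.} This is the key point. Over $\overline{\QQ}$, $X$ is the incidence variety of points and lines in $\PP^2$. The fiber of $f$ over a point $p$ is mapped by $f_2$ isomorphically onto a line $\ell_p\subset X''$, and the lines $\ell_p$ for distinct $p$ are distinct. Let $g\in G'$ act trivially on $X'$. Then $g$ preserves every fiber of $f$, so it maps every line $\ell_p$ to itself. Every point of $X''$ is the intersection of two such lines, so $g$ acts trivially on $X''$. Since $X_K$ embeds into $X'\times X''$, $g$ acts trivially on $X$.

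\textbf{Step 5: conclude.} By Step 4, $G'\hookrightarrow \Aut(\PP^2_K)=\PGL_3(K)$, so Corollary~\ref{corollary: serr for PGL_3(<=2)} gives $|G'|\leqslant 2\,620\,800$. Hence
$$|G|\leqslant 2\cdot 2\,620\,800=5\,241\,600\leqslant 125\,798\,400.$$

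If Step 4 turned out to be delicate, there is a cruder fallback. Use the exact sequence $1\to G'_F\to G'\to G'_B\to 1$ from Lemma~\ref{lemma: smooth rational fiber in fibrations}, with $G'_B\subset\PGL_3(K)$. The group $G'_F$ acts on a smooth fiber, which is a conic over an extension of $K$ of degree at most $2$. Bounding $|G'_F|$ by Lemma~\ref{lemma: PGL_2(L), L <= m} (for instance $|G'_F|\leqslant 24$) still gives
$$|G|\leqslant 2\cdot 24\cdot 2\,620\,800=125\,798\,400.$$
This matches the stated bound exactly, so the fallback alone already proves the lemma.
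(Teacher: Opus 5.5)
Your main argument (Steps 1--5) is correct. It departs from the paper exactly at Step 4.

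The paper also carries out Steps 1--3, but after that it uses only the single contraction $f$. It takes the sequence $1\to G'_F\to G'\to G'_B\to 1$ from Lemma~\ref{lemma: smooth rational fiber in fibrations} and views $G'_F\subset\PGL_2(K)$ and $G'_B\subset\PGL_3(K)$. It then multiplies $|G'_F|\leqslant 24$ (Lemma~\ref{lemma: PGL_2(Q(sqrt(d)))}), $|G'_B|\leqslant 2\,620\,800$ (Corollary~\ref{corollary: serr for PGL_3(<=2)}) and the index $2$.

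You instead observe that $G'$ automatically fixes the second ray as well. You then use the incidence structure of $\overline{X}$ to show $G'_F=1$: smoothness forces the bilinear form to be nondegenerate, so the fibers of $f$ map onto all lines of $X''$. This is sound, because an element of $G'_F$ induces an automorphism of $X''$ preserving every line, hence trivial, and $(f,f_2)$ is a closed embedding.

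Your route gives the sharper bound $|G|\leqslant 5\,241\,600$. It also needs neither a fiber of $f$ with a $K$-point nor any analysis of finite subgroups of $\PGL_2$. The paper's route is the uniform fiber--base template it also uses for families $(1.2.7)$ and $(1.2.6)$. That suffices because this case is nowhere near the bottleneck of Theorem~\ref{theo: cr_3}.

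Your fallback paragraph, however, does not reach the stated number as written. Lemma~\ref{lemma: PGL_2(L), L <= m} never gives anything below $60$, because it does not rule out $\mathfrak{A}_5$. If you only know the fiber is a conic over $K$, it splits over a field of degree up to $4$ over $\QQ$. There $\mathfrak{A}_5$ really can occur in the automorphism group of a pointless conic, e.g.\ $x^2+y^2+z^2=0$ over $\QQ(\sqrt5)$. That would give $2\cdot 60\cdot 2\,620\,800>125\,798\,400$.

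To get $24$ you need two things, which is what the paper does:
\begin{enumerate}
\item a fiber isomorphic to $\PP^1_K$, i.e.\ a fiber with a $K$-point, as provided by Lemma~\ref{lemma: smooth rational fiber in fibrations} once $K$-points are dense;
\item Lemma~\ref{lemma: PGL_2(Q(sqrt(d)))}.
\end{enumerate}
Since Step 4 already proves the lemma, this does not affect the validity of your proof.
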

\begin{proof}
    The Mori cone of $\overline{X}$ contains two extremal rays corresponding to two \mbox{$\PP^1$-bundle} structures over $\PP^2$. The group $\Gal(\overline{\QQ}/\QQ)$ acts on these extremal rays, denote by~$\relpenalty = 10000 H \subset \Gal(\overline{\QQ}/\QQ)$ the kernel of this action. Then $H$ has index at most $2$ and the field of invariants $K = (\overline{\QQ})^{H}$ has degree at most $2$ over~$\QQ$. Also, each extremal contraction is defined over $K$. Consider one of them
    $$f : X_K \rightarrow X',$$ corresponding to a ray $R$. Then $X'$ is a Severi--Brauer surface with an $K$-point. By Theorem~\ref{theo: Chatelet's theorem}, we obtain that $X'$ is isomorphic to $\mathbb{P}^2_{K}$. Moreover, group $G$ also acts on the Mori cone, and if we denote by $G' \subset G$ the stabilizer of $R$, then~$[G:G'] \leqslant 2$ and the contraction $f$ is $G'$-equivariant. By Lemma~\ref{lemma: smooth rational fiber in fibrations} there exists a short exact sequence  $$1 \rightarrow G'_F \rightarrow G' \rightarrow G'_B \rightarrow 1,$$ where $G'_F \subset \PGL_2(K)$ and $G'_B \subset \PGL_3(K)$. Then $|G'_F| \leqslant 24$  by Lemma~\ref{lemma: PGL_2(Q(sqrt(d)))} and $$|G'_B| \leqslant 2\,620\,800 < 10^{7}$$ by Corollary~\ref{corollary: serr for PGL_3(<=2)}. And the final bound is $$|G| = [G:G']\cdot|G'| \leqslant 2 \cdot 24 \cdot 2\,620\,800 = 125\,798\,400 < 10^{9}.$$
\end{proof}

\begin{lemma}\label{lemma: (1,1) in P^2 x P^2 fixing curve}
    Let $\QQ \subset K$ be a field extension of degree at most $3$ (possibly, $K = \QQ$). Let~$W$ be an irreducible projective threefold over $K$ with a $K$-point such that~$\relpenalty=10000 \overline{W} \subset \PP^2 \times \PP^2$ is a divisor of bidegree $(1,1)$. Let $C \subset W$ be a curve of bidegree $(2,2)$ in $\PP^2 \times \PP^2$. Let~$\relpenalty=10000 G \subset \Aut(W)$ be a finite subgroup preserving curve $C$. Then $$|G| \leqslant 21\,337\,344 < 10^8.$$
\end{lemma}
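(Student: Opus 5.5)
The plan is to imitate the proof of Lemma~\ref{lemma: (1.2.4)}, but to use the invariant curve $C$ to replace the crude bound on $\PGL_3$ by bounds on automorphisms of rational curves and on $\GL_2$. The target is the factorization
$$21\,337\,344 = 2\cdot 84\cdot 84\cdot 1512 = 2^8\cdot 3^5\cdot 7^3,$$
and I will try to realise each factor at one step.

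First, the absolute Galois group of $K$ acts on the two extremal rays of the Mori cone of $\overline{W}$. Let $H$ be the kernel of this action and $L=(\overline{\QQ})^{H}$; then $[L:\QQ]\leqslant 6$, and both $\PP^1$-bundle contractions $W_L\to\PP^2$ are defined over $L$. The target of each contraction is a Severi--Brauer surface with an $L$-point, hence $\PP^2_L$ by Theorem~\ref{theo: Chatelet's theorem}. Let $G'\subset G$ be the stabilizer of one ray, so $[G:G']\leqslant 2$ (the factor $2$) and $f\colon W_L\to\PP^2_L$ is $G'$-equivariant. This gives the standard sequence $1\to G'_F\to G'\to G'_B\to 1$.

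Second, I bound $G'_B$ using $C$. The image $D=f(C)$ is a conic in $\PP^2_L$, since $C$ has bidegree $(2,2)$, and $D$ is $G'_B$-invariant. An element of $\PGL_3$ acting trivially on a smooth conic is trivial, so $G'_B\hookrightarrow\Aut(D)$. If $D$ has an $L$-point (which I would try to extract from the $K$-point of $W$, or otherwise absorb into a quadratic extension), then $D\simeq\PP^1_L$, and Lemma~\ref{lemma: PGL_2(L), L <= 4,6,24} with $d=6$ gives $|G'_B|\leqslant 84$ (the first factor $84$). If a quadratic extension is unavoidable, I would instead use Lemma~\ref{lemma: conic over L <=6}, at the cost of a worse constant.

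Third, I bound $G'_F$, which preserves every fiber of $f$ and preserves $C$. It therefore acts on $C$ over $D$, and I would split it by the restriction map to $\Aut(C)$. The image of $G'_F$ in $\Aut(C)$ lies in the automorphisms of the rational curve $C$ over $L$, so by the same $\PGL_2$ argument it has order at most $84$ (the second factor $84$). The kernel fixes $C$ pointwise. Choose a point $Q\in C$, over $L$ or a small extension, that is not fixed by any nontrivial element outside the kernel. By Theorem~\ref{theo: action with fixed point} the kernel embeds in $\GL(T_QW)$. It acts trivially on the line $T_QC$, so it acts faithfully through the $2$-dimensional quotient $T_QW/T_QC$, because a finite group acting trivially on a line and on the complementary quotient is unipotent and hence trivial. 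This places the kernel in $\GL_2(L)$, so its order is at most $1512$ by Lemma~\ref{lemma: GL_2(L), L <= 6} (the factor $1512$). Multiplying the four bounds gives the claim.

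The main obstacle is controlling the fields of definition, so that every point and curve used is defined over a field of degree at most $6$ over $\QQ$ and the constants $84$ and $1512$ (and not $180$ or worse) apply. In particular I expect the hardest part to be ensuring that $C$ and $D$ are rational over $L$ and that $Q$ can be chosen over $L$. If this fails, I would first try to exploit the fact that $C\to D$ has small degree, so that the Galois orbit of a suitable point has size at most $2$.
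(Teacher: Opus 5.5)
You have a genuine gap in your second step. You assert that $D=f(C)$ is a conic, isomorphic to $\PP^1_L$ when it has a point. The bidegree only gives $\deg(f|_C)\cdot\deg D=2$. So either $C\to D$ is birational and $D$ is a conic, which can be a pair of lines when $C$ is reducible, or $C\to D$ is a double cover and $D$ is a line. The line case already happens for smooth $W$: over a line $\ell$, take the preimage in $f^{-1}(\ell)$ of a general conic under the other projection. In the line case $G'_B\to\Aut(D)$ is not injective, because $\PGL_3(L)$ contains nontrivial finite-order elements fixing a line pointwise, e.g.\ $\mathrm{diag}(1,1,-1)$. In the line-pair case the conic and $\PGL_2$ lemmas do not apply at all.

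These are exactly the cases that drive the bound in the paper. When $f(C)$ is a line, the paper splits $G'_{\PP^2}$ into two pieces. Its image in $\PGL_2(L)$ has order at most $84$. The pointwise stabilizer of the line fixes an $L$-point of $\PP^2_L$, so it lies in $\GL_2(L)$ and has order at most $1512$. The line-pair case uses the fixed singular $L$-point, and a smooth conic costs at most $180$ by Lemma~\ref{lemma: conic over L <=6}. So $2\cdot 84\cdot 84\cdot 1512$ comes from the line case, not from a smooth conic.

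Your third step puts the factors in the wrong places and leaves the rationality problems you flagged unresolved. $C$ need not be a smooth rational curve with an $L$-point; it can be reducible or a pointless conic. So neither the bound $84$ on the image in $\Aut(C)$ nor the choice of $Q$ over $L$ is justified. If $D$ is a pointless conic, your product becomes $2\cdot 180\cdot 84\cdot 1512=45\,722\,880$, which exceeds the claim. Also, when $D$ is a smooth conic, $C\to D$ is birational, so $G'_F$ preserves fibers and automatically fixes $C$ pointwise; all of $G'_F$ then lands in your $\GL_2$ kernel.

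The missing observation is simpler. $G'_F$ acts faithfully on a suitable fiber of $f$ over an $L$-point, as in Lemma~\ref{lemma: smooth rational fiber in fibrations}. That fiber is a line in the other factor $\PP^2_L$, hence isomorphic to $\PP^1_L$. So $G'_F\subset\PGL_2(L)$ and $|G'_F|\leqslant 84$ by Lemma~\ref{lemma: PGL_2(L), L <= 4,6,24}, without using $C$ at all. The curve $C$ is then needed only to bound $G'_B$, via the three-case analysis above. This gives $|G|\leqslant 2\cdot 84\cdot\max\{180,\,1512,\,84\cdot 1512\}=21\,337\,344$.
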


\begin{proof}
    %If $W$ is singular, then choosing homogeneous coordinates $(X_0:X_1:X_2)$ on the first factor and $(Y_0:Y_1:Y_2)$ on the second, we can assume that $\overline{W} \subset \PP^2 \times \PP^2$ is given by equation $\{X_0Y_0 - X_1Y_1 = 0\}$. In this case, $\overline{W}$ conains only one singular point $P$. Therefore,~$P$ is defined over $\QQ$, also $P$ is fixed by $G$, and $\dim T_PX = 4$. By Theorem~\ref{theo: action with fixed point}, group $G$ embeds to $\GL_4(\QQ)$, and $|G| \leqslant 5\,760$ by Theorem~\ref{theo: Minkowski}.

    Denote rays of the Mori cone of $\overline{W}$ corresponding to projections on~$\PP^2$ by $R_1$ and~$R_2$. The group $\Gal(\overline{\QQ}/K)$ acts on these rays, denote by $\relpenalty = 10000 H \subset \Gal(\overline{\QQ}/K)$ the stabilizer of a ray~$R_1$. Then $H$ has index at most $2$ and the field of invariants $L = (\overline{\QQ})^{H}$ has degree at most~$2\cdot3 = 6$ over~$\QQ$. Also, extremal contraction $$f : W_L \rightarrow W'$$ corresponding to $R_1$ is defined over $L$. Then $X'$ is a Severi--Brauer surface with a $L$-point. By Theorem~\ref{theo: Chatelet's theorem}, we obtain that $X'$ is isomorphic to $\mathbb{P}^2_{L}$. Moreover, group $G$ also acts on the Mori cone, and if we denote by $G' \subset G$ the stabilizer of $R$, then~$[G:G'] \leqslant 2$ and the contraction $f$ is $G'$-equivariant. 
    
    Since $\overline{W}$ has bidegree $(1,1)$, general fiber of $f$ is isomorphic to $\PP^1$. Therefore, by Lemma~\ref{lemma: smooth rational fiber in fibrations} there is a short exact sequence $$1 \rightarrow G'_F \rightarrow G' \rightarrow G'_{\PP^2} \rightarrow 1,$$ where $G'_F \subset \PGL_2(L)$ and $G'_{\PP^2}$ acts on $\PP^2_L$ preserving $f(C)$. Immediately note that~$\relpenalty=10000 |G'_F| \leqslant 84$ by Lemma~\ref{lemma: PGL_2(L), L <= 4,6,24}. Since curve $C$ has degree $(2,2)$, then $f(C) \subset \PP^2_L$ is either a conic or a line. If $f(C)$ is a smooth conic, then $G'_{\PP^2} \subset \Aut(f(C))$ and~$\relpenalty=10000 |G'_{\PP^2}| \leqslant 180$ by Lemma~\ref{lemma: conic over L <=6}. Therefore, $$|G| = [G:G']\cdot|G'_F|\cdot|G'_{\PP^2}| \leqslant 2 \cdot 84\cdot 180 = 30\,240 < 10^5.$$ If $f(C)$ is a singular conic, then $f(C)$ contains only one singular point $\tilde{P}$. Therefore,~$\tilde{P}$ is an $L$-point, and it is fixed by $G'_{\PP^2}$. By Theorem~\ref{theo: action with fixed point} we have $G'_{\PP^2} \subset \GL_2(L)$ and~$\relpenalty=10000 |G'_{\PP^2}| \leqslant 1512$ by Lemma~\ref{lemma: GL_2(L), L <= 6} and $$|G| = [G:G']\cdot|G_F|\cdot|G'_{\PP^2}| \leqslant 2\cdot84 \cdot 1512 = 254\,016 < 10^6.$$ If $f(C)$ is a line, then there is a short exact sequence $$1 \rightarrow G'' \rightarrow G'_{\PP^2} \rightarrow G'_{\PP^1} \rightarrow 1,$$ where $G'_{\PP^1} \subset \PGL_2(L)$ and $G''$ acts trivially on $f(C)$. By Lemma~\ref{lemma: PGL_2(L), L <= 4,6,24} we have~$\relpenalty=10000 |G'_{\PP^1}| \leqslant 84$. Also, the group $G''$ fixes the whole line of $L$-points, in particular,~$G''$ embeds to $\GL_2(L)$ by Theorem~\ref{theo: action with fixed point} and $|G''| \leqslant 1512$ by Lemma~\ref{lemma: GL_2(L), L <= 6}. Therefore,~$|G'_{\PP^2}| = |G''|\cdot|G'_{\PP^1}| \leqslant 127\,008$ and $$|G| = [G:G']\cdot|G'_F| \cdot |G'_{\PP^2}| \leqslant 2 \cdot 84 \cdot 127\,008 = 21\,337\,344 < 10^8.$$
    \end{proof}

\begin{lemma}\label{lemma: (1.2.6)}
    Let $X$ be a Gorenstein threefold $\QQ$ with a $\QQ$-point such that $\overline{X}$ is isomorphic to an intersection of three divisors of tridegrees $(0,1,1)$, $(1,0,1)$, $(1,1,0)$ in $\PP^2 \times \PP^2 \times \PP^2$. Let~$\relpenalty = 10000 G \subset \Aut(X)$ be a finite subgroup. Then $$|G| \leqslant 64\,012\,032 < 10^8.$$
\end{lemma}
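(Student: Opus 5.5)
The bound to aim for is $64\,012\,032 = 3\cdot 21\,337\,344$. So the plan is to reduce to Lemma~\ref{lemma: (1,1) in P^2 x P^2 fixing curve}, after passing to a subgroup of index at most $3$ and a field extension of degree at most $3$.

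The geometry comes from the three projections of $\overline{X}\subset \PP^2\times\PP^2\times\PP^2$ to the factors $\PP^2\times\PP^2$ obtained by forgetting one of the three factors. For the $(i,j)$-projection $\pi_{ij}$, the equation of tridegree with $1$'s in positions $i,j$ cuts out the image, which is a divisor $\overline{W}_{ij}\subset\PP^2\times\PP^2$ of bidegree $(1,1)$. The other two equations, being linear in the forgotten coordinates, define a fiber which is generically a point. Hence $\pi_{ij}$ is birational onto $\overline{W}_{ij}$. Its exceptional locus lies over the locus where these two linear equations become dependent. This is the degeneracy locus of a $2\times 3$ matrix of bilinear forms, which is the curve $\overline{C}_{ij}$ of bidegree $(2,2)$. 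This matches the description of family 3-13 as a blow up of a $(1,1)$-divisor in $\PP^2\times\PP^2$ along such a curve.

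Next, the Galois group $\Gal(\overline{\QQ}/\QQ)$ acts on $\Pic(\overline{X})$. It therefore permutes the three nef classes pulled back from the factors $\PP^2$, equivalently the three projections, through a homomorphism to $\mathfrak{S}_3$. Fix one factor, say the third, and let $H$ be its stabilizer, so $[\Gal(\overline{\QQ}/\QQ):H]\leqslant 3$. Put $K=(\overline{\QQ})^H$, a field of degree at most $3$ over $\QQ$. Then the projection $\pi_{12}$ forgetting the third factor is defined over $K$, giving a birational morphism $X_K\to W$ onto a threefold $W$ with $\overline{W}$ a $(1,1)$-divisor. The image of the $\QQ$-point of $X$ is a $K$-point of $W$, and the curve $C=C_{12}$ is defined over $K$.

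The group $G$ likewise acts on $\Pic(\overline{X})$ and permutes the three factor classes. Let $G'$ be the stabilizer of the third one, so $[G:G']\leqslant 3$. Then $\pi_{12}$ is $G'$-equivariant. Since $\pi_{12}$ is birational, $G'$ acts faithfully on $W$, and it preserves the exceptional image $C$. Lemma~\ref{lemma: (1,1) in P^2 x P^2 fixing curve} then gives $|G'|\leqslant 21\,337\,344$, hence $|G|\leqslant 3\cdot 21\,337\,344=64\,012\,032$.

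The main obstacle is the singular terminal Gorenstein case. There I need that $\pi_{12}$ is still birational onto an irreducible $(1,1)$-divisor and that $C$ is still a curve of bidegree $(2,2)$, possibly reducible or nonreduced. I would handle this by the explicit determinantal description above, which does not use smoothness. I would also check that the $\mathfrak{S}_3$-permutation of the factor classes is well defined for singular members, using $\Pic(X)\simeq\Pic$ of a smoothing from Theorem~\ref{theo: smoothing} together with the fact that the three classes are the only primitive nef classes giving maps to $\PP^2$.
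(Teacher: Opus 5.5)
Your proposal is correct and is essentially the paper's proof. Both pass to the Galois stabilizer of one of the three projections $\overline{X}\to\PP^2\times\PP^2$, which gives a field $K$ of degree at most $3$, and to the $G$-stabilizer $G'$ of index at most $3$. Then Lemma~\ref{lemma: (1,1) in P^2 x P^2 fixing curve} is applied to $G'$ acting on $W$ and preserving the $(2,2)$-curve $C$, giving $|G|\leqslant 3\cdot 21\,337\,344$.

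The only differences are presentational. The paper phrases the permutation action through the three extremal rays of the Mori cone rather than the factor nef classes. It also leaves implicit several details that you supply: birationality of the projection, the bidegree of $C$, the $K$-point on $W$, and faithfulness of $G'$ on $W$.
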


\begin{proof}
    Threefold $\overline{X}$ has three divisorial contractions, which are restrictions of three projections of $\PP^2 \times \PP^2 \times \PP^2$ to~$\binoppenalty=10000 \PP^2 \times \PP^2$. Denote rays of the Mori cone of $\overline{X}$, corresponding to these contractions by $R_1$, $R_2$ and~$R_3$. The group $\Gal(\overline{\QQ}/\QQ)$ acts on these rays, denote by $\relpenalty = 10000 H \subset \Gal(\overline{\QQ}/\QQ)$ the stabilizer of a ray~$R_1$. Then $H$ has index at most $3$ and the field of invariants $K = (\overline{\QQ})^{H}$ has degree at most~$3$ over~$\QQ$. Also, extremal contraction $$f : X_K \rightarrow W$$ corresponding to $R_1$ is defined over $K$. Here $\overline{W} \subset \PP^2 \times \PP^2$ is a divisor of bidegree~$(1,1)$. Moreover, if we denote the image of the exceptional divisor of $f$ by $C$, then $C$ has bidegree~$(2,2)$. The group $G$ also acts on the Mori cone, and if we denote by~$G' \subset G$ the stabilizer of $R_1$, then~$[G:G'] \leqslant 3$, contraction $f$ is $G'$-equivariant and $G' \subset \Aut(W)$ is a finite subgroup preserving $C$. Then $|G'| \leqslant 21\,337\,344$ by Lemma~\ref{lemma: (1,1) in P^2 x P^2 fixing curve} and $$|G| = [G:G']\cdot|G'| \leqslant 3 \cdot 21\,337\,344 = 64\,012\,032 < 10^8.$$
\end{proof}

\begin{lemma}\label{lemma: (1.2.3) smooth}
    Let $X$ be a smooth Fano threefold over $\QQ$ such that $\overline{X}$ is isomorphic to a blow up of a quadric $Q \subset \PP^4_{\overline{\QQ}}$ in a twisted quartic $C$. Let~$\relpenalty=10000 G \subset \Aut(X)$ be a finite subgroup. Then $|G| \leqslant 120.$
\end{lemma}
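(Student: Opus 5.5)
Since $G$ is finite and acts by automorphisms over $\QQ$, we may regard $G$ as a subgroup of $\Aut(\overline{X})$. The plan is to bound $|\Aut(\overline{X})|$ itself, or at least the orders of its finite subgroups, using the geometry of family 2-21.

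First we look at the two extremal contractions. The Mori cone of $\overline{X}$ has two rays. For the smooth member of family 2-21 both contractions are blow-downs to a smooth quadric $Q\subset\PP^4$ with centre a twisted quartic; this is the symmetry of the family. Every $g\in G$ permutes the two rays, so the subgroup $G'$ stabilizing a ray has index at most $2$. The corresponding contraction $f\colon\overline{X}\to Q$ is $G'$-equivariant. Hence $G'$ embeds into the stabilizer of $C$ in $\Aut(Q)$, where $\Aut(Q)\simeq \mathrm{PSO}_5$.

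Next we bound this stabilizer. The curve $C$ is a rational normal quartic spanning $\PP^4$. An automorphism of $\PP^4$ that is trivial on $C$ fixes five points of $C$ in general linear position, together with a sixth, so it is the identity. Therefore the stabilizer embeds into $\Aut(C)\simeq\PGL_2(\overline{\QQ})$.

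The quadric $Q$ is one of the quadrics through $C$, and these form a $\PP^5$ of quadrics. The action of $\PGL_2$ on this space is the representation $S^4\oplus S^0$ of $\mathrm{SL}_2$; more precisely, the quadrics containing $C$ correspond to $S^2(S^4)\ominus S^8 = S^4\oplus S^0$. So the stabilizer of $Q$ in $\PGL_2$ is the stabilizer of a point of $\PP(S^4\oplus S^0)$. The key claim is that this stabilizer is finite, since $X$ smooth forces $Q$ smooth and non-special, and that it embeds into the stabilizer of a binary quartic form, up to scalar.

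By the classification, the automorphism group of a binary quartic with distinct roots is a subgroup of $\mathfrak{S}_4$, hence of order at most $24$. Actually the finite subgroups of $\PGL_2(\overline{\QQ})$ preserving four points have order at most $24$, and if the quartic component vanishes, the stabilizer is all of $\PGL_2$. We must therefore exclude the degenerate cases where the quartic part is zero or has multiple roots, because then $\Aut(\overline{X})$ would be infinite.

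This degenerate case is the main obstacle. If $\Aut(\overline{X})$ is infinite, we use instead that $G$ is defined over $\QQ$. Then $G'$ acts on $C$, which has a $\QQ$-structure (a conic, or $\PP^1$ over a quadratic extension of $\QQ$). The finite subgroups of $\PGL_2$ over the relevant field have order at most $60$, by Lemma~\ref{lemma: PGL_2(L), L <= m}, or at most $24$ over a quadratic field. Combined with the index $2$ this gives $|G|\le 120$. I expect the target bound $120=2\cdot 60$ to come exactly from this route: $G'$ embeds into a finite subgroup of $\PGL_2(\overline{\QQ})$ preserving the $\QQ$-structure, and such a subgroup is at worst $\mathfrak{A}_5$.

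The main difficulty is justifying why $\mathfrak{A}_5$ is the worst case. If $G'$ is cyclic or dihedral of order $2m$, then $G'$ is defined over the field of definition of $C$. This field must be controlled: $C$ is the image of the exceptional divisor, which is $\QQ$-rational, so $C$ is a form of $\PP^1$ over $\QQ$ and $\xi_m+\xi_m^{-1}$ lies in a quadratic field. Hence $m\le 12$, and $G'$ of type $\mathfrak{A}_4$, $\mathfrak{S}_4$ or $\mathfrak{A}_5$ gives $|G'|\le 60$. Altogether $|G|\le 2\cdot 60=120$.
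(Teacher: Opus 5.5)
Your skeleton is the paper's. You pass to the stabilizer $G'$ of one extremal ray, of index at most $2$. You observe that $G'$ acts faithfully on the centre $C$, since a projective automorphism fixing a rational normal quartic pointwise is trivial. You then bound $G'$ as a finite group of automorphisms of a twisted form of $\PP^1$.

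The gap is in the descent step. You assert that the exceptional divisor of $f$ is $\QQ$-rational, so that $C$ is a form of $\PP^1$ over $\QQ$ and $\xi_m+\xi_m^{-1}$ lies in a quadratic field. Nothing in the hypotheses or in your argument forces $\Gal(\overline{\QQ}/\QQ)$ to fix the two extremal rays of $\overline{X}$. It may interchange them, and it must whenever $\rho(X)=1$. In that case $f$, its target quadric, its exceptional divisor and $C$ are defined only over the quadratic field $K$ cut out by the kernel of the Galois action on the rays. Over $\QQ$ only the sum of the two exceptional divisors is rational.

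The correct bookkeeping, which is exactly what the paper does, runs as follows. Since $G'$ preserves the ray $R$, it acts on $f\colon X_K\to X'\subset\PP^4_K$ and faithfully on $C'=f(E)$, which is a form of $\PP^1$ over $K$. The curve $C'$ splits over a quadratic extension $K'\supset K$ with $[K':\QQ]\le 4$, so $G'\subset\PGL_2(K')$. Lemma~\ref{lemma: PGL_2(L), L <= 4,6,24}(1) then gives $|G'|\le 60$, hence $|G|\le 120$. Over such a $K'$ your bound $m\le 12$ is no longer available: $\varphi(m)\le 8$ only gives $m\le 30$, i.e.\ cyclic or dihedral groups of order up to $60$. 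So the $60$ comes from the degree-$4$ field, not from $\mathfrak{A}_5$ alone, and your conclusion survives only after this correction.

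Separately, your first route through $\Aut(\overline{X})$ and the decomposition $S^4\oplus S^0$ is not needed. As you noticed yourself, its ``key claim'' that the stabilizer is finite fails for special smooth members, for instance when the quartic component vanishes. The descent argument above works uniformly, so you can drop the case distinction on whether $\Aut(\overline{X})$ is infinite.
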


\begin{proof}
    The Mori cone of $\overline{X}$ contains two extremal rays, one of which corresponds to a blow up, which is mentioned in the lemma, denote this ray by $R$. The group $\Gal(\overline{\QQ}/\QQ)$ acts on these extremal rays, denote by $\relpenalty = 10000 H \subset \Gal(\overline{\QQ}/\QQ)$ the kernel of this action. Then~$H$ has index at most $2$, field of invariants $K = (\overline{\QQ})^{H}$ has degree at most $2$ over~$\QQ$ and~$\rho(X_K) = 2$. Therefore, each extremal contraction is defined over $K$. Consider one of them $$f : X_K \rightarrow X' \subset \PP^4_K,$$ which corresponds to a ray $R$. Group $G$ also acts on the Mori cone, and if we denote by~$\relpenalty = 10000 G' \subset G$ the stabilizer of $R$, then~$[G:G'] \leqslant 2$, morphism $f$ is $G'$-equivariant and~$ \relpenalty=10000 G' \subset \Aut(X')$. 

    Since $f$ is $G'$-equivariant, then group $G'$ fixes a curve~$C'$, which is the image of an exceptional divisor. Moreover, $G' \subset \Aut(C')$. Indeed, consider a sequence of embeddings $$C' \hookrightarrow X' \hookrightarrow \PP^4_K.$$ Passing to algebraic closure, we obtain a sequence of embeddings $$C \hookrightarrow Q \hookrightarrow \PP^4_{\overline{\QQ}}.$$ Suppose there is an element $g \in G'$, which acts on $C'$ trivially. Then identifying $g$ with its image in $\Aut(Q)$, we obtain an automorphism of a quadric, which acts trivially on a twisted quartic $C$. But then $g$ extends to an automorphism $\tilde{g}$ of $\PP^4_{\overline{\QQ}}$, which fixes a twisted quartic. Therefore, $\tilde{g}$ is the identity automorphism and $g$ is the identity automorphism. Now, considering a quadratic extension $K \subset K'$ such that $C'_{K'} \simeq \PP^1_{K'}$, we may assume that~$\relpenalty=10000 G' \subset \PGL_2(K')$ and then have $|G'| \leqslant 60$ by Lemma~\ref{lemma: PGL_2(L), L <= 4,6,24}. Finally, $$|G| = [G:G']\cdot|G'| \leqslant 120.$$  
\end{proof}

\begin{lemma}\label{lemma: (1.2.3) singular}
    Let $X$ be a threefold over $\QQ$ such that $\overline{X}$ is isomorphic to a blow up of a quadratic cone in $\PP^4$ with center a union of two conics that do not pass through the vertex and meet each other transversely. Let $G \subset \Aut(X)$ be a finite subgroup. Then $$|G| \leqslant 174\,182\,400 < 10^9.$$
\end{lemma}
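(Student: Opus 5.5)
The idea is to reduce to a linear action on a four-dimensional tangent space over a field of degree at most $2$, as in Lemma~\ref{lemma: passing to stabilizer and acting on a tangent space}, and then apply Corollary~\ref{corollary: additive bound GL_4(<=2)}. The bound in the statement is $174\,182\,400 = 2\cdot 87\,091\,200$. This is exactly an index-$2$ subgroup times the bound for $\GL_4(K)$ with $[K:\QQ]\leqslant 2$, which suggests this route.

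\textbf{Step 1: the singular points.} First I will describe $\Sing(\overline{X})$. Let $Q'\subset\PP^4$ be the quadratic cone and let $C_1\cup C_2$ be the blown-up curve. Away from the vertex, $Q'$ is smooth and the center is a nodal curve with two nodes $C_1\cap C_2=\{q_1,q_2\}$. Blowing up a curve with an ordinary node in a smooth threefold produces exactly one ordinary double point in the fibre over the node. The vertex of $Q'$ is disjoint from the center, so it survives as an isolated singular point $v$ of $\overline{X}$. Thus $\overline{X}$ has the singular points $v,p_1,p_2$, where $p_i$ lies over $q_i$. All three are hypersurface singularities, so each Zariski tangent space is $4$-dimensional.

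\textbf{Step 2: an invariant set of size two.} Next I will show that $\{p_1,p_2\}$ is invariant under both $G$ and $\Gal(\overline{\QQ}/\QQ)$. Both groups act on $\overline{X}$ and permute its singular points. It then suffices to distinguish $v$ from $p_1,p_2$ intrinsically.
\begin{itemize}
\item The points $p_i$ lie on the exceptional locus of the birational extremal contraction $\overline{X}\to Q'$, whereas $v$ does not.
\item Alternatively, $v$ is the base point of a distinguished structure: the pencil of planes through the vertex, or the lines of the cone.
\end{itemize}
Either criterion is preserved by every automorphism and by Galois. If the intrinsic description of the contraction is delicate, I would note that $\Gal$ and $G$ act on the Mori cone of $\overline{X}$, which has two rays. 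Passing to an index-$\leqslant 2$ subgroup and a degree-$\leqslant 2$ field would then make the contraction to the cone equivariant, but this would cost an extra factor. So I would try to make the distinction directly, using that $v$ is the unique singular point not lying on any $(-K)$-trivial or exceptional curve.

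\textbf{Step 3: the linear bound.} Given invariance, I will apply the argument of Lemma~\ref{lemma: passing to stabilizer and acting on a tangent space} to the orbit $\{p_1,p_2\}$ rather than to all of $\Sing(\overline{X})$. This gives a field $K$ with $[K:\QQ]\leqslant 2$ over which $p_1$ is a $K$-point, and a subgroup $G_\bullet\subset G$ of index at most $2$ fixing $p_1$. By Theorem~\ref{theo: action with fixed point}, $G_\bullet$ embeds into $\GL(T_{p_1}X_K)\simeq\GL_4(K)$. Corollary~\ref{corollary: additive bound GL_4(<=2)} then gives $|G_\bullet|\leqslant 87\,091\,200$, hence $|G|\leqslant 174\,182\,400$.

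\textbf{Main obstacle.} I expect the main obstacle to be Step 2, namely rigorously separating $v$ from $p_1,p_2$ in a way compatible with both actions. If $v$ is shown to be fixed and defined over $\QQ$, one could instead get the much better bound $5760$ from Theorem~\ref{theo: Minkowski}. Since the stated bound is weaker, I expect the author uses the pair $\{p_1,p_2\}$.
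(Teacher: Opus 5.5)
Your architecture (index at most $2$, degree at most $2$, a fixed singular point, $\GL_4(K)$, Corollary~\ref{corollary: additive bound GL_4(<=2)}) is the right one. However, Step~1 misdescribes the variety, and Step~2 is built on that error.

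The two conics meet in \emph{one} point, not two. The center $C_1\cup C_2$ is the degeneration of the rational normal quartic of the smooth member of family 2-21, so it spans $\PP^4$ and has arithmetic genus $0$. Two conics meeting in two points would lie in a common hyperplane and form a curve with $p_a=1$. Blowing that up would give $(-K_{\overline{X}})^3=54-2\cdot 12+2p_a-2=30$, i.e.\ genus $16$, whereas this family has $(-K_{\overline{X}})^3=28$ and $g=15$. Hence $\Sing(\overline{X})$ consists of exactly two ordinary double points: $v$ over the vertex, and $p$ in the fibre over the node $C_1\cap C_2$.

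With the correct count, Step~2 is unnecessary. $\Sing(\overline{X})$ is a set of size $2$ and is automatically invariant under $G$ and $\Gal(\overline{\QQ}/\QQ)$. So Lemma~\ref{lemma: passing to stabilizer and acting on a tangent space} with $d=2$ gives a field $K$ of degree at most $2$ and a subgroup of index at most $2$ fixing $v$. That subgroup embeds into $\GL_4(K)$ by Theorem~\ref{theo: action with fixed point}, which yields $2\cdot 87\,091\,200$. This is exactly the paper's proof.

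Moreover, the intrinsic distinction you wanted in Step~2 is not available. Both ``lying on the exceptional locus of $\overline{X}\to Q'$'' and the ruling/plane structure of the cone depend on choosing one of the two extremal rays. The point $v$ lies on the exceptional locus of the \emph{other} contraction. To see this, project from $v$: the images of $C_1$ and $C_2$ are $(1,1)$-curves on $\PP^1\times\PP^1$. They meet at the image of the node and, in general, at one further point. The ruling over that point is a bisecant of $C_1\cup C_2$ through $v$, whose strict transform $\ell$ has $H\cdot\ell=1$ and $E\cdot\ell=2$, so it spans the other ray.

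The two singular points thus play symmetric roles. An element of $G$ or of the Galois group exchanging the rays can interchange $v$ and $p$. Where the lemma is applied ($\rho(X)^G=1$), some element of $G\times\Gal(\overline{\QQ}/\QQ)$ must exchange them, since $\Pic(\overline{X})^{G\times\Gal(\overline{\QQ}/\QQ)}$ then has rank $1$. Your criterion via $(-K)$-trivial curves is vacuous, since $-K_{\overline{X}}$ is ample. This symmetry is why the stated bound carries the factor $2$, rather than the $5760$ that Theorem~\ref{theo: Minkowski} would give for a $G$-fixed rational point.
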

\begin{proof}
    There are two singular points $P_1$ and $P_2$ on $\overline{X}$ with $\dim T_{P_1}\overline{X}= \dim T_{P_2}\overline{X} = 4$. One is the preimage of the vertex of the blowing cone, and another lies on the exceptional divisor over the intersection point of conics. The group $\Gal(\overline{\QQ}/\QQ)$ acts on these points, denote by $\relpenalty = 10000 H \subset \Gal(\overline{\QQ}/\QQ)$ the kernel of this action. Then $H$ has index at most $2$, field of invariants $K = (\overline{\QQ})^{H}$ has degree at most $2$ over~$\QQ$ and both singular points are defined over $K$. 
    
    The group $G$ also acts on these points, and if we define by $G'$ the kernel of this action, then $G'$ acts on $X_K$ fixing $P_1$. By Theorem~\ref{theo: action with fixed point}, group $G'$ embeds to $\GL_4(K)$, and~$\relpenalty=10000 |G| \leqslant 87\,091\,200$ by Corollary~\ref{corollary: additive bound GL_4(<=2)}. Finally, we have $$|G| = [G:G'] \cdot |G'| \leqslant 2 \cdot 87\,091\,200= 174\,182\,400 < 10^9.$$   
    %The Mori cone of $\overline{X}$ contains two extremal rays, one of which corresponds to a blow up, which is mentioned in the lemma, denote this ray by $R$. The group $\Gal(\overline{\QQ}/\QQ)$ acts on these extremal rays, denote by $\relpenalty = 10000 H \subset \Gal(\overline{\QQ}/\QQ)$ the kernel of this action. Then $H$ has index at most $2$, field of invariants $K = (\overline{\QQ})^{H}$ has degree at most $2$ over~$\QQ$ and $\rho(X_K) = 2$. Therefore, each extremal contraction is defined over $K$. Consider one of them $$f : X_K \rightarrow X' \subset \PP^4_K,$$ which corresponds to a ray $R$. Group $G$ also acts on the Mori cone, and if we denote by~$\relpenalty = 10000 G' \subset G$ the stabilizer of $R$, then~$[G:G'] \leqslant 2$, morphism $f$ is $G'$-equivariant and~$ \relpenalty=10000 G' \subset \Aut(X')$
    %Since $\overline{X'}$ contains only one singular point $P$, then $P$ is defined over $K$. Also $P$ is fixed by $G$, and $\dim T_PX = 4$. 
    %Therefore,~$P$ is defined over $\QQ$. 
\end{proof}

\begin{lemma}\label{lemma: (1.2.8)}
    Let $X$ be a terminal Gorenstein Fano threefold over $\QQ$ with a $\QQ$-point such that $\overline{X}$ is isomorphic to a divisor in~$\binoppenalty=10000 \PP^1\times \PP^1 \times \PP^1 \times \PP^1$ of multidegree $(1,1,1,1)$. Let~$G \subset \Aut(X)$ is a finite subgroup. Then $$|G| \leqslant 1\,778\,112\,000 < 10^{10}.$$
\end{lemma}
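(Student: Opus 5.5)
The bound is exactly $4 \cdot 444\,528\,000$, which suggests the approach: reduce to a form of $\PP^1 \times \PP^1 \times \PP^1$ over a field of degree at most $4$, losing a factor of at most $4$, and then apply Lemma~\ref{lemma: (1.2.7)}. The reduction uses the four projections $\pi_i\colon \overline{X} \to \PP^1 \times \PP^1 \times \PP^1$ obtained by forgetting the $i$-th factor of $\PP^1\times\PP^1\times\PP^1\times\PP^1$.

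First I would show that $\Gal(\overline{\QQ}/\QQ)$ and $G$ both act on the set $\{\pi_1,\dots,\pi_4\}$. The classes $h_1,\dots,h_4 \in \Pic(\overline{X})$ pulled back from the four $\PP^1$ factors are intrinsic: they are the primitive generators of the extremal rays of the nef cone of $\overline{X}$, whose rank is $4$ by Theorem~\ref{theo: smoothing} and Table~\ref{table: possible minimal by Pro}. Each triple of them gives the morphism $\pi_i$. Hence any automorphism of $\overline{X}$, and any Galois conjugation, permutes the $h_j$ and therefore the $\pi_i$.

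Next I pass to stabilizers. Let $H \subset \Gal(\overline{\QQ}/\QQ)$ be the stabilizer of $\pi_4$. Then $[\Gal(\overline{\QQ}/\QQ):H]\leqslant 4$, so $K=(\overline{\QQ})^H$ has degree at most $4$, and $\pi_4$ descends to a morphism $f\colon X_K \to W$ with $\overline{W}\simeq \PP^1\times\PP^1\times\PP^1$. The $\QQ$-point of $X$ maps to a $K$-point of $W$, and $W$ is smooth Fano, hence terminal Gorenstein. Let $G'\subset G$ be the stabilizer of $\pi_4$; then $[G:G']\leqslant 4$ and $f$ is $G'$-equivariant.

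Then I check that $f$ is birational, which makes the induced map $G' \to \Aut(W)$ injective. Write the equation of $\overline{X}$ as $s\,a(x,y,z)+t\,b(x,y,z)=0$, where $(s:t)$ are coordinates on the fourth factor. Over a point of $\PP^1\times\PP^1\times\PP^1$ where $a$ and $b$ do not both vanish, the fiber is a single point. So $\pi_4$ is birational, and an element of $G'$ acting trivially on $W$ acts trivially on a dense open subset of $X$, hence trivially on $X$. Therefore $G'$ embeds into $\Aut(W)$, and Lemma~\ref{lemma: (1.2.7)} gives $|G'|\leqslant 444\,528\,000$. Consequently
$$|G| = [G:G']\cdot|G'| \leqslant 4\cdot 444\,528\,000 = 1\,778\,112\,000 < 10^{10}.$$

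The main obstacle is the first step in the singular case: I need the four projections to be intrinsic, i.e. that the $h_j$ span exactly the boundary rays of the nef cone. This is clear for smooth $X$. For singular $\overline{X}$ I would argue via $\Pic(X)\simeq\Pic(\mathfrak{X}_b)$ from Theorem~\ref{theo: smoothing}, or directly: each $h_j$ is nef but not big, and these are the only such primitive classes on the boundary rays.
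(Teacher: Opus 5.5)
Your proposal is correct and follows the paper's proof: you pass to the stabilizers, of index at most $4$, of one of the four projections in $\Gal(\overline{\QQ}/\QQ)$ and in $G$, descend that contraction to a field $K$ of degree at most $4$, and apply Lemma~\ref{lemma: (1.2.7)} to $G'\subset\Aut(W)$. Your extra checks (birationality of $\pi_4$ to get $G'\hookrightarrow\Aut(W)$, and the $K$-point on $W$ needed in the proof of that lemma) only make explicit what the paper leaves implicit. The singular-case worry is settled directly: the fibers $\ell_j$ of $\pi_j$ over the nonempty locus $\{a=b=0\}$ satisfy $h_i\cdot\ell_j=\delta_{ij}$, so the nef cone is exactly the simplicial cone spanned by the $h_j$.
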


\begin{proof}
   Threefold $\overline{X}$ has four divisorial contractions, which are restrictions of four projections of $\PP^1 \times \PP^1 \times \PP^1 \times \PP^1$ to $\PP^1 \times \PP^1 \times \PP^1$. Denote rays of the Mori cone, corresponding to these contractions by $R_1$, $R_2$, $R_3$ and $R_4$. The group $\Gal(\overline{\QQ}/\QQ)$ acts on these rays, denote by~$\relpenalty = 10000 H \subset \Gal(\overline{\QQ}/\QQ)$ the stabilizer of the ray~$R_1$. Then $H$ has index at most~$4$ and the field of invariants $K = (\overline{\QQ})^{H}$ has degree at most~$4$ over~$\QQ$. Also, extremal contraction $$f : X_K \rightarrow X'$$ corresponding to $R_1$ is defined over $K$. Here $X'$ is as in Lemma~\ref{lemma: (1.2.7)}. Group $G$ also acts on the Mori cone, and if we denote by~$\relpenalty = 10000 G' \subset G$ the stabilizer of $R_1$, then~$[G:G'] \leqslant 4$, contraction~$f$ is $G'$-equivariant and $G' \subset \Aut(X')$. Then $|G'| \leqslant 444\,528\,000$ by Lemma~\ref{lemma: (1.2.7)} and $$|G| =[G:G']\cdot[G'] \leqslant 4 \cdot 444\,528\,000 =  1\,778\,112\,000 < 10^{10}.$$
\end{proof}

Now we are able to prove the general case of threefolds with $\rho(\overline{X}) > 1$.
%\begin{lemma}\label{lemma: 4-1}
 %    \textcolor{red}{Лемма для 4-1}
%\end{lemma}

\begin{propos}\label{prop: Gorenstein rho > 1}
    Let $G$ be a finite group. Let $X$ be a terminal $G\QQ$-factorial Gorenstein Fano threefold over $\QQ$ with a $\QQ$-point such that $\rho(\overline{X}) > 1$ and $\rho(X)^G = 1$. Then $$|G| \leqslant 5\,573\,836\,800 < 10^{10}.$$ 
\end{propos}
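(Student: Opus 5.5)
The plan is to reduce to Theorem~\ref{theo: prokhorov on G-Fanos} and then go through Table~\ref{table: possible minimal by Pro} family by family, using the eight lemmas just proved. The first step is to check the hypothesis of Theorem~\ref{theo: prokhorov on G-Fanos}. Let $\Gamma=\Gal(\overline{\QQ}/\QQ)$; its action on $\overline{X}=X\otimes_\QQ\overline{\QQ}$ through the second factor commutes with the action of $G$ through the first factor. Since $X$ is $G\QQ$-factorial, terminal and Gorenstein, a $(G\times\Gamma)$-invariant Weil divisor class on $\overline{X}$ comes from a $G$-invariant Weil divisor class on $X$, which is $\QQ$-Cartier. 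Because $X$ has a $\QQ$-point, the remark on $\Pic(X)\simeq\Pic(\overline{X})^{\Gal}$ lets us compare invariant classes on $X$ and on $\overline{X}$ (working with $\QQ$-coefficients if needed). The condition $\rho(X)^G=1$ then gives $\Cl(\overline{X})^{G\times\Gamma}\otimes\QQ\simeq\QQ$. I expect to have to pass to a finite quotient of $\Gamma$ acting through $\Cl(\overline{X})$, so that Theorem~\ref{theo: prokhorov on G-Fanos} applies with $G'$ finite. Since $\rho(\overline{X})>1$, we conclude that $\overline{X}$ belongs to one of the $8$ families in Table~\ref{table: possible minimal by Pro}.

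Next I would match each family with a lemma. Families (1.2.1) and (1.2.5) have genus $7$, so Lemma~\ref{lemma: (1.2.1) and (1.2.5)} gives $|G|\leqslant 5\,573\,836\,800$; this is exactly the bound in the statement, so these are the extremal cases. The remaining families are handled as follows:
\begin{itemize}
\item family (1.2.2) by Lemma~\ref{lemma: (1.2.2)}, with bound $1\,886\,976\,000$;
\item family (1.2.3)a) by Lemma~\ref{lemma: (1.2.3) smooth}, with bound $120$;
\item family (1.2.3)b) by Lemma~\ref{lemma: (1.2.3) singular}, with bound $174\,182\,400$;
\item family (1.2.4) by Lemma~\ref{lemma: (1.2.4)}, with bound $125\,798\,400$;
\item family (1.2.6) by Lemma~\ref{lemma: (1.2.6)}, with bound $64\,012\,032$;
\item family (1.2.7) by Lemma~\ref{lemma: (1.2.7)} with $K=\QQ$, with bound $444\,528\,000$;
\item family (1.2.8) by Lemma~\ref{lemma: (1.2.8)}, with bound $1\,778\,112\,000$.
\end{itemize}
All of these are at most $5\,573\,836\,800$, which gives the proposition.

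For family (1.2.4) I must also make sure the lemma's smoothness hypothesis holds. A singular $(1,1)$-divisor in $\PP^2\times\PP^2$ has a single node $P$. That node is then a $\QQ$-point fixed by $G$ with $\dim T_P=4$, so Theorem~\ref{theo: action with fixed point} and Theorem~\ref{theo: Minkowski} give $|G|\leqslant 5760$. Similar small checks are needed so that each lemma's hypotheses (terminal, Gorenstein, existence of a $\QQ$-point) are met in its case. These follow from the assumptions of the proposition.

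The main obstacle I expect is the first step: justifying that $\rho(X)^G=1$ together with $G\QQ$-factoriality really gives the condition $\Cl(\overline{X})^{G\times G'}\simeq\ZZ$ needed in Theorem~\ref{theo: prokhorov on G-Fanos}. The difficulty is the passage between Weil and Cartier classes and between invariants over $\QQ$ and over $\overline{\QQ}$. The approach I would try first is to note that $G$-invariant Weil classes on $X$ are $\QQ$-Cartier, so rationally $\Cl(X)^G$ agrees with $\Pic(X)^G$. Then I would use the injection $\Cl(X)\hookrightarrow\Cl(\overline{X})^{\Gamma}$, whose cokernel is torsion (it embeds into $\mathrm{Br}(\QQ)$ and is in fact trivial by the rational-point argument of the remark), to identify $\Cl(\overline{X})^{G\times\Gamma}$ rationally with $\QQ$.
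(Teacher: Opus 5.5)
Your proposal is correct and follows the paper's proof. The paper likewise passes to the finite quotient $\Gal(K/\QQ)$ through which $\Gal(\overline{\QQ}/\QQ)$ acts on $\Cl(\overline{X})$, and deduces $\Cl(\overline{X})^{G\times G'}\simeq\ZZ$ from $G\QQ$-factoriality and $\rho(X)^G=1$. It then applies Theorem~\ref{theo: prokhorov on G-Fanos} and handles the eight families with the same lemmas, the genus-$7$ families giving the maximum.

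Your extra check of a singular $(1,1)$-divisor is harmless but unnecessary, since Table~\ref{table: possible minimal by Pro} lists only the smooth one in family $(1.2.4)$. Your rational-coefficient descent argument is a more careful version of a step the paper states without justification.
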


\begin{proof}
    Since $X$ is $G\QQ$-factorial and $\rho(X)^G = 1$, then $\Cl(X)^G \simeq \ZZ$. Consider the action of $\Gal(\overline{\QQ}/\QQ)$ on $\Cl(\overline{X})$ and denote by~$H$ the kernel of this action. Then $H$ is a normal open (and therefore closed) subgroup of finite index. Put~$K = (\overline{\QQ})^H$, then $\QQ \subset K$ is a finite Galois extension (see \cite[\href{https://stacks.math.columbia.edu/tag/0BML}{Theorem 0BML}]{stacks-project}) and the action of~$\Gal(\overline{\QQ}/\QQ)$ on $\overline{X}$ factors through the faithful action of a finite group~$ \relpenalty = 10000 G' = \Gal(K/\QQ)$. Since~$\Cl(X)^{G} \simeq \ZZ$, then we have $\Cl(\overline{X})^{G\times G'} \simeq \ZZ$.
    Therefore, by Theorem~\ref{theo: prokhorov on G-Fanos} we know that $\overline{X}$ belongs to one of the $8$ families from Table~\ref{table: possible minimal by Pro}. 

    If $\overline{X}$ belongs to family $(1.2.1)$ or $(1.2.5)$, then $g(X) = 7$ and we have $$|G| \leqslant 5\,573\,836\,800 < 10^{10}$$ by Lemma~\ref{lemma: (1.2.1) and (1.2.5)}. If $\overline{X}$ belongs to other $6$ families, then we have $$|G| \leqslant 1\,886\,976\,000 < 10^{10}$$ by Lemma \ref{lemma: (1.2.2)} in the case of family $(1.2.2)$, by Lemma~\ref{lemma: (1.2.3) singular} in the singular case of family~$(1.2.3)$, by Lemma~\ref{lemma: (1.2.3) smooth} in the smooth case of family $(1.2.3)$, by Lemma~\ref{lemma: (1.2.4)} in the case of family $(1.2.4)$, by Lemma~\ref{lemma: (1.2.6)} in the case of family $(1.2.6)$, by Lemma~\ref{lemma: (1.2.7)}  in the case of family $(1.2.7)$, and by Lemma~\ref{lemma: (1.2.8)} in the case of family $(1.2.8)$. 
\end{proof}

Now, let us deal with threefolds, whose Picard rank is equal to $1$ over $\overline{\QQ}$. The following three lemmas handle cases of threefolds of large degree or genus.
%Let us start with some results on smooth Fano threefolds. Of course, smooth Fano threefolds are much better understood than singular ones. In particular, we have a lot more options for bounding the orders of finite groups acting on smooth Fano threefolds rather than on singular ones. But in this section, we are not trying to get the best bounds, only those that will be sufficient for our purposes. Also, to avoid repetition, we do not consider all possible smooth Fano threefolds, but only those that require different reasoning than their singular Gorenstein deformations. In this regard, we consider only few smooth threefolds of Picard rank $1$ over $\overline{\QQ}$.

\begin{lemma}\label{lemma: smooth of degree 5}
Let $G$ be a finite group. Let $Y$ be a terminal $G\QQ$-factorial Gorenstein Fano threefold of index $2$ of degree $5$ over $\QQ$. Then $|G| \leqslant 24$.
\end{lemma}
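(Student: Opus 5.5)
The plan is to reduce everything to the automorphisms of a smooth rational curve over $\QQ$, using the geometry of the quintic del Pezzo threefold. Over $\overline{\QQ}$, the threefold $\overline{Y}$ is a terminal Gorenstein Fano threefold of index $2$ and degree $5$. I would first split into the smooth case and the singular case, which I expect to be the harder one.

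\emph{Smooth case.} Here $\overline{Y} \simeq V_5$ is a codimension $3$ linear section of $\mathrm{Gr}(2,5) \subset \PP^9$. It is classical that $\Aut(\overline{Y}) \simeq \PGL_2(\overline{\QQ})$, and that $\overline{Y}$ has exactly three $\PGL_2$-orbits. The unique closed orbit is a rational normal sextic curve $\overline{C} \subset \overline{Y}$.
\begin{enumerate}
\item Since $\overline{C}$ is the unique $\Aut(\overline{Y})$-invariant curve, it is invariant under both $\Gal(\overline{\QQ}/\QQ)$ and $G$. Hence it descends to a smooth geometrically rational curve $C \subset Y$ defined over $\QQ$, and $G$ acts on $C$.
\item The restriction homomorphism $\Aut(\overline{Y}) \simeq \PGL_2(\overline{\QQ}) \to \Aut(\overline{C}) \simeq \PGL_2(\overline{\QQ})$ is an isomorphism: it is the identification of $\overline{C}$ with the closed orbit $\PGL_2/B$. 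In particular it is injective, so $G$ embeds into $\Aut(C)$.
\item The curve $C$ is a smooth conic over $\QQ$, so Corollary~\ref{corollary: automorphsms of conic over Q} gives $|G| \leqslant 24$.
\end{enumerate}
I do not need to know whether $C$ has a $\QQ$-point. The corollary applies to any smooth conic, since it only uses the quadratic splitting field.

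\emph{Singular case.} By Theorem~\ref{theo: smoothing}, $\overline{Y}$ is a degeneration of $V_5$, and $N \leqslant 20 - 1 + 0 = 19$. This estimate is useless on its own, so I would instead invoke the known classification of singular terminal Gorenstein (nodal) del Pezzo threefolds of degree $5$. There are very few types, with one, two or three nodes. For each type, I would examine the geometrically defined objects: the set of nodes, and the planes and lines passing through them.

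The aim is to produce, for each type, a $G$-invariant geometric object. Possible candidates are:
\begin{itemize}
\item a unique distinguished node, or the line through two nodes;
\item a conic, or the projection from a node.
\end{itemize}
Such an object should force one of two outcomes. The first is a contradiction with $G\QQ$-factoriality and $\rho(Y)^G = 1$. In that case the Weil divisors through the nodes, for instance the planes, give $G$-invariant non-$\QQ$-Cartier divisors. The second is a faithful action of $G$ on a small curve over $\QQ$, again a conic, which gives the bound $24$ as in the smooth case.

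The main obstacle is this singular case analysis. Specifically, I need to check for each nodal type that the $G$-invariant planes through the nodes are indeed not $\QQ$-Cartier. If some type survives, I would fall back on the structure of $\Aut^0(\overline{Y})$, which is a solvable group such as $\mathbb{G}_a \rtimes \mathbb{G}_m$ or a torus. A finite subgroup of such a group over $\QQ$, extended by the permutation group of the nodes, would then be bounded by an explicit small number. I would verify that this number is at most $24$.
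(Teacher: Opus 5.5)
Your smooth case is correct, and it takes a different route from the paper. The paper passes to the Hilbert scheme of lines on $Y$, which is a Severi--Brauer surface $S_1$ with $G\subset\Aut(S_1)$. It then concludes with Vikulova's bound $|G|\leqslant 3$ if $S_1$ is nontrivial, or with Lemma~\ref{lemma: P^2_Q} if $S_1\simeq\PP^2_{\QQ}$. You instead use $\Aut(\overline{Y})\simeq\PGL_2(\overline{\QQ})$ and descend the unique closed orbit to a conic $C$ over $\QQ$ with $G\hookrightarrow\Aut(C)$, so only Corollary~\ref{corollary: automorphsms of conic over Q} is needed. This works and avoids Severi--Brauer surfaces entirely, at the cost of relying on the classical Mukai--Umemura description of $\Aut(V_5)$ and its orbits.

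The genuine gap is the singular case, which you only outline.

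\emph{What the paper does.} It never analyses singular $\overline{Y}$. It invokes Prokhorov's theorem \cite[Theorem~1.7]{Pro I}: under the $G\QQ$-factoriality assumption, $\overline{Y}$ is automatically smooth. So only your smooth case is ever needed.

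\emph{Why your outline falls short.} Producing Weil divisors through the nodes that are not $\QQ$-Cartier gives no contradiction by itself. Over $\QQ$, the invariant divisors are sums over orbits of $G$ together with $\Gal(\overline{\QQ}/\QQ)$. Such orbit sums can be $\QQ$-Cartier even when their components are not. For example, on the quadric cone over a smooth quadric surface, one plane from each family adds up to a hyperplane section. A group swapping the two families therefore makes that cone $G\QQ$-factorial.

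\emph{What must actually be shown.} For every nodal type, the subspace of $\Cl(\overline{Y})\otimes\QQ$ fixed by the combined action of $G$ and the Galois group must have dimension at least $2$. Equivalently, the relevant extremal rays (the small resolutions and their contractions) cannot be permuted so as to leave only $\QQ\cdot H$ invariant. You flag exactly this as the ``main obstacle'' but do not carry it out.

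\emph{The fallback.} Your route through $\Aut^0(\overline{Y})$ is also unverified. You do not compute the resulting bound. Nor do you control the part of $\Aut(\overline{Y})$ that the permutation action on the nodes does not see, since the kernel of that action need not equal $\Aut^0(\overline{Y})$.

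To complete the proof, either cite Prokhorov's smoothness result, or carry out the class group computation for each singular type.
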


\begin{proof}
    By Theorem~\cite[Theorem 1.7]{Pro I} threefold $\overline{Y}$ is smooth, then the Hilbert scheme of lines on $\overline{Y}$ is isomorphic to $\PP^2$ by~\mbox{\cite[Theorem 1.1.1]{Hilbert schemes}}. Therefore, the Hilbert scheme of lines on $Y$ is a Severi--Brauer surface $S_1$ over $\QQ$ and~$G$ is a subgroup in $\Aut(S_1)$ by~\mbox{\cite[Lemma 4.2.1]{Hilbert schemes}}. If $S_1$ is non-trivial, then $|G| \leqslant 3$ by~\mbox{\cite[Theorem 1.3]{Vikulova 2}}, if $S_1$ is trivial, then $|G| \leqslant 24$ by Lemma~\ref{lemma: P^2_Q}.
\end{proof}

\begin{lemma}\label{lemma: smooth of genus 12}
    Let $G$ be a finite group. Let $X$ be a terminal $G\QQ$-factorial Gorenstein Fano threefold of index~$1$ of genus~$12$ over $\QQ$. Then $|G| \leqslant 5760$. 
\end{lemma}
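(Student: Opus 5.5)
Following the pattern of Lemma~\ref{lemma: smooth of degree 5}, the plan is to reduce to the smooth case and then find a $G$-fixed $\QQ$-point, so that Theorem~\ref{theo: action with fixed point} together with Theorem~\ref{theo: Minkowski} gives an embedding $G\subset \GL_3(\QQ)$. We then have $|G|\leqslant 2^{4}\cdot 3^{2}\cdot 5 \cdot\ldots$, and in any case $|G|\leqslant |{\rm GL}_4(\QQ)\text{-bound}| = 5760$ if the fixed point is only known to have a $4$-dimensional tangent space.

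\textbf{Step 1: reduction to the smooth case.} By Theorem~\ref{theo: smoothing}, a smoothing $\mathfrak{X}_b$ of $\overline{X}$ is a smooth Fano threefold with $\rho=1$, $\iota=1$, $g=12$. Hence it is $V_{22}$, and $h^{1,2}(\mathfrak{X}_b)=0$. Thus $N\leqslant 20-1+0=19$ singular points. I would first try to show directly that a singular $X$ is handled as follows. The singular locus is a $G$- and Galois-invariant finite set. If $N=1$, the unique singular point is a $G$-fixed $\QQ$-point with $\dim T_P\overline{X}=4$ (terminal Gorenstein singularities are hypersurface singularities). Theorem~\ref{theo: action with fixed point} and Theorem~\ref{theo: Minkowski} then give $|G|\leqslant 5760$. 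For larger $N$ this naive argument only gives a weaker bound. I would instead invoke the known description of singular (nodal, $\QQ$-factorial) $V_{22}$ due to Prokhorov: in the $G\QQ$-factorial case such a $V_{22}$ either has at most one singular point, or it is handled via the Sarkisov link to $\PP^3$ or $V_5$. Via that link I expect the bound reduces to Lemma~\ref{lemma: smooth of degree 5} or to $\PGL_4(\QQ)$-type bounds; Corollary~\ref{corollary: Serre bound for PGL} over $\QQ$ gives $\nu^{\se}_4(\QQ)=5760$ as well.

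\textbf{Step 2: the smooth case.} For smooth $V_{22}$ over $\overline{\QQ}$, the automorphism group is known (Kuznetsov--Prokhorov--Shramov): it is finite except for the Mukai--Umemura threefold and two special members with $\Aut^0=\mathbb{G}_a$ or $\mathbb{G}_m$. The finite automorphism groups are subgroups of $\PGL_2$ acting on the associated space of binary forms. More concretely, $V_{22}$ is the variety of polar hexagons of a plane quartic curve; the group $G$ acts faithfully on this quartic, which is defined over $\QQ$ since it is canonically attached to $X$. Hence $G\subset \Aut(\text{quartic})\subset\PGL_3(\QQ)$, and $|G|\leqslant 24$ by Lemma~\ref{lemma: P^2_Q}. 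In the Mukai--Umemura and other infinite cases, $\Aut(\overline{X})$ is a form of $\PGL_2$ or of $\mathbb{G}_a\rtimes\mu_4$, $\mathbb{G}_m\rtimes\mu_2$. Then Proposition~\ref{prop:Beauville} and Lemma~\ref{lemma: PGL_2(L), L <= m} (over $\QQ$ or a quadratic field) give a bound far below $5760$.

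\textbf{Main obstacle.} The main obstacle is Step 1, the singular case with several singular points. There the $\QQ$-point and the tangent-space argument do not immediately apply. I would handle it by choosing a $G$-invariant geometric object defined over $\QQ$ whose automorphisms are controlled: the double projection from a line, or the Hilbert scheme of lines or conics, which carries a faithful $G$-action. I would then check case by case that the resulting group embeds in $\GL_4(\QQ)$ or $\PGL_4(\QQ)$, giving the stated $5760$.
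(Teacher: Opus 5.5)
Your singular case has a genuine gap. You settle $N=1$ correctly. For $N\geqslant 2$ you half-invoke Prokhorov's description of singular genus-$12$ threefolds, but add a second alternative (``or it is handled via the Sarkisov link to $\PP^3$ or $V_5$''), and you then leave that case open as the ``main obstacle''. The fallback you sketch would not close it, for two reasons.
\begin{enumerate}
\item The links from $X$ to $V_5$ or $\PP^3$ are centred at some point or curve of $X$. They are equivariant only for the subgroup of $G$ stabilizing that centre, and nothing guarantees a $G$-invariant centre defined over $\QQ$. So $G$ does not pass to $\Aut(V_5)$ or to $\PGL_4(\QQ)$.
\item Even if it did, Serre's bound over $\QQ$ is $\nu^{\se}_4(\QQ)=2^7\cdot 3^4\cdot 5\cdot 7=362\,880$, as computed in the appendix. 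The number $5760$ is the Minkowski bound for $\GL_4(\QQ)$, not for $\PGL_4(\QQ)$.
\end{enumerate}
The missing ingredient is the full strength of Prokhorov's theorem for genus $12$, which the paper cites. A terminal Gorenstein $G\QQ$-factorial Fano threefold of genus $12$ is either smooth or has exactly one singular point, an ordinary double point; there is no other alternative. With this, $N\leqslant 1$, and the smoothing estimate $N\leqslant 19$ is not needed. In the nodal case your argument is exactly the paper's. The node is a $G$-fixed $\QQ$-point with $4$-dimensional tangent space, so $G\subset\GL_4(\QQ)$ by Theorem~\ref{theo: action with fixed point}, and $|G|\leqslant 5760$ by Theorem~\ref{theo: Minkowski}.

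In the smooth case your idea is in substance the paper's, but it needs to be made precise. The paper uses the Kuznetsov--Prokhorov--Shramov result that the Hilbert scheme of conics on $\overline{X}$ is $\PP^2$. Hence the Hilbert scheme of conics on $X$ is a Severi--Brauer surface $S_2$ over $\QQ$ on which $G$ acts faithfully. Then $|G|\leqslant 3$ if $S_2$ is nontrivial (Vikulova), and $|G|\leqslant 24$ if $S_2\simeq\PP^2_{\QQ}$.

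This argument covers every smooth $X$ at once, including the Mukai--Umemura threefold and those with $\Aut^0=\mathbb{G}_a$ or $\mathbb{G}_m$. Your split into finite and infinite $\Aut(\overline{X})$ is therefore unnecessary.

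Three points in your version need correcting:
\begin{enumerate}
\item That the quartic is defined over $\QQ$ does not by itself place it in $\PP^2_{\QQ}$. You need either the observation that a degree-$4$ curve defined over $\QQ$ on a Severi--Brauer surface forces the surface to be trivial, or the separate bound for nontrivial Severi--Brauer surfaces.
\item The assertion that the finite automorphism groups lie in $\PGL_2$ is not the right statement. The relevant faithful action is on the plane, that is, inside $\PGL_3$. You do not actually use this assertion, however.
\item Your opening reference to $\GL_3(\QQ)$ should be $\GL_4(\QQ)$, since the fixed point is a node.
\end{enumerate}
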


\begin{proof}
     By~\cite[Theorem 1.3]{Pro g = 12} threefold $\overline{X}$ is either smooth or contains only one singular point $P$, which is an ordinary double point. In the former case, the Hilbert scheme of conics on $\overline{X}$ is isomorphic to $\PP^2$ by~\cite[Theorem 1.1.1]{Hilbert schemes}. Therefore, the Hilbert scheme of conics on $X$ is a Severi--Brauer surface $S_2$ over $\QQ$ 
    and~$G$ is a subgroup in $\Aut(S_2)$ by~\mbox{~\cite[Lemma 4.3.4]{Hilbert schemes}}. If $S_2$ is non-trivial, then $|G| \leqslant 3$ by~\mbox{\cite[Theorem 1.3]{Vikulova 2}}, if $S_2$ is trivial, then $|G| \leqslant 24$ by Lemma~\ref{lemma: P^2_Q}. In the latter case, point $P$ is defined over $\QQ$ and the group $G$ acts on $X$ fixing $P$. Therefore, $G$ embeds to $\GL_4(\QQ)$ by Theorem~\ref{theo: action with fixed point} and~$\relpenalty=10000 |G| \leqslant 5760$ by Theorem~\ref{theo: Minkowski}.
\end{proof}

\begin{rem}\label{rem: schemes are P^2 always}
    Actually, Severi--Barauer surfaces in Lemmas~\ref{lemma: smooth of degree 5} and~\ref{lemma: smooth of genus 12} are  trivial. To see this, note, that there is a curve $\Gamma$ of degree $4$ on $S_2$ defined over $\QQ$ that corresponds to degenerate conics on $X$. Therefore, the positive generating class of $\Pic(\overline{X}) \simeq \ZZ$ is defined over $\QQ$ and provides an isomorphism $S_2 \simeq \PP^2_{\QQ}$. Triviality of $S_1$ follows from the existence of an isomorphism $S_1 \simeq S_2$, see~\cite[Proposition B.4.1]{Hilbert schemes}.
\end{rem}

\begin{lemma}\label{lemma: degree 4}
     Let $G$ be a finite group. Let $X$ be a terminal $G\QQ$-factorial Gorenstein Fano threefold of index~$2$ of degree~$4$ over $\QQ$. Then $$|G| \leqslant 1\,902\,071\,808 < 10^{10}.$$ 
\end{lemma}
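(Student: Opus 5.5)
The plan is to use the fundamental class to embed $X$ into $\PP^5$ and then split $G$ into its action on the pencil of quadrics through $X$ and the kernel of that action.

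\emph{Embedding.} Since $\iota(\overline{X})=2$ and $d(\overline{X})=4$, the fundamental class $H$ on $\overline{X}$ satisfies $h^0(\overline{X},H)=d+2=6$. For a del Pezzo threefold of degree $4$ (terminal Gorenstein), $H$ is very ample and embeds $\overline{X}$ as a complete intersection of two quadrics in $\PP^5$. The class $H$ is Galois-invariant, because $-K_X=2H$ and $\Pic(\overline{X})\simeq\ZZ$ by Theorem~\ref{theo: smoothing}, and it is also $G$-invariant. Hence the morphism $\Phi_{|H|}$ descends to a $G$-equivariant embedding $X\hookrightarrow V$, where $V$ is a Severi--Brauer fivefold over $\QQ$. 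This gives an embedding $G\subset\Aut(V)$, which is an inner form of $\PGL_6$.

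\emph{First reduction (Serre).} Theorem~\ref{theo: serre's theorem for reductive groups} applies to $\Aut(V)$, since it is reductive of rank $5$ with Weyl group $\mathfrak S_6$. This gives a first crude bound of the shape $\nu^{\se}_{p,6}(\QQ)$ prime by prime. It already handles the large primes: $7^2$ and $13$ come from $t_7=6$, $t_{13}=12$; $5$ and $11$ should be ruled out separately below.

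\emph{Second reduction (pencil).} Next I use the pencil structure. The ideal of $X$ in degree $2$ gives a pencil $\PP^1$ of quadrics, which is a conic over $\QQ$ acted on by $G$. This yields an exact sequence
$$1\to G_0\to G\to G_1\to 1,\qquad G_1\subset\Aut(\text{conic}).$$

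\emph{Smooth case.} The degenerate members of the pencil are the roots of a binary sextic. The kernel $G_0$ preserves every quadric, and in the smooth case it is a subgroup of $\mu_2^5$. The image $G_1$ preserves the six-point set, so its order is controlled by Corollary~\ref{corollary: automorphsms of conic over Q} and Lemma~\ref{lemma: PGL_2(L), L <= 4,6,24}. This yields a bound of the order $2^5\cdot 24$ or smaller.

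\emph{Singular case.} Here the singular points of $\overline{X}$ number at most $20-\rho+h^{1,2}=21$ by Theorem~\ref{theo: smoothing}. Also, some degenerate quadric of the pencil is defined over a field of small degree, and its vertex gives a point of small degree. On that point I apply Lemma~\ref{lemma: passing to stabilizer and acting on a tangent space} together with Corollary~\ref{corollary: schur}, or else use directly the $\PGL_6$ bound from the first reduction.

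\emph{Combining.} Finally, I take the maximum over the cases, together with the per-prime Serre bound, to arrive at the stated $2^{12}\cdot3^6\cdot7^2\cdot13$.

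\emph{Main obstacle.} The hardest step is the Serre-type bound for the prime $2$ and the elimination of $5$ and $11$, because the naive $\PGL_6(\QQ)$ estimate gives $2^{13}$ and allows $5^3\cdot 11$. I would first try to show that the $2$-part and the primes $5$, $11$ are governed by the pencil decomposition: $G_0$ is a $2$-group of bounded rank, and $G_1$ lies in a $\PGL_2$ over a field of degree $\le 2$. Then I would combine this with the Serre bound, taking for each prime the smaller of the two estimates.
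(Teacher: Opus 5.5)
Your set-up is sound up to and including the smooth case. $H$ is very ample and Galois-invariant, so $X$ sits $G$-equivariantly as an intersection of two quadrics in a Severi--Brauer fivefold. For smooth $\overline X$ the kernel $G_0$ lies in $\mu_2^5$, so $|G|\le 2^5\cdot 24$.

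\textbf{The gap: the singular case.} This is where all the difficulty lies, and neither of your fallbacks reaches the bound. With up to $21$ singular points, Lemma~\ref{lemma: passing to stabilizer and acting on a tangent space} gives index and field degree up to $21$. Even the paper's refined estimate for $\GL_4$ over fields of degree $\le 7$, multiplied by $21$, already exceeds $1\,902\,071\,808$, and nothing comparable is available for larger degrees. The bare Serre bound for $\PGL_6(\QQ)$ is $2^{13}\cdot 3^6\cdot 5^3\cdot 7^2\cdot 11\cdot 13\approx 5.2\cdot 10^{12}$.

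\textbf{Why the planned repair fails.} You intend a prime-by-prime argument showing $5\nmid|G|$, $11\nmid|G|$ and $\nu_2(|G|)\le 12$, with $G_0$ a $2$-group. This cannot work, because $5$ can divide $|G|$ and $G_0$ need not be a $2$-group. (Excluding $11$ is possible, since $\PGL_6$ and its inner forms over $\QQ$ have no elements of order $11$, but that does not help.) Here is an example.
\begin{itemize}
\item Let $L=\QQ(\xi_5)$ and $L^+=\QQ(\sqrt5)$. Let $X\subset\PP(L\oplus\QQ\oplus\QQ)\simeq\PP^5_\QQ$ be given by $\alpha\bar\alpha=u_3v_3$, read as an identity in $L^+=\QQ\oplus\QQ\sqrt5$, i.e.\ as two rational quadrics.
\item Over $\overline{\QQ}$ this is the toric threefold $\{u_1v_1=u_2v_2=u_3v_3\}$ with six nodes. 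So $X$ is a terminal Gorenstein Fano threefold with $\rho(\overline X)=1$, $\iota=2$, $d=4$, and it has the $\QQ$-point $\alpha=u_3=v_3=1$.
\item The automorphisms $\alpha\mapsto\xi_5\alpha$ and $u_3\leftrightarrow v_3$ generate $G\simeq\mu_{10}$, which preserves every quadric of the pencil. Thus $G_0=G$ contains $\mu_5$.
\item $\Cl(\overline X)$ is generated by the eight planes $\{w_1=w_2=w_3=0\}$ with $w_i\in\{u_i,v_i\}$. $\Gal(L/\QQ)$ permutes them in two $4$-cycles distinguished by $w_3$, and $u_3\leftrightarrow v_3$ swaps these two orbits. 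Hence $\Cl(\overline X)^{G\times\Gal(L/\QQ)}\otimes\QQ=\QQ K_X$, and $X$ is $G\QQ$-factorial.
\end{itemize}
So the lemma is only an inequality on $|G|$. The factorization $2^{12}\cdot3^6\cdot7^2\cdot13$ of the constant is not a divisibility pattern you can prove prime by prime.

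\textbf{What is missing.} You need the sharp count of singular points: the paper quotes Avilov's result that $\overline X$ has at most $6$ singular points. Then Lemma~\ref{lemma: passing to stabilizer and acting on a tangent space} gives a subgroup $G_\bullet$ of index $\le 6$ fixing a singular $K$-point with $[K:\QQ]\le 6$. Since terminal Gorenstein points are hypersurface singularities, $G_\bullet\subset\GL_4(K)$. The appendix estimate for $\GL_4$ over fields of degree $\le 7$ (Corollary~\ref{corollary: additive bound GL_4(<=7)}) finishes. The constant in the statement is just $6$ times the bound used for $|G_\bullet|$. (As cited, that corollary gives $1\,132\,185\,600$ rather than $317\,011\,968$, so strictly this yields $|G|<10^{10}$, which is all that is used later.) Your tangent-space idea was the right one; it only needed $6$ in place of $21$.

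Keep your smooth-case argument. The singular-point argument presupposes $\Sing(\overline X)\neq\varnothing$, and for smooth $\overline X$ your bound $|G|\le 2^5\cdot 24$ is exactly what covers that case.
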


\begin{proof}
    Threefold $\overline{X}$ contains at most $6$ singular points, see~\cite[\S2, Proposition 3]{Avilov}. By Lemma~\ref{lemma: passing to stabilizer and acting on a tangent space} there is a field extension $\QQ \subset K$ of degree at most $6$ and a subgroup $G_{\bullet} \subset G$ of index at most $6$ such that $G_{\bullet} \subset \GL_4(K)$. By Corollary~\ref{corollary: additive bound GL_4(<=7)} we have $$|G_{\bullet}| \leqslant 317\,011\,968 < 10^{9}$$ and $$|G| = [G:G_{\bullet}] \cdot |G_{\bullet}| \leqslant 6 \cdot 317\,011\,968 = 1\,902\,071\,808 < 10^{10}.$$
\end{proof}

To deal with remaining threefolds of Picard rank one over $\overline{\QQ}$, we mainly use embeddings given by a fundamental linear system, but this is not always the case.
The following proposition give an information on finite subgroups of automorphism groups of Gorenstein Fano threefolds of Picard rank $1$, whose fundamental class is not very ample.

\begin{propos}[{see~\cite[Theorem 0.6]{Shin}, \cite[Corollary 0.8]{Shin}, \cite[Theorem 1.1]{JR-2}, \cite[Theorem 1.4]{PCS}}]\label{prop: Gorenstein Fanos with not very ample anticanonical class}
    Let $X$ be a terminal Gorenstein Fano threefold over algebraically closed field with $\relpenalty = 10000 \rho(X) = 1$. Suppose that fundamental class~$H$ is not very ample, then one of the following possibilities holds:
\begin{enumerate}
    \item[(i)] $ \iota(X) = 2 $ and $ d(X) = 1 $;
    \item[(ii)] $ \iota(X) = 2 $ and $d(X) = 2 $;
    \item[(iii)] $ \iota(X) = 1 $ and $ g(X) = 2 $;
    \item[(iv)] $ \iota(X) = 1 $, $ g(X) = 3 $, and $ X $ is a double cover of a three-dimensional quadric.
\end{enumerate}
\end{propos}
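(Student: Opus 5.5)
This proposition is a compilation of known results rather than something to be proved from scratch. I would prove it by reducing to the smooth case via the smoothing of Theorem~\ref{theo: smoothing} and quoting the cited classification results. Let $X$ be terminal Gorenstein with $\rho(X)=1$, and put $\iota=\iota(X)$, with fundamental class $H$. By Theorem~\ref{theo: smoothing} there is a smoothing $\mathfrak{X}\to B$ with $\Pic(X)\simeq\Pic(\mathfrak{X}_b)$ and the same index, degree and genus. So $\mathfrak{X}_b$ is a smooth Fano threefold with $\rho=1$ and the same numerical invariants. Since $\iota\le 4$, with $\iota=4,3$ giving $\PP^3$ and the quadric (whose fundamental classes are very ample), it remains to treat $\iota\in\{1,2\}$.

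For $\iota=2$, I would use the del Pezzo threefold description. By Shin's results~\cite{Shin}, $|H|$ is base point free for $d(X)\ge 2$, and very ample for $d(X)\ge 3$. For $d=1$ the system $|H|$ has a base point. For $d=2$ the map $\Phi_{|H|}$ is a double cover of $\PP^3$. These give cases (i) and (ii).

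For $\iota=1$, I would invoke the Gorenstein analogue of Iskovskikh's trichotomy, taken from \cite[Theorem 0.6, Corollary 0.8]{Shin} and \cite[Theorem 1.1]{JR-2}: either $|-K_X|$ is very ample, or $\Phi_{|-K_X|}$ is a double cover (hyperelliptic case), or $\Bs|-K_X|\neq\varnothing$. For $\rho=1$ and terminal Gorenstein $X$, a nonempty base locus does not occur; this is where I would cite \cite{Shin} or \cite{JR-2}. In the hyperelliptic case the image has degree $g-1$ and is a variety of minimal degree. With $\rho(X)=1$ this image must be $\PP^3$ (giving $g=2$) or a quadric (giving $g=3$). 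The rational normal scroll images that occur for larger genus all have $\rho>1$, and this is the content of \cite[Theorem 1.4]{PCS}. These give cases (iii) and (iv).

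The main obstacle is excluding a nonempty base locus and hyperelliptic scrolls in the singular Picard rank one setting. Smooth-case arguments do not transfer directly, because very ampleness is not obviously preserved under degeneration. Here I would simply rely on the cited works rather than reprove them.
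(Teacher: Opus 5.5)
\textbf{Overall.} The paper gives no argument for this proposition; it only cites \cite{Shin}, \cite{JR-2} and \cite{PCS}. Your outline is a natural way to assemble those sources: index $3,4$ are trivial, index $2$ follows from the del Pezzo threefold results, and index $1$ uses the trichotomy (base points / hyperelliptic / very ample) together with the classification of hyperelliptic threefolds.

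\textbf{The gap.} One claim you attribute to these sources is false: a terminal Gorenstein Fano threefold with $\rho(X)=1$ \emph{can} have $\Bs|-K_X|\neq\varnothing$. Here is an example.
\begin{itemize}
\item Take $X\subset\PP(1,1,1,1,2,3)$, with coordinates $x_0,\dots,x_3,z,w$.
\item Cut it out by $q(x_0,\dots,x_3)=0$, where $q$ has rank $4$ and does not involve $z$, and by a general sextic $w^2=z^3+a_2(x)z^2+a_4(x)z+a_6(x)$.
\item $X$ misses the two singular points of the weighted projective space. Hence $-K_X=\mathcal{O}_X(1)$ is an ample Cartier divisor with $(-K_X)^3=2$.
\item For general $a_i$ the only singular point is $P=(0:0:0:0:1:1)$. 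Near $P$, $X$ is the cone $\{q=0\}\subset\mathbb{A}^4$, an ordinary double point.
\item A smoothing of $X$ is a smooth Fano threefold of anticanonical degree $2$, i.e.\ a sextic double solid. So $\rho(X)=1$ and $\iota(X)=1$ by Theorem~\ref{theo: smoothing}.
\item But $|-K_X|$ is spanned by $x_0,\dots,x_3$. So $\Bs|-K_X|=\{P\}$, and $\Phi_{|-K_X|}$ maps $X$ onto a quadric surface.
\end{itemize}
So the base point case cannot be discarded; it must be absorbed into case (iii). What you need from the cited works is not emptiness of the base locus. It is that, for $\rho(X)=1$, a nonempty base locus forces $(-K_X)^3=2$, i.e.\ $g=2$. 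With that in place of your emptiness claim, the argument closes.

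\textbf{Smaller points.}
\begin{itemize}
\item In the hyperelliptic step, threefolds of minimal degree also include the cone over the Veronese surface in $\PP^6$, which is not a scroll. In the terminal case its Gorenstein double cover is the del Pezzo threefold of degree $1$, of index $2$, so it does not arise for $\iota=1$. You should say this explicitly.
\item Cones over scrolls can have Picard number one, so ``$\rho>1$'' is not by itself why they drop out. Terminality, via \cite[Theorem 1.4]{PCS}, is what excludes them.
\item The smoothing you invoke at the start only transports invariants and plays no further role, as you yourself note.
\end{itemize}
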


\begin{comment}
\begin{cor}\label{corollary: embedding if d >=3 or g >=4}
    Let $X$ be a terminal Gorenstein Fano threefold over an algebraically closed field with $\rho(X) = 1$. If $\iota(X) = 2$ and $d(X) \geqslant 3$ or $\iota(X) = 1$ and $g(X) \geqslant 4$, then fundamental class $H$ is very ample.
\end{cor}
\end{comment}

\begin{propos}[cf. {\cite[Proposition 6.1.1]{PrShr Cr3}}]\label{prop: low degree and embedding}
Let $K$ be a field of characteristic $0$. Let $X$ be a Gorenstein Fano threefold over $K$ with a $K$-point. Suppose that one of the following cases holds 
\begin{enumerate}
    \item[(i)] $ \iota(X) = 2 $ and $ d(X) = 1 $;
    \item[(ii)] $ \iota(X) = 2 $ and $d(X) = 2 $;
    \item[(iii)] $ \iota(X) = 1 $ and $ g(X) = 2 $;
    \item[(iv)] $ \iota(X) = 1 $, $ g(X) = 3 $, and $X$ is a double cover of a three-dimensional quadric.
\end{enumerate}
Let $G \subset \Aut(X)$ be a finite group. Then for some positive integer $ r $ there is a central extension
$$ 1 \to \mu_r \to \tilde{G} \to G \to 1 $$
such that one has an embedding \( \tilde{G} \subset \mathrm{GL}_3(K) \times K^{\times} \) in case \textnormal{(i)}, an embedding \( \tilde{G} \subset \mathrm{GL}_4(K) \) in cases \textnormal{(ii)} and \textnormal{(iii)}, and an embedding \( \tilde{G} \subset \mathrm{GL}_5(K) \) in case \textnormal{(iv)}.
\end{propos}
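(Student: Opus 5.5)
The plan is to realize $X$ in each case as a hypersurface in a weighted projective space defined over $K$. Then $G$ acts on a graded ring whose generators can be chosen equivariantly in low degrees, and we take $\tilde G$ to be the group of graded automorphisms of the ring of sections $R(X,H)=\bigoplus_{m\ge 0}H^0(X,mH)$ that lift $G$.

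The first step is to descend the fundamental class. By the Remark on the isomorphism $\Pic(X)\simeq\Pic(\overline{X})^{\Gal(\overline K/K)}$, which uses the $K$-point, the class $H$ is defined over $K$ as a Cartier divisor class with $-K_X=\iota(X)H$. Since $\rho(\overline X)=1$ here (or at least $H$ is the unique class with this property), $H$ is $\Aut(X)$-invariant as a class. However, $G$ need not act on the line bundle $\mathcal O_X(H)$ itself, because only $-K_X$ carries a canonical linearization. We therefore define
$$\tilde G=\{(g,\phi)\colon g\in G,\ \phi\colon g^*\mathcal O_X(H)\xrightarrow{\sim}\mathcal O_X(H)\ \text{compatible with the canonical action on } \omega_X^{-1}\cong \mathcal O_X(H)^{\otimes\iota}\}.$$
This is a central extension of $G$ by $\mu_\iota$, or by $K^\times$ if we do not impose the compatibility. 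Imposing compatibility with the $\iota$-th power gives a finite central kernel $\mu_r$ with $r\mid\iota(X)$. When $\iota(X)=1$, $\tilde G=G$ and $r=1$. Note that $\tilde G$ acts linearly on every $H^0(X,mH)$, hence on the graded ring $R(X,H)$.

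The second step uses the classical descriptions, valid over $\overline K$ and hence over $K$ since the ring is defined over $K$:
\begin{enumerate}
\item[(i)] $X$ is a hypersurface of degree $6$ in $\PP(1,1,1,2,3)$;
\item[(ii)] $X$ is a double cover of $\PP^3$ branched in a quartic, i.e. a hypersurface of degree $4$ in $\PP(1,1,1,1,2)$;
\item[(iii)] $X$ is a double cover of $\PP^3$ branched in a sextic, i.e. a hypersurface of degree $6$ in $\PP(1,1,1,1,3)$;
\item[(iv)] $X$ is a double cover of a quadric $Q\subset\PP^4$, i.e. a complete intersection of a quadric and a degree $4$ hypersurface in $\PP(1^5,2)$.
\end{enumerate}
Faithfulness of $\tilde G$ on low-degree pieces is the key. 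In cases (ii), (iii) and (iv), $\tilde G$ acts on $V=H^0(X,H)$ of dimension $4$, $4$ and $5$ respectively. The kernel of $\tilde G\to\GL(V)$ acts trivially on the base $\PP^3$ or $Q$, hence lies in the covering involution times scalars. We handle this by adjusting the extension: we replace the linearization so that the covering involution acts as $-1$ on $V$, for instance by composing with the natural central $\mu_2$ that acts by $(-1)^m$ in degree $m$. With this choice an element acting trivially on $V$ acts on the top generator $w$ (of degree $2$ or $3$) by $\pm1$, and the element with $-1$ coincides with the degree-parity scalar. So after enlarging $r$ the map to $\GL(V)$ becomes injective. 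In case (i), $H^0(X,H)$ is $3$-dimensional, and $H^0(X,2H)$ contains a new generator $y$ that can be chosen $\tilde G$-equivariantly, since it is a complement to $\mathrm{Sym}^2H^0(H)$ and we are in characteristic $0$. The element $y$ spans an invariant line, which gives a character $\tilde G\to K^\times$. Together with the action on $H^0(H)$ this yields $\tilde G\to\GL_3(K)\times K^\times$. The kernel acts trivially on $x_0,x_1,x_2,y$, so it acts on $z$ (degree $3$) by $\pm1$, which is again the Bertini-type involution. We absorb it into $\mu_r$ as before.

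The main obstacle is this bookkeeping of the central extension. We must make sure that the kernel we quotient by is genuinely central and cyclic, and that the hyperelliptic or Bertini involution, which lies in $\Aut(X)$ and acts trivially on low-degree sections up to sign, is correctly absorbed into $\mu_r$ without losing injectivity. I would first work out case (ii) explicitly in coordinates, checking that any graded automorphism fixing $x_0,\dots,x_3$ sends $w\mapsto \pm w$, and then transplant the argument to the other cases.
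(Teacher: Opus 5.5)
\paragraph{Overall.} The paper gives no argument of its own. It cites Prokhorov--Shramov and remarks that $K$-rationality of $H$ is all that is needed. Your route (descend $H$, linearize $\mathcal{O}_X(H)$, use the weighted models) is in that spirit. However, two steps do not work as written, and the second is a genuine gap for case (ii) over a non-closed field.

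\paragraph{Faithfulness.} In case (ii), the element acting trivially on $V=H^0(X,H)$ and by $-1$ on $w$ is the lift $(x,w)\mapsto(x,-w)$ of the covering involution $\tau$. The degree-parity element $(x,w)\mapsto(-x,w)$ is a different element: it acts on $V$ by $-1$ and trivially on $X$. The same mismatch occurs in (iii), (iv), and with $z$ in (i). More importantly, $\mu_r=\ker(\tilde G\to G)$ consists of lifts of the identity, so a lift of $\tau\neq 1$ cannot be ``absorbed'' into it. Enlarging $r$ only adds elements to $\tilde G$, and you cannot ``replace'' the lift of one element and still have a group.

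What must be proved is that $(x,w)\mapsto(x,-w)$ (resp.\ $(x,y,z)\mapsto(x,y,-z)$) does not lie in $\tilde G$. Over $\overline K$ this is true for your compatible extension, but for a reason you do not give. The involution $\tau$ multiplies the residue form $\Omega/F$ by $-1$; equivalently, $\det d\tau=-1$ along the ramification divisor. Consequently:
\begin{itemize}
\item for $\iota=1$, the canonical action of $\tau$ on $H^0(X,-K_X)=V$ is $-\mathrm{id}$;
\item for $\iota=2$, the lifts of $\tau$ (resp.\ of the Bertini involution) compatible with $\omega_X^{-1}$ act on $V$ by $\pm\sqrt{-1}$.
\end{itemize}
Hence $\tilde G\to\GL(V)$ (resp.\ $\to\GL_3\times K^\times$) is injective.

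\paragraph{Rationality over $K$.} This is the more serious problem, and it is the whole point of this version of the statement, since it is applied with $K=\QQ$ and Minkowski's theorem. The computation above already shows that compatible lifts need not be $K$-rational. In case (ii) over $\QQ$ there is no compatible lift of $\tau$ at all, so for $\iota=2$ your $\tilde G$ need not surject onto $G$. Likewise, ``enlarging $r$'' needs roots of unity that are not in $K$.

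$K$-rationality of $H$ does not cure this. Over $\QQ$, take
\begin{itemize}
\item $f_4=x_0^4+12x_0^2x_1^2+4x_1^4+x_2^4+12x_2^2x_3^2+4x_3^4$ and $F=w^2-f_4$;
\item $X=\{F=0\}\subset\PP(1,1,1,1,2)$, a smooth quartic double solid with the $\QQ$-point $(1:0:0:0:1)$;
\item $\sigma\colon(x_0,x_1,x_2,x_3,w)\mapsto(2x_1,x_0,2x_3,x_2,2w)$.
\end{itemize}
Then $\sigma^*F=4F$, and $\sigma^2$ is the weighted scaling by $2$, so $\sigma$ is an involution of $X$. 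Its $\QQ$-linear lifts are $\sigma$ composed with weighted scalings by $t\in\QQ^\times$. Their squares are the weighted scalings by $2t^2\neq\pm1$, so none has finite order. Therefore no finite group of $\QQ$-linear automorphisms of $\mathcal{O}_X(H)$ maps onto $\langle\sigma\rangle$. The conclusion itself is trivially true for $G\simeq\mu_2$, but it cannot be obtained via lifts.

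\paragraph{What survives.}
\begin{itemize}
\item Cases (iii) and (iv): for $\iota=1$ the canonical linearization is $K$-rational, so these go through with $r=1$ once the sign of $\tau$ is computed.
\item Case (i): it can be done without linearizations. The base point $(0:0:0:1:1)$ of $|H|$ in the normalized model $z^2=y^3+a_4(x)y+a_6(x)$ is a smooth $K$-point fixed by $G$. This gives $G\subset\GL(T_PX)=\GL_3(K)$.
\item Case (ii): your argument yields only $\tilde G\subset\GL_4(\overline K)$. The missing step is not supplied by $K$-rationality of $H$, which is also the only input the paper's Remark adds to the Prokhorov--Shramov argument.
\end{itemize}
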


\begin{rem}\label{rem: on low degree and genus}
    In paper~\cite{PrShr Cr3} Proposition~\ref{prop: low degree and embedding} is stated for algebraically closed fields. However, for the proof written there to work, it is sufficient to require that the class $H$ is defined over $K$, which is the case in our setting, since $X$ contains a $K$-point.
\end{rem}

The following lemma describes terminal Gorenstein Fano threefolds of large index. 

\begin{lemma}[cf. {\cite[Theorem 3.1.14]{Isk Pro}}]\label{lemma: terminal Gorenstein Fanos of large index}
    Let $X$ be a terminal Gorenstein Fano threefold over an algebraically closed field. 
    \begin{enumerate}
        \item If $\iota(X) = 4$, then $X \simeq \PP^3$;
        \item if $\iota(X) = 3$, then $X \simeq Q \subset \PP^4$, where $Q$ is either a smooth quadric or a cone over smooth $2$-dimensional quadric.
    \end{enumerate}
\end{lemma}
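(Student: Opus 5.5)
The plan is to prove both parts through the fundamental linear system $|H|$, where $-K_X = \iota(X)H$, and to use the classical description of Fano threefolds of large index. First I would note that on a Fano threefold with terminal Gorenstein singularities the index satisfies $\iota(X)\leqslant 4$. To see this, pass to a smoothing $\mathfrak{X}_b$ as in Theorem~\ref{theo: smoothing}. It keeps $\Pic$, $\iota$ and $d$, and for smooth Fano threefolds we have $\iota\leqslant 4$. The same theorem also reduces the computation of the Hilbert polynomial of $H$ to the smooth case. By Riemann--Roch together with Kawamata--Viehweg vanishing,
$$h^0(X,H) = \frac{d(X)}{6}\,\iota(X)\,(\iota(X)+1)\cdot\frac{1}{\,\iota(X)\,}\cdots,$$
or more simply, this is read off from the smooth fiber $\mathfrak{X}_b$, for which $d=1$ when $\iota=4$ and $d=2$ when $\iota=3$. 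This gives $h^0(X,H)=4$ when $\iota=4$ and $h^0(X,H)=5$ when $\iota=3$.

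For (1), $H^3=1$ and $h^0(H)=4$. I would show that $|H|$ is base point free. The cleanest route is Fujita's $\Delta$-genus: $\Delta(X,H)=3+d-h^0(H)=0$. Polarized varieties with $\Delta=0$ are classified, and degree one then forces $(X,H)\simeq(\PP^3,\mathcal{O}(1))$. Alternatively, one can argue directly that $\Phi_{|H|}\colon X\dashrightarrow\PP^3$ is birational of degree $H^3=1$ once $|H|$ has no base points, and a finite birational morphism onto the normal variety $\PP^3$ is an isomorphism.

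For (2), $H^3=2$ and $h^0(H)=5$, so again $\Delta(X,H)=3+2-5=0$. By Fujita's classification, $X$ is a quadric hypersurface in $\PP^4$ embedded by $|H|$; in particular $|H|$ is very ample. A quadric in $\PP^4$ with only isolated singularities has corank at most $1$, so it is either smooth or the cone over a smooth quadric surface. The corank-$2$ case, a cone over a quadric cone, is singular along a line and must be excluded, which is consistent with terminal singularities being isolated.

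The main obstacle is justifying base point freeness, or equivalently applying Fujita's $\Delta$-genus theory, in the singular Gorenstein setting. Fujita's results hold for arbitrary polarized varieties with $\Delta=0$, so I expect to invoke them directly: for $\Delta=0$, $H$ is very ample and $(X,H)$ is a variety of minimal degree, namely $\PP^3$ when $d=1$ and a quadric when $d=2$. Alternatively, since the statement is marked ``cf.'', one can simply cite \cite[Theorem 3.1.14]{Isk Pro} together with the smoothing of Theorem~\ref{theo: smoothing}, which transports the invariants $\iota,d,h^0$.
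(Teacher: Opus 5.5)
Your argument is correct. The paper gives no proof of this lemma: it only refers to Theorem~3.1.14 of Iskovskikh--Prokhorov, which says index $4$ gives $\PP^3$ and index $3$ gives a quadric in $\PP^4$. Your final step is exactly what turns ``a quadric'' into the two listed possibilities: terminal singularities are isolated, so the quadric has corank at most $1$.

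Where you differ from the standard proof is in obtaining $d(X)$ and $h^0(X,H)$ from a smoothing. That route costs three things:
\begin{itemize}
\item the (true but unstated) fact that $H$ extends to a line bundle on the total space $\mathfrak{X}$, so that $\chi(H)$ is constant in the family;
\item a Lefschetz-principle step, since Theorem~\ref{theo: smoothing} is formulated over $\CC$;
\item the classification of smooth Fano threefolds of index $3$ and $4$.
\end{itemize}

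All of this can be bypassed. By Theorem~\ref{theo: Riemann-Roch} with empty basket, $-K_X\cdot c_2(X)=24$. Riemann--Roch and Kawamata--Viehweg vanishing then give
\[
h^0(X,H)=\frac{(\iota+1)(\iota+2)}{12}\,d+\frac{2}{\iota}+1,
\]
which is what your garbled displayed formula should read. Hence
\[
\Delta(X,H)=3+d-h^0(X,H)=2-\frac{2}{\iota}-\frac{(\iota+5)(\iota-2)}{12}\,d .
\]
For $\iota=4$ this equals $\frac{3}{2}(1-d)$, so Fujita's inequality $\Delta\geqslant 0$ forces $d=1$. For $\iota=3$ it equals $\frac{2}{3}(2-d)$, so $d\leqslant 2$, and integrality of $h^0(X,H)=\frac{5(d+1)}{3}$ rules out $d=1$. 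In both cases $\Delta=0$ follows intrinsically, without passing to a smooth fiber.

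The same formula gives $\Delta<0$ for $\iota\geqslant 5$. So your preliminary bound $\iota\leqslant 4$, which the statement does not actually need, also comes for free. From there on, your use of Fujita's classification of $\Delta=0$ polarized varieties (very ampleness, minimal degree) and the corank argument are exactly right.
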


This allows us to complete the case of threefolds with $\rho(\overline{X}) = 1$.

\begin{propos}\label{propos: Gorenstein, rho = 1}
     Let $G$ be a finite group. Let $X$ be a terminal $G\QQ$-factorial Gorenstein Fano threefold over $\QQ$ with a $\QQ$-point such that $\rho(\overline{X}) = 1$. Then $$|G| \leqslant 24\,103\,053\,950\,976\,000 < 10^{17}.$$ Moreover, if we are not in the case $\iota(X) = 1$, $g(X) = 10$, then $$|G| \leqslant 735\,746\,457\,600 < 10^{12}.$$
\end{propos}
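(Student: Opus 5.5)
Since $X$ has a $\QQ$-point, the index, degree and genus of $X$ coincide with those of $\overline{X}$, and by Theorem~\ref{theo: smoothing} they coincide with those of a smooth Fano threefold of Picard rank one. So the plan is a case analysis over the Iskovskikh classification of smooth Fano threefolds with $\rho=1$: $\iota\in\{1,2,3,4\}$, and then $d$ or $g$.

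\textbf{Large index.} If $\iota(X)=4$, then $\overline{X}\simeq\PP^3$ by Lemma~\ref{lemma: terminal Gorenstein Fanos of large index}, so $X$ is a Severi--Brauer threefold with a $\QQ$-point. Theorem~\ref{theo: Chatelet's theorem} then gives $X\simeq\PP^3_\QQ$, and Corollary~\ref{corollary: Serre bound for PGL} applied to $\PGL_4(\QQ)$ gives a small bound. If $\iota(X)=3$, then $X$ is a quadric in $\PP^4_\QQ$, embedded equivariantly by $H$, so $G\subset\PGL_5(\QQ)$ and Corollary~\ref{corollary: PGL_5(Q) and PGL_6(Q)} gives $|G|<10^{11}$. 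In the cone case the vertex is a $G$-fixed rational point with $4$-dimensional tangent space, so Theorem~\ref{theo: Minkowski} gives the sharper bound $5760$.

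\textbf{Index two.} For $d=5$ use Lemma~\ref{lemma: smooth of degree 5}, and for $d=4$ use Lemma~\ref{lemma: degree 4}. For $d=1,2$ use Proposition~\ref{prop: low degree and embedding}. Over $\QQ$ the central $\mu_r$ has order at most $2$, so Theorem~\ref{theo: Minkowski} applied to $\GL_4(\QQ)$, or to $\GL_3(\QQ)\times\QQ^\times$, bounds $|G|$. For $d=3$, $X$ is a cubic in $\PP^4_\QQ$, so $G\subset\PGL_5(\QQ)$ and Corollary~\ref{corollary: PGL_5(Q) and PGL_6(Q)} applies again.

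\textbf{Index one.} The cases $g=2$ and the double quadric with $g=3$ follow from Proposition~\ref{prop: low degree and embedding} together with Minkowski's bound for $\GL_4(\QQ)$ or $\GL_5(\QQ)$. The case $g=12$ is Lemma~\ref{lemma: smooth of genus 12}. In all remaining cases, $g\in\{3,4,5,6,7,8,9,10\}$, Proposition~\ref{prop: Gorenstein Fanos with not very ample anticanonical class} shows that $-K_X$ is very ample. Lemma~\ref{lemma: action on sections} then embeds $G$ into $\GL_{g+2}(\QQ)$, and Theorem~\ref{theo: Minkowski} bounds it.

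\textbf{Main obstacle.} This is making the numbers fit. For $g\le9$ the representation lives in $\GL_{\le 11}(\QQ)$. Minkowski's bound there is large, about $10^{19}$ for $\GL_{11}(\QQ)$, so something sharper is needed for the larger genera. I would try replacing the linear action by the projective one, $G\subset\PGL_{g+2}(\QQ)$, and using Corollary~\ref{corollary: Serre bound for PGL}; that drops one rank. Where that still exceeds $10^{12}$, I would look for extra structure. One option is the Mukai description as a linear section of a homogeneous variety, so that $G$ acts on a smaller space, for example $\PGL_5$ for $g=8$ via $\mathrm{Gr}(2,5)$. Another is singular points: if there are few of them, Lemma~\ref{lemma: passing to stabilizer and acting on a tangent space} gives an action on a $4$-dimensional tangent space over a small extension, which Corollary~\ref{corollary: additive bound GL_4(<=7)} controls.

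The exceptional case $g=10$ presumably gets only the crude bound: an action on $\PP^{11}$, giving $G\subset\PGL_{12}(\QQ)$ via Lemma~\ref{lemma: action on sections}, with $\nu^{\se}_{12}(\QQ)$ as the bound. The number $24\,103\,053\,950\,976\,000$ should be exactly that Serre bound. Computing it, and checking that every other case stays below $735\,746\,457\,600$, is the final bookkeeping step.
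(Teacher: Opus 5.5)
Your case analysis is the same as the paper's. In the index-one paragraph you even state the paper's argument: Lemma~\ref{lemma: action on sections} gives a faithful representation of $G$ on $H^0(X,-K_X)^{\vee}\simeq\QQ^{g+2}$, and Theorem~\ref{theo: Minkowski} bounds it. The gap is in your ``main obstacle'' paragraph, where you discard this argument because of a miscalculation and replace it with one that fails.

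Minkowski's bound for $\GL_{11}(\QQ)$ is not about $10^{19}$. One has $M(11,2)=11+5+2+1=19$, $M(11,3)=5+1=6$, $M(11,5)=2$, $M(11,7)=M(11,11)=1$, and $M(11,p)=0$ for $p\geqslant 13$. So the bound is $2^{19}\cdot3^6\cdot5^2\cdot7\cdot11=735\,746\,457\,600$, which is exactly the second number in the statement. It is attained at $g=9$, and smaller genera give smaller bounds. Likewise, $24\,103\,053\,950\,976\,000=2^{22}\cdot3^8\cdot5^3\cdot7^2\cdot11\cdot13$ is Minkowski's bound for $\GL_{12}(\QQ)$, i.e.\ the case $g=10$; it is not a Serre bound for $\PGL_{12}(\QQ)$. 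No extra structure is needed for any genus.

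The remedy you propose would break the proof. Passing to $G\subset\PGL_{g+2}(\QQ)$ and using Corollary~\ref{corollary: Serre bound for PGL} loses far more than the one rank it saves. Over $\QQ$ one has $m_2=2$ and $\varphi(t_2)=1$, so with $n=g+2$ the $2$-adic exponent becomes $2(n-1)+\nu_2((n-1)!)$. That is $28$ for $n=11$ and $30$ for $n=12$, against $19$ and $22$ from Minkowski. The $3$-adic exponent becomes $14$, resp.\ $15$, against $6$, resp.\ $8$.

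Consequently $2^{28}\cdot3^{14}>10^{15}$ already ruins the $10^{12}$ bound for $g=9$. For $g=10$, $\nu^{\se}_{12}(\QQ)$ is divisible by $2^{30}\cdot3^{15}\cdot5^{7}>10^{21}$. So your final bookkeeping step would fail for both assertions.

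The Mukai alternative is also misstated. Linear sections of $\mathrm{Gr}(2,5)$ are the index-two threefolds of degree $5$; genus $8$ corresponds to $\mathrm{Gr}(2,6)$. In any case it is unnecessary. Keep the linear representation from Lemma~\ref{lemma: action on sections} and compute Minkowski's exponents.
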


\begin{proof}
First of all, note, that indices, degrees and genera of terminal Gorenstein Fano threefolds coincides with indices, degrees and genera of smooth ones by Theorem~\ref{theo: smoothing}.

Suppose that $\iota(X) = 4$. Then $\overline{X}$ is isomorphic to $\PP^3$ by Lemma~\ref{lemma: terminal Gorenstein Fanos of large index}. Therefore,~$X$ is a Severi--Brauer threefold with a $\QQ$-point, which is isomorphic to $\PP^3_{\QQ}$ by Theorem~\ref{theo: Chatelet's theorem} and~$\relpenalty = 10000 G \subset \PGL_4(\QQ)$. By Corollary~\ref{corollary: PGL_5(Q) and PGL_6(Q)} we have $|G| \leqslant 362\,880.$

Suppose that $\iota(X) = 3$. Then $\overline{X}$ is isomorphic either to a smooth $3$-dimensional quadric or to a cone over a smooth $2$-dimensional quadric by Lemma~\ref{lemma: terminal Gorenstein Fanos of large index}. In the former case, since~$X$ contains a $\QQ$-point, then $X$ is isomorphic to a smooth rational $3$-dimensional quadric, in particular $G \subset \PGL_5(\QQ)$. By Corollary~\ref{corollary: PGL_5(Q) and PGL_6(Q)} we have $$|G| \leqslant 87\,178\,291\,200 < 10^{11}.$$ In the latter case, the unique singular point of $\overline{X}$ is defined over $\QQ$ and $G$ acts on $X$ fixing this point. Applying Theorem~\ref{theo: action with fixed point}, we obtain an embedding $G \subset \GL_4(\QQ)$. By Theorem~\ref{theo: Minkowski} we have $|G| \leqslant 5\,760$.

Suppose that $\iota(X) = 2$. If $d(X) = 5$, then we have $|G| \leqslant 24$ by Lemma~\ref{lemma: smooth of degree 5}. If~$\relpenalty=10000 d(X) = 4$, then we have $$|G| \leqslant 1\,902\,071\,808 < 10^{10}$$ by Lemma~\ref{lemma: degree 4}. If $d(X) = 3$, then according to Proposition~\ref{prop: Gorenstein Fanos with not very ample anticanonical class} the fundamental class is very ample, so that the map $\Phi_{|-\frac{1}{2}K_X|}$ is a~\hbox{$G$-equivariant} embedding to~$\mathbb{P}^{4}$ and $$|G| \relpenalty=10000\leqslant 87\,178\,291\,200 < 10^{11} $$ by Corollary~\ref{corollary: PGL_5(Q) and PGL_6(Q)}.

If $d(X) =1$ or $2$, then by Proposition~\ref{prop: low degree and embedding} there exists a central extension $$1 \rightarrow \mu_r \rightarrow \tilde{G} \rightarrow G \rightarrow 1,$$ where $\tilde{G} \subset \GL_3(\QQ) \times \QQ^{\times}$ in case $d(X) = 1$ and $\tilde{G} \subset \GL_4(\QQ)$ in case $d(X) = 2$. Applying Theorem~\ref{theo: Minkowski} we obtain a bound $|G| \leqslant |\tilde{G}| \leqslant 96$ in the former case and a bound~$\relpenalty = 10000 |G| \leqslant |\tilde{G}| \leqslant 5\,760$ in the latter case. 
%If $d(X) = 5$, we can repeat the previous argument, but we need a smaller bound. If $X$ is smooth, then $|G| \leqslant ?$ by Lemma~\ref{lemma: V_5}. If $X$ is singular with $N$ singular points over $\overline{\QQ}$ or, equivalently, over $\CC$, then by Theorem~\ref{theo: smoothing} we have  $$ N \leqslant 20 - \rho(\mathfrak{X}_b) + h^{1,2}(\mathfrak{X}_b),$$ where $\mathfrak{X}_b$ is a smoothing of $X_{\CC}$. Also, by Theorem~\ref{theo: smoothing} we have $\rho(\mathfrak{X}_b) = 1$, $\iota(\mathfrak{X}_b) = 2$ and~$d(\mathfrak{X}_b) = 5$. From the classification of smooth Fano threefolds it turns out, that~$\mathfrak{X}_b$ is a section of the Grassmanian $\Gr(2,5) \subset \PP^9$ by a subspace of codimension $3$. In particular,~$\relpenalty = 10000 h^{1,2}(\mathfrak{X}_b) = 0$ and $N \leqslant 19$. Since all singular points of~$\overline{X}$ are hypersurface singularities, then by Lemma~\ref{lemma: passing to stabilizer and acting on a tangent space} there exists a field extension $\QQ \subset K$ of degree $d \leqslant 19$ and a subgroup~$G_{\bullet} \subset G$ of index $i \leqslant 19$ such that $G_{\bullet}$ embeds to $\GL_4(K)$. Therefore, $$|G| \leqslant  2^{15}\cdot3^{9}\cdot5^2 \cdot 7^2 \cdot 13 \cdot 19^2 = 59326380520243200 = (17)$$ by \textcolor{red}{Corollary}~\ref{corollary: GL_4(<=27)}. 

Suppose that $\iota(X) = 1$. If $-K_X$ is very ample, then we have a faithful~$(g+2)$-dimensional representation of $G$ in~$H^0(X,-K_X)^{\vee}$ by Lemma~\ref{lemma: action on sections}. If $g(X) \leqslant 9$, then $$|G| \leqslant 2^{19}\cdot3^6\cdot5^2\cdot7\cdot11 = 735\,746\,457\,600 < 10^{12}$$ by Theorem~\ref{theo: Minkowski}. If $g(X) = 10$, then $$|G| \leqslant 2^{22}\cdot3^8\cdot5^3\cdot7^2\cdot11\cdot13 = 24\,103\,053\,950\,976\,000 < 10^{17}$$ by Theorem~\ref{theo: Minkowski}. If $g(X) = 12$, then $|G| \leqslant 5760$ by Lemma~\ref{lemma: smooth of genus 12}. If $-K_X$ is not very ample, then according to Proposition~\ref{prop: Gorenstein Fanos with not very ample anticanonical class} we are in one of two cases, either $g(X) = 2$, or~$\relpenalty=10000 g(X) = 3$ and $X$ is a double covering of a three-dimensional quadric. In the former case by Proposition~\ref{prop: low degree and embedding} there exists a central extension $$1 \rightarrow \mu_r \rightarrow \tilde{G} \rightarrow G \rightarrow 1,$$ where $\tilde{G} \subset \GL_4(\QQ)$. Then $|G| \leqslant |\tilde{G}| \leqslant 5\,760$ by Theorem~\ref{theo: Minkowski}. In the latter case by Proposition~\ref{prop: low degree and embedding} there exists a central extension $$1 \rightarrow \mu_r \rightarrow \tilde{G} \rightarrow G \rightarrow 1,$$ where $\tilde{G} \subset \GL_5(\QQ)$. Then $|G| \leqslant |\tilde{G}| \leqslant 11\,520$ by Theorem~\ref{theo: Minkowski}. \end{proof}

\begin{rem}\label{remark: alternative strategy for Gorenstein}
    Note that the argument which we use in Lemma~\ref{lemma: degree 4} could be used for all Gorenstein Fano threefolds together with the bound on the number of singularities given by Theorem~\ref{theo: smoothing}. But it turns out that the bounds obtained in this way are worse than the ones presented.
\end{rem}

\begin{rem}\label{remark: problems with i = 1, g = 10}
    The bound in case of threefolds of index $1$ and genus $10$ given in Proposition~\ref{propos: Gorenstein, rho = 1} seems to be too large. We think there should be a way to produce a smaller bound. Actually, we have a smaller bound if $\overline{X}$ is $\QQ$-factorial, namely, in this case $\overline{X}$ has at most $2$ singular points according to~\cite[Theorem 1.3]{Pro III} and we apply our standard argument with action on a tangents space to a singular point. But we do not know if~$\QQ$-factoriality of $\overline{X}$ is always the case or how to handle the non-$\QQ$-factorial one.
\end{rem}

Now everything is prepared to prove the main theorem.

\begin{proof}[of Theorem~\ref{theo: cr_3}]
We can assume $X$ to be projective. Regularizing the action of~$G$ and taking $G$-equivariant resolution of singularities, we can assume that $G$ is a subgroup of an automorphism group of a smooth rational treefold~$X$ (see~\mbox{\cite[Lemma 3.5]{D-I}}). %In this case, $X$ contains a $\mathbb{Q}$-point according to the Lang--Nishimura theorem (see for example~\mbox{\cite[Theorem 3.6.11]{Poo}}).
 Applying equivariant minimal model program, we obtain a terminal $G\QQ$-factorial threefold~$\tilde{X}$ with structure of a~\hbox{$G$-Mori} fiber space, i.e. there are three possible cases:
\begin{enumerate}
    \item $\tilde{X}$ has a structure of a $G\QQ$-del Pezzo fibration;
    \item $\tilde{X}$ has a structure of a $G\QQ$-conic bundle;
    \item $\tilde{X}$ is a $G\QQ$-Fano threefold.
\end{enumerate}
The first and the second cases are handled by Lemma~\ref{lemma: GQ-conic and GQ-del Pezzo}, and we have $|G| \leqslant 5148$. The third case follows from Proposition~\ref{prop: bound for non-Gorenstein}, if $\tilde{X}$ is non-Gorenstein, and from Propositions~\ref{prop: Gorenstein rho > 1} and~\ref{propos: Gorenstein, rho = 1}, if~$\tilde{X}$ is Gorenstein.
\end{proof}

Taking into account Remark~\ref{remark: problems with i = 1, g = 10} let us prove the following auxiliary theorem, which can be perceived as the first step towards a possible improvement of Theorem~\ref{theo: cr_3}.

\begin{theorem}\label{theo: Cr_3 for future}
    Let $X$ be a $3$-dimensional variety over $\QQ$ with a $\QQ$-point such that~$\relpenalty=10000 \overline{X}$ is rationally connected. Let $G \subset \Bir(X)$ be a finite subgroup. Assume that for any finite group $\tilde{G}$ such that there exists terminal $\tilde{G}\QQ$-factorial Gorenstein Fano threefold $X$ over~$\QQ$ with a $\QQ$-point with $\rho(\overline{X}) = 1$, $\iota(X) = 1$ and $g(X) = 10$, one has $|\tilde{G}| \leqslant M$. Then $$|G| \leqslant  \max\{735\,746\,457\,600, M\}.$$ In particular, this applies to finite subgroups of $\Cr_3(\QQ)$.
\end{theorem}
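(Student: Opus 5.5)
The plan is to repeat the proof of Theorem~\ref{theo: cr_3} word for word, changing only the final bookkeeping in the Gorenstein case with $\rho(\overline{X}) = 1$.

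First, we may assume $X$ is projective. By \cite[Lemma 3.5]{D-I}, regularizing the action and taking a $G$-equivariant resolution, we may assume $G$ acts biregularly on a smooth projective threefold whose geometric model is rationally connected. By the Lang--Nishimura theorem this threefold still has a $\QQ$-point. Running the $G$-equivariant minimal model program gives a terminal $G\QQ$-factorial $G$-Mori fiber space $\tilde{X} \to \mathcal{B}$. Since $\tilde X$ is obtained from $X$ by $G$-birational maps, it again has a $\QQ$-point and is geometrically rationally connected.

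Next we split into the same three cases as in the proof of Theorem~\ref{theo: cr_3}.
\begin{enumerate}
\item If $\dim\mathcal{B}>0$, Lemma~\ref{lemma: GQ-conic and GQ-del Pezzo} gives $|G|\le 5184$. If that lemma is applied literally only under rationality, we use the fact that its proof only needs density of $\QQ$-points (or just a $\QQ$-point on the base together with Lemma~\ref{lemma: smooth rational fiber in fibrations}), exactly as the main proof already does.
\item If $\tilde X$ is a non-Gorenstein Fano threefold, Proposition~\ref{prop: bound for non-Gorenstein} gives $|G|<10^{10}$.
\item If $\tilde X$ is a Gorenstein Fano threefold with $\rho(\overline{\tilde X})>1$, Proposition~\ref{prop: Gorenstein rho > 1} gives $|G|<10^{10}$.
\end{enumerate}
All three bounds are below $735\,746\,457\,600$.

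The remaining case is a Gorenstein $\tilde X$ with $\rho(\overline{\tilde X})=1$. Here we read off the case analysis in the proof of Proposition~\ref{propos: Gorenstein, rho = 1}. Every subcase other than $\iota=1$, $g=10$ gives $|G|\le 735\,746\,457\,600$; this is the second statement of that proposition. In the subcase $\iota(\tilde X)=1$, $g(\tilde X)=10$, the variety $\tilde X$ is exactly a terminal $G\QQ$-factorial Gorenstein Fano threefold over $\QQ$ with a $\QQ$-point, $\rho(\overline{\tilde X})=1$, $\iota=1$ and $g=10$. So the hypothesis applies with $\tilde G=G$ and gives $|G|\le M$.

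Taking the maximum over all cases yields the claim. The statement for $\Cr_3(\QQ)$ follows by taking $X=\PP^3_\QQ$.

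The only real point to check is that the hypotheses of the assumption are met verbatim, namely $G\QQ$-factoriality, terminality and the existence of a $\QQ$-point on $\tilde X$. These are guaranteed by the equivariant MMP and Lang--Nishimura, so I expect no genuine obstacle.
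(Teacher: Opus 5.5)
Your proposal is correct and is essentially the paper's argument: rerun the proof of Theorem~\ref{theo: cr_3} and, when $\tilde X$ is Gorenstein with $\rho(\overline{\tilde X})=1$, use the second bound of Proposition~\ref{propos: Gorenstein, rho = 1} outside the case $\iota=1$, $g=10$, and the hypothesis (with $\tilde G=G$) inside it. Your write-up has two soft spots: Lang--Nishimura needs the given $\QQ$-point of $X$ to be smooth, and density of $\QQ$-points on $\tilde X$ (which Lemma~\ref{lemma: GQ-conic and GQ-del Pezzo} uses as written) is not established when $\overline{X}$ is only rationally connected; both are inherited verbatim from the paper's proof, which simply declares the resolution rational, and both disappear for $\Cr_3(\QQ)$.
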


\begin{proof}
    The proof follows verbatim the proof of Theorem~\ref{theo: cr_3}, with the only difference that in the case where $\tilde{X}$ is Gorenstein, we apply Proposition~\ref{propos: Gorenstein, rho = 1} along with the restriction given in the condition of the theorem.
\end{proof}

\begin{comment}
\begin{rem}
    Следующее улучшение - $735\,746\,457\,600 < 10^{12}$. Для этого надо эффективно оценить порядки конечных групп действующих на Горенштейновом Фано индеса $2$ степени $4$ (сейчас это делается эквивариантным вложением фундаментальной системой и Серром для $\PGL_6(\QQ)$). 

    Следующее - $367\,873\,228\,800 < 10^{12}$. Для этого надо эффективно оценить порядки конечных групп действующих на Горенштейновом Фано индеса $1$ рода $9$ (сейчас это делается эквивариантным вложением антиканклассом и Минковским для $\GL_{11}(\QQ)$). 

    Следующее - $87\,178\,291\,200 < 10^{11}$. Для этого надо эффективно оценить порядки конечных групп действующих на Горенштейновом Фано индеса $1$ рода $8$ (сейчас это делается эквивариантным вложением антиканклассом и Минковским для $\GL_{10}(\QQ)$). 

    Следующее - $10^{10}$. Для этого надо эффективно оценить порядки конечных групп действующих на Горенштейновом Фано индеса $2$ степени $3$ (сейчас это делается эквивариантным вложением фундаментальной системой и Серром для $\PGL_5(\QQ)$). И оценить порядки конечных групп действующих на гладкой трехмерной квадрике (сейчас это тоже приходит из Серра для $\PGL_5(\QQ)$).
\end{rem}
\end{comment}

\appendix
\section{Galois theory}\label{section: appendix}\label{sect: appendix}

In this section we do a technical preliminary work for Corollaries~\ref{corollary: additive bound GL_4(<=7)},~\ref{corollary: additive bound GL_3(<=15)},~\ref{corollary: serr for PGL_3(<=2)} and~\ref{corollary: serr for PGL_4(<=2)}. In particular, we bound numbers, which appears in Theorem~\ref{theorem: schur} and Corollary~\ref{corollary: Serre for PGL} in some cases. To formulate the main goal of this section, let us recall and fix the notation. 

Suppose we are given a number field $K$ of degree $d$ over $\QQ$. First of all, with each prime number $p$ we associate cyclotomic invariants $t_{p}$ and~$m_{p}$ defined in Definition~\ref{def: cyclotomic invariants}. For convenience, we repeat this definition here:
 \begin{itemize}
        \item $t_p= t_{p}(K) = [K(\xi_p): K]$;
        \item $t_2= t_{2}(K) = [K(\xi_4): K]$;
        \item $m_p = m_{p}(K) = \sup\{n \mid \xi_{p^n} \in K(\xi_p)\}$;
        \item $m_2 = m_{2}(K) = \sup \{n \mid \xi_{2^n} \in K\}$, if $\xi_4 \in K$;
        \item $m_2 = m_{2}(K) = \sup \{n \mid \xi_{2^n} + \xi_{2^n}^{-1} \in K\}$, if $\xi_4 \not\in K$.
    \end{itemize}
Moreover, with each prime number~$p > 2$ we associate a positive integer number~$\relpenalty=10000 e_p = [K(\xi_{p}): \QQ(\xi_{p^{m_p}})]$.

It is convenient to fit these invariants into the following diagram
\begin{center}
\begin{tikzpicture}
    
    \node (Q1) at (0,0) {$\QQ$};
    \node (Q2) at (0 + 8/3,4/3) {$K$};
    \node (Q3) at (0 + 0,8/3) {$K(\xi_p) = K(\xi_{p^{m_p}})$};
    \node (Q4) at (0 -8/3,4/3) {$\QQ(\xi_{p^{m_p}})$};
    \node (Q5) at (-24/10,1/3) {$p^{m_p-1}(p-1)$};
    \draw (Q1)--(Q2) node [pos=0.6, below,inner sep=0.2cm] {$d$};
    \draw (Q1)--(Q4); % node [pos=0.9, below,inner sep=0.25cm] {$p^{m_p-1}(p-1)$};
    \draw (Q4)--(Q3) node [pos=0.2, above,inner sep=0.2cm]{$e_{p}$};
    \draw (Q2)--(Q3) node [pos=0.2, above,inner sep=0.2cm]{$t_{p}$};

\end{tikzpicture}
\end{center}
\begin{comment}
    \node (Q5) at (5/2,4/3) {$F_p$};
    \node (Q6) at (5/2 + 4/3,8/3) {$K$};
    \node (Q7) at (5/2 -4/3,8/3) {$\QQ(\xi_{p})$};
    \node (Q8) at (5/2, -0) {$\QQ$};
    \node (Q9) at (5/2 + 4/3, 4/3) {.};
    \end{comment}
        \begin{comment}
    \draw (Q5)--(Q6);
    \draw (Q5)--(Q7);
    \draw (Q5)--(Q8) node [pos = 0.5, left, inner sep=0.15cm]{$r_p$};
    \end{comment}
With $2$ we associate integer number $e_2 = [K(\xi_{4}): \QQ(\xi_{2^{m_2}} + \xi^{-1}_{2^{m_2}})]$, 
\begin{comment}
a field $F_2 = \QQ(\xi_4) \cap K$ and a positive integer number $r_2 = [F_2:\QQ]$. An analogous diagrams can be drawn.
\end{comment}
\begin{rem} There is an obvious relation between introduced numbers for primes $p>2$, which follows from the diagram: $$p^{m_p-1}(p-1)\cdot e_p = d \cdot t_p.$$ We will use this equation without reference.
\end{rem}

Also, with any prime number $p$ and any positive integer number $n$ we associate Schur's and Serre's values $\nu_{p,n}^{\sch}$, $\nu_p^{\sch}$, $\nu_{p,n}^{\se}$ and $\nu_p^{\se}$ defined in Definition~\ref{def: schur and serre invariants}. For convenience, we repeat this definition here (remind that $\mathcal{P}$ denotes the set of all prime numbers):
\begin{itemize}
        \item $\nu^{\sch}_{p,n} = \nu^{\sch}_{p,n}(K) = m_p\left\lfloor \frac{n}{t_p}\right\rfloor + \left\lfloor \frac{n}{pt_p}\right\rfloor + \left\lfloor \frac{n}{p^2t_p}\right\rfloor + \dots$, if $p \neq 2$;
        \item $\nu^{\sch}_{p,n} = \nu^{\sch}_{p,n}(K) = m_2n+ \left\lfloor \frac{n}{2}\right\rfloor + \left\lfloor \frac{n}{2^2}\right\rfloor + \dots$, if $p=2$ and $\xi_4 \in K$;
        \item $\nu^{\sch}_{p,n} = \nu^{\sch}_{p,n}(K) = n + m_2\left\lfloor \frac{n}{2}\right\rfloor + \left\lfloor \frac{n}{2^2}\right\rfloor + \dots$, if $p = 2$ and $\xi_4 \not\in K$;
        \item $\nu^{\se}_{p,n} = \nu^{\se}_{p,n}(K) = m_p\left\lfloor\frac{n-1}{\varphi(t_p)}\right\rfloor + \nu_p((n-1)!)$;
        \item $\nu^{\sch}_n = \nu^{\sch}_n(K) =  \prod_{p \in \mathcal{P}} p^{\nu^{\sch}_{p,n}}$; 
        \item  $\nu^{\se}_n = \nu^{\se}_n(K) = \prod_{p \in \mathcal{P}} p^{\nu^{\se}_{p,n}}$.
    \end{itemize}

Two main goals of this section are to prove the following propositions.

\begin{propos}\label{proposition: bound of schur GL_4 <= 7}
    Let $K$ be a number field of degree $d \leqslant 7$. Then $$\nu^{\sch}_4 \leqslant 1\,132\,185\,600 < 10^{10}.$$
    %2^{15}\cdot3^{5}\cdot5^{4}\cdot7\cdot11^{4} = 
\end{propos}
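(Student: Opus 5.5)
The plan is to bound $\nu^{\sch}_4(K)=\prod_p p^{\nu^{\sch}_{p,4}}$ by a joint optimization over the primes $p$, rather than by multiplying the separate maxima of the exponents. The key structural input is the cyclotomic subfields of $K$. For an odd prime $p$ put $F_p=K\cap\QQ(\xi_{p^{m_p}})$. Since $\QQ(\xi_{p^{m_p}})/\QQ$ is abelian and $K(\xi_p)=K(\xi_{p^{m_p}})=K\cdot\QQ(\xi_{p^{m_p}})$, we get
$$[F_p:\QQ]=\frac{p^{m_p-1}(p-1)}{t_p}=\frac{d}{e_p}.$$
For $p=2$ we take $F_2=K\cap\QQ(\xi_{2^{m_2}})$ in the case $\xi_4\in K$, and $F_2=\QQ(\xi_{2^{m_2}}+\xi_{2^{m_2}}^{-1})$ otherwise. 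In both cases $F_2\subset K$ and $[F_2:\QQ]$ equals $2^{m_2-1}$ or $2^{m_2-2}$, respectively.

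The fields $F_p$ for distinct $p$ are subfields of cyclotomic fields ramified only at $p$, so they are pairwise linearly disjoint. Hence their compositum lies in $K$, and
$$\prod_p [F_p:\QQ]\ \Big|\ d,\qquad d\leqslant 7.$$
This single divisibility condition controls everything.

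Next I would tabulate, for each prime $p$, the exponent $\nu^{\sch}_{p,4}$ as a function of $(t_p,m_p)$, together with the cost $f_p=[F_p:\QQ]$.

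For $p$ odd, $\nu^{\sch}_{p,4}>0$ forces $t_p\leqslant 4$ and $t_p\mid p-1$. Then $(p-1)/t_p\mid f_p$ and $f_p\leqslant 7$, so only $p\leqslant 29$ can occur. For each such $p$ I list the few admissible pairs: $t_p\in\{1,2,4\}$, and $m_p=1$ unless $p\in\{3,5,7\}$ with a small cost. I record the resulting $\nu_{p,4}$ and $f_p$ in each case. For example, $p=3$ with $t_3=1$ and $m_3$ large gives $\nu_{3,4}=4m_3+1$ at cost $3^{m_3-1}$. As another example, $p=13$ with $t_{13}=2$ and $m_{13}=1$ gives $\nu_{13,4}=2$ at cost $6$, whereas $t_{13}=4$ gives $\nu_{13,4}=1$ at cost $3$.

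For $p=2$ I treat the two cases $\xi_4\in K$ and $\xi_4\notin K$ separately, using the two different formulas from Definition~\ref{def: schur and serre invariants}, with cost $2^{m_2-1}$ and $2^{m_2-2}$ respectively.

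The final step is a finite knapsack-type enumeration. For each $d\leqslant 7$, I choose a multiset of costs $f_p$ whose product divides $d$ and maximize $\prod_p p^{\nu_{p,4}}$. Since $d\leqslant 7$, the only factorizations are $1,\dots,7$ and $6=2\cdot 3$, so there are few cases. Primes with $f_p=1$ (cost-free) contribute their baseline value in every case. I expect the maximum to be attained at $d=6$, or possibly $d=4$, by spending the budget on $2$, $3$ or $13$ in the best combination, and to equal $2^{11}\cdot3^5\cdot5^2\cdot7\cdot13=1\,132\,185\,600$. I would check this candidate against all other splittings.

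The main obstacle I expect is the bookkeeping at $p=2$. There the two definitions of $m_2$ behave differently, the cost of $\xi_4\in K$ (a factor $2$ in $d$) competes with the costs at odd primes, and one has to make sure that $F_2$ is still linearly disjoint from the odd $F_p$. I would handle this by first fixing whether $\xi_4\in K$, which fixes a factor of $d$, and then running the odd-prime enumeration on the remaining budget.
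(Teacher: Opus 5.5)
Your framework is sound and genuinely different from the paper's. The paper works degree by degree with the invariants $e_p$. It proves $2^{m_2-1}\mid e_p$ (resp.\ $2^{m_2-2}\mid e_p$) and $t_q\geqslant (q-1)/\gcd(q-1,d,e_p)$, and splits according to whether some $e_p\in\{1,2\}$. In the remaining cases it maximizes each prime separately, and it reduces $d=3$ to $d=6$ via $K(\sqrt2)$ and monotonicity of $\nu^{\sch}_4$.

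Your single constraint $\prod_p[F_p:\QQ]\mid d$ contains both of those lemmas as its two-prime cases: put $f_p=d/e_p$ into $f_2f_p\mid d$ and into $f_qf_p\mid d$. Because it couples all primes at once, it removes the overcounting built into the paper's per-prime maximization. For the divisibility you need full linear disjointness, not just pairwise. It holds here: the compositum of the $F_q$ with $q\neq p$ is unramified at $p$, and each $F_p$ is abelian. Alternatively, all $F_p$ lie in some $\QQ(\xi_N)$, whose Galois group is the product of the $p$-primary parts.

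Carried out correctly, the enumeration has baseline $2^9\cdot3^2\cdot5$. The best use of the budget is spending all of $d=6$ on $t_7=1$. Hence $\nu^{\sch}_4\leqslant 2^9\cdot3^2\cdot5\cdot7^4=55\,319\,040$ for every $d\leqslant 7$, attained by $\QQ(\xi_7)$. This is about twenty times smaller than the stated bound, so the proposition follows, and your route gives a sharp answer where the paper's does not.

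Several details of your sketch must be fixed, however.
\begin{enumerate}
\item For $t_3=1$ the cost is $2\cdot 3^{m_3-1}$, as your own formula $f_p=p^{m_p-1}(p-1)/t_p$ gives, not $3^{m_3-1}$. With your stated cost, $\xi_3$ becomes free. The configuration $\xi_3\in K$ plus $t_7=1$ at $d=6$ would then give $2^9\cdot3^5\cdot5\cdot7^4=1\,493\,614\,080$, which exceeds the target. Taken literally, this slip breaks the argument.
\item You must allow $t_p=3$. This gives $p=7$ at cost $2$ with $\nu_{7,4}=1$, $p=13$ at cost $4$, and $p=19$ at cost $6$.
\item You must include the split $4=2\cdot 2$, i.e.\ two different primes each of cost $2$. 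For example, $\QQ(\sqrt{-3},\sqrt{-7})$ gives $2^9\cdot3^5\cdot5\cdot7$, which beats every single cost-$4$ option.
\end{enumerate}
Items 2 and 3 do not change the global maximum, but without them the enumeration is incomplete.

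Finally, your predicted extremal value $2^{11}\cdot3^5\cdot5^2\cdot7\cdot13$ is not feasible in your own model. Its cheapest realization costs $2\cdot2\cdot2\cdot2\cdot3=48>7$. That number is an artifact of the paper's prime-by-prime maximization, not the true maximum.
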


\begin{propos}\label{proposition: bound of schur GL_3 <= 15}
    Let $K$ be a number field of degree $d \leqslant 15$. Then $$\nu^{\sch}_3 \leqslant 240\,045\,120 < 10^{9}.$$
    %2^{15}\cdot3^{5}\cdot5^{4}\cdot7\cdot11^{4} = 
\end{propos}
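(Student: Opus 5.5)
The plan is to bound $\nu^{\sch}_3(K)=\prod_p p^{\nu^{\sch}_{p,3}}$ by combining a bound on each exponent $\nu^{\sch}_{p,3}$ with a \emph{joint} constraint that ties the primes together. Maximizing each exponent separately is too crude. For example, $\QQ(\xi_{16})$ has degree $8\leqslant 15$, so it gives $m_2=4$ and $\nu^{\sch}_{2,3}=3\cdot 4+1=13$. This already exceeds the $2^6$ in $240\,045\,120=2^6\cdot 3^7\cdot 5\cdot 7^3$. So the argument must use the fact that cyclotomic contributions from different primes compete for the degree $d\leqslant 15$.

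\textbf{Step 1 (which primes occur).} For $n=3$ and $p$ odd we have $\nu^{\sch}_{p,3}=0$ unless $t_p\leqslant 3$. Since $t_p$ divides $p-1$, this means $t_p\in\{1,2,3\}$, and then
$$\nu^{\sch}_{p,3}=m_p\lfloor 3/t_p\rfloor+\lfloor 3/(pt_p)\rfloor .$$
The relation $p^{m_p-1}(p-1)e_p=dt_p$ gives $p^{m_p-1}(p-1)\leqslant 3d\leqslant 45$. So only $p\leqslant 43$ can occur, with small $m_p$.

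\textbf{Step 2 (the joint constraint).} I would set $F_p=K\cap\QQ(\xi_{p^{m_p}})$ for odd $p$, and for $p=2$ let $F_2$ be the analogous subfield of $K$ inside $\QQ(\xi_{2^{m_2}})$. Since $K(\xi_p)\supset\QQ(\xi_{p^{m_p}})$ and $[K(\xi_p):K]=t_p$, Galois theory of the abelian extension $\QQ(\xi_{p^{m_p}})/\QQ$ gives $[F_p:\QQ]\geqslant p^{m_p-1}(p-1)/t_p$. For $p=2$ it gives $[F_2:\QQ]\geqslant 2^{m_2-1}$ if $\xi_4\in K$, and $[F_2:\QQ]\geqslant 2^{m_2-2}$ otherwise. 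Cyclotomic fields of coprime conductors are linearly disjoint, so the compositum of the $F_p$ has degree $\prod_p[F_p:\QQ]$. Since this compositum lies in $K$, the product divides $d$. Hence
$$\prod_{p}[F_p:\QQ]\ \Big|\ d,\qquad d\leqslant 15 .$$

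\textbf{Step 3 (enumeration).} For each prime I would tabulate the possible pairs (degree $a_p=[F_p:\QQ]$, exponent $\nu^{\sch}_{p,3}$), keeping for each admissible $a_p$ only the maximal exponent. Examples: for $p=3$, $t_3=1$ gives $\nu=3m_3+1$ with $a_3\geqslant 2\cdot 3^{m_3-1}$. For $p=7$, $t_7=1$ gives $\nu=3m_7$ with $a_7\geqslant 6\cdot 7^{m_7-1}$. For $p$ with $t_p=2$ or $3$, $a_p\geqslant (p-1)/t_p$. The case $a_p=1$ corresponds to the baseline exponent, i.e.\ that of $\QQ$ itself when $t_p\leqslant 3$ is still possible. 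I would then maximize $\prod_p p^{\nu_p}$ over all tuples $(a_p)$ with $\prod a_p\mid d$ for some $d\leqslant 15$. This is a finite knapsack-type search over the divisor lattice of $d\in\{1,\dots,15\}$, which can be done by hand or checked mechanically. I expect the optimum to be attained when the whole degree is spent on $p=3$ and $p=7$, matching $2^6\cdot 3^7\cdot 5\cdot 7^3$. For instance $t_3=t_7=1$ is compatible with $[F_3:\QQ]\cdot[F_7:\QQ]$ small relative to $15$ once the ``free'' contributions with $t_p=2,3$ (which cost only $a_p\geqslant (p-1)/t_p$) are accounted for.

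\textbf{Main obstacle.} The hard part is Step 3. The $p=2$ case splits according to whether $\xi_4\in K$, and the formula changes between the two branches. Also, primes with $t_p=2$ or $3$ do not require $\xi_p\in K$ but still consume some degree. I would handle this by first fixing $d$, which must be a multiple of $\prod a_p$, then for each divisor pattern taking the exponentwise maximum. I would verify that every competing configuration is dominated by the claimed one; the most dangerous configurations are those heavy in powers of $2$ or in $11$ and $13$, which need to be checked against it.
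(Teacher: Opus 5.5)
Your route is sound and differs from the paper's. Carried out, it gives a sharper result than the statement, but the enumeration still has to be done, and your predicted outcome is wrong.

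\textbf{How the paper argues.} It never states a global constraint. It works degree by degree:
\begin{itemize}
\item for $d$ odd or $d=2$ it uses the crude per-prime bound $B_{3,d}$ of Lemma~\ref{lemma: rough multiplicative bound};
\item it pushes $d=4,6$ up to degree $12$ by monotonicity of $\nu^{\sch}_3$, adjoining $\sqrt[3]{2}$, $\sqrt{2}$ or $\xi_4$;
\item for $d=8,10,12,14$ it runs a case analysis on $e_p=[K(\xi_p):\QQ(\xi_{p^{m_p}})]$, driven by Lemmas~\ref{lemma: boundin of m_2} and~\ref{lemma: divisibility of gcd}.
\end{itemize}
Since $e_p=d/[F_p:\QQ]$, those two lemmas are exactly the two-prime cases of your relation $\prod_p[F_p:\QQ]\mid d$.

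\textbf{What your approach buys.} Your linear-disjointness argument captures all interactions between primes at once. It treats every $d\leqslant 15$ uniformly and reduces the problem to a small knapsack, which is easy to finish by hand. Relative to the baseline $\nu^{\sch}_3(\QQ)=2^5\cdot 3$, the best single upgrades are:
\begin{itemize}
\item $3^3$ at cost $2$ (from $\xi_3\in K$);
\item $5^3$ at cost $4$;
\item $3^6$ or $7^3$ at cost $6$;
\item $2^8$ at cost $8$;
\item $11^3$ at cost $10$;
\item $13^3$ at cost $12$.
\end{itemize}
Every combination of two or three upgrades with product of costs at most $15$ is easily checked. The optimum is $2^5\cdot 3^4\cdot 7^3=889\,056$, attained by $K=\QQ(\xi_{21})$. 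So your method proves the proposition with a factor of exactly $270$ to spare, and the value it produces is attained. The paper's number is an artifact of bounding exponents separately inside each case. The paper avoids global bookkeeping but pays with a long case analysis and a non-sharp bound.

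\textbf{What you must correct.}
\begin{itemize}
\item The optimum does not ``match'' $2^6\cdot 3^7\cdot 5\cdot 7^3$. That configuration needs $\sqrt{2}$, $\xi_9$, $\sqrt{5}$ and $\xi_7$ in $K$, so $\prod_p[F_p:\QQ]\geqslant 2\cdot 6\cdot 2\cdot 6=144$. Spending the degree on $p=3$ and $p=7$ yields $2^5\cdot 3^4\cdot 7^3$, not the paper's constant.
\item Contributions with $t_p\in\{2,3\}$ are not free for $p\geqslant 5$: they cost $(p-1)/t_p\geqslant 2$. Only the baseline factor $3^1$, coming from $t_3=2$, costs nothing.
\end{itemize}
Since the enumeration is the whole content of the bound, you have to carry it out rather than predict its result.
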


The following two standard lemmas turn out to be useful.

\begin{lemma}[{see for example~\cite[Chapter 6, Theorem 1.12]{Lang}}]\label{lemma: natural irrationalities}
    Let $F \subset K$ be a finite Galois extension, and $F \subset L$ be an arbitrary field extension. If $KL$ is a composite field of $K$ and $L$, then $$[KL:L] = [K:K\cap L].$$
\end{lemma}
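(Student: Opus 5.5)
This is the classical ``theorem on natural irrationalities''. I will prove it by restricting automorphisms: send a Galois automorphism of the composite over $L$ to its restriction to $K$. This realises $\Gal(KL/L)$ as the subgroup of $\Gal(K/F)$ that fixes $K\cap L$.

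The first step is to check that $KL/L$ is finite Galois. Write $K$ as the splitting field over $F$ of a separable polynomial $f\in F[x]$, with $K=F(\alpha_1,\dots,\alpha_n)$, where the $\alpha_i$ are the roots of $f$. Then $KL=L(\alpha_1,\dots,\alpha_n)$ is the splitting field over $L$ of the same separable polynomial $f$, so $KL/L$ is finite Galois.

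Next, I define the restriction map
$$\rho\colon \Gal(KL/L)\to\Gal(K/F),\qquad \sigma\mapsto\sigma|_K.$$
It is well defined because $\sigma$ fixes $F\subset L$ and permutes the roots of $f$, hence maps $K$ to itself. The map is clearly a homomorphism. It is injective: if $\sigma|_K=\id$, then $\sigma$ fixes both $K$ and $L$, hence fixes the field they generate, which is $KL$.

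Call the image $H$. The main step is to show that $K^H=K\cap L$; the other steps are formal. The inclusion $K\cap L\subset K^H$ holds because every $\sigma\in\Gal(KL/L)$ fixes $L$. For the converse, take $x\in K^H$. Then $x\in KL$ is fixed by every element of $\Gal(KL/L)$. Since $KL/L$ is Galois, the fixed field of its full Galois group is $L$, so $x\in L$, and therefore $x\in K\cap L$.

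Finally, by the Galois correspondence for $K/F$, $H=\Gal(K/K\cap L)$. Combining this with the injectivity of $\rho$ gives
$$[KL:L]=|\Gal(KL/L)|=|H|=[K:K\cap L].$$
If one wants to avoid finiteness issues when $L/F$ is infinite, note that every extension in the argument is finite ($KL/L$ and $K/F$), so no topology on Galois groups is needed.
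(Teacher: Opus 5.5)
Your proof is correct and complete. The restriction map $\sigma\mapsto\sigma|_K$ identifies $\Gal(KL/L)$ with $\Gal(K/K\cap L)$, and this gives the degree equality. The paper does not prove this lemma itself; it only cites Lang's theorem on natural irrationalities, and your argument is essentially the standard restriction proof of that theorem.
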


\begin{cor}\label{corollary: nu(L) <= nu(K)}
    Let $K \subset L$ be an extension of number fields and $n$ be a positive integer number. Then $\nu_n^{\sch}(K) \leqslant \nu_n^{\sch}(L)$. 
\end{cor}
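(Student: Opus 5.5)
The plan is to compare the two products prime by prime: it suffices to show $\nu^{\sch}_{p,n}(K) \leqslant \nu^{\sch}_{p,n}(L)$ for every prime $p$. Each $\nu^{\sch}_{p,n}$ is built from the cyclotomic invariants $t_p$ and $m_p$. It is non-decreasing in $m_p$ and non-increasing in $t_p$, since every floor $\lfloor n/(p^k t_p)\rfloor$ grows when $t_p$ shrinks. So the core of the proof is two inequalities, $t_p(L) \leqslant t_p(K)$ and $m_p(L) \geqslant m_p(K)$, together with a separate treatment of the case $p=2$, where the shape of the formula depends on whether $\xi_4$ lies in the field.

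\textbf{Odd $p$.} I apply Lemma~\ref{lemma: natural irrationalities} to the Galois extension $K \subset K(\xi_p)$ and the extension $K \subset L$. Since $L\cdot K(\xi_p) = L(\xi_p)$, this gives
$$t_p(L) = [L(\xi_p):L] = [K(\xi_p):K(\xi_p)\cap L],$$
which divides $t_p(K)$. Next, $K(\xi_p) \subset L(\xi_p)$, so $\xi_{p^k} \in K(\xi_p)$ implies $\xi_{p^k} \in L(\xi_p)$, hence $m_p(L) \geqslant m_p(K)$. Monotonicity of the formula then gives the inequality. The same Galois argument applied to $K \subset K(\xi_4)$ shows $t_2(L) \leqslant t_2(K)$, although for $p=2$ this is not needed separately.

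\textbf{The prime $2$.} I split into three cases according to whether $\xi_4$ lies in $K$ and in $L$.
\begin{itemize}
\item If $\xi_4 \in K$, then $\xi_4 \in L$, both values use the formula $m_2 n + \dots$ with the same tail, and $\xi_{2^k}\in K$ implies $\xi_{2^k}\in L$.
\item If $\xi_4 \notin L$, then $\xi_4\notin K$ either, and both use $n + m_2\lfloor n/2\rfloor+\dots$. Here monotonicity of $m_2$ is clear, since $\xi_{2^k}+\xi_{2^k}^{-1}\in K\subset L$.
\item The mixed case $\xi_4\notin K$, $\xi_4\in L$ is the main obstacle.
\end{itemize}

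In the mixed case, write $m=m_2(K)\geqslant 2$ and $c=\xi_{2^m}+\xi_{2^m}^{-1}\in K$. I claim $\xi_{2^m}\in L$, so that $m_2(L)\geqslant m$.
\begin{itemize}
\item For $m=2$ the claim holds because $\xi_4\in L$.
\item For $m\geqslant 3$, note that $\xi_{2^m}$ is a root of $x^2-cx+1$. It therefore suffices to see that $\xi_{2^m}-\xi_{2^m}^{-1}\in L$.
\end{itemize}
To see this, write
$$\xi_{2^m}-\xi_{2^m}^{-1} = \xi_4\bigl(\zeta+\zeta^{-1}\bigr), \qquad \zeta=\xi_{2^m}\xi_4^{-1},$$
where $\zeta$ is again a primitive $2^m$-th root of unity. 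Hence $\zeta+\zeta^{-1}$ is a Galois conjugate of $c$. Since $\QQ(c)$ is abelian over $\QQ$, we get $\zeta+\zeta^{-1}\in\QQ(c)\subset K$. With $\xi_4\in L$, this gives $\xi_{2^m}\in L$.

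It then remains to verify the inequality
$$n + m\left\lfloor \tfrac{n}{2}\right\rfloor \leqslant m_2(L)\, n,$$
the tails $\lfloor n/4\rfloor+\dots$ being identical on both sides. This follows from $n + m n/2 \leqslant m n$, which is equivalent to $m\geqslant 2$, true by Remark~\ref{remark: m and t}, together with $m_2(L)\geqslant m$.

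Combining the odd and even cases yields $\nu^{\sch}_{p,n}(K)\leqslant\nu^{\sch}_{p,n}(L)$ for all $p$. Taking the product over $p \in \mathcal{P}$ gives $\nu^{\sch}_n(K)\leqslant\nu^{\sch}_n(L)$.
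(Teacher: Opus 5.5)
Your proof is correct and follows essentially the same route as the paper: you compare $\nu^{\sch}_{p,n}$ prime by prime, using $t_p(L)\leqslant t_p(K)$ (via Lemma~\ref{lemma: natural irrationalities}), $m_p(L)\geqslant m_p(K)$, and the monotonicity of the formula in these invariants. Your explicit treatment of the mixed case $\xi_4\in L\setminus K$ fills in a step that the paper's one-line proof passes over; there you show $\xi_{2^m}\in L$ and use $m\geqslant 2$, and dropping the extra $\lfloor n/2\rfloor$ on the $L$-side only makes the inequality you verify stronger.
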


\begin{proof}
    It is enough to show that for any prime $p$ we have $t_p(K) \geqslant t_p(L)$ and~$\relpenalty=10000 m_p(K) \leqslant m_p(L)$. This immediately follows from definitions and Lemma~\ref{lemma: natural irrationalities}.
\end{proof}

%\begin{proof}
 %   Доказывать?
%\end{proof}

\begin{lemma}[{see for example~\cite[Chapter 6, Corollary 3.2]{Lang}}]\label{lemma: Q(n) cap Q(m)}
    Let $n$ and $m$ be two coprime integers. Then $\QQ(\xi_n) \cap \QQ(\xi_m) = \QQ$.
\end{lemma}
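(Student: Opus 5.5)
The plan is a degree count, using Lemma~\ref{lemma: natural irrationalities} together with the multiplicativity of the Euler function. Set $F=\QQ$, $K=\QQ(\xi_n)$ and $L=\QQ(\xi_m)$. The extension $\QQ\subset\QQ(\xi_n)$ is Galois, since it is the splitting field of $x^n-1$. So Lemma~\ref{lemma: natural irrationalities} applies and gives
$$[\QQ(\xi_n)\QQ(\xi_m):\QQ(\xi_m)] = [\QQ(\xi_n):\QQ(\xi_n)\cap\QQ(\xi_m)].$$
The goal is to show that the left-hand side equals $\varphi(n)$. Since $[\QQ(\xi_n):\QQ]=\varphi(n)$, this forces $[\QQ(\xi_n)\cap\QQ(\xi_m):\QQ]=1$, which is the claim.

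The first step is to identify the compositum as $\QQ(\xi_n)\QQ(\xi_m)=\QQ(\xi_{nm})$.
\begin{itemize}
\item The inclusion $\subset$ holds because $\xi_{nm}^m$ and $\xi_{nm}^n$ are primitive roots of unity of orders $n$ and $m$ respectively.
\item For $\supset$, the element $\xi_n\xi_m$ has order exactly $nm$ because $\gcd(n,m)=1$, so it is a primitive $nm$-th root of unity lying in the compositum.
\end{itemize}

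The second step is to compute degrees. The irreducibility of cyclotomic polynomials gives $[\QQ(\xi_{nm}):\QQ]=\varphi(nm)$ and $[\QQ(\xi_m):\QQ]=\varphi(m)$. Since $n$ and $m$ are coprime, $\varphi(nm)=\varphi(n)\varphi(m)$. By the tower law,
$$[\QQ(\xi_{nm}):\QQ(\xi_m)]=\frac{\varphi(n)\varphi(m)}{\varphi(m)}=\varphi(n).$$
Substituting this into the displayed identity completes the argument.

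There is no real obstacle here. The only inputs are irreducibility of cyclotomic polynomials, which the paper already uses in Lemma~\ref{lemma: degree of semi-syclic extension}, and the coprimality fact $\mathrm{ord}(\xi_n\xi_m)=nm$. The only point needing care is to invoke Lemma~\ref{lemma: natural irrationalities} with the Galois extension $\QQ\subset\QQ(\xi_n)$ playing the role of $F\subset K$.
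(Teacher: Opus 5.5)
Your argument is correct and complete. Identifying the compositum as $\QQ(\xi_{nm})$, using $\varphi(nm)=\varphi(n)\varphi(m)$ with the tower law, and applying Lemma~\ref{lemma: natural irrationalities} to the Galois extension $\QQ\subset\QQ(\xi_n)$ forces $[\QQ(\xi_n)\cap\QQ(\xi_m):\QQ]=1$. The paper gives no proof of its own and only cites Lang, whose proof is essentially this same degree count, so your approach matches the intended one.
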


The following lemma provides a multiplicative bound on Schur's values. Of course, this bound is far from being suitable for us, but it has some advantages: it depends only on a degree of a field, and is easy to compute.

\begin{lemma}\label{lemma: rough multiplicative bound}
    Let $K$ be a number field of degree $d$. Let $p$ be a prime number and $n$ be a positive integer number. Then
    \begin{enumerate}
        \item if $p \geqslant 3$, one has $$\nu_{p,n}^{\sch} \leqslant B_{n,d,p} = (\nu_p(d) + 1)\left\lfloor\frac{n}{\frac{p-1}{\gcd(p-1,d)}} \right\rfloor +  \left\lfloor \frac{n}{p}\right\rfloor + \left\lfloor \frac{n}{p^2}\right\rfloor + \dots;$$
        \item if $p = 2$, one has $$\nu_{2,n}^{\sch} \leqslant B_{n,d,2} = n(\nu_2(d) + 1)+  \left\lfloor \frac{n}{2}\right\rfloor + \left\lfloor \frac{n}{2^2}\right\rfloor + \dots, \;  \text{if}\; d \; \text{is even};$$
        $$\nu_{2,n}^{\sch} \leqslant B_{n,d,2} = n + 2\left\lfloor \frac{n}{2}\right\rfloor + \left\lfloor \frac{n}{2^2}\right\rfloor + \dots, \;  \text{if}\; d \; \text{is odd}.$$
    \end{enumerate}
\end{lemma}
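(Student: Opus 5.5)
The plan is to bound the two cyclotomic invariants $m_p$ and $t_p$ from below and above respectively in terms of $d$ alone, and then substitute into the formulas for $\nu^{\sch}_{p,n}$ from Definition~\ref{def: schur and serre invariants}. The key structural input is the relation $p^{m_p-1}(p-1)\cdot e_p = d\cdot t_p$, together with the fact that $t_p = [K(\xi_p):K]$ divides $p-1$ for $p\geqslant 3$, because $\Gal(K(\xi_p)/K)$ embeds into $(\ZZ/p\ZZ)^\times$.

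\emph{Odd $p$.} I will first show $t_p \geqslant \frac{p-1}{\gcd(p-1,d)}$. Since $\QQ(\xi_p)/\QQ$ is Galois, Lemma~\ref{lemma: natural irrationalities} gives $t_p = [\QQ(\xi_p):\QQ(\xi_p)\cap K]$. Since $[\QQ(\xi_p)\cap K:\QQ]$ divides both $p-1$ and $d$, it divides $\gcd(p-1,d)$, which yields the claim. Next I will show $m_p \leqslant \nu_p(d)+1$. If $m_p \geqslant 1$, then from $p^{m_p-1}(p-1)e_p = d\,t_p$ and $p\nmid t_p$ (as $t_p\mid p-1$), we get $p^{m_p-1}\mid d$, i.e.\ $m_p-1\leqslant \nu_p(d)$. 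The only other possibility is $m_p = 0$, but in fact $m_p\geqslant 1$ always holds, since $\xi_p\in K(\xi_p)$.

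For the remaining terms, the smaller $t_p$ is, the larger the tail $\lfloor n/(p^k t_p)\rfloor$ becomes. So I will simply use $t_p\geqslant 1$ there, which gives $\lfloor n/(p^k t_p)\rfloor\leqslant \lfloor n/p^k\rfloor$. Combining this with the two previous bounds and the monotonicity of floor in the first term gives $B_{n,d,p}$.

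\emph{$p=2$.} Suppose first that $\xi_4\in K$. Then $\xi_{2^{m_2}}\in K$, so $\QQ(\xi_{2^{m_2}})\subset K$ and $2^{m_2-1}\mid d$. Hence $m_2\leqslant \nu_2(d)+1$, and the formula $m_2 n+\lfloor n/2\rfloor+\dots$ is bounded by $B_{n,d,2}$.

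Suppose now that $\xi_4\notin K$. Then $\QQ(\xi_{2^{m_2}}+\xi_{2^{m_2}}^{-1})\subset K$. By Lemma~\ref{lemma: degree of semi-syclic extension} this field has degree $2^{m_2-2}$ (when $m_2\geqslant 3$), which gives $m_2\leqslant \nu_2(d)+2$. The Schur value is then $n+m_2\lfloor n/2\rfloor+\lfloor n/4\rfloor+\dots$. In the even-$d$ case I must compare $n+(\nu_2(d)+2)\lfloor n/2\rfloor$ with $n(\nu_2(d)+1)+\lfloor n/2\rfloor$. This reduces to $(\nu_2(d)+1)\lfloor n/2\rfloor\leqslant n\,\nu_2(d)$, which holds because $\nu_2(d)\geqslant 1$ and $2\lfloor n/2\rfloor\leqslant n$.

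In the odd-$d$ case, $\xi_4\notin K$ automatically, and $2^{m_2-2}\mid d$ forces $m_2\leqslant 2$. Recalling from Remark~\ref{remark: m and t} that $m_2\geqslant 2$, we get $m_2=2$ exactly, which gives the stated $B_{n,d,2}$.

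The main obstacle I expect is bookkeeping rather than ideas. The step that needs the most care is checking the edge cases $m_2\leqslant 2$ in the $\xi_4\notin K$ case: there Lemma~\ref{lemma: degree of semi-syclic extension} needs $2^{m_2}>2$, and the degenerate cases should be handled by the universal lower bound $m_2\geqslant 2$.
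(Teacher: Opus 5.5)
Your proposal is correct and follows essentially the paper's route: for odd $p$ you compare $[K(\xi_p):\QQ]=d\,t_p$ with $[\QQ(\xi_{p^{m_p}}):\QQ]=p^{m_p-1}(p-1)$, and for $p=2$ you use the same towers $\QQ\subset\QQ(\xi_{2^{m_2}})\subset K$ and $\QQ\subset\QQ(\xi_{2^{m_2}}+\xi_{2^{m_2}}^{-1})\subset K$, followed by the same parity-of-$d$ comparison of the two resulting bounds. The one difference is for odd $p$: the paper derives $\frac{p^{m_p-1}(p-1)}{\gcd(p^{m_p-1}(p-1),d)}\mid t_p$ and handles the case $m_p>\nu_p(d)+1$ by an induction, whereas your observation that $t_p\mid p-1$ (hence $p\nmid t_p$) gives $m_p\leqslant\nu_p(d)+1$ directly and shows that case never occurs, which is a cleaner finish.
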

\begin{proof}
    Consider the case $p \geqslant 3$. By definition, we have $$\nu_{p,n}^{\sch} = m_p\left\lfloor \frac{n}{t_p}\right\rfloor + \left\lfloor \frac{n}{pt_p}\right\rfloor + \left\lfloor \frac{n}{p^2t_p}\right\rfloor + \dots$$
     As we remember invariants $m_p$ and $t_p$ depend on a field $K$, but we want to bound this value depending only on a degree $d$. Since $t_p \geqslant 1$, we can write the following bound
     $$\nu_{p,n}^{\sch} \leqslant m_p\left\lfloor \frac{n}{t_p}\right\rfloor + \left\lfloor \frac{n}{p}\right\rfloor + \left\lfloor \frac{n}{p^2}\right\rfloor + \dots,$$ where all summand except the first one does not depend on $K$. It is left to bound the first summand by the value depending only on $d$.
     
     It follows from the diagram
\begin{center}\hspace*{-2.5cm}
\begin{tikzpicture}
    
    \node (Q1) at (0,0) {$\QQ$};
    \node (Q2) at (4/3,4/3) {$K$};
    \node (Q3) at (0,8/3) {$K(\xi_p)$};
    \node (Q4) at (-4/3,4/3) {$\QQ(\xi_{p^{m_p}})$};

    \draw (Q1)--(Q2) node [pos=0.9, below,inner sep=0.25cm] {$d$};
    \draw (Q1)--(Q4) node [pos=2.3, below,inner sep=1.1cm] {$p^{m_p-1}(p-1)$};
    \draw (Q3)--(Q4);
    \draw (Q2)--(Q3) node [pos=0.2, above,inner sep=0.25cm]{$t_p$};
\end{tikzpicture}
\end{center}
\begin{comment}

\begin{center}
\begin{tikzpicture}
    
   \node (Q1) at (0,0) {$\QQ$};
    \node (Q2) at (4/3,4/3) {$K$};
    \node (Q3) at (0,8/3) {$K(\xi_p)$};
    \node (Q4) at (-4/3,4/3) {$\QQ(\xi_p)$};

    \draw (Q1)--(Q2) node [pos=0.9, below,inner sep=0.25cm] {$d$};
    \draw (Q1)--(Q4) node [pos=1.3, below,inner sep=0.50cm] {$p-1$};
    \draw (Q3)--(Q4);
    \draw (Q2)--(Q3) node [pos=0.2, above,inner sep=0.25cm]{$t$};
\end{tikzpicture}
\end{center}
\end{comment}
that $t_p$ is divisible by $$\frac{p^{m_p-1}(p-1)}{\gcd(p^{m_p-1}(p-1),d)} = \frac{p^{m_p-1}(p-1)}{p^{\min(m_p-1,\nu_p(d))}\gcd(p-1,d)} = \frac{p^{\max(0,m_p-\nu_p(d) -1)}(p-1)}{\gcd(p-1,d)},$$ which implies that $$m_p\left\lfloor\frac{n}{t_p}\right\rfloor \leqslant m_p\left\lfloor\frac{n}{p^{\max(0,m_p-\nu_p(d) -1})\cdot\frac{p-1}{\gcd(p-1,d)}} \right\rfloor.$$ For $m_p \leqslant \nu_p(d) + 1$, we have $p^{\max(0,m_p - \nu_p(d) - 1)}) = 1$ and clearly
$$m_p\left\lfloor\frac{n}{t_p}\right\rfloor \leqslant m_p\left\lfloor\frac{n}{\frac{p-1}{\gcd(p-1,d)}} \right\rfloor \leqslant (\nu_p(d) + 1)\left\lfloor\frac{n}{\frac{p-1}{\gcd(p-1,d)}} \right\rfloor.$$ 
For $m_p > \nu_p(d) + 1$ the same inequality follows from induction on the value $m_p -\nu_p(d)- 1$ and this proves assertion $(1)$.

Consider the case $p = 2$. If $\xi_4 \in K$, then we have $$\nu_{2,n}^{\sch} = m_2 n + \left\lfloor \frac{n}{2}\right\rfloor + \left\lfloor \frac{n}{2^2}\right\rfloor + \dots$$
Also, by Definition~\ref{def: cyclotomic invariants} there is a tower of fields $$\QQ \subset \QQ(\xi_{2^{m_2}}) \subset K$$ which implies an inequality $m_2 - 1 \leqslant \nu_2(d)$ and a bound 
\begin{equation}\label{eq: bound for 2 in case (a)}
\nu_{2,n}^{\sch} \leqslant n(\nu_2(d) + 1) + \left\lfloor \frac{n}{2}\right\rfloor + \left\lfloor \frac{n}{2^2}\right\rfloor + \dots
\end{equation}
If $\xi_4 \not\in K$, then we have $$\nu_{2,n}^{\sch} = n + m_2\left\lfloor \frac{n}{2}\right\rfloor + \left\lfloor \frac{n}{2^2}\right\rfloor + \dots$$ Again, by Definition~\ref{def: cyclotomic invariants} there is tower of fields $$\QQ \subset \QQ(\xi_{2^{m_2}} + \xi^{-1}_{2^{m_2}}) \subset K$$ which implies inequality $m_2 - 2 \leqslant \nu_2(d)$ and a bound 
\begin{equation}\label{eq: bound for 2 in case (b) and (c)}
\nu_{2,n}^{\sch} \leqslant n + (\nu_2(d) + 2)\left\lfloor \frac{n}{2}\right\rfloor + \left\lfloor \frac{n}{2^2}\right\rfloor + \dots
\end{equation}
It remains to note, that if $d$ is even, then bound~\eqref{eq: bound for 2 in case (a)} is worse then bound~\eqref{eq: bound for 2 in case (b) and (c)} and vice versa if $d$ is odd. This proves assertion $(2)$.
\end{proof}

\begin{comment}
\begin{rem}\label{rem: bound for t_2 = 1 and t_2 = 2}
    It is convenient for us to extract from the given proof, that if $t_2 = 1$, then we have a bound $$\nu_2(G) \leqslant m_2 n + \left\lfloor \frac{n}{2}\right\rfloor + \left\lfloor \frac{n}{2^2}\right\rfloor + \dots$$ and if $t_2 = 2$, then we have a bound $$\nu_2(G) \leqslant n + \textcolor{red}{(\nu_2(d) + 2)}\left\lfloor \frac{n}{2}\right\rfloor + \left\lfloor \frac{n}{2^2}\right\rfloor + \dots$$
\end{rem}
\end{comment}

\begin{rem}\label{rem: multiplicative bound}
Suppose we are in the setup of Lemma~\ref{lemma: rough multiplicative bound}. If we put 
$$B_{n,d} = \prod_{p \in \mathcal{P}} p^{B_{n,d,p}},$$
then we obviously have a bound $\nu_{n}^{\sch} \leqslant B_{n,d}$ by Lemma~\ref{lemma: rough multiplicative bound}.
\end{rem}

In cases $n=3$ and $n = 4$ let us fit these bounds into Tables~\ref{tabular: tabular GL_3(<=15)} and~\ref{tabular: tabular d<=21}, we will need this for applications. 
\begin{comment}
The first table corresponds to the case $n = 3$. 

\begin{table}[h!]
\centering
\begin{tabular}{ | c | c | }
\hline
$d$& $B_{3,d}$ \\ \hline
$1$ & $2^{10}\cdot3^{2} = 9216  = (4)$ \\
$2$ & $2^{13}\cdot3^{4}\cdot5\cdot7 = 23224320 = (8)$ \\
$3$ & $2^{10}\cdot3^{3}\cdot7 = 193536 =(6)$\\
$4$ & $
2^{16}\cdot3^{4}\cdot5^{3}\cdot7\cdot13 = 60383232000 =(11)$\\
$5$ & $2^{10}\cdot3^{2}\cdot11 = 101376 =( 6 )$\\
$6$ & $ 2^{13}\cdot3^{7}\cdot5\cdot7^{3}\cdot13\cdot19 = 7589266513920 =( 13 )
$\\
$7$ & $2^{10}\cdot3^{2}\cdot = 9216 =( 4 )$\\
$8$ & $2^{19}\cdot3^{4}\cdot5^{3}\cdot7\cdot13\cdot17 = 8212119552000 =( 13 )
$ \\
$9$ & $2^{10}\cdot3^{4}\cdot7\cdot19 = 11031552 =( 8 )l$ \\
$10$ & $2^{13}\cdot3^{4}\cdot5^{2}\cdot7\cdot11^{3}\cdot31 = 4791293337600 =( 13 )
$\\
$11$ & $2^{10}\cdot3^{2}\cdot23 = 211968 =( 6 )$\\
$12$ & $2^{16}\cdot3^{7}\cdot5^{3}\cdot7^{3}\cdot13^{3}\cdot19\cdot37 = 9491136702308352000 =( 19 )$\\
$13$ & $2^{10}\cdot3^{2}\cdot = 9216 =( 4 )$\\
$14$ & $2^{13}\cdot3^{4}\cdot5\cdot7^{2}\cdot29\cdot43 = 202725089280 =( 12 )$\\
$15$ & $2^{10}\cdot3^{3}\cdot7\cdot11\cdot31 = 65995776 =( 8 )$\\
\hline
\end{tabular}
\caption{}
\label{table: n = 3}
\end{table}
\end{comment}
\begin{table}[h!]
\centering
\begin{tabular}{ | c | c | }
\hline
$d$& $B_{3,d}$ \\ \hline
$1$ & $ 2^{5}\cdot3^{2}= 288$\\
$2$ & $2^{7}\cdot3^{4}\cdot5\cdot7 = 362\,880 < 10^6$ \\
$3$ & $2^{5}\cdot3^{3}\cdot7 = 6048$\\
$4$ & $2^{10}\cdot3^{4}\cdot5^{3}\cdot7\cdot13 = 943\,488\,000 < 10^{9}$\\
$5$ & $2^{5}\cdot3^{2}\cdot11 = 3168$\\
$6$ & $2^{7}\cdot3^{7}\cdot5\cdot7^{3}\cdot13\cdot19 = 118\,582\,289\,280 < 10^{12}$\\
$7$ & $2^{5}\cdot3^{2}= 288$\\
$8$ & 
$2^{13}\cdot3^{4}\cdot5^{3}\cdot7\cdot13\cdot17 = 128\,314\,368\,000 < 10^{12}$ \\
$9$ & $2^{5}\cdot3^{4}\cdot7\cdot19 = 344\,736 < 10^6$ \\
$10$ & $2^{7}\cdot3^{4}\cdot5^{2}\cdot7\cdot11^{3}\cdot31 = 74\,863\,958\,400 < 10^{11}$ \\
$11$ & $2^{5}\cdot3^{2}\cdot23 = 6624$ \\
$12$ &  $2^{10}\cdot3^{7}\cdot5^{3}\cdot7^{3}\cdot13^{3}\cdot19\cdot37 = 148\,299\,010\,973\,568\,000 < 10^{18}$  \\
$13$ & $2^{5}\cdot3^{2} = 288$ \\
$14$ & $2^{7}\cdot3^{4}\cdot5\cdot7^{2}\cdot29\cdot43 = 3\,167\,579\,520< 10^{10}$ \\
$15$ & $2^{5}\cdot3^{3}\cdot7\cdot11\cdot31 = 2\,062\,368 < 10^7$ \\
\hline
\end{tabular}
\caption{Bounds of $\nu_3^{\sch}$ depending only on a degree of a number field.}
\label{tabular: tabular GL_3(<=15)}
\end{table}

\begin{table}[h!]
\centering
\begin{tabular}{ | c | c | }
\hline
$d$& $B_{4,d}$ \\ \hline
$1$ & $ 2^{9}\cdot3^{3}\cdot5 = 69\,120 < 10^5$ \\
$2$ & $2^{11}\cdot3^{5}\cdot5^{2}\cdot7 = 87\,091\,200 < 10^8$ \\
$3$ & $2^{9}\cdot3^{5}\cdot5\cdot7^{2}\cdot13 = 396\,264\,960 < 10^9$\\
$4$ & $2^{15}\cdot3^{5}\cdot5^{4}\cdot7\cdot13\cdot17 = 7\,698\,862\,080\,000 < 10^{13}$\\
$5$ & $2^{9}\cdot3^{3}\cdot5^{2}\cdot11^{2}= 41\,817\,600 < 10^8$\\
$6$ & $2^{11}\cdot3^{9}\cdot5^{2}\cdot7^{4}\cdot13^{2}\cdot19 = 7\,769\,511\,593\,625\,600 < 10^{16}$\\
$7$ & $2^{9}\cdot3^{3}\cdot5\cdot29 = 2\,004\,480 < 10^7$\\
\hline
\end{tabular}
\caption{Bounds of $\nu_4^{\sch}$ depending only on a degree of a number field.}
\label{tabular: tabular d<=21}
\end{table}

While Remark~\ref{rem: multiplicative bound} gives a ``multiplicative'' bound, we want to obtain an ``additive'' one, since it is stronger. We will improve our bounds separately for ranks $3$ and $4$. But before this, let us prove two lemmas, which establish a connection between invariants $m_2$, $t_q$ and~$e_p$, corresponding to different primes. 

\begin{lemma}\label{lemma: boundin of m_2}
    Let $p > 2$ be a prime number and $K$ be a number field.
    Then \begin{enumerate}
        \item $m_2 \leqslant \nu_2(e_p) + 1$ if $\xi_4 \in K$ (in particular, $e_p$ is even);
        \item $m_2 \leqslant \nu_2(e_p) + 2$ if $\xi_4 \not\in K$.
     \end{enumerate}
\end{lemma}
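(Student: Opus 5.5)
The idea is to locate a $2$-power cyclotomic subfield inside $K$ and push its degree up the tower $\QQ(\xi_{p^{m_p}}) \subset K(\xi_p)$, whose degree is exactly $e_p$.

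First I fix the relevant subfield $F \subset K$. If $\xi_4 \in K$, put $F = \QQ(\xi_{2^{m_2}})$. By Definition~\ref{def: cyclotomic invariants} we have $F \subset K$, and $[F:\QQ] = \varphi(2^{m_2}) = 2^{m_2-1}$. Here $m_2 \geqslant 2$, since $\xi_4 \in K$.

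If $\xi_4 \not\in K$, put $F = \QQ(\xi_{2^{m_2}} + \xi_{2^{m_2}}^{-1})$, which again lies in $K$ by definition. Since $m_2 \geqslant 2$ (Remark~\ref{remark: m and t}), Lemma~\ref{lemma: degree of semi-syclic extension} gives $[F:\QQ] = 2^{m_2-2}$. In the edge case $m_2 = 2$ we simply have $F = \QQ$, and the formula still holds.

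In both cases $F$ is a subfield of the abelian field $\QQ(\xi_{2^{m_2}})$, so $F/\QQ$ is Galois.

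Next I pass to the compositum $F\cdot\QQ(\xi_{p^{m_p}})$. It sits inside $K(\xi_p)$, because $F \subset K$ and $\xi_{p^{m_p}} \in K(\xi_p)$. Apply Lemma~\ref{lemma: natural irrationalities} with base $\QQ$, Galois extension $F$, and $L = \QQ(\xi_{p^{m_p}})$. This gives
$$[F\cdot\QQ(\xi_{p^{m_p}}):\QQ(\xi_{p^{m_p}})] = [F : F \cap \QQ(\xi_{p^{m_p}})].$$
Moreover, $F \cap \QQ(\xi_{p^{m_p}}) \subset \QQ(\xi_{2^{m_2}}) \cap \QQ(\xi_{p^{m_p}}) = \QQ$ by Lemma~\ref{lemma: Q(n) cap Q(m)}, since $p$ is odd. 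Hence this degree equals $[F:\QQ]$.

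By multiplicativity of degrees in the tower
$$\QQ(\xi_{p^{m_p}}) \subset F\cdot\QQ(\xi_{p^{m_p}}) \subset K(\xi_p),$$
the number $[F:\QQ]$ divides $e_p = [K(\xi_p):\QQ(\xi_{p^{m_p}})]$. Therefore $\nu_2(e_p) \geqslant m_2 - 1$ in the first case and $\nu_2(e_p) \geqslant m_2 - 2$ in the second. These are exactly assertions (1) and (2).

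For the parenthetical remark in (1): there $m_2 \geqslant 2$, so $2 \mid 2^{m_2-1} \mid e_p$, and $e_p$ is even.

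The only points needing care are routine. One is the finiteness of $m_2$ and $m_p$, which holds because $K$ and $K(\xi_p)$ are number fields. The other is the Galois hypothesis in Lemma~\ref{lemma: natural irrationalities}, which is supplied by $F$ being abelian over $\QQ$. I expect no real obstacle.
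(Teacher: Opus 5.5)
Your proof is correct and follows the same route as the paper: both show that $2^{m_2-1}$ (resp.\ $2^{m_2-2}$) divides $e_p$ by placing $F\cdot\QQ(\xi_{p^{m_p}})$, with $F=\QQ(\xi_{2^{m_2}})$ (resp.\ $F=\QQ(\xi_{2^{m_2}}+\xi_{2^{m_2}}^{-1})$), in the tower $\QQ(\xi_{p^{m_p}}) \subset F\cdot\QQ(\xi_{p^{m_p}}) \subset K(\xi_p)$. You also justify the degree of the middle step via Lemma~\ref{lemma: natural irrationalities} and Lemma~\ref{lemma: Q(n) cap Q(m)}, which the paper leaves implicit.
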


\begin{proof}
    If $\xi_4 \in K$, then there is a tower of fields $$\QQ(\xi_{p^{m_p}}) \subset \QQ(\xi_{p^{m_p}}, \xi_{2^{m_2}}) \subset K(\xi_p),$$ which shows that $e_p$ is divisible by $2^{m_2 - 1}$, or, in other words, $m_2 - 1 \leqslant \nu_2(e_p)$ and this proves assertion $(1)$. If $\xi_4 \not\in K$, then there is a tower of fields $$\QQ(\xi_p) \subset \QQ(\xi_{p^{m_p}}, \xi_{2^{m_2}} + \xi^{-1}_{2^{m_2}}) \subset K(\xi_p),$$ which shows that $e_p$ is divisible by $2^{m_2 - 2}$, or, in other words, $m_2 - 2 \leqslant \nu_2(e_p)$ and this proves assertion $(2)$.
\end{proof}

\begin{lemma}\label{lemma: divisibility of gcd}
    Let $p$, $q > 2$ be two different primes and $\QQ \subset K$ be an extension of degree~$d$.
    Then $t_q \geqslant \frac{q-1}{\gcd(q-1,d,e_p)}$.
\end{lemma}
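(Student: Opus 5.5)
The plan is to identify $t_q$ with the index of a subfield of the cyclotomic field $\QQ(\xi_q)$, and then show that the degree of that subfield divides each of $q-1$, $d$ and $e_p$.

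First, let $F = \QQ(\xi_q) \cap K$. Since $\QQ \subset \QQ(\xi_q)$ is a finite Galois extension, Lemma~\ref{lemma: natural irrationalities} (applied with $L = K$) gives
$$t_q = [K(\xi_q):K] = [\QQ(\xi_q):F] = \frac{q-1}{[F:\QQ]}.$$
So it suffices to prove that $[F:\QQ]$ divides $\gcd(q-1,d,e_p)$, because then $[F:\QQ] \leqslant \gcd(q-1,d,e_p)$. Divisibility by $q-1$ and by $d$ is immediate from the towers $\QQ \subset F \subset \QQ(\xi_q)$ and $\QQ \subset F \subset K$.

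The main step is divisibility by $e_p$. Recall that $e_p = [K(\xi_p):\QQ(\xi_{p^{m_p}})]$ and that $K(\xi_p) = K(\xi_{p^{m_p}})$ contains $\QQ(\xi_{p^{m_p}})$. Since $F \subset K \subset K(\xi_p)$, the compositum $F\cdot\QQ(\xi_{p^{m_p}})$ lies between $\QQ(\xi_{p^{m_p}})$ and $K(\xi_p)$. Hence $[F\cdot\QQ(\xi_{p^{m_p}}):\QQ(\xi_{p^{m_p}})]$ divides $e_p$.

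Next, $F$ is a subfield of the abelian extension $\QQ(\xi_q)$, so $F$ is Galois over $\QQ$. Lemma~\ref{lemma: natural irrationalities} then gives
$$[F\cdot\QQ(\xi_{p^{m_p}}):\QQ(\xi_{p^{m_p}})] = [F : F\cap \QQ(\xi_{p^{m_p}})].$$
Moreover $F \cap \QQ(\xi_{p^{m_p}}) \subset \QQ(\xi_q)\cap\QQ(\xi_{p^{m_p}}) = \QQ$ by Lemma~\ref{lemma: Q(n) cap Q(m)}, since $q \neq p$. Therefore $[F:\QQ]$ divides $e_p$.

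Combining the three divisibilities, $[F:\QQ]$ divides $\gcd(q-1,d,e_p)$, which proves the inequality. The only delicate point I expect is checking that the hypotheses of Lemma~\ref{lemma: natural irrationalities} hold in the $e_p$ step: the Galois field must be $F$ (not $K$), and the base of the second application is $\QQ$. Both are satisfied because $F$ is abelian over $\QQ$.
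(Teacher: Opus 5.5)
Your proof is correct and follows the paper's argument. Like the paper, you set $F=\QQ(\xi_q)\cap K$, use Lemma~\ref{lemma: natural irrationalities} to get $t_q=(q-1)/[F:\QQ]$, and show that $[F:\QQ]$ divides $q-1$, $d$ and $e_p$ via a second application of that lemma together with Lemma~\ref{lemma: Q(n) cap Q(m)}. Your choice of $\QQ(\xi_{p^{m_p}})$ rather than $\QQ(\xi_p)$ as the base in the $e_p$ step is actually the more precise one: the paper's tower $\QQ(\xi_p)\subset F_q(\xi_p)\subset K(\xi_p)$ literally yields only divisibility of $[K(\xi_p):\QQ(\xi_p)]=p^{m_p-1}e_p$, whereas your version gives divisibility of $e_p$ itself.
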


\begin{proof} Denote $F_q = \QQ(\xi_q) \cap K$ and $r_q = [F_q:\QQ]$. Clearly, we have~$r_q \mid d$ and $r_q \mid (q-1)$. Let us show that $r_q$ divides $e_p$ too. Since $F_q$ is a subfield of~$\QQ(\xi_q)$, it is Galois over $\QQ$, also $F_q \cap \QQ(\xi_p) = \QQ$ by Lemma~\ref{lemma: Q(n) cap Q(m)}. Then consider a diagram 
\begin{center}
\begin{tikzpicture}
    
    \node (Q1) at (0,0) {$\QQ$};
    \node (Q2) at (4/3,4/3) {$F_q$};
    \node (Q3) at (0,8/3) {$F_q(\xi_{p})$};
    \node (Q4) at (-4/3,4/3) {$\QQ(\xi_{p})$};
    \node (Q5) at (-1.2,1/2) {$p-1$};
    \draw (Q1)--(Q2) node [pos=0.9, below,inner sep=0.25cm] {$r_q$};
    \draw (Q1)--(Q4);% node [pos=0.9, below,inner sep=0.25cm] {$p-1$};
    \draw (Q4)--(Q3);
    \draw (Q2)--(Q3);
\end{tikzpicture}
\end{center}
By Lemma~\ref{lemma: natural irrationalities} we have $[F_q(\xi_p):\QQ(\xi_p)] = r_q$, and the following tower of fields $$\QQ(\xi_{p}) \subset F_{q}(\xi_{p}) \subset K(\xi_{p})$$ shows that $r_q$ divides $e_{p}$. Therefore, $r_q$ divides $\gcd(q-1,d,e_p)$. Now consider another diagram 
\begin{center}
\begin{tikzpicture}
    
    \node (Q1) at (0,0) {$F_q$};
    \node (Q2) at (4/3,4/3) {$K$};
    \node (Q3) at (0,8/3) {$K(\xi_{q})$};
    \node (Q4) at (-4/3,4/3) {$\QQ(\xi_{q})$};
    \node (Q5) at (0, -4/3) {$\QQ$};

    \draw (Q1)--(Q2) node [pos=0.9, below,inner sep=0.25cm] {$\frac{d}{r_q}$};;
    \draw (Q1)--(Q4) node [pos=0.9, below,inner sep=0.25cm] {$\frac{q-1}{r_q}$};
    \draw (Q4)--(Q3); % node [pos=0.2, above,inner sep=0.25cm]{$e_{q}$};
    \draw (Q2)--(Q3) node [pos=0.2, above,inner sep=0.25cm]{$t_{q}$};
    \draw (Q1)--(Q5) node [pos = 0.5, left, inner sep=0.15cm]{$r_q$};
\end{tikzpicture}
\end{center}
Again, by Lemma~\ref{lemma: natural irrationalities} we have $$t_q = \frac{q-1}{r_q} \geqslant \frac{q-1}{\gcd(q-1,d,e_p)}.$$
\end{proof}

Let us comment on previous lemmas informally. If we find a prime number $p > 2$ with small invariant~$e_p$, then Lemma~\ref{lemma: boundin of m_2} and Lemma~\ref{lemma: divisibility of gcd} show that there is an obstacle to appearance of other primes in the value $\nu_{n}^{\sch}$(since $\nu_{q,n}^{\sch}$ is smaller if $t_q$ is bigger for primes~$\relpenalty=10000 q>2$, and $\nu_{2,n}^{\sch}$ is smaller if $m_2$ is smaller).

\textbf{Rank 3.} 

The following lemma handles cases of special fields.
\begin{lemma}\label{lemma: rank = 3 bounding G if K = Q(xi)}
    Let $p > 2$ be a prime number and $k$ be a positive integer number. Let $K$ be a subfield of $\QQ(\xi_{4p^k})$.
\begin{enumerate}
    \item If $p \geqslant 5$, then $\nu_3^{\sch} \leqslant 2^7 \cdot 3 \cdot p^{k\left\lfloor \frac{3}{t_p}\right\rfloor}$;
    \item if $p = 3$, then $\nu_3^{\sch} \leqslant 2^{7} \cdot 3^{k\left\lfloor \frac{3}{t_3}\right\rfloor + \left\lfloor \frac{1}{t_3}\right\rfloor} $.
\end{enumerate}
\end{lemma}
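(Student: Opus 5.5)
The plan is to compute each local factor $p'^{\nu^{\sch}_{p',3}}$ of $\nu_3^{\sch}$ directly from Definition~\ref{def: schur and serre invariants}, using only that $K$, and hence each $K(\xi_q)$ for $q \mid 4p^k$, lies inside an explicit cyclotomic field. The primes split into three groups: $q=2$, $q=p$, and primes $q\neq 2,p$. Lemma~\ref{lemma: Q(n) cap Q(m)} controls the last group, and elementary facts about which roots of unity lie in $\QQ(\xi_{4p^k})$ control the first two.

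For a prime $q\notin\{2,p\}$, since $K\subset \QQ(\xi_{4p^k})$ and $\gcd(q,4p^k)=1$, Lemma~\ref{lemma: Q(n) cap Q(m)} gives $K\cap\QQ(\xi_q)=\QQ$. Lemma~\ref{lemma: natural irrationalities} then yields $t_q=[K(\xi_q):K]=q-1$. For $q\geqslant 5$ all floors in $\nu^{\sch}_{q,3}$ vanish because $3/(q-1)<1$, so these primes contribute nothing. For $q=3$, which occurs only when $p\geqslant5$, we get $t_3=2$ and $\nu^{\sch}_{3,3}=m_3\lfloor 3/2\rfloor+\lfloor 3/6\rfloor=m_3$. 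Moreover $K(\xi_3)\subset\QQ(\xi_{12p^k})$ does not contain $\xi_9$, so $m_3=1$ and the factor is $3^1$.

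For $q=2$, the field $\QQ(\xi_{4p^k})$ contains neither $\xi_8$ nor $\xi_8+\xi_8^{-1}=\sqrt2$, since $\QQ(\sqrt2)$ has conductor $8$. Hence $m_2\leqslant 2$ in both cases of the definition. Substituting into the two formulas gives $\nu^{\sch}_{2,3}\leqslant 2\cdot3+1=7$ when $\xi_4\in K$, and $\nu^{\sch}_{2,3}\leqslant 3+2\cdot1=5$ otherwise. So the factor is at most $2^7$.

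For $q=p$, we have $K(\xi_p)\subset\QQ(\xi_{4p^k})$, and the $p$-power roots of unity in that field have order at most $p^k$, so $m_p\leqslant k$. If $p\geqslant5$, the tail terms $\lfloor 3/(p^jt_p)\rfloor$ vanish, giving $\nu^{\sch}_{p,3}\leqslant k\lfloor 3/t_p\rfloor$. If $p=3$, the terms for $j\geqslant 1$ are $\lfloor 1/t_3\rfloor$ and $0$, giving exponent at most $k\lfloor 3/t_3\rfloor+\lfloor 1/t_3\rfloor$. Multiplying the factors yields both assertions. The only step requiring care is the claim about which roots of unity (and $\sqrt2$) lie in $\QQ(\xi_{4p^k})$. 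I would justify it via conductors, or by the degree count $\varphi(\mathrm{lcm})>\varphi(4p^k)$.
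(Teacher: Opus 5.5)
Your proof is correct and follows the paper's argument essentially step for step. You use the same split into the primes $q=2$, $q=p$ and $q\notin\{2,p\}$, the same appeal to Lemma~\ref{lemma: Q(n) cap Q(m)} and Lemma~\ref{lemma: natural irrationalities} to get $t_q=q-1$, and the same bounds $m_3=1$, $m_2\leqslant 2$ and $m_p\leqslant k$ read off from the roots of unity in $\QQ(\xi_{4p^k})$. You are in fact slightly more explicit than the paper, which calls the case $\xi_4\notin K$ ``clear'' and the case $p=3$ ``absolutely analogous''.
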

\begin{proof}
    Let us prove the case $p \geqslant 5$, other case is absolutely analogous.
    
     Immediately note that we have $$\nu_{p,3}^{\sch} = m_p \left\lfloor \frac{3}{t_p}\right\rfloor \leqslant k\left\lfloor \frac{3}{t_p}\right\rfloor.$$ For prime number $q \neq p$, $q > 2$ we have $\QQ(\xi_{q}) \cap K = \QQ$ by Lemma~\ref{lemma: Q(n) cap Q(m)}. Then consider the diagram 
\begin{center}
\begin{tikzpicture}
    
    \node (Q1) at (0,0) {$\QQ$};
    \node (Q2) at (4/3,4/3) {$K$};
    \node (Q3) at (0,8/3) {$K(\xi_q)$};
    \node (Q4) at (-4/3,4/3) {$\QQ(\xi_{q})$};
    \node (Q5) at (-1.2,1/2) {$q-1$};
    \draw (Q1)--(Q2);% node [pos=0.9, below,inner sep=0.25cm] {$q-1$};
    \draw (Q1)--(Q4); %node [pos=0.9, below,inner sep=0.25cm] {$q-1$};
    \draw (Q4)--(Q3) node [pos=0.2, above,inner sep=0.25cm]{};
    \draw (Q2)--(Q3) node [pos=0.2, above,inner sep=0.25cm]{$t_{q}$};
\end{tikzpicture}
\end{center}
By Lemma~\ref{lemma: natural irrationalities} we have an equality $t_q = q - 1$, which implies equalities for $\nu_{q,3}^{\sch}$:
\begin{enumerate}
    \item if $q \geqslant 5$, then $\nu_{q,3}^{\sch} = 0$;
    \item if $q = 3$, then $\nu_{q,3}^{\sch} = m_3$.
\end{enumerate}
Moreover, it is clear that $m_3 = 1$, since~$K(\xi_3)$ does not contain elements $\xi_{3^m}$ for $m \geqslant 2$. Therefore, we have~$\nu_{3,3}^{\sch} = 1$. 

If $K$ contains $\xi_4$, then we have $\nu_{2,3}^{\sch} = 1 + 3m_2$.
Also, it is clear, that $m_2 = 2$, since~$\QQ(\xi_{4p^k})$ does not contain elements $\xi_{2^m}$ for $m \geqslant 3$, which means, that $\nu_{2,3}^{\sch} = 7$. If $K$ does not contain $\xi_4$, then $\nu_{2,3}^{\sch} = 5$, and finally we obtain an inequality $$\nu_{4}^{\sch} \leqslant 2^{7} \cdot 3 \cdot p^{k\left\lfloor \frac{3}{t_p}\right\rfloor}.$$ As we have mentioned, other case is absolutely analogous.
\end{proof}

Since the worst bound in Table~\ref{tabular: tabular GL_3(<=15)} corresponds to degree $12$. Let us first deal with this case. The following lemma is a more or less direct consequence from Lemma~\ref{lemma: bounding G if K = Q(xi)}, but they also give a first look on the impact of invariant $e_p$. 

\begin{lemma}\label{lemma: rank = 3, d = 12, e_p = 1}
    Let $K$ be a number field of degree $12$.
    Assume that one of the conditions holds:
    \begin{enumerate}
        \item there is a prime number $p > 2$ such that $e_p = 1$;
        \item $\xi_4 \in K$ and there is a prime number $p > 2$ such that $e_p = 2$.
    \end{enumerate}
    Then $$\nu_{3}^{\sch} \leqslant 843\,648 < 10^6.$$ 
\end{lemma}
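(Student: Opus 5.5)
The plan is to show that each hypothesis forces $K$ into a cyclotomic field of the form $\QQ(\xi_{4p^k})$. We then apply Lemma~\ref{lemma: rank = 3 bounding G if K = Q(xi)} (I read the reference in the preamble to this lemma, ``Lemma~\ref{lemma: bounding G if K = Q(xi)}'', as meaning this one), and finally enumerate the few possible triples $(p,m_p,t_p)$ using the relation $p^{m_p-1}(p-1)\,e_p = d\,t_p = 12\,t_p$.

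\emph{Step 1 (reduction to a cyclotomic field).} In case (1) we have $e_p=1$, so $K(\xi_p)=\QQ(\xi_{p^{m_p}})$. Hence $K\subset \QQ(\xi_{p^{m_p}})\subset\QQ(\xi_{4p^{m_p}})$. In case (2) we have $e_p=2$ and $\xi_4\in K$. Then $K(\xi_p)$ contains $\QQ(\xi_{p^{m_p}},\xi_4)=\QQ(\xi_{4p^{m_p}})$. Since $p$ is odd, $\xi_4\notin\QQ(\xi_{p^{m_p}})$, so this field has degree $2$ over $\QQ(\xi_{p^{m_p}})$. Comparing with $e_p=2$ gives $K(\xi_p)=\QQ(\xi_{4p^{m_p}})$, and again $K\subset\QQ(\xi_{4p^{m_p}})$. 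In both cases Lemma~\ref{lemma: rank = 3 bounding G if K = Q(xi)} applies with $k=m_p$. It gives $\nu_3^{\sch}\leqslant 2^7\cdot 3\cdot p^{m_p\lfloor 3/t_p\rfloor}$ for $p\geqslant5$, and $\nu_3^{\sch}\leqslant 2^7\cdot 3^{m_3\lfloor 3/t_3\rfloor+\lfloor 1/t_3\rfloor}$ for $p=3$.

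\emph{Step 2 (enumeration).} We use $t_p\mid p-1$, which holds because $K(\xi_p)/K$ embeds in $\Gal(\QQ(\xi_p)/\QQ)$, together with $p^{m_p-1}(p-1)e_p=12t_p$. If $m_p\geqslant 2$, then $p\mid 12t_p$ and $p\nmid t_p$, so $p=3$.

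In case (1) the relation is $p^{m_p-1}(p-1)=12t_p$.
\begin{itemize}
\item For $m_p=1$ we get $p=12t_p+1$, so $(p,t_p)\in\{(13,1),(37,3),(73,6),\dots\}$. These give the factors $13^3$, $37$, and $1$ for all larger $t_p$, since then $t_p>3$.
\item For $p=3$ we get $2\cdot3^{m_3-1}=12t_3$ with $t_3\mid 2$, which is impossible.
\end{itemize}
In case (2) the relation is $p^{m_p-1}(p-1)=6t_p$.
\begin{itemize}
\item For $m_p=1$ we get $p=6t_p+1$, giving $(7,1)$, $(13,2)$, $(19,3)$ and then trivial contributions. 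These yield at most $7^3$.
\item For $p=3$ we get $3^{m_3-2}=t_3\mid 2$, so $t_3=1$ and $m_3=2$, which is $K=\QQ(\xi_{36})$. This yields $2^7\cdot 3^{7}=279\,936$.
\end{itemize}

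\emph{Step 3 (conclusion).} The maximum over all these cases is $2^7\cdot3\cdot13^3=843\,648$, attained at $p=13$, $m_{13}=1$, $t_{13}=1$, that is, $K=\QQ(\xi_{13})$.

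I expect the main obstacle to be Step 1 in case (2): we must check that $K(\xi_p)$ is exactly $\QQ(\xi_{4p^{m_p}})$ and not merely a field containing it. This follows from the degree count $e_p=2$ together with the fact that adjoining $\xi_4$ to $\QQ(\xi_{p^{m_p}})$ already has degree $2$. The remaining work is bookkeeping, where the only care needed is to use the constraint $t_p\mid p-1$ to rule out spurious solutions.
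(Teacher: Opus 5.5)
Your proof is correct and follows the paper's argument: in both cases you show $K\subset\QQ(\xi_{4p^{m_p}})$, apply the rank-$3$ lemma for subfields of $\QQ(\xi_{4p^k})$, and enumerate the possible $p$ using $[K:\QQ]=12$, with the maximum $2^7\cdot 3\cdot 13^3$ attained at $K=\QQ(\xi_{13})$. Your case analysis, based on $t_p\mid p-1$ and $p^{m_p-1}(p-1)e_p=12t_p$, writes out explicitly the ``going over all the values of $p$'' step that the paper leaves to the reader.
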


\begin{proof}
    If the first condition holds, then $K(\xi_p) = \QQ(\xi_{p^{m_p}})$. If the second condition holds, then the field $K(\xi_p)$ is obtained from the field $\QQ(\xi_{p^{m_p}})$ by adjoining $\xi_4$, which means that~$K(\xi_p) = \QQ(\xi_{4p^{m_p}})$ and $\relpenalty=10000 K \subset \QQ(\xi_{4p^{m_p}})$. In both cases applying Lemma~\ref{lemma: rank = 3 bounding G if K = Q(xi)} we obtain a bound, depending on the actual value of $p$. Going over all the values of $p$ and remembering that $[K:\QQ] = 12$ we see, that the worst bound we can obtain corresponds to the case~$\relpenalty=10000 p = 13$: $$\nu_3^{\sch} \leqslant 2^{7} \cdot 3 \cdot 13^3 = 843\,648 < 10^6.$$
\end{proof}

The following lemma provides bound if there is a prime $p$ with~$e_p = 2$.

\begin{lemma}\label{lemma: rank=3, d = 12, e_p = 2}
    Let $K$ be a number field of degree $12$. Suppose there exists a prime number~$\relpenalty=10000 p > 2$ such that $e_p = 2$. Then $$\nu_{3}^{\sch} \leqslant  240\,045\,120 < 10^{9}.$$
\end{lemma}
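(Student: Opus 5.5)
By Lemma~\ref{lemma: rank = 3, d = 12, e_p = 1} we may assume $\xi_4\notin K$; otherwise the bound $843\,648$ already holds. So fix a prime $p>2$ with $e_p=2$ and $\xi_4\notin K$. The identity $p^{m_p-1}(p-1)\cdot 2 = 12\, t_p$ (i.e.\ $p^{m_p-1}(p-1)=6t_p$) is the starting point. Since $t_p\mid p-1$ and $t_p\leqslant 3$ whenever $\nu^{\sch}_{p,3}>0$, this leaves only a short list of possibilities for $(p,m_p,t_p)$:
\begin{itemize}
\item $t_p=1$ with $p^{m_p-1}(p-1)=6$, i.e.\ $p=7$, $m_p=1$;
\item $t_p=2$ with $p^{m_p-1}(p-1)=12$, i.e.\ $p=13$, $m_p=1$, or $p=3$, $m_p=2$;
\item $t_p=3$ with $p^{m_p-1}(p-1)=18$, i.e.\ $p=19$, $m_p=1$, or $p=3$?
\end{itemize}
The last option is impossible since $t_3\mid 2$. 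Larger $t_p$ with $p$ large contribute nothing to $\nu^{\sch}_{p,3}$.

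I will treat each case separately, bounding the contributions of all other primes with Lemmas~\ref{lemma: boundin of m_2} and~\ref{lemma: divisibility of gcd}.

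\textbf{Prime 2.} Since $\xi_4\notin K$ and $\nu_2(e_p)=1$, Lemma~\ref{lemma: boundin of m_2} gives $m_2\leqslant 3$. Hence $\nu^{\sch}_{2,3}\leqslant 3+3=6$, so $2^6$ is the contribution of $2$.

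\textbf{Primes $q\neq p$, $q>2$.} Lemma~\ref{lemma: divisibility of gcd} gives $t_q\geqslant (q-1)/\gcd(q-1,12,2)$, which is at least $(q-1)/2$.
\begin{itemize}
\item For $q\geqslant 11$ we get $t_q\geqslant 5>3$, so $\nu^{\sch}_{q,3}=0$.
\item For $q=7$ we get $t_7\geqslant 3$, so $\nu^{\sch}_{7,3}\leqslant m_7$. Then $7^{m_7-1}\cdot 6$ divides $12t_7\leqslant 72$, which forces $m_7=1$.
\item For $q=5$ we get $t_5\geqslant 2$, so $\nu^{\sch}_{5,3}\leqslant m_5$, and $5^{m_5-1}\cdot4\mid 12 t_5$ gives $m_5\leqslant 2$.
\item For $q=3$ we have $t_3\in\{1,2\}$ and $m_3\leqslant \nu_3(12t_3)+1=2$. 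This gives $\nu^{\sch}_{3,3}\leqslant 3m_3+1=7$ if $t_3=1$, and $\nu^{\sch}_{3,3}\leqslant m_3\leqslant 2$ if $t_3=2$.
\end{itemize}

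\textbf{Assembling the product.} In each case I multiply $2^6$, the $p$-contribution $p^{m_p\lfloor 3/t_p\rfloor+\lfloor 3/(pt_p)\rfloor}$, and the bounds for the other $q\leqslant 7$. The prime $q=3$ then needs a finer look, because $t_3=1$ means $\xi_3\in K$, and that must be made compatible with $e_p=2$. When $p\neq 3$, I would refine Lemma~\ref{lemma: divisibility of gcd} by noting that $F_3=\QQ(\xi_3)\cap K$ embeds in $K(\xi_p)$ and is linearly disjoint from $\QQ(\xi_{p^{m_p}})$. Together with $\xi_4\notin K$, this restricts which degree-$2$ pieces can occur. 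For instance, $\sqrt{-3}$ and $\sqrt{5}$ cannot both appear above $\QQ(\xi_p)$ inside a degree-$2$ extension unless they coincide. It follows that at most one of $t_3=1$ and $t_5<4$ holds, and likewise for the $2$-part.

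\textbf{Main obstacle.} Organising this interaction between the single quadratic extension $K(\xi_p)/\QQ(\xi_{p^{m_p}})$ and the small primes $2,3,5,7$ is the main difficulty. The plan is to enumerate the possible quadratic subfields (generated by $\sqrt{-1}$, $\sqrt{2}$, $\sqrt{-2}$, $\sqrt{-3}$, $\sqrt{5}$, \dots), and in each case compute the resulting product. I would then check that the maximum occurs at an explicit configuration whose value equals $240\,045\,120$.
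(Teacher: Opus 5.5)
Your outline matches the paper's: reduce to $\xi_4\notin K$ via Lemma~\ref{lemma: rank = 3, d = 12, e_p = 1}, get $m_2\leqslant 3$ and $t_q\geqslant (q-1)/2$ from Lemmas~\ref{lemma: boundin of m_2} and~\ref{lemma: divisibility of gcd}, then solve $p^{m_p-1}(p-1)=6t_p$. But as written it does not reach the bound, and the refinement you call the main obstacle targets the wrong prime. The real gap is at $q=5$. You only extract $m_5\leqslant 2$. However, $t_5\mid 4$ means $5\nmid 12t_5$, so $5^{m_5-1}\cdot 4\cdot e_5=12t_5$ forces $m_5=1$. Together with $t_5\geqslant 2$ this gives $\nu^{\sch}_{5,3}\leqslant 1$, by the same divisibility argument you used for $q=7$. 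With your weaker bound, the case $p=7$ only yields $2^6\cdot 3^7\cdot 5^2\cdot 7^3=1\,200\,225\,600$, five times too much. Once $\nu^{\sch}_{5,3}\leqslant 1$ is available, you need no analysis of how $\sqrt{-3}$, $\sqrt5$, $\sqrt2$ sit inside $K(\xi_p)$. The plain product of the independent per-prime bounds is $2^6\cdot 3^7\cdot 5\cdot 7^3=240\,045\,120$ for $p=7$. It is smaller for $p=3,13,19$ and for every other $p$, where $\nu^{\sch}_{p,3}=0$. This is exactly the paper's proof. Your interaction argument could be completed: for $p\notin\{3,5\}$, any two of $\sqrt{-3},\sqrt5,\sqrt2$ lying in $K$ would force $e_p\geqslant 4$. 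But you leave it as a plan, and its stated target is wrong. The number $240\,045\,120$ is a product of separate upper bounds and is never attained. For instance, $\nu^{\sch}_{7,3}=3$ needs $\QQ(\xi_7)\subset K$ while $\nu^{\sch}_{3,3}=7$ needs $\QQ(\xi_9)\subset K$, which is impossible in degree $12$. So the final verification you describe, that the maximum equals this value at some configuration, would fail; you only need an upper bound.

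Separately, your list of triples $(p,m_p,t_p)$ misplaces $p=3$. Since $3^{1}\cdot 2=6$, the only solution is $(m_3,t_3)=(2,1)$, which belongs to the $t_p=1$ row. It occurs, for example, for $K=\QQ(\xi_9,\sqrt2)$, which has degree $12$, $e_3=2$ and $\xi_4\notin K$. It gives $\nu^{\sch}_{3,3}=2\cdot 3+1=7$. There is no solution with $t_3=2$, so your case $p=3$ would assert the false bound $\nu^{\sch}_{3,3}\leqslant 2$. With the correct value, the case $p=3$ contributes at most $2^6\cdot 3^7\cdot 5\cdot 7=4\,898\,880$. The final maximum is therefore unaffected, but the case analysis must be corrected.
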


\begin{proof}
    If there is another prime $q>2$ with $e_q = 1$, then we are done by Lemma~\ref{lemma: rank = 3, d = 12, e_p = 1}. Therefore, we can assume that for any other prime $q>2$ we have $e_q \geqslant 2$. If $\xi_4 \in K$, then $$\nu_{3}^{\sch} \leqslant  843\,648 < 10^6$$ by Lemma~\ref{lemma: rank = 3, d = 12, e_p = 1}. If $\xi_4 \not\in K$, then by Lemma~\ref{lemma: boundin of m_2} we have $m_2 \leqslant 3$ and by Lemma~\ref{lemma: divisibility of gcd} for any prime $q > 2$ other than $p$ we have $$t_q \geqslant \frac{q-1}{2}.$$ In particular, we have $\nu_{q,3}^{\sch} = 0$ for any $q > 7$ other than $p$ and $\nu_{2,3}^{\sch} \leqslant 6$. Let us find bounds for small odd primes. 

    If $q = 7$ other than $p$, then the equality $$7^{m_7 - 1}\cdot6\cdot e_7 = 12 \cdot t_7$$ shows that $m_7 = 1$, $t_7 \geqslant 3$ or $m_7 \geqslant 2$, $t_7 \geqslant 7$. In both cases, we have a bound~$\relpenalty=10000 \nu_{7,3}^{\sch} \leqslant 1$. If~$\relpenalty=10000 q = 5$ other than~$p$, then the equality $$5^{m_5 - 1}\cdot4\cdot e_5 = 12 \cdot t_5$$ shows that $m_5 = 1$, $t_5 \geqslant 2$ or $m_5 \geqslant 2$, $t_5 \geqslant 5$. In both cases, we have~$\nu_{5,3}^{\sch} \leqslant 1$. If $q = 3$ other than~$p$, then the same analysis does not provide a better bound than in Lemma~\ref{lemma: rough multiplicative bound}, i.e. we have~$\relpenalty=10000 \nu_{3,3}^{\sch} \leqslant 7$. 

    It is left to estimate $\nu_{p,3}^{\sch}$. And the bound comes from the standard equality: $$p^{m_p -1}\cdot(p-1) \cdot 2 = 12 \cdot t_p.$$ Note that this equality is very restrictive. Since $m_p$ and $t_p$ are positive integer numbers, clearly this equality could not be satisfied for some primes $p>2$. Straightforward computations provide the following bounds: 
    \begin{enumerate}
        \item $\nu_{p,3}^{\sch} \leqslant 1$, if $p = 13$, $19$;
        \item $\nu_{p,3}^{\sch} \leqslant 3$, if $p = 7$;
        \item $\nu_{p,3}^{\sch} \leqslant 7$, if $p = 3$;
        \item $\nu_{p,3}^{\sch} = 0$ for all other possible values of $p > 2$.
    \end{enumerate}
    Summarizing all these estimates, we obtain bounds  depending on the actual value of $p$. The worst one we can obtain corresponds to the case $p = 7$: $$\nu_3^{\sch} \leqslant 2^{6} \cdot 3^7 \cdot 5 \cdot 7^3 = 240\,045\,120 < 10^{9}.$$
\end{proof}

Now we are ready for final improvements of our bound in case of degree $12$.

\begin{lemma}\label{lemma: rank=3, d = 12 final}
    Let $K$ be a number field of degree $12$. Then $$\nu_{3}^{\sch} \leqslant 240\,045\,120 < 10^{9}.$$
\end{lemma}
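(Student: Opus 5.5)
By the previous two lemmas, we may assume that $e_p \geqslant 3$ for every prime $p>2$. Lemma~\ref{lemma: rank = 3, d = 12, e_p = 1} handles $e_p=1$, and Lemma~\ref{lemma: rank=3, d = 12, e_p = 2} handles $e_p=2$. The plan is then to bound each $\nu_{q,3}^{\sch}$ separately by exploiting the relation
$$q^{m_q-1}(q-1)\,e_q = 12\, t_q$$
together with Lemmas~\ref{lemma: boundin of m_2} and~\ref{lemma: divisibility of gcd}. The goal is to show that the product stays below $2^6\cdot 3^7\cdot 5\cdot 7^3$.

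The first step is the primes $q$ with $t_q\leqslant 3$, since only these contribute to $\nu_{q,3}^{\sch}$. For $q>2$, the condition $t_q \leqslant 3$ together with $t_q \geqslant \frac{q-1}{\gcd(q-1,12)}$ forces $q-1 \mid 36$ or something close to it. So $q\in\{3,5,7,13,19,37\}$, and among these the rough Table~\ref{tabular: tabular GL_3(<=15)} already lists the only candidates: $3,5,7,13,19,37$. For each such $q$, I would list the admissible triples $(m_q,t_q,e_q)$ with $e_q\geqslant 3$ satisfying the displayed equality.
\begin{itemize}
\item $q=37$: $t_{37}=3$ forces $e_{37}=1$, which is excluded, so $\nu_{37,3}^{\sch}=0$.
\item $q=13$ and $q=19$: an exponent $3$ needs $t=1$, hence $e=1$; once $e\geqslant 3$, we get $t\geqslant 3$ and exponent at most $1$.
\item $q=7$: the exponent is $m_7\lfloor 3/t_7\rfloor$, and $e_7\geqslant 3$ gives $7^{m_7-1}e_7 = 2t_7$. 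This yields at most exponent $3$ (from $m_7=1,t_7\in\{2,3\}$ or $m_7=2$, $t_7\geqslant 7$).
\item $q=5$: similarly at most $1$ (or $2$ when $t_5=1$, which needs $e_5=3$).
\item $q=3$: bounded by $7$ as in Table~\ref{tabular: tabular GL_3(<=15)}.
\end{itemize}

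Second, I would bound $\nu_{2,3}^{\sch}$. If $\xi_4\in K$, then $2\mid e_p$ for every odd $p$, and Lemma~\ref{lemma: boundin of m_2} gives $m_2\leqslant \nu_2(e_p)+1$. Choosing $p$ with small $\nu_2(e_p)$ (for instance $p=3$, where $2e_3\cdot 3^{m_3-1}=12t_3$ and $t_3\leqslant 2$) controls $m_2$. If $\xi_4\notin K$, then $\nu_{2,3}^{\sch}=3+m_2$ and $m_2\leqslant \nu_2(e_p)+2$.

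The main obstacle is the interaction between primes, which is exactly what is needed to beat the multiplicative product. Large exponents at $3$, $7$, $13$ and $2$ cannot occur simultaneously. A large $\nu_{7,3}^{\sch}$ requires $t_7\leqslant 3$, so $K$ contains a subfield of $\QQ(\xi_7)$ of degree $2$ or $3$. Lemma~\ref{lemma: divisibility of gcd} applied with this $p=7$, and with $e_7$ small (equal to $3$ or $4$), then forces $t_q\geqslant (q-1)/\gcd(q-1,12,e_7)$ for the other primes; this kills $13$ and $19$ and bounds $5$ and $3$. So I would split into cases according to the minimal $e_p$ among the relevant primes:
\begin{itemize}
\item $e_p=3$: then $\gcd(\cdot,e_p)\mid 3$ and $m_2\leqslant 2$.
\item $e_p=4$: then $\gcd\mid 4$.
\item $e_p=6$ and beyond: here $12 t_p = p^{m_p-1}(p-1)e_p$ with $t_p\leqslant 3$ restricts $p$ to very few values.
\end{itemize}
In each case I would multiply the resulting prime-power bounds and check that they are at most $240\,045\,120$. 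The final bound would be attained, as before, by the $p=7$ configuration.
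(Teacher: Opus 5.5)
Your reduction to the case $e_p\geqslant 3$ for all odd $p$, and the tools you invoke, are the paper's: the relation $q^{m_q-1}(q-1)e_q=12t_q$ and Lemmas~\ref{lemma: boundin of m_2} and~\ref{lemma: divisibility of gcd}. But the step that actually proves the bound is only announced (``in each case I would multiply \dots\ and check''). Several of the per-prime inputs you list for that step are wrong, so as written the argument does not establish the inequality.

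\begin{itemize}
\item For $q=3$ you keep the table exponent $7$. But $\nu_{3,3}^{\sch}=7$ arises only from $m_3=2$, $t_3=1$, which forces $e_3=2$. With $e_3\geqslant 3$, the equation $3^{m_3-1}e_3=6t_3$ leaves $\nu_{3,3}^{\sch}\leqslant 4$, attained by $m_3=t_3=1$, $e_3=6$.
\item For $q=7$, $e_7\geqslant 3$ forces $m_7=1$ and $(t_7,e_7)\in\{(2,4),(3,6),(6,12)\}$. So $\nu_{7,3}^{\sch}\leqslant 1$, not $3$; your own parenthetical cases give exponents $1$ and $0$. The value $e_7=3$ never occurs.
\item For $q=5$ your bound is too small. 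The triple $t_5=1$, $e_5=3$, $m_5=1$ gives $\nu_{5,3}^{\sch}=3$, realized for example by $K=\QQ(\xi_5,\sqrt[3]{2})$.
\item Lemma~\ref{lemma: divisibility of gcd} with $e_7\in\{4,6\}$ yields only $t_{13}\geqslant 3$, respectively $t_{13}\geqslant 2$, so it does not kill $13$.
\item For $q=19$, $e_{19}$ must be even, so $t_{19}\geqslant 6$ and the exponent is $0$.
\end{itemize}

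Once the per-prime bounds are correct, very little interaction between primes is needed, and this is essentially the paper's route.
\begin{itemize}
\item If $\nu_{13,3}^{\sch}\neq 0$, then $m_{13}=1$ and $e_{13}=t_{13}=3$. Since $e_{13}$ is odd, Lemma~\ref{lemma: boundin of m_2} gives $\xi_4\notin K$ and $m_2\leqslant 2$. Hence $\nu_{2,3}^{\sch}\leqslant 5$ and $\nu_3^{\sch}\leqslant 2^5\cdot 3^4\cdot 5^3\cdot 7\cdot 13$.
\item If $\nu_{13,3}^{\sch}=0$, one gets directly $\nu_3^{\sch}\leqslant 2^{10}\cdot 3^4\cdot 5^3\cdot 7=72\,576\,000$.
\end{itemize}
The paper also applies Lemma~\ref{lemma: divisibility of gcd} with $p=13$ to sharpen the first case, but this is not essential.

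So in the residual case $e_p\geqslant 3$ the bound is far below $240\,045\,120=2^6\cdot 3^7\cdot 5\cdot 7^3$. That value comes only from the $e_7=2$ configuration already handled in Lemma~\ref{lemma: rank=3, d = 12, e_p = 2}. Your closing remark is therefore right only for that earlier case: no configuration with all $e_p\geqslant 3$ produces $7^3$ or $3^7$.
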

\begin{proof}
If there exists a prime number $p > 2$ with $e_p = 1$, then we have $$\nu_{3}^{\sch} \leqslant  843\,648 < 10^6$$ by Lemma~\ref{lemma: rank = 3, d = 12, e_p = 1}. If there exists a prime number $p > 2$ with $e_p = 2$, then we have $$\nu_{3}^{\sch} \leqslant 240\,045\,120 < 10^{9}$$ by Lemma~\ref{lemma: rank=3, d = 12, e_p = 2}. Then assume that for any prime number $p > 2$ we have~$\relpenalty=10000 e_p \geqslant 3$. 

Immediately note, that we have $\nu^{\sch}_{2,3} \leqslant 10$ by Lemma~\ref{lemma: rough multiplicative bound}. For odd primes consider the standard equation 
\begin{equation}\label{eq:rank=3, d = 12 final}    
p^{m_p -1}\cdot(p-1) \cdot e_p = 12 \cdot t_p.\end{equation} Due to Table~\ref{tabular: tabular GL_3(<=15)} we are interested only in primes: $3$, $5$, $7$, $13$, $19$ and $37$. Substituting these primes and solving equation, we see that $t_{37}$, $t_{19} > 3$, which implies $\nu_{37,3}^{\sch} = \nu_{19,3}^{\sch} = 0$.

\begin{comment}
Also, it follows that $$m_{17}\cdot\left\lfloor\frac{4}{t_{17}}\right\rfloor \leqslant \left\lfloor\frac{4}{t_{17}}\right\rfloor, \; m_{13}\cdot\left\lfloor\frac{4}{t_{13}}\right\rfloor \leqslant \left\lfloor\frac{4}{t_{13}}\right\rfloor, \; m_{11}\cdot\left\lfloor\frac{4}{t_{11}}\right\rfloor \leqslant \left\lfloor\frac{4}{t_{11}}\right\rfloor, \;m_{7}\cdot\left\lfloor\frac{4}{t_{7}}\right\rfloor \leqslant \left\lfloor\frac{4}{t_{7}}\right\rfloor,$$ $$m_{5}\cdot\left\lfloor\frac{4}{t_{5}}\right\rfloor \leqslant 2\left\lfloor\frac{4}{t_{5}}\right\rfloor.$$
\end{comment}

Suppose that $\nu_{13,3}^{\sch} \neq 0$, then we have $t_{13} \leqslant 3$. Under this condition the only solution of $$13^{m_{13} - 1}\cdot12\cdot e_{13} = 12\cdot t_{13}$$ is $m_{13} = 1$, $e_{13} = t_{7} = 3$. In particular, we have $\nu_{13,3}^{\sch} \leqslant 1$. Applying Lemma~\ref{lemma: boundin of m_2} we find out that $\xi_4 \notin K$ (since $e_{13}$ is odd) and $m_2 \leqslant 2$. Therefore, $\nu_{2,3}^{\sch} \leqslant 5$. Applying Lemma~\ref{lemma: divisibility of gcd} for $p = 13$ we obtain inequalities  $t_{7} \geqslant 2$, $t_5 \geqslant 4$, and $t_3 \geqslant 2.$ In particular, we have $\nu_{5,3}^{\sch} = 0$. Solving~\eqref{eq:rank=3, d = 12 final} for primes $3$ and $7$ we obtain a bound $$\nu_{3}^{\sch} \leqslant 2^{5} \cdot 3^3\cdot 7 = 6048.$$ Further, assume that $\nu_{13,3}^{\sch} = 0.$

Suppose that $\nu_{7,3}^{\sch} \neq 0$, then we have $t_{7} \leqslant 3$. Under this condition there are two solutions of $$7^{m_{7} - 1}\cdot6\cdot e_{7} = 12\cdot t_{7}.$$ We have $m_7 = 1$. Also, we have $e_7 = 4$, $t_7 = 2$ or $e_7 = 6$, $t_7 = 3$. In both cases we have~$\relpenalty=10000 \nu_{7,3}^{\sch} \leqslant 1$. In the former case, solving~\eqref{eq:rank=3, d = 12 final} for primes $3$ and $5$ we obtain a bound $$\nu_{3}^{\sch} \leqslant 2^{10} \cdot 3^4\cdot 5^3 \cdot 7 = 72\,576\,000 < 10^8.$$ In the latter case, applying Lemma~\ref{lemma: divisibility of gcd} for $p = 7$ we obtain inequality $t_5 \geqslant 2$. Solving~\eqref{eq:rank=3, d = 12 final} for primes $3$ and $5$ we obtain a bound $$\nu_{3}^{\sch} \leqslant 2^{10} \cdot 3^4\cdot 5\cdot 7 = 2\,903\,040< 10^7.$$ Further, assume that $\nu_{7,3}^{\sch} = 0.$

Under all assumptions we already have a bound $\nu_{3}^{\sch} \leqslant 2^{10} \cdot 3^7 \cdot 5^3$ by Remark~\ref{rem: multiplicative bound} and  Table~\ref{tabular: tabular GL_3(<=15)}. Let us decrease the power of $3$ in this bound. Consider~\ref{eq:rank=3, d = 12 final} for $p = 3$: $$3^{m_3 - 1}\cdot 2 \cdot e_3 = 12 \cdot t_3.$$ Since $e_3 \geqslant 3$, then for all solutions we have $\nu_{3,3}^{\sch} \leqslant 4$. This leads to the bound $$\nu_{3}^{\sch} \leqslant 2^{10} \cdot 3^4 \cdot 5^3 = 10\,368\,000 < 10^{8}$$ and the lemma is proved.
\end{proof}

Now it remains for us to prove analogous lemmas for degrees $8$, $10$ and $14$.

\begin{lemma}\label{lemma: rank = 3, d = 8, e_p = 1}
    Let $K$ be a number field of degree $8$.
    Assume that one of the conditions holds:
    \begin{enumerate}
        \item there is a prime number $p > 2$ such that $e_p = 1$;
        \item $\xi_4 \in K$ and there is a prime number $p > 2$ such that $e_p = 2$.
    \end{enumerate} Then $$\nu_{3}^{\sch} \leqslant 48\,000 < 10^5.$$ 
\end{lemma}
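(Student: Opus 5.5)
The plan is to follow the proof of Lemma~\ref{lemma: rank = 3, d = 12, e_p = 1} verbatim, with $12$ replaced by $8$. The argument has three steps: reduce to the case $K \subset \QQ(\xi_{4p^{m_p}})$, apply Lemma~\ref{lemma: rank = 3 bounding G if K = Q(xi)}, and then run through the finitely many admissible primes $p$.

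\textbf{Reduction.} Under condition (1) we have $K(\xi_p)=\QQ(\xi_{p^{m_p}})$, so $K\subset\QQ(\xi_{p^{m_p}})\subset \QQ(\xi_{4p^{m_p}})$. Under condition (2), $K(\xi_p)$ is a quadratic extension of $\QQ(\xi_{p^{m_p}})$ containing $\xi_4 \notin \QQ(\xi_{p^{m_p}})$. Hence $K(\xi_p)=\QQ(\xi_{4p^{m_p}})$, and again $K\subset \QQ(\xi_{4p^{m_p}})$. Lemma~\ref{lemma: rank = 3 bounding G if K = Q(xi)} with $k=m_p$ then gives
$$\nu_3^{\sch}\leqslant 2^7\cdot 3\cdot p^{m_p\lfloor 3/t_p\rfloor} \quad (p\geqslant 5), \qquad \nu_3^{\sch}\leqslant 2^7\cdot 3^{m_3\lfloor 3/t_3\rfloor+\lfloor 1/t_3\rfloor} \quad (p=3).$$

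\textbf{Enumeration.} It remains to control $m_p$ and $t_p$ through the standard relation
$$p^{m_p-1}(p-1)\,e_p = 8\,t_p, \qquad e_p\in\{1,2\},$$
using that $t_p$ divides $p-1$. Only solutions with $t_p\leqslant 3$ give a nontrivial power of $p$, so only those need to be listed.
\begin{itemize}
\item For $p=3$, the left-hand side is $2\cdot 3^{m_3-1}e_3$ with $e_3 \le 2$, so its $2$-adic valuation is at most $2$. It can never equal $8t_3$, so $p=3$ does not occur.
\item For $e_p=1$, the only solution with $t_p\leqslant3$ is $p=17$, $m_{17}=1$, $t_{17}=2$. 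This gives the bound $2^7\cdot 3\cdot 17$.
\item For $e_p=2$, there are two solutions.
 \begin{itemize}
 \item $p=13$, $m_{13}=1$, $t_{13}=3$ gives $2^7\cdot3\cdot13$.
 \item $p=5$, $m_5=1$, $t_5=1$ (that is, $K=\QQ(\xi_{20})$) gives $2^7\cdot 3\cdot 5^3$.
 \end{itemize}
\item Larger $m_p$ forces $p \mid t_p$, which is impossible since $t_p \mid p-1$.
\end{itemize}
The maximum is therefore $2^7\cdot3\cdot5^3=48\,000$.

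\textbf{Expected difficulty.} The only delicate point is making sure the case list is exhaustive. This is the step I would write out most carefully: bound $p$ by $p-1\leqslant 8t_p\leqslant 24$, then check each remaining prime directly against the displayed relation.
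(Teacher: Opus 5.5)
Your proposal is correct and follows the same route as the paper: reduce to $K\subset\QQ(\xi_{4p^{m_p}})$, apply Lemma~\ref{lemma: rank = 3 bounding G if K = Q(xi)}, and check the admissible primes, with the worst case $p=5$, $K=\QQ(\xi_{20})$ giving $2^7\cdot 3\cdot 5^3=48\,000$. Your enumeration via $p^{m_p-1}(p-1)e_p=8t_p$ spells out the check that the paper only summarizes as ``going over all the values of $p$'': it forces $m_p=1$, excludes $p=3$, and leaves only $p=17$ for $e_p=1$ and $p=5,13$ for $e_p=2$.
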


\begin{proof}
    If the first condition holds, then $K(\xi_p) = \QQ(\xi_{p^{m_p}})$. If the second condition holds, then the field $K(\xi_p)$ is obtained from the field $\QQ(\xi_{p^{m_p}})$ by adjoining $\xi_4$, which means that~$K(\xi_p) = \QQ(\xi_{4p^{m_p}})$ and $\relpenalty=10000 K \subset \QQ(\xi_{4p^{m_p}})$. In both cases applying Lemma~\ref{lemma: rank = 3 bounding G if K = Q(xi)} we obtain a bound, depending on the actual value of $p$. Going over all the values of $p$ and remembering that $[K:\QQ] = 8$ we see, that the worst bound we can obtain corresponds to the case~$\relpenalty=10000 p = 5$: $$\nu_3^{\sch} \leqslant 2^{7} \cdot 3 \cdot 5^3 = 48\,000 < 10^5.$$
\end{proof}

\begin{lemma}\label{lemma: rank = 3, d = 10, e_p = 1}
    Let $K$ be a number field of degree $10$.
    Assume that one of the conditions holds:
    \begin{enumerate}
        \item there is a prime number $p > 2$ such that $e_p = 1$;
        \item $\xi_4 \in K$ and there is a prime number $p > 2$ such that $e_p = 2$.
    \end{enumerate} Then $$\nu_{3}^{\sch} \leqslant  511\,104 < 10^6.$$ 
\end{lemma}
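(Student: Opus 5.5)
The plan is to copy the proofs of Lemmas~\ref{lemma: rank = 3, d = 12, e_p = 1} and~\ref{lemma: rank = 3, d = 8, e_p = 1}: reduce to a subfield of a cyclotomic field, then enumerate the primes allowed by the degree constraint.

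\textbf{Reduction.} Under condition (1) we have $K(\xi_p) = \QQ(\xi_{p^{m_p}})$, since $e_p = 1$. Under condition (2), $K(\xi_p)$ is obtained from $\QQ(\xi_{p^{m_p}})$ by adjoining $\xi_4 \in K$, so $K(\xi_p) = \QQ(\xi_{4p^{m_p}})$. In both cases $K \subset \QQ(\xi_{4p^{m_p}})$, and Lemma~\ref{lemma: rank = 3 bounding G if K = Q(xi)} applies with $k = m_p$. This gives
$$\nu_3^{\sch} \leqslant 2^7\cdot 3\cdot p^{m_p\left\lfloor \frac{3}{t_p}\right\rfloor} \quad\text{for } p\geqslant 5, \qquad \nu_3^{\sch} \leqslant 2^7\cdot 3^{m_3\left\lfloor \frac{3}{t_3}\right\rfloor+\left\lfloor \frac{1}{t_3}\right\rfloor} \quad\text{for } p = 3.$$

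\textbf{Enumeration.} It remains to list the admissible pairs $(p, m_p, t_p)$. They come from the standard relation
$$p^{m_p-1}(p-1)\,e_p = 10\, t_p, \qquad e_p\in\{1,2\},$$
together with the facts that $t_p \mid p-1$ and that only $t_p \leqslant 3$ contributes.

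For $p = 3$ the left side is $2\cdot 3^{m_3-1}e_3$, which is never divisible by $5$, so there is no solution.

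For $p = 5$ we need $5^{m_5-1}\cdot 4e_5 = 10t_5$, which forces $m_5 \geqslant 2$.
\begin{itemize}
\item[$\cdot$] If $m_5 = 2$, then $t_5 = 2e_5$. The case $e_5 = 1$, $t_5 = 2$ gives at most $2^7\cdot 3\cdot 5^2 = 9600$; the case $t_5 = 4$ gives nothing.
\item[$\cdot$] If $m_5 \geqslant 3$, then $t_5 \geqslant 10 > 4$, which is impossible.
\end{itemize}

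For $p \geqslant 7$ we need $5 \mid p-1$, and then $m_p = 1$, since $p \nmid 10t_p$ (as $t_p < p$). So $t_p = (p-1)e_p/10$.
\begin{itemize}
\item[$\cdot$] $p = 11$ with $t_{11} = 1$ gives at most $2^7\cdot 3\cdot 11^3 = 511\,104$.
\item[$\cdot$] $p = 31$ with $t_{31} = 3$ gives at most $2^7\cdot3\cdot 31 = 11\,904$.
\item[$\cdot$] Every other prime ($p = 41, 61, \dots$) has $t_p \geqslant 4$ and contributes nothing.
\end{itemize}
The maximum over all cases is $511\,104 < 10^6$, attained at $p = 11$, which is the claimed bound.

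The main point needing care is the reduction step: one must check that under condition (2) the field $K$ really lies inside $\QQ(\xi_{4p^{m_p}})$, so that Lemma~\ref{lemma: rank = 3 bounding G if K = Q(xi)} applies with exponent exactly $m_p$. After that, the case enumeration above is routine bookkeeping.
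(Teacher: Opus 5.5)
Your proposal is correct and follows the paper's proof: you reduce to $K \subset \QQ(\xi_{4p^{m_p}})$, apply Lemma~\ref{lemma: rank = 3 bounding G if K = Q(xi)}, and find the worst case at $p=11$, $t_{11}=1$, giving $2^7\cdot 3\cdot 11^3 = 511\,104$. The paper only says it goes ``over all the values of $p$'', and your enumeration via $p^{m_p-1}(p-1)e_p = 10\,t_p$ supplies that bookkeeping. The one subcase you skip, $e_{11}=t_{11}=2$, only gives $2^7\cdot 3\cdot 11$, so it does not affect the bound.
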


\begin{proof}
    If the first condition holds, then $K(\xi_p) = \QQ(\xi_{p^{m_p}})$. If the second condition holds, then the field $K(\xi_p)$ is obtained from the field $\QQ(\xi_{p^{m_p}})$ by adjoining $\xi_4$, which means that~$K(\xi_p) = \QQ(\xi_{4p^{m_p}})$ and $\relpenalty=10000 K \subset \QQ(\xi_{4p^{m_p}})$. In both cases applying Lemma~\ref{lemma: rank = 3 bounding G if K = Q(xi)} we obtain a bound, depending on the actual value of $p$. Going over all the values of $p$ and remembering that $[K:\QQ] = 10$ we see, that the worst bound we can obtain corresponds to the case~$\relpenalty=10000 p = 11$: $$\nu_3^{\sch} \leqslant 2^{7} \cdot 3 \cdot 11^3 = 511\,104 < 10^6.$$
\end{proof}

\begin{lemma}\label{lemma: rank = 3, d = 14, e_p = 1}
    Let $K$ be a number field of degree $14$.
    Assume that one of the conditions holds:
    \begin{enumerate}
        \item there is a prime number $p > 2$ such that $e_p = 1$;
        \item $\xi_4 \in K$ and there is a prime number $p > 2$ such that $e_p = 2$.
    \end{enumerate} Then $$\nu_{3}^{\sch} \leqslant 48\,000 < 10^5.$$ 
\end{lemma}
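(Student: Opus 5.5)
The plan is to follow the proofs of Lemmas~\ref{lemma: rank = 3, d = 8, e_p = 1} and~\ref{lemma: rank = 3, d = 10, e_p = 1}: reduce to Lemma~\ref{lemma: rank = 3 bounding G if K = Q(xi)}, then enumerate the few primes $p$ compatible with degree $14$.

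\textbf{Step 1: reduction to a subfield of a cyclotomic field.}
\begin{itemize}
\item Under condition (1) we have $K(\xi_p)=\QQ(\xi_{p^{m_p}})$, so $K\subset \QQ(\xi_{p^{m_p}})\subset\QQ(\xi_{4p^{m_p}})$.
\item Under condition (2) the field $K(\xi_p)$ is the quadratic extension of $\QQ(\xi_{p^{m_p}})$ obtained by adjoining $\xi_4$. Hence $K(\xi_p)=\QQ(\xi_{4p^{m_p}})$ and again $K\subset\QQ(\xi_{4p^{m_p}})$.
\end{itemize}
Lemma~\ref{lemma: rank = 3 bounding G if K = Q(xi)} with $k=m_p$ then gives
$$\nu_3^{\sch}\leqslant 2^7\cdot 3\cdot p^{m_p\lfloor 3/t_p\rfloor}\quad\text{if } p\geqslant 5,$$
and the analogous expression with $3^{m_3\lfloor 3/t_3\rfloor+\lfloor 1/t_3\rfloor}$ if $p=3$.

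\textbf{Step 2: which primes can occur.} The admissible pairs come from the standard relation
$$p^{m_p-1}(p-1)\,e_p=14\,t_p,$$
with $e_p\in\{1,2\}$ and $t_p\mid p-1$. Since $7$ must divide the left-hand side, either $p=7$ with $m_p\geqslant 2$, or $7\mid p-1$ (so $p=29$, $43$, $71,\dots$). In particular $p=3$ and $p=5$ are excluded outright.
\begin{itemize}
\item For $e_p=1$:
\begin{itemize}
\item $p=7$: only $m_7=2$, $t_7=3$ works, giving exponent $2\lfloor 3/3\rfloor=2$.
\item $p=29$: $t_{29}=2$, $m_{29}=1$, giving exponent $1$.
\item $p=43$: $t_{43}=3$, $m_{43}=1$, giving exponent $1$.
\item All larger primes $p$ with $7\mid p-1$, and larger $m_p$, force $t_p>3$, so the exponent vanishes.
\end{itemize}
\item For $e_p=2$: every solution has $t_p\geqslant 4$. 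For example $p=7$, $m_7=2$ gives $t_7=6$; $p=29$ gives $t_{29}=4$; $p=43$ gives $t_{43}=6$. So the $p$-part is trivial and only $2^7\cdot 3$ remains.
\end{itemize}

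\textbf{Step 3: conclusion and main difficulty.} Comparing the cases, the worst one is $p=7$, $m_7=2$, $e_7=1$. It gives
$$\nu_3^{\sch}\leqslant 2^7\cdot 3\cdot 7^2=18\,816,$$
which is below the claimed $48\,000$.

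The main obstacle is making the enumeration in Step 2 genuinely exhaustive. This requires arguing cleanly that $t_p\mid p-1$ and that the factor $7$ forces either $p=7$ or $p\equiv 1 \pmod 7$, and then checking carefully that every remaining solution has $\lfloor 3/t_p\rfloor=0$. The case $p=3$ uses the slightly different formula of Lemma~\ref{lemma: rank = 3 bounding G if K = Q(xi)}, but it is already excluded because $7\nmid 3^{m-1}\cdot 2\cdot e_3$ when $e_3\leqslant 2$.
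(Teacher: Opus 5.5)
Your proof is correct and follows the paper's route: you use $e_p\in\{1,2\}$ to get $K\subset\QQ(\xi_{4p^{m_p}})$, apply Lemma~\ref{lemma: rank = 3 bounding G if K = Q(xi)}, and then enumerate the primes compatible with $p^{m_p-1}(p-1)e_p=14t_p$. Your enumeration is actually more careful than the paper's: the paper names $p=5$ as the worst case, but $7\nmid 5^{m-1}\cdot 4\cdot e_5$ rules $p=5$ out, and your worst case $p=7$, $m_7=2$, $t_7=3$ gives $2^7\cdot 3\cdot 7^2=18\,816$, which is within the stated $48\,000$.
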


\begin{proof}
     If the first condition holds, then $K(\xi_p) = \QQ(\xi_{p^{m_p}})$. If the second condition holds, then the field $K(\xi_p)$ is obtained from the field $\QQ(\xi_{p^{m_p}})$ by adjoining $\xi_4$, which means that~$K(\xi_p) = \QQ(\xi_{4p^{m_p}})$ and $\relpenalty=10000 K \subset \QQ(\xi_{4p^{m_p}})$. In both cases applying Lemma~\ref{lemma: rank = 3 bounding G if K = Q(xi)} we obtain a bound, depending on the actual value of $p$. Going over all the values of $p$ and remembering that $[K:\QQ] = 14$ we see, that the worst bound we can obtain corresponds to the case~$\relpenalty=10000 p = 5$: $$\nu_3^{\sch} \leqslant 2^{7} \cdot 3 \cdot 5^3 = 48\,000 < 10^5.$$
\end{proof}

\begin{lemma}\label{lemma: rank=3, d = 8, e_p = 2}
    Let $K$ be a number field of degree $8$. Suppose there exists a prime number~$\relpenalty=10000 p > 2$ such that $e_p = 2$. Then $$\nu_{3}^{\sch} \leqslant  4\,536\,000 < 10^{7}.$$
\end{lemma}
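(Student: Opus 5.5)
The plan follows the proof of Lemma~\ref{lemma: rank=3, d = 12, e_p = 2}, with the degree $12$ replaced by $8$ in the standard relation $p^{m_p-1}(p-1)e_p = d\, t_p$. The target is $4\,536\,000 = 2^{6}\cdot 3^{4}\cdot 5^{3}\cdot 7$, and I expect it to be attained when $p=5$.

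\emph{Reduction.} If some prime $q>2$ has $e_q = 1$, or if $\xi_4\in K$, then Lemma~\ref{lemma: rank = 3, d = 8, e_p = 1} gives $\nu_3^{\sch}\leqslant 48\,000$. Note that the case $\xi_4\in K$ falls under condition~(2) of that lemma, since $e_p = 2$. So I assume $\xi_4\notin K$ and $e_q\geqslant 2$ for all odd $q$.

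\emph{The prime $2$.} Lemma~\ref{lemma: boundin of m_2} applied to $p$ gives $m_2\leqslant \nu_2(2)+2 = 3$. Hence
$$\nu_{2,3}^{\sch}\leqslant 3+3\left\lfloor \tfrac32\right\rfloor = 6.$$

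\emph{Odd primes $q\neq p$.} Lemma~\ref{lemma: divisibility of gcd} gives
$$t_q\geqslant \frac{q-1}{\gcd(q-1,8,2)}\geqslant \frac{q-1}{2}.$$
So $t_q>3$, and hence $\nu_{q,3}^{\sch}=0$, for every $q>7$. For the remaining small primes I use $7^{m_7-1}\cdot 6\, e_7 = 8t_7$ and its analogues, together with $t_q\mid q-1$.
\begin{itemize}
\item For $q=7$: since $t_7\mid 6$, the right-hand side $8t_7$ is not divisible by $7$, which forces $m_7=1$. Also $t_7\geqslant 3$, so $\nu_{7,3}^{\sch}\leqslant 1$.
\item For $q=5$: similarly $m_5=1$ and $t_5\geqslant 2$, so $\nu_{5,3}^{\sch}\leqslant 1$.
\item For $q=3$: $t_3\mid 2$ forces $m_3=1$, so $\nu_{3,3}^{\sch}\leqslant 3+1 = 4$.
\end{itemize}

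\emph{The prime $p$ itself.} Here I solve $p^{m_p-1}(p-1)\cdot 2 = 8t_p$ with $t_p\mid p-1$, which again forces $m_p=1$, and then read off $t_p=(p-1)/4$.
\begin{itemize}
\item $p=5$ gives $t_5=1$ and $\nu_{5,3}^{\sch}=3$.
\item $p=3$ has no solution.
\item $p=13$ gives $t_{13}=3$ and $\nu_{13,3}^{\sch}\leqslant 1$.
\item Larger $p$ give $t_p>3$, so they contribute nothing.
\end{itemize}

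\emph{Combining.} Multiplying the local bounds for each possible $p$, the worst case is $p=5$:
$$\nu_3^{\sch}\leqslant 2^6\cdot 3^4\cdot 5^3\cdot 7 = 4\,536\,000.$$
The main obstacle is purely bookkeeping: I must make sure that when $p\in\{3,7,13\}$ the bound for the primes other than $p$ is computed with the constraints coming from $e_p=2$, so that the product never exceeds the $p=5$ case. For example, when $p=13$ the product is at most $2^6\cdot3^4\cdot5\cdot7\cdot13$, which is smaller.
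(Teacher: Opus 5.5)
Your proposal is correct and follows the paper's proof step for step. It uses the same reduction to Lemma~\ref{lemma: rank = 3, d = 8, e_p = 1}, the same bound $m_2\leqslant 3$ from Lemma~\ref{lemma: boundin of m_2}, the same estimate $t_q\geqslant (q-1)/2$ from Lemma~\ref{lemma: divisibility of gcd}, and the same analysis of $p^{m_p-1}(p-1)\cdot 2=8t_p$ with worst case $p=5$.

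Your use of $t_q\mid q-1$ to force $m_q=1$ for every odd $q$ is a slightly cleaner way to dispose of the vacuous branches the paper lists, such as $m_7\geqslant 2$, $t_7\geqslant 21$. Note also that only $p\in\{5,13\}$ can actually occur, so the concern about $p\in\{3,7\}$ in your last paragraph is moot.
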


\begin{proof}
    If there is another prime $q>2$ with $e_q = 1$, then we are done by Lemma~\ref{lemma: rank = 3, d = 8, e_p = 1}. Therefore, we can assume that for any other prime $q>2$ we have $e_q \geqslant 2$. If $\xi_4 \in K$, then $$\nu_{3}^{\sch} \leqslant  48\,000 < 10^5$$ by Lemma~\ref{lemma: rank = 3, d = 8, e_p = 1}. If $\xi_4 \not\in K$, then by Lemma~\ref{lemma: boundin of m_2} we have $m_2 \leqslant 3$ and by Lemma~\ref{lemma: divisibility of gcd} for any prime $q > 2$ other than $p$ we have $$t_q \geqslant \frac{q-1}{2}.$$ In particular, we have $\nu_{q,3}^{\sch} = 0$ for any $q > 7$ other than $p$ and $\nu_{2,3}^{\sch} \leqslant 6$. Let us find bounds for small odd primes. 

    If $q = 7$ other than $p$, then the equality $$7^{m_7 - 1}\cdot6\cdot e_7 = 8 \cdot t_7$$ shows that $m_7 = 1$, $t_7 \geqslant 3$ or $m_7 \geqslant 2$, $t_7 \geqslant 21$. In both cases, we have a bound~$\relpenalty=10000 \nu_{7,3}^{\sch} \leqslant 1$. If $q = 5$ other than~$p$, then the equality $$5^{m_5 - 1}\cdot4\cdot e_5 = 8 \cdot t_5$$ shows that $m_5 = 1$, $t_5 \geqslant 2$ (as we have mentioned before) or $m_5 \geqslant 2$, $t_5 \geqslant 5$. In both cases, we have~$\nu_{5,3}^{\sch} \leqslant 1$. If $q = 3$ other than~$p$, then the same analysis does not provide a better bound than in Lemma~\ref{lemma: rough multiplicative bound}, i.e. we have~$\relpenalty=10000 \nu_{3,3}^{\sch} \leqslant 4$. 

    It is left to estimate $\nu_{p,3}^{\sch}$. And the bound comes from the standard equality: $$p^{m_p -1}\cdot(p-1) \cdot 2 = 8 \cdot t_p.$$ Note that this equality is very restrictive. Since $m_p$ and $t_p$ are positive integer numbers, clearly this equality could not be satisfied for some primes $p>2$. Straightforward computations provide the following bounds: 
    \begin{enumerate}
        \item $\nu_{p,3}^{\sch} \leqslant 1$, if $p = 13$;
        \item $\nu_{p,3}^{\sch} \leqslant 3$, if $p = 5$;
        \item $\nu_{p,3}^{\sch} = 0$ for all other possible values of $p > 2$.
    \end{enumerate}
    Summarizing all these estimates, we obtain bounds  depending on the actual value of $p$. The worst one we can obtain corresponds to the case $p = 5$: $$\nu_3^{\sch} \leqslant 2^{6} \cdot 3^4 \cdot 5^3 \cdot 7 = 4\,536\,000 < 10^{7}.$$
\end{proof}

\begin{lemma}\label{lemma: rank=3, d = 10, e_p = 2}
    Let $K$ be a number field of degree $10$. Suppose there exists a prime number~$\relpenalty=10000 p > 2$ such that $e_p = 2$. Then $$\nu_{3}^{\sch} \leqslant  1\,995\,840 < 10^{7}.$$
\end{lemma}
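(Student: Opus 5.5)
The plan is to follow the proof of Lemma~\ref{lemma: rank=3, d = 8, e_p = 2} verbatim, with $12$ or $8$ replaced by $d=10$. The expected worst case is $p=11$, with target value $2^6\cdot3^4\cdot5\cdot7\cdot11 = 1\,995\,840$.

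\textbf{Reductions.} If some other odd prime $q$ has $e_q=1$, or if $\xi_4\in K$, then Lemma~\ref{lemma: rank = 3, d = 10, e_p = 1} gives $\nu_3^{\sch}\leqslant 511\,104$, which is smaller than the target. So we may assume $\xi_4\notin K$. Lemma~\ref{lemma: boundin of m_2} applied to $e_p=2$ then gives $m_2\leqslant 3$, hence $\nu_{2,3}^{\sch}\leqslant 3+3=6$. Lemma~\ref{lemma: divisibility of gcd} with $e_p=2$ gives $t_q\geqslant (q-1)/2$ for every odd prime $q\neq p$. Consequently $\nu_{q,3}^{\sch}=0$ for all such $q>7$.

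\textbf{Small odd primes $q\neq p$.} For each one I solve the standard equation $q^{m_q-1}(q-1)e_q = 10\,t_q$.
\begin{itemize}
\item For $q=7$ we have $t_7\geqslant 3$. If $m_7\geqslant 2$, the factor $7$ forces $7\mid t_7$. Hence $\nu_{7,3}^{\sch}\leqslant 1$.
\item For $q=5$ we have $t_5\geqslant 2$. If $m_5\geqslant 2$, then $5\mid t_5$, so $t_5\geqslant 5$. Hence $\nu_{5,3}^{\sch}\leqslant 1$.
\item For $q=3$ I would simply use Lemma~\ref{lemma: rough multiplicative bound}: since $\nu_3(10)=0$ and $\gcd(2,10)=2$, it gives $\nu_{3,3}^{\sch}\leqslant 3+1=4$.
\end{itemize}

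\textbf{The prime $p$ itself.} Here the equation is $p^{m_p-1}(p-1)\cdot 2 = 10\,t_p$. I would run through the candidates.
\begin{itemize}
\item $p=3$ and $p=7$: there are no solutions, since $5\nmid$ the left-hand side.
\item $p=5$: this forces $m_5\geqslant2$ and then $t_5\geqslant4$, so the contribution is $0$.
\item $p=11$: we get $m_{11}=1$, $t_{11}=2$, so $\nu_{11,3}^{\sch}=1$; if $m_{11}\geqslant 2$, then $t_{11}\geqslant 22$.
\item Other $p$, for example $p=31$ with $t_{31}=6$: these give $t_p>3$, hence contribution $0$.
\end{itemize}

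\textbf{Conclusion and main risk.} Multiplying the bounds, the worst case is $p=11$, which gives $2^6\cdot3^4\cdot5\cdot7\cdot11 = 1\,995\,840$. When $p$ is one of $3,5,7$, the $p$-part is $0$ or small and the other factors are unchanged, so those cases give smaller bounds. The only delicate part is the case-check on $p$: I must make sure that no prime with $t_p\leqslant 3$ has been overlooked, that is, every $p$ with $(p-1)\mid 30$ or a $p$-power factor. The list $p\in\{3,5,7,11,31\}$ is exhausted by divisibility considerations.
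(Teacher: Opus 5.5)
\textbf{The gap: the $q=5$ step.} Your outline is the paper's own argument: the same reductions, the same prime-by-prime bounds, and the same worst case $p=11$. However, the step for $q=5$ is false as written. Because $5\mid d=10$, the case $m_5=2$ turns the standard equation into $20e_5=10t_5$, that is, $t_5=2e_5$. Nothing forces $5\mid t_5$.

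In fact $t_5\mid 4$ always. So your implication ``$m_5\geqslant 2\Rightarrow 5\mid t_5$'' would force $m_5=1$ for every field of degree $10$. That fails for $K=\QQ(\xi_{25}+\xi_{25}^{-1})$. There $K(\xi_5)=\QQ(\xi_{25})$, so $m_5=2$, $t_5=2$ and $\nu^{\sch}_{5,3}=2$. Your justification therefore does not give $\nu^{\sch}_{5,3}\leqslant 1$.

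\textbf{The fix.} Use the reduction you stated but never actually used. In this branch $q=5\neq p$, so after discarding the case of another prime with $e_q=1$ you have $e_5\geqslant 2$.
\begin{itemize}
\item If $m_5=2$, then $t_5=2e_5\geqslant 4>3$, so the $5$-contribution is $0$.
\item $m_5\geqslant 3$ is impossible, since it would require $25\mid 10t_5$ while $t_5\mid 4$.
\item If $m_5=1$, then $t_5\geqslant 2$ gives $\nu^{\sch}_{5,3}\leqslant 1$ as you wrote.
\end{itemize}
This is exactly where the paper relies on its standing assumption $e_q\geqslant 2$ for all other odd primes. The counterexample field above has $e_5=1$ and is absorbed by that reduction.

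With this repair, the rest of your proposal goes through unchanged: the $q=7$ step, the bound $\nu^{\sch}_{3,3}\leqslant 4$, the analysis of $p$ itself, and the final value $2^6\cdot 3^4\cdot 5\cdot 7\cdot 11=1\,995\,840$.

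\textbf{A cleaner enumeration over $p$.} Instead of appealing to ``$(p-1)\mid 30$'', note that a nonzero $p$-contribution needs $t_p\leqslant 3$. By the standard equation this means $p^{m_p-1}(p-1)=5t_p\leqslant 15$, which leaves only $p=11$.
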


\begin{proof}
     If there is another prime $q>2$ with $e_q = 1$, then we are done by Lemma~\ref{lemma: rank = 3, d = 10, e_p = 1}. Therefore, we can assume that for any other prime $q>2$ we have $e_q \geqslant 2$. If $\xi_4 \in K$, then $$\nu_{3}^{\sch} \leqslant   511\,104 < 10^6$$ by Lemma~\ref{lemma: rank = 3, d = 10, e_p = 1}. If $\xi_4 \not\in K$, then by Lemma~\ref{lemma: boundin of m_2} we have $m_2 \leqslant 3$ and by Lemma~\ref{lemma: divisibility of gcd} for any prime $q > 2$ other than $p$ we have $$t_q \geqslant \frac{q-1}{2}.$$ In particular, we have $\nu_{q,3}^{\sch} = 0$ for any $q > 7$ other than $p$ and $\nu_{2,3}^{\sch} \leqslant 6$. Let us find bounds for small odd primes. 

    If $q = 7$ other than $p$, then the equality $$7^{m_7 - 1}\cdot6\cdot e_7 = 10 \cdot t_7$$ shows that $m_7 = 1$, $t_7 \geqslant 3$ or $m_7 \geqslant 2$, $t_7 \geqslant 21$. In both cases, we have a bound~$\relpenalty=10000 \nu_{7,3}^{\sch} \leqslant 1$. If $q = 5$ other than~$p$, then the equality $$5^{m_5 - 1}\cdot4\cdot e_5 = 10 \cdot t_5$$ shows that $m_5 = 1$ and $t_5 \geqslant 2$ or $m_5 \geqslant 2$ and $t_5 \geqslant 10$. In both cases, we have~$\nu_{5,3}^{\sch} \leqslant 1$. If $q = 3$ other than~$p$, then the same analysis does not provide a better bound than in Lemma~\ref{lemma: rough multiplicative bound}, i.e. we have~$\relpenalty=10000 \nu_{3,3}^{\sch} \leqslant 4$. 

    It is left to estimate $\nu_{p,3}^{\sch}$. And the bound comes from the standard equality: $$p^{m_p -1}\cdot(p-1) \cdot 2 = 10 \cdot t_p.$$ Note that this equality is very restrictive. Since $m_p$ and $t_p$ are positive integer numbers, clearly this equality could not be satisfied for some primes $p>2$. Straightforward computations provide the following bounds: 
    \begin{enumerate}
        \item $\nu_{p,3}^{\sch} \leqslant 1$, if $p = 11$;
        \item $\nu_{p,3}^{\sch} = 0$ for all other possible values of $p > 2$.
    \end{enumerate}
    Summarizing all these estimates, we obtain bounds  depending on the actual value of $p$. The worst one we can obtain corresponds to the case $p = 11$: $$\nu_3^{\sch} \leqslant 2^{6} \cdot 3^4 \cdot 5 \cdot 7 \cdot 11 = 1\,995\,840 < 10^{7}.$$
\end{proof}

\begin{lemma}\label{lemma: rank=3, d = 14, e_p = 2}
    Let $K$ be a number field of degree $14$. Suppose there exists a prime number~$\relpenalty=10000 p > 2$ such that $e_p = 2$. Then $$\nu_{3}^{\sch} \leqslant  181\,440 < 10^{6}.$$
\end{lemma}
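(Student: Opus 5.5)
The plan is to follow the template of Lemmas~\ref{lemma: rank=3, d = 8, e_p = 2} and~\ref{lemma: rank=3, d = 10, e_p = 2} verbatim, now with $d = 14$. Fix the prime $p > 2$ with $e_p = 2$.

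\textbf{Reduction to $\xi_4 \notin K$ and the $2$-part.} If some other odd prime $q$ has $e_q = 1$, or if $\xi_4 \in K$, then Lemma~\ref{lemma: rank = 3, d = 14, e_p = 1} gives $\nu_3^{\sch} \leqslant 48\,000$ and we are done. So we may assume $\xi_4 \notin K$ and $e_q \geqslant 2$ for all odd $q$. Lemma~\ref{lemma: boundin of m_2}(2) with $e_p = 2$ gives $m_2 \leqslant 3$, hence
$$\nu_{2,3}^{\sch} = 3 + m_2 \leqslant 6.$$

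\textbf{Odd primes $q \neq p$.} Since $\gcd(q-1,14,2) \leqslant 2$, Lemma~\ref{lemma: divisibility of gcd} gives $t_q \geqslant \frac{q-1}{2}$. Hence $t_q > 3$, and so $\nu_{q,3}^{\sch} = 0$, for every $q > 7$. For $q = 5, 7$ I will analyse the standard identity $q^{m_q-1}(q-1)e_q = 14\,t_q$.
\begin{itemize}
\item[] For $q = 7$: $t_7 \geqslant 3$ by Lemma~\ref{lemma: divisibility of gcd}. If $m_7 = 1$ then $\nu_{7,3}^{\sch} = m_7\lfloor 3/t_7\rfloor \leqslant 1$. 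If $m_7 \geqslant 2$, the identity gives $t_7 = 3 \cdot 7^{m_7-2} e_7$, so $t_7 \geqslant 6$ and $\nu_{7,3}^{\sch} = 0$.
\item[] For $q = 5$: if $m_5 = 1$, then $4e_5 = 14t_5$ forces $t_5$ even, so $t_5 \geqslant 2$ and $\nu_{5,3}^{\sch} \leqslant 1$. If $m_5 \geqslant 2$, then $t_5$ is divisible by $10$ and $\nu_{5,3}^{\sch} = 0$.
\item[] For $q = 3$: I will simply use the rough bound $\nu_{3,3}^{\sch} \leqslant 4$ from Lemma~\ref{lemma: rough multiplicative bound}, as recorded for $d = 14$ in Table~\ref{tabular: tabular GL_3(<=15)}.
\end{itemize}

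\textbf{The prime $p$ itself.} Here the identity reads
$$p^{m_p-1}(p-1)\cdot 2 = 14\,t_p,$$
so $7 \mid p^{m_p-1}(p-1)$. Therefore either $p = 7$ with $m_p \geqslant 2$, or $p \equiv 1 \pmod 7$.
\begin{itemize}
\item[] If $p = 7$, then $t_7 = 6\cdot 7^{m_7-2} \geqslant 6$.
\item[] If $p \equiv 1 \pmod 7$, then $t_p \geqslant (p-1)/7 \geqslant 4$ (e.g.\ $p = 29$ gives $t_{29} = 4$, and $p = 43$ gives $t_{43} = 6$).
\end{itemize}
In all cases $t_p > 3$, so $\nu_{p,3}^{\sch} = 0$. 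In particular $p$ is never $3$ or $5$, so the estimates above for $q = 3, 5, 7$ hold regardless of the actual value of $p$.

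\textbf{Conclusion.} Multiplying the local bounds gives
$$\nu_3^{\sch} \leqslant 2^6 \cdot 3^4 \cdot 5 \cdot 7 = 181\,440.$$
The only real work is the small case analysis of the identity for $q = 5, 7$ and for $p$; I expect the main obstacle to be making sure no $(m_q, t_q, e_q)$ combination is missed, in particular checking divisibility by $7$ carefully. This is routine once the identity is written down.
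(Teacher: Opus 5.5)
Your proposal is correct and follows the paper's proof step by step. The reduction via Lemma~\ref{lemma: rank = 3, d = 14, e_p = 1} to the case $\xi_4\notin K$ matches, as do the bounds $\nu_{2,3}^{\sch}\leqslant 6$, $\nu_{7,3}^{\sch}\leqslant 1$, $\nu_{5,3}^{\sch}\leqslant 1$, $\nu_{3,3}^{\sch}\leqslant 4$ and the vanishing for $q>7$. Your observation that $7\mid p^{m_p-1}(p-1)$ forces $t_p>3$ just spells out the ``straightforward computations'' the paper uses to get $\nu_{p,3}^{\sch}=0$.
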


\begin{proof}
    If there is another prime $q>2$ with $e_q = 1$, then we are done by Lemma~\ref{lemma: rank = 3, d = 14, e_p = 1}. Therefore, we can assume that for any other prime $q>2$ we have $e_q \geqslant 2$. If $\xi_4 \in K$, then $$\nu_{3}^{\sch} \leqslant  48\,000 < 10^5$$ by Lemma~\ref{lemma: rank = 3, d = 14, e_p = 1}. If $\xi_4 \not\in K$, then by Lemma~\ref{lemma: boundin of m_2} we have $m_2 \leqslant 3$ and by Lemma~\ref{lemma: divisibility of gcd} for any prime $q > 2$ other than $p$ we have $$t_q \geqslant \frac{q-1}{2}.$$ In particular, we have $\nu_{q,3}^{\sch} = 0$ for any $q > 7$ other than $p$ and $\nu_{2,3}^{\sch} \leqslant 6$. Let us find bounds for small odd primes. 

    If $q = 7$ other than $p$, then the equality $$7^{m_7 - 1}\cdot6\cdot e_7 = 14 \cdot t_7$$ shows that $m_7 = 1$, $t_7 \geqslant 3$ or $m_7 \geqslant 2$, $t_7 \geqslant 6$ (remember that we assume $e_7 \geqslant 2$). In both cases, we have a bound~$\relpenalty=10000 \nu_{7,3}^{\sch} \leqslant 1$. If $q = 5$ other than~$p$, then the equality $$5^{m_5 - 1}\cdot4\cdot e_5 = 14 \cdot t_5$$ shows that $m_5 = 1$, $t_5 \geqslant 2$ or $m_5 \geqslant 2$, $t_5 \geqslant 10$. In both cases, we have~$\nu_{5,3}^{\sch} \leqslant 1$. If $q = 3$ other than~$p$, then the same analysis does not provide a better bound than in Lemma~\ref{lemma: rough multiplicative bound}, i.e. we have~$\relpenalty=10000 \nu_{3,3}^{\sch} \leqslant 4$. 

    It is left to estimate $\nu_{p,3}^{\sch}$. And the bound comes from the standard equality: $$p^{m_p -1}\cdot(p-1) \cdot 2 = 14 \cdot t_p.$$ Note that this equality is very restrictive. Since $m_p$ and $t_p$ are positive integer numbers, clearly this equality could not be satisfied for some primes $p>2$. Straightforward computations show that $\nu_{p,3}^{\sch} = 0$ for all possible values of $p$ and we have $$\nu_3^{\sch} \leqslant 2^{6} \cdot 3^4 \cdot 5 \cdot 7 = 181\,440 < 10^{6}.$$
\end{proof}

In the following three lemmas, we are not aiming for the best possible bound, it will suffice for us to have bounds that do not exceed the bounds from Lemma~\ref{lemma: rank=3, d = 12 final}.

\begin{lemma}\label{lemma: rank=3, d = 8 final}
    Let $K$ be a number field of degree $8$. Then $$\nu_{3}^{\sch} \leqslant 23\,224\,320 < 10^{8}.$$
\end{lemma}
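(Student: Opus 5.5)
The bound $23\,224\,320 = 2^{13}\cdot 3^4\cdot 5\cdot 7$ should follow from the same case analysis as in Lemma~\ref{lemma: rank=3, d = 12 final}, starting from the multiplicative bound $B_{3,8} = 2^{13}\cdot3^{4}\cdot5^{3}\cdot7\cdot13\cdot17$ in Table~\ref{tabular: tabular GL_3(<=15)}. Compared with $B_{3,8}$, the target differs only by the factor $5^2\cdot 13\cdot 17$. So the whole task is to remove the primes $13$, $17$ and to lower the exponent of $5$ to $1$, while keeping the exponents $13$ for $2$, $4$ for $3$ and $1$ for $7$ given by Lemma~\ref{lemma: rough multiplicative bound}.

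First I dispose of small $e_p$. If some odd prime $p$ has $e_p = 1$, or $\xi_4\in K$ and $e_p=2$, then Lemma~\ref{lemma: rank = 3, d = 8, e_p = 1} gives $\nu_3^{\sch}\leqslant 48\,000$. If some odd $p$ has $e_p = 2$, then Lemma~\ref{lemma: rank=3, d = 8, e_p = 2} gives $\nu_3^{\sch}\leqslant 4\,536\,000$. Both are below the claimed bound. Hence I may assume $e_p\geqslant 3$ for every odd prime $p$ and work with the standard equation
\begin{equation*}
p^{m_p-1}(p-1)\,e_p = 8\,t_p .
\end{equation*}
A prime $p$ contributes to $\nu_3^{\sch}$ only if $t_p\leqslant 3$, so I check each relevant $p$ under $t_p\leqslant 3$ and $e_p\geqslant 3$.

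\begin{itemize}
\item For $p=17$ the equation reads $17^{m-1}\cdot 16 e_{17} = 8t_{17}$, which forces $t_{17}\geqslant 2e_{17}\geqslant 6$. So $\nu_{17,3}^{\sch}=0$.
\item For $p=13$ it reads $13^{m-1}\cdot 3e_{13} = 2t_{13}$. With $t_{13}\leqslant 3$ this gives $e_{13}\leqslant 2$, a contradiction. So $\nu_{13,3}^{\sch}=0$.
\item For $p=5$ it reads $5^{m_5-1}e_5 = 2t_5$. The value $t_5=1$ is impossible since $e_5\geqslant3$. The values $t_5=2$ and $t_5=3$ force $m_5=1$. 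Hence $\nu_{5,3}^{\sch} = m_5\lfloor 3/t_5\rfloor \leqslant 1$.
\item For $p=7$ it reads $3\cdot 7^{m_7-1}e_7 = 4t_7$. This has no solution with $t_7\leqslant 2$ and $e_7\geqslant 3$, and for $t_7=3$ it forces $m_7=1$. So $\nu_{7,3}^{\sch}\leqslant 1$.
\end{itemize}

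For $p=3$ and $p=2$ I simply keep the exponents $4$ and $13$ from Table~\ref{tabular: tabular GL_3(<=15)}. Multiplying the contributions gives $2^{13}\cdot3^4\cdot5\cdot7 = 23\,224\,320$.

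The only delicate step is checking the small Diophantine cases correctly, in particular for $p=5$, where one must make sure the floor terms $\lfloor 3/(5t_5)\rfloor$ vanish. They do, because $t_5\geqslant 2$.
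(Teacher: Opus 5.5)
Your proof is correct and follows the paper's argument essentially verbatim. You dispose of $e_p\in\{1,2\}$ via Lemmas~\ref{lemma: rank = 3, d = 8, e_p = 1} and~\ref{lemma: rank=3, d = 8, e_p = 2}; you then use $e_p\geqslant 3$ in $p^{m_p-1}(p-1)e_p=8t_p$ to get $t_{13},t_{17}>3$ and $\nu_{5,3}^{\sch}\leqslant 1$, and keep the exponents from Table~\ref{tabular: tabular GL_3(<=15)} for $2$, $3$ and $7$. Your separate check for $p=7$ is harmless but unnecessary, since the table already gives exponent $1$ there.
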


\begin{proof}
If there exists a prime number $p > 2$ with $e_p = 1$, then we have $$\nu_{3}^{\sch} \leqslant  48\,000 < 10^5$$ by Lemma~\ref{lemma: rank = 3, d = 8, e_p = 1}. If there exists a prime number $p > 2$ with $e_p = 2$, then we have $$\nu_{3}^{\sch} \leqslant 4\,536\,000 < 10^{7}$$ by Lemma~\ref{lemma: rank=3, d = 8, e_p = 2}. Then assume that for any prime number $p > 2$ we have~$\relpenalty=10000 e_p \geqslant 3$. 

For odd primes consider the standard equation 
\begin{equation}\label{eq:rank=3, d = 8 final}    
p^{m_p -1}\cdot(p-1) \cdot e_p = 8 \cdot t_p.\end{equation} Due to Table~\ref{tabular: tabular GL_3(<=15)} we are interested only in primes: $3$, $5$, $7$, $13$ and $17$. Substituting these primes and solving the equation, we see that $t_{17}$, $t_{13} > 3$, which implies $\nu_{17,3}^{\sch} = \nu_{13,3}^{\sch} = 0$. In particular, we already have a bound $\nu_3^{\sch} \leqslant 2^{13} \cdot 3^4 \cdot 5^3 \cdot 7$ by Remark~\ref{rem: multiplicative bound} and  Table~\ref{tabular: tabular GL_3(<=15)}. Let us decrease the power of $5$ in this bound. Consider~\ref{eq:rank=3, d = 8 final} for $p = 5$: $$5^{m_5 - 1}\cdot 4 \cdot e_5 = 8 \cdot t_5.$$ Since $e_5 \geqslant 3$, then for all solutions we have $\nu_{5,3}^{\sch} \leqslant 1$. This leads to the bound $$\nu_{3}^{\sch} \leqslant 2^{13} \cdot 3^4 \cdot 5 \cdot 7 = 23\,224\,320 < 10^{8}$$ and the lemma is proved.
\end{proof}

\begin{lemma}\label{lemma: rank=3, d = 10 final}
    Let $K$ be a number field of degree $10$. Then $$\nu_{3}^{\sch} \leqslant 19\,958\,400 < 10^{8}.$$
\end{lemma}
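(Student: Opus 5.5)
We follow the same scheme as in Lemma~\ref{lemma: rank=3, d = 8 final}: first dispose of the cases where some odd prime has a small invariant $e_p$, then use the standard equation to delete the large primes from the multiplicative bound of Table~\ref{tabular: tabular GL_3(<=15)}. Note that the target number factors as
$$19\,958\,400 = 2^{7}\cdot 3^{4}\cdot 5^{2}\cdot 7\cdot 11.$$
Compare this with the entry $B_{3,10} = 2^{7}\cdot3^{4}\cdot5^{2}\cdot7\cdot11^{3}\cdot31$ of the table. So it suffices to kill the factor $31$ and to lower the exponent of $11$ from $3$ to $1$, while keeping all other exponents from Remark~\ref{rem: multiplicative bound}.

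\textbf{Step 1: small $e_p$.} If some prime $p>2$ has $e_p=1$, then Lemma~\ref{lemma: rank = 3, d = 10, e_p = 1} gives $\nu_3^{\sch}\leqslant 511\,104$. If some prime $p>2$ has $e_p=2$, then Lemma~\ref{lemma: rank=3, d = 10, e_p = 2} gives $\nu_3^{\sch}\leqslant 1\,995\,840$. Both numbers are below $19\,958\,400$. Hence we may assume from now on that $e_p\geqslant 3$ for every prime $p>2$.

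\textbf{Step 2: the standard equation for $11$ and $31$.} For odd primes we use
$$p^{m_p-1}(p-1)e_p = 10\, t_p.$$

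For $p=31$:
\begin{itemize}
\item if $m_{31}=1$, the equation reads $30e_{31}=10t_{31}$, so $t_{31}=3e_{31}\geqslant 9$;
\item if $m_{31}\geqslant 2$, then $t_{31}$ is divisible by $31$.
\end{itemize}
In both cases $t_{31}>3$, hence $\nu^{\sch}_{31,3}=0$.

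For $p=11$:
\begin{itemize}
\item if $m_{11}=1$, the equation gives $t_{11}=e_{11}\geqslant 3$, so $\nu_{11,3}^{\sch}=\lfloor 3/t_{11}\rfloor\leqslant 1$;
\item if $m_{11}\geqslant 2$, then $t_{11}=11^{m_{11}-1}e_{11}>3$, so $\nu_{11,3}^{\sch}=0$.
\end{itemize}
Thus $\nu^{\sch}_{11,3}\leqslant 1$.

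\textbf{Step 3: the remaining primes.} For every other prime we keep the exponent of $B_{3,10}$ from Lemma~\ref{lemma: rough multiplicative bound}, namely $2^7$, $3^4$, $5^2$ and $7$. Combining these with Step~2, Remark~\ref{rem: multiplicative bound} gives
$$\nu_3^{\sch}\leqslant 2^{7}\cdot 3^{4}\cdot 5^{2}\cdot 7\cdot 11 = 19\,958\,400,$$
as claimed.

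The only step requiring care is checking that the prime list coming from Table~\ref{tabular: tabular GL_3(<=15)} is complete, namely that the primes $p$ with $B_{3,10,p}>0$ are exactly $2,3,5,7,11,31$. The case analysis itself is routine.
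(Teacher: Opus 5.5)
Your proposal is correct and is essentially the paper's own proof: you dispose of the cases $e_p\in\{1,2\}$ via the two preceding $d=10$ lemmas, then use $p^{m_p-1}(p-1)e_p=10\,t_p$ together with $e_p\geqslant 3$ to remove the factor $31$ and lower the exponent of $11$ to $1$ in $B_{3,10}$. Your explicit case analysis for $31$ and $11$ fills in what the paper summarizes as ``solving the equation''. You also correctly cite Lemma~\ref{lemma: rank=3, d = 10, e_p = 2}, whereas the paper's proof cites the $d=8$ version by mistake.
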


\begin{proof}
If there exists a prime number $p > 2$ with $e_p = 1$, then we have $$\nu_{3}^{\sch} \leqslant  511\,104 < 10^6$$ by Lemma~\ref{lemma: rank = 3, d = 10, e_p = 1}. If there exists a prime number $p > 2$ with $e_p = 2$, then we have $$\nu_{3}^{\sch} \leqslant 1\,995\,840 < 10^{7}$$ by Lemma~\ref{lemma: rank=3, d = 8, e_p = 2}. Then assume that for any prime number $p > 2$ we have~$\relpenalty=10000 e_p \geqslant 3$. 

For odd primes consider the standard equation 
\begin{equation}\label{eq:rank=3, d = 10 final}    
p^{m_p -1}\cdot(p-1) \cdot e_p = 10 \cdot t_p.\end{equation} Due to Table~\ref{tabular: tabular GL_3(<=15)} we are interested only in primes: $3$, $5$, $7$, $11$ and $31$. Substituting these primes and solving the equation, we see that $t_{31} > 3$, which implies $\nu_{31,3}^{\sch} = 0$. In particular, we already have a bound $\nu_3^{\sch} \leqslant 2^{7} \cdot 3^4 \cdot 5^2 \cdot 7 \cdot 11^3$ by Remark~\ref{rem: multiplicative bound} and  Table~\ref{tabular: tabular GL_3(<=15)}. Let us decrease the power of $11$ in this bound. Consider~\ref{eq:rank=3, d = 10 final} for $p = 11$: $$11^{m_{11} - 1}\cdot 10 \cdot e_{11} = 10 \cdot t_{11}.$$ Since $e_{11} \geqslant 3$, then for all solutions we have $\nu_{11,3}^{\sch} \leqslant 1$. This leads to the bound $$\nu_{3}^{\sch} \leqslant 2^{7} \cdot 3^4 \cdot 5^2 \cdot 7 \cdot 11 = 19\,958\,400 < 10^{8}$$ and the lemma is proved.
\end{proof}

\begin{lemma}\label{lemma: rank=3, d = 14 final}
    Let $K$ be a number field of degree $14$. Then $$\nu_{3}^{\sch} \leqslant  2\,540\,160 < 10^7.$$
\end{lemma}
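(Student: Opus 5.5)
The plan follows the scheme of Lemmas~\ref{lemma: rank=3, d = 8 final} and~\ref{lemma: rank=3, d = 10 final}. We first dispose of the cases with a small invariant $e_p$, and then remove the large primes from the multiplicative bound of Table~\ref{tabular: tabular GL_3(<=15)}. The target bound factors as $2\,540\,160 = 2^7\cdot 3^4\cdot 5\cdot 7^2$. The table entry for $d=14$ is $2^7\cdot3^4\cdot5\cdot7^2\cdot29\cdot43$, so it suffices to show that the primes $29$ and $43$ do not contribute.

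\textbf{Small $e_p$.} If some prime $p>2$ has $e_p=1$, then Lemma~\ref{lemma: rank = 3, d = 14, e_p = 1} gives $\nu_3^{\sch}\leqslant 48\,000$. If some prime $p>2$ has $e_p=2$, then Lemma~\ref{lemma: rank=3, d = 14, e_p = 2} gives $\nu_3^{\sch}\leqslant 181\,440$. Both are below the claimed bound. From now on we assume $e_p\geqslant 3$ for every odd prime $p$.

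\textbf{Killing $29$ and $43$.} We use the standard relation $p^{m_p-1}(p-1)e_p = 14\, t_p$. Since $\nu_{p,3}^{\sch}=m_p\lfloor 3/t_p\rfloor+\dots$ for odd $p$, we have $\nu_{p,3}^{\sch}=0$ as soon as $t_p>3$.
\begin{itemize}
\item For $p=43$ the relation reads $43^{m_{43}-1}\cdot 42\, e_{43}=14\,t_{43}$. This gives $t_{43}=3\cdot 43^{m_{43}-1}e_{43}\geqslant 9$.
\item For $p=29$ it reads $29^{m_{29}-1}\cdot 28\,e_{29}=14\,t_{29}$. This gives $t_{29}=2\cdot 29^{m_{29}-1}e_{29}\geqslant 6$.
\end{itemize}
Hence $\nu_{43,3}^{\sch}=\nu_{29,3}^{\sch}=0$.

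\textbf{Conclusion.} For the remaining primes we keep the exponents $B_{3,14,q}$ from Lemma~\ref{lemma: rough multiplicative bound}, as recorded in Table~\ref{tabular: tabular GL_3(<=15)}, via Remark~\ref{rem: multiplicative bound}. These are $q=2,3,5,7$ with exponents $7,4,1,2$. This yields $\nu_3^{\sch}\leqslant 2^7\cdot 3^4\cdot 5\cdot 7^2=2\,540\,160$.

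There is no real obstacle. The only point needing care is confirming that the tabulated multiplicative bound lists exactly the primes $2,3,5,7,29,43$ with the stated exponents. Concretely, one must check that $d=14$ gives no contribution from $q=13$ or other primes with $(q-1)/\gcd(q-1,14)\leqslant 3$. This is immediate, since that quotient is at most $3$ only for $q\in\{2,3,5,7,29,43\}$.
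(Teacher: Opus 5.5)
Your proof is correct and is essentially the paper's argument. You dispose of the cases $e_p=1$ and $e_p=2$ via the two preceding $d=14$ lemmas, then use $p^{m_p-1}(p-1)e_p=14t_p$ with $e_p\geqslant 3$ to get $t_{29},t_{43}>3$, and keep the exponents $7,4,1,2$ for $2,3,5,7$ from the multiplicative bound. Your citation of the $d=14$, $e_p=2$ lemma and your prime $43$ are the correct versions of what the paper's text states with small typos (it cites the $d=8$ lemma and writes $t_{41}$).
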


\begin{proof}
If there exists a prime number $p > 2$ with $e_p = 1$, then we have $$\nu_{3}^{\sch} \leqslant  48\,000 < 10^5$$ by Lemma~\ref{lemma: rank = 3, d = 14, e_p = 1}. If there exists a prime number $p > 2$ with $e_p = 2$, then we have $$\nu_{3}^{\sch} \leqslant 181\,440 < 10^{6}$$ by Lemma~\ref{lemma: rank=3, d = 8, e_p = 2}. Then assume that for any prime number $p > 2$ we have~$\relpenalty=10000 e_p \geqslant 3$. 

For odd primes consider the standard equation 
\begin{equation}\label{eq:rank=3, d = 14 final}    
p^{m_p -1}\cdot(p-1) \cdot e_p = 14 \cdot t_p.\end{equation} Due to Table~\ref{tabular: tabular GL_3(<=15)} we are interested only in primes: $3$, $5$, $7$, $29$ and $43$. Substituting these primes and solving the equation, we see that $t_{41}$, $t_{29} > 3$, which implies $\nu_{41,3}^{\sch} = \nu_{29,3}^{\sch} = 0$. Therefore, we have a bound $$\nu_3^{\sch} \leqslant 2^{7} \cdot 3^4 \cdot 5 \cdot 7^2 = 2\,540\,160 < 10^7$$ by Remark~\ref{rem: multiplicative bound} and Table~\ref{tabular: tabular GL_3(<=15)}.
\end{proof}

Now everything is prepared to prove Proposition~\ref{proposition: bound of schur GL_3 <= 15}

\begin{proof}[of Proposition~\ref{proposition: bound of schur GL_3 <= 15}]
    For $d = 12$ result follows from Lemma~\ref{lemma: rank=3, d = 12 final}. For $d = 1$, $2$, $3$, $5$, $7$, $9$, $11$, $13$, $15$ result follows from Remark~\ref{rem: multiplicative bound} and  Table~\ref{tabular: tabular GL_3(<=15)}. For $d = 4$ result follows from Corollary~\ref{corollary: nu(L) <= nu(K)} applied to the extension $K \subset K(\sqrt[3]{2})$ and Lemma~\ref{lemma: rank=3, d = 12 final}. For~$\relpenalty=10000 d = 6$ note, that $K$ could not contain $\sqrt{2}$ and $\xi_4$ at the same time, therefore, result follows from Corollary~\ref{corollary: nu(L) <= nu(K)} applied either to the extension $K \subset K(\sqrt{2})$ or to $K \subset K(\xi_4)$ and Lemma~\ref{lemma: rank=3, d = 12 final}. For $d = 8$, $10$, $14$ result follows from Lemmas~\ref{lemma: rank=3, d = 8 final},~\ref{lemma: rank=3, d = 10 final} and~\ref{lemma: rank=3, d = 14 final}.
\end{proof}

\textbf{Rank 4.} 

The following lemma handles cases of special fields.

\begin{lemma}\label{lemma: bounding G if K = Q(xi)}
    Let $p > 2$ be a prime number and $k$ be a positive integer number. Let $K$ be a subfield of $\QQ(\xi_{4p^k})$.
\begin{enumerate}
    \item If $p > 5$, then $\nu_4^{\sch} \leqslant 2^{11} \cdot 3^2 \cdot 5 \cdot p^{k\left\lfloor \frac{4}{t_p}\right\rfloor}$;
    \item if $p = 5$, then $\nu_4^{\sch} \leqslant 2^{11} \cdot 3^2 \cdot 5^{k\left\lfloor \frac{4}{t_5}\right\rfloor}$;
    \item if $p = 3$, then $\nu_4^{\sch} \leqslant 2^{11} \cdot 5 \cdot 3^{k\left\lfloor \frac{4}{t_3}\right\rfloor+ \left\lfloor \frac{4}{3t_3}\right\rfloor}$.
\end{enumerate}
\end{lemma}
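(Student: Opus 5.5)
This is the rank-$4$ analogue of Lemma~\ref{lemma: rank = 3 bounding G if K = Q(xi)}. The plan is to compute $\nu_{q,4}^{\sch}$ prime by prime from Definition~\ref{def: schur and serre invariants}, using only that $K \subset \QQ(\xi_{4p^k})$. I treat $p>5$ in detail; the other two cases differ only in which small primes coincide with $p$.

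\emph{The prime $p$.} Since $K(\xi_p) \subset \QQ(\xi_{4p^k})$ and $\QQ(\xi_{4p^k})$ contains no $\xi_{p^{k+1}}$, we get $m_p \leqslant k$. Hence
$$\nu_{p,4}^{\sch} = m_p\left\lfloor \frac{4}{t_p}\right\rfloor + \left\lfloor \frac{4}{pt_p}\right\rfloor+\dots \leqslant k\left\lfloor \frac{4}{t_p}\right\rfloor,$$
because $p>5>4$ kills the tail terms. For $p=5$ the same holds. For $p=3$ the term $\left\lfloor \frac{4}{3t_3}\right\rfloor$ survives, which produces the extra summand in assertion~(3).

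\emph{Odd primes $q \neq p$.} By Lemma~\ref{lemma: Q(n) cap Q(m)}, $\QQ(\xi_q)\cap \QQ(\xi_{4p^k}) = \QQ$, since $q$ is coprime to $4p^k$. Hence $\QQ(\xi_q)\cap K=\QQ$, and Lemma~\ref{lemma: natural irrationalities} gives $t_q = q-1$.
\begin{itemize}
\item For $q \geqslant 7$ we have $t_q \geqslant 6 > 4$, so $\nu_{q,4}^{\sch}=0$.
\item For $q=5$ we have $t_5=4$. Moreover $m_5=1$, because $K(\xi_5)\subset \QQ(\xi_{20p^k})$ contains no $\xi_{25}$. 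This gives $\nu_{5,4}^{\sch}=1$.
\item For $q=3$ we have $t_3=2$ and $m_3=1$ by the same argument. Then $\nu_{3,4}^{\sch}=1\cdot 2+\left\lfloor \frac{4}{6}\right\rfloor = 2$.
\end{itemize}
These account for the factors $3^2$ and $5$; the relevant ones drop out when $p$ equals $3$ or $5$.

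\emph{The prime $2$.} If $\xi_4\in K$, then $m_2=2$, because $\QQ(\xi_{4p^k})$ contains no $\xi_8$. Then $\nu_{2,4}^{\sch}=2\cdot 4+2+1=11$. If $\xi_4\notin K$, then $m_2 \leqslant 2$, since $\sqrt 2 = \xi_8+\xi_8^{-1}$ does not lie in $\QQ(\xi_{4p^k})$. This gives $\nu_{2,4}^{\sch}\leqslant 4+2\cdot 2+1=9 \leqslant 11$.

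Multiplying the prime-by-prime bounds gives the three stated inequalities. The only slightly delicate step is the claim $\sqrt2\notin\QQ(\xi_{4p^k})$ for odd $p$. I would verify it by noting that the only quadratic subfields of $\QQ(\xi_{4p^k})$ are $\QQ(\xi_4)$, $\QQ(\sqrt{\pm p})$ and their compositum's quadratic subfield; none of these is $\QQ(\sqrt2)$. Equivalently, $2$ ramifies in $\QQ(\sqrt2)$ with ramification index $2$ via conductor $8$, while the conductor of $\QQ(\xi_{4p^k})$ is not divisible by $8$.
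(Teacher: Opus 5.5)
Your proposal is correct and follows the paper's proof essentially step by step. The steps match: $m_p\leqslant k$ because the only roots of unity in $\QQ(\xi_{4p^k})$ are the $4p^k$-th ones; $t_q=q-1$ for odd $q\neq p$ by coprimality of cyclotomic fields and natural irrationalities; $m_3=m_5=1$; and $m_2=2$ (resp.\ $m_2\leqslant 2$) for the prime $2$. Your explicit check that $\sqrt2\notin\QQ(\xi_{4p^k})$ fills in a point the paper only asserts, since the quadratic subfields are exactly $\QQ(\xi_4)$, $\QQ(\sqrt{p})$ and $\QQ(\sqrt{-p})$; writing $\nu^{\sch}_{2,4}\leqslant 9$ rather than $=9$ is harmless.
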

\begin{proof}
    Let us prove the case $p \geqslant 5$, other cases are absolutely analogous.
    
     Immediately note that we have $$\nu_{p,4}^{\sch} = m_p \left\lfloor \frac{4}{t_p}\right\rfloor \leqslant k\left\lfloor \frac{4}{t_p}\right\rfloor.$$ For prime number $q \neq p$, $q > 2$ we have $\QQ(\xi_{q}) \cap K = \QQ$ by Lemma~\ref{lemma: Q(n) cap Q(m)}. Then consider the diagram 
\begin{center}
\begin{tikzpicture}
    
    \node (Q1) at (0,0) {$\QQ$};
    \node (Q2) at (4/3,4/3) {$K$};
    \node (Q3) at (0,8/3) {$K(\xi_q)$};
    \node (Q4) at (-4/3,4/3) {$\QQ(\xi_{q})$};
    \node (Q5) at (-1.2,1/2) {$q-1$};
    \draw (Q1)--(Q2);% node [pos=0.9, below,inner sep=0.25cm] {$q-1$};
    \draw (Q1)--(Q4); %node [pos=0.9, below,inner sep=0.25cm] {$q-1$};
    \draw (Q4)--(Q3) node [pos=0.2, above,inner sep=0.25cm]{};
    \draw (Q2)--(Q3) node [pos=0.2, above,inner sep=0.25cm]{$t_{q}$};
\end{tikzpicture}
\end{center}
By Lemma~\ref{lemma: natural irrationalities} we have an equality $t_q = q - 1$, which implies inequalities for $\nu_{q,4}^{\sch}$:
\begin{enumerate}
    \item if $q > 5$, then $\nu_{q,4}^{\sch} = 0$;
    \item if $q = 5$, then $\nu_{q,4}^{\sch} = m_5$;
    \item if $q = 3$, then $\nu_{q,4}^{\sch} = 2m_3$.
\end{enumerate}
Moreover, it is clear that $m_5 = m_3 = 1$, since $K(\xi_5)$ does not contain elements $\xi_{5^m}$ for~$m \geqslant 2$ and~$K(\xi_3)$ does not contain elements $\xi_{3^m}$ for $m \geqslant 2$. Therefore, we have~$\relpenalty=10000 \nu_{5,4}^{\sch} = 1$ and $\nu_{3,4}^{\sch} = 2$. 

If $K$ contains $\xi_4$, then we have $\nu_{2,4}^{\sch} = 3 + 4m_2$.
Also, it is clear, that $m_2 = 2$, since~$\QQ(\xi_{4p^k})$ does not contain elements $\xi_{2^m}$ for $m \geqslant 3$, which means, that $\nu_{2,4}^{\sch} = 11$. If $K$ does not contain $\xi_4$, then $\nu_{2,4}^{\sch} = 9$, and finally we obtain an inequality $$\nu_{4}^{\sch} \leqslant 2^{11} \cdot 3^2 \cdot 5 \cdot p^{k\left\lfloor \frac{4}{t_p}\right\rfloor}.$$ As we have mentioned, other cases are absolutely analogous.
\end{proof}

The following lemmas are more or less direct consequences from Lemma~\ref{lemma: bounding G if K = Q(xi)}, but they also give a first look on the impact of invariant $e_p$.

\begin{lemma}\label{lemma: d = 6, e_p = 1}
    Let $K$ be a number field of degree $6$.
    Assume that one of the conditions holds:
    \begin{enumerate}
        \item there is a prime number $p > 2$ such that $e_p = 1$;
        \item $\xi_4 \in K$ and there is a prime number $p > 2$ such that $e_p = 2$.
    \end{enumerate} Then $$\nu_{4}^{\sch} \leqslant  221\,276\,160 < 10^9.$$ 
\end{lemma}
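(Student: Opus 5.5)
We follow the same scheme as Lemmas~\ref{lemma: rank = 3, d = 12, e_p = 1}, \ref{lemma: rank = 3, d = 8, e_p = 1} and the other rank~$3$ analogues. First we show that either hypothesis forces $K$ to lie in a cyclotomic field of the shape required by Lemma~\ref{lemma: bounding G if K = Q(xi)}.

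\emph{Reduction to a cyclotomic field.} If $e_p = 1$, then $K(\xi_p) = \QQ(\xi_{p^{m_p}})$, so $K \subset \QQ(\xi_{p^{m_p}}) \subset \QQ(\xi_{4p^{m_p}})$. If $\xi_4 \in K$ and $e_p = 2$, then $K(\xi_p) \supset \QQ(\xi_{p^{m_p}}, \xi_4) = \QQ(\xi_{4p^{m_p}})$. This containing field has degree $2$ over $\QQ(\xi_{p^{m_p}})$, so equality holds, $K(\xi_p) = \QQ(\xi_{4p^{m_p}})$, and again $K \subset \QQ(\xi_{4p^{m_p}})$. In both cases Lemma~\ref{lemma: bounding G if K = Q(xi)} applies with $k = m_p$. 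It bounds $\nu_4^{\sch}$ by $2^{11}\cdot 3^2\cdot 5$ times $p^{m_p\lfloor 4/t_p\rfloor}$, with the appropriate modification when $p = 3$ or $p = 5$.

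\emph{Enumeration of $p$.} It remains to run through the possible $p$ using the relation $p^{m_p-1}(p-1)e_p = 6t_p$, with $e_p \in \{1,2\}$ and $t_p \mid p-1$.
\begin{itemize}
\item Case $e_p = 1$.
 \begin{itemize}
 \item $p = 7$, $m_7 = 1$, $t_7 = 1$, i.e.\ $K = \QQ(\xi_7)$: the factor is $7^4$.
 \item $p = 3$, $m_3 = 2$, $t_3 = 1$: the factor is $3^{2\cdot 4 + 1}$, giving $2^{11}\cdot 3^9\cdot 5$.
 \item $p = 13$, $t_{13} = 2$: the factor is $13^2$.
 \item $p = 19$, $t_{19} = 3$: the factor is $19$.
 \item $p = 37$, $t_{37} = 6$: the factor is $1$.
 \item The remaining candidates, such as $p = 5$, admit no solution.
 \end{itemize}
\item Case $e_p = 2$ with $\xi_4 \in K$.
 \begin{itemize}
 \item $p = 7$, $t_7 = 2$: the factor is $7^2$.
 \item $p = 3$, $m_3 = 2$, $t_3 = 2$: the factor is at most $3^4$.
 \item $p = 13$, $t_{13} = 4$: the factor is $13$.
 \end{itemize}
\end{itemize}
Comparing these values, the worst case is $K = \QQ(\xi_7)$. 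It gives
$$\nu_4^{\sch} \leqslant 2^{11}\cdot 3^2\cdot 5\cdot 7^4 = 221\,276\,160.$$
The runner-up is $2^{11}\cdot 3^9\cdot 5 = 201\,553\,920$, which is smaller.

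\emph{Main obstacle.} The step that needs care is the enumeration. One must not miss solutions of the degree equation, in particular for $p = 3$, where $m_3$ can exceed $1$ and the extra term $\lfloor 4/(3t_3)\rfloor$ enters. One must also check that cases with larger $m_p$ are excluded by the bound $t_p \leqslant p-1$ together with the divisibility $t_p \mid p-1$.
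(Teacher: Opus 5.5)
Your proof is correct and follows the paper's own argument. You reduce to $K\subset\QQ(\xi_{4p^{m_p}})$, apply the bound for subfields of $\QQ(\xi_{4p^k})$ with $k=m_p$, and run through the primes $p$ satisfying $p^{m_p-1}(p-1)e_p=6t_p$; the worst case is $K=\QQ(\xi_7)$. Your enumeration just spells out what the paper compresses into ``going over all the values of $p$''.

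One small slip: in the case $e_p=1$, the solutions with $m_p=1$ are all primes $p\equiv 1 \pmod 6$ with $t_p=(p-1)/6$, which includes $p=31,43,\dots$ and not only $7,13,19,37$. So the claim that the remaining candidates admit no solution is false. It does no harm, since all these primes have $t_p>4$ and contribute only the trivial factor.
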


\begin{proof}
    If the first condition holds, then $K(\xi_p) = \QQ(\xi_{p^{m_p}})$. If the second condition holds, then the field $K(\xi_p)$ is obtained from the field $\QQ(\xi_{p^{m_p}})$ by adjoining $\xi_4$, which means that~$K(\xi_p) = \QQ(\xi_{4p^{m_p}})$ and $\relpenalty=10000 K \subset \QQ(\xi_{4p^{m_p}})$. In both cases applying Lemma~\ref{lemma: bounding G if K = Q(xi)} we obtain a bound, depending on the actual value of $p$. Going over all the values of $p$ and remembering that $[K:\QQ] = 6$ we see, that the worst bound we can obtain corresponds to the case~$\relpenalty=10000 p = 7$: $$\nu_4^{\sch} \leqslant 2^{11} \cdot 3^2 \cdot 5 \cdot 7^4 = 221\,276\,160 < 10^{9}.$$
\end{proof}

\begin{lemma}\label{lemma: d = 4, e_p = 1}
    Let $K$ be a number field of degree $4$.
    Assume that one of the conditions holds:
    \begin{enumerate}
        \item there is a prime number $p > 2$ such that $e_p = 1$;
        \item $\xi_4 \in K$ and there is a prime number $p > 2$ such that $e_p = 2$.
    \end{enumerate} Then $$\nu_{4}^{\sch} \leqslant  11\,520\,000 < 10^{8}.$$ 
\end{lemma}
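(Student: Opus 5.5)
We would argue exactly as in Lemmas~\ref{lemma: d = 6, e_p = 1} and~\ref{lemma: rank = 3, d = 12, e_p = 1}. First we reduce to a subfield of a cyclotomic field. Under condition (1) we have $K(\xi_p)=\QQ(\xi_{p^{m_p}})$. Under condition (2) the field $K(\xi_p)$ is obtained from $\QQ(\xi_{p^{m_p}})$ by a degree $2$ extension containing $\xi_4$, so $K(\xi_p)=\QQ(\xi_{4p^{m_p}})$. In both cases $K\subset \QQ(\xi_{4p^{m_p}})$. Lemma~\ref{lemma: bounding G if K = Q(xi)} with $k=m_p$ then gives a bound on $\nu_4^{\sch}$ that depends only on $p$, $m_p$ and $t_p$.

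It remains to list the admissible triples $(p,m_p,t_p)$ using the standard relation
$$p^{m_p-1}(p-1)\,e_p = 4\,t_p,\qquad e_p\in\{1,2\}.$$
Only the cases with $t_p\leqslant 4$ matter, since otherwise the $p$-part in Lemma~\ref{lemma: bounding G if K = Q(xi)} is trivial. Then $p^{m_p-1}(p-1)\leqslant 16$, which leaves $p\in\{3,5,7,11,13,17\}$ and small $m_p$.
\begin{enumerate}
    \item[] For $p=3$: the solutions are $(m_3,e_3,t_3)=(1,2,1)$, which gives at most $2^{11}\cdot 5\cdot 3^{5}$, and $(2,2,3)$, which gives at most $2^{11}\cdot5\cdot3^2$.
    \item[] For $p=5$: the solutions are $(1,1,1)$, which gives $2^{11}\cdot3^2\cdot5^4$, and $(1,2,2)$, which gives $2^{11}\cdot3^2\cdot5^2$. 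If $m_5\geqslant 2$, then $t_5\geqslant 5$ and the $5$-part vanishes.
    \item[] For $p=7$ and $p=13$: we get $t_p\geqslant 3$, so the exponent of $p$ is at most $1$.
    \item[] For $p=17$: we get $t_{17}=4$, so again the exponent is at most $1$.
    \item[] For $p=11$: we get $t_{11}=5$, so the $11$-part is trivial.
\end{enumerate}

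Comparing the finitely many resulting numbers, the largest is the case $p=5$, $m_5=1$, $e_5=t_5=1$, that is, $K=\QQ(\xi_5)$. This gives
$$\nu_4^{\sch}\leqslant 2^{11}\cdot3^2\cdot5^4=11\,520\,000<10^8.$$

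The only step needing real care is this enumeration. One has to make sure that no solution with $m_p\geqslant 2$ and small $t_p$ is overlooked, and that for $p=3$ the extra term $\left\lfloor \frac{4}{3t_3}\right\rfloor$ in Lemma~\ref{lemma: bounding G if K = Q(xi)} is accounted for. The first check is where I expect the main obstacle to lie.
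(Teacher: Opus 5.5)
Your proposal is correct and follows the paper's proof. You reduce to $K\subset\QQ(\xi_{4p^{m_p}})$, apply Lemma~\ref{lemma: bounding G if K = Q(xi)}, and find that the worst case is $p=5$, $K=\QQ(\xi_5)$, giving $2^{11}\cdot 3^2\cdot 5^4$; you just spell out, via $p^{m_p-1}(p-1)e_p=4t_p$, the enumeration the paper leaves implicit (your $p=3$ triple $(2,2,3)$ cannot actually occur since $t_3\mid 2$, but listing it is harmless).
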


\begin{proof}
    If the first condition holds, then $K(\xi_p) = \QQ(\xi_{p^{m_p}})$. If the second condition holds, then the field $K(\xi_p)$ is obtained from the field $\QQ(\xi_{p^{m_p}})$ by adjoining $\xi_4$, which means that~$K(\xi_p) = \QQ(\xi_{4p^{m_p}})$ and $\relpenalty=10000 K \subset \QQ(\xi_{4p^{m_p}})$. In both cases applying Lemma~\ref{lemma: bounding G if K = Q(xi)} we obtain a bound, depending on the actual value of $p$. Going over all the values of $p$ and remembering that $[K:\QQ] = 4$ we see, that the worst bound we can obtain corresponds to the case~$\relpenalty=10000 p = 5$: $$\nu_4^{\sch} \leqslant 2^{11} \cdot 3^2 \cdot 5^4 =11\,520\,000 < 10^{8}.$$
\end{proof}

\begin{lemma}\label{lemma: d = 6, e_p = 2}
    Let $K$ be a number field of degree $6$. Suppose there exists a prime number~$\relpenalty=10000 p > 2$ such that $e_p = 2$. Then $$\nu_{4}^{\sch} \leqslant 1\,132\,185\,600 < 10^{10}.$$
\end{lemma}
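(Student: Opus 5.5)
The plan is to mirror the proof of Lemma~\ref{lemma: rank=3, d = 12, e_p = 2}, now with rank $4$ and $[K:\QQ]=6$. The first two reductions come from Lemma~\ref{lemma: d = 6, e_p = 1}. If some other odd prime $q$ has $e_q=1$, that lemma gives $\nu_4^{\sch}\leqslant 221\,276\,160$. If $\xi_4\in K$, then $e_p=2$ is condition (2) of the same lemma, and we get the same bound. So from now on we assume $\xi_4\notin K$ and $e_q\geqslant 2$ for every odd prime $q$.

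Next come the constraints that $e_p=2$ imposes on the other primes.
\begin{itemize}
\item For $p=2$: Lemma~\ref{lemma: boundin of m_2} gives $m_2\leqslant \nu_2(e_p)+2=3$. Since $\xi_4\notin K$, this yields $\nu_{2,4}^{\sch}\leqslant 4+3\cdot 2+1=11$.
\item For odd $q\neq p$: Lemma~\ref{lemma: divisibility of gcd} gives $t_q\geqslant \frac{q-1}{\gcd(q-1,6,2)}=\frac{q-1}{2}$. Hence $\nu_{q,4}^{\sch}=0$ for every $q>11$ with $q\neq p$, since then $t_q>4$.
\item For $q=11$: $t_{11}\geqslant 5>4$, so $\nu_{11,4}^{\sch}=0$ too.
\item For $q=7,5$: I will solve the standard equation $q^{m_q-1}(q-1)e_q=6t_q$ with $e_q\geqslant 2$. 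For $q=7$ this forces $t_7\geqslant 3$ and $m_7=1$ whenever $t_7\leqslant 4$, so $\nu_{7,4}^{\sch}\leqslant 1$. For $q=5$ it forces $m_5=1$ as soon as $t_5\leqslant 4$, which gives $\nu_{5,4}^{\sch}\leqslant 2$.
\item For $q=3$: I expect to get no improvement over Lemma~\ref{lemma: rough multiplicative bound} by this route.
\end{itemize}

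Then I estimate $\nu_{p,4}^{\sch}$ from the rigid equation $p^{m_p-1}(p-1)\cdot 2=6t_p$. Only $t_p\leqslant 4$ matters, and this leaves finitely many pairs $(p,m_p)$: $p=13$ with $t_{13}=4$ and $m_{13}=1$; $p=7$ with $t_7=2$ and $m_7=1$; $p=3$ with $t_3\leqslant 3$; and possibly $p=5$. For each candidate $p$, I multiply the resulting exponent of $p$ by the bounds for the remaining primes found above.

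The target bound factors as $1\,132\,185\,600=2^{11}\cdot 3^5\cdot 5^2\cdot 7\cdot 13$, so the extremal case should be $p=13$. There we have $t_{13}=4$, hence $\nu_{13,4}^{\sch}=1$, and the other primes contribute $2^{11}\cdot 3^5\cdot 5^2\cdot 7$.

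The main obstacle is the exponent of $3$. The rough bound allows $3^9$ when $d=6$, which is too large. To cut it down to $3^5$ I will use $e_3\geqslant 2$ in $3^{m_3-1}\cdot 2\cdot e_3=6t_3$: this forces $t_3=1$ together with $m_3\leqslant 2$, or $t_3=2$. In the first case $\nu_{3,4}^{\sch}\leqslant 2\cdot 4/\ldots$ needs checking; I expect the bound to come out as $\leqslant 5$ once $e_3$ is forced even. The case $p=7$ also needs care, since it naively gives $7^2$. There I plan to compensate by applying Lemma~\ref{lemma: divisibility of gcd} with $p=7$ to force $t_5\geqslant 2$ and $t_3\geqslant 2$, and then check directly that the resulting product stays below $1\,132\,185\,600$.
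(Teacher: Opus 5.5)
Your overall route is the paper's own. You reduce via Lemma~\ref{lemma: d = 6, e_p = 1} to $\xi_4\notin K$ and $e_q\geqslant 2$ for odd $q\neq p$, get $m_2\leqslant 3$ and $t_q\geqslant\frac{q-1}{2}$, and then solve the standard equations for $q=3,5,7$ and for $p$ itself. The gap is the step you single out as the main obstacle, the exponent of $3$: you do not carry it out, and the case split you propose does not give it.

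Since $t_3\in\{1,2\}$, the equation $3^{m_3-1}\cdot 2\cdot e_3=6t_3$ reads $3^{m_3-1}e_3=3t_3$. Your case ``$t_3=1$ with $m_3\leqslant 2$'' contains $(m_3,t_3)=(2,1)$, for which $\nu_{3,4}^{\sch}=2\cdot 4+1=9$. That is just the rough bound $3^9$ again. The heuristic ``once $e_3$ is forced even'' is also wrong: in the extremal case $t_3=1$ one has $e_3=3$.

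The missing one-line observation is that $(m_3,t_3)=(2,1)$ forces $e_3=1$, which contradicts $e_3\geqslant 2$ when $p\neq 3$. So either $m_3=1$, giving $\nu_{3,4}^{\sch}\leqslant 4+1=5$, or $(m_3,t_3)=(2,2)$, giving $4$; $m_3\geqslant 3$ is impossible. When $p=3$, the equation $3^{m_3-1}\cdot 2\cdot 2=6t_3$ forces $m_3=t_3=2$, so $\nu_{3,4}^{\sch}=4$. Drop your earlier bullet predicting no improvement at $q=3$: for $d=6$, cutting $3^9$ down to $3^5$ is the whole point.

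Some smaller claims are also off, though they do not change the final number.
\begin{itemize}
\item Lemma~\ref{lemma: divisibility of gcd} with $p=7$ does not give $t_3\geqslant 2$, since $\gcd(2,6,2)=2$. No compensation is needed for $p=7$ anyway. There $q=13$ contributes nothing because $t_{13}\geqslant 6$, and $2^{11}\cdot 3^5\cdot 5^2\cdot 7^2=609\,638\,400$ is already below the target.
\item The case $p=5$ cannot occur, because $5^{m_5-1}\cdot 4\cdot 2=6t_5$ has no integer solution.
\item For $p=3$ the only solution is $m_3=t_3=2$, not ``$t_3\leqslant 3$''.
\end{itemize}
With these corrections, the maximum over $p\in\{3,7,13\}$ is attained at $p=13$. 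It equals $2^{11}\cdot 3^5\cdot 5^2\cdot 7\cdot 13$, exactly as in the paper.
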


\begin{proof}
    If there is another prime $q>2$ with $e_q = 1$, then we are done by Lemma~\ref{lemma: d = 6, e_p = 1}. Therefore, we can assume that for any other prime $q>2$ we have $e_q \geqslant 2$. If $\xi_4 \in K$, then $$\nu_{4}^{\sch} \leqslant  221\,276\,160 < 10^9$$ by Lemma~\ref{lemma: d = 6, e_p = 1}. If $\xi_4 \not\in K$, then by Lemma~\ref{lemma: boundin of m_2} we have $m_2 \leqslant 3$ and by Lemma~\ref{lemma: divisibility of gcd} for any prime $q > 2$ other than $p$ we have $$t_q \geqslant \frac{q-1}{2}.$$ In particular, we have $\nu_{q,4}^{\sch} = 0$ for any $q > 7$ other than $p$ and $\nu_{2,4}^{\sch} \leqslant 11$. Let us find bounds for small odd primes. 

    If $q = 7$ other than $p$, then the equality $$7^{m_7 - 1}\cdot6\cdot e_7 = 6 \cdot t_7$$ shows that $m_7 = 1$, $t_7 \geqslant 3$ (as we have mentioned before) or $m_7 \geqslant 2$, $t_7 \geqslant 14$. In both cases, we have a bound~$\relpenalty=10000 \nu_{7,4}^{\sch} \leqslant 1$. If $q = 5$ other than~$p$, then the equality $$5^{m_5 - 1}\cdot4\cdot e_5 = 6 \cdot t_5$$ shows that $m_5 = 1$, $t_5 \geqslant 2$ or $m_5 \geqslant 2$, $t_5 \geqslant 10$. In both cases, we have~$\nu_{5,4}^{\sch} \leqslant 2$. If~$\relpenalty=10000 q = 3$ other than~$p$, then the equality $$3^{m_3 - 1}\cdot 2 \cdot e_3 = 6 \cdot t_3$$ shows that $m_3 = 1$, $t_3 = 1$ or $m_3 = 2$, $t_3 \geqslant 2$ or $m_3 \geqslant 3$, $t_3 \geqslant 6$. In all cases, we have~$\nu_{3,4}^{\sch} \leqslant 5$.

    It is left to estimate $\nu_{p,4}^{\sch}$. And the bound comes from the standard equality: $$p^{m_p -1}\cdot(p-1) \cdot 2 = 6 \cdot t_p.$$ Note that this equality is very restrictive. Since $m_p$ and $t_p$ are positive integer numbers, clearly this equality could not be satisfied for some primes $p>2$. Straightforward computations provide the following bounds: 
    \begin{enumerate}
        \item $\nu_{p,4}^{\sch} \leqslant 1$, if $p = 13$;
        \item $\nu_{p,4}^{\sch} \leqslant 2$, if $p = 7$;
        \item $\nu_{p,4}^{\sch} \leqslant 5$, if $p = 3$;
        \item $\nu_{p,4}^{\sch} = 0$ for all other possible values of $p > 2$.
    \end{enumerate}
    Summarizing all these estimates, we obtain bounds  depending on the actual value of $p$. The worst one we can obtain corresponds to the case $p = 13$: $$\nu_4^{\sch} \leqslant 2^{11} \cdot 3^5 \cdot 5^2 \cdot 7 \cdot 13 = 1\,132\,185\,600 < 10^{10}.$$
\end{proof}

\begin{lemma}\label{lemma: d = 4, e_p = 2}
    Let $K$ be a number field of degree $4$. Suppose there exists a prime number~$\relpenalty=10000 p > 2$ such that $e_p = 2$. Then $$\nu_{4}^{\sch} \leqslant 87\,091\,200 < 10^{8}.$$
\end{lemma}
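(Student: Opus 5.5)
I will follow the template of Lemma~\ref{lemma: d = 6, e_p = 2}, now with $d=4$. The target $87\,091\,200 = 2^{11}\cdot 3^5\cdot 5^2\cdot 7$ coincides with $B_{4,2}$ from Table~\ref{tabular: tabular d<=21}. That suggests the extremal configuration is essentially a quadratic-field-type bound, and the plan is to show every prime exponent is at most this.

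\textbf{Reductions.} If some other odd prime $q$ has $e_q=1$, or if $\xi_4\in K$, then Lemma~\ref{lemma: d = 4, e_p = 1} gives $\nu_4^{\sch}\leqslant 11\,520\,000$, which is smaller than the target. So I will assume $\xi_4\notin K$ and $e_q\geqslant 2$ for all odd $q$. Then Lemma~\ref{lemma: boundin of m_2} gives $m_2\leqslant 3$, hence
$$\nu_{2,4}^{\sch}\leqslant 4+3\cdot 2+1=11.$$
By Lemma~\ref{lemma: divisibility of gcd}, every odd prime $q\neq p$ satisfies $t_q\geqslant (q-1)/\gcd(q-1,4,2)\geqslant (q-1)/2$. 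Consequently $\nu_{q,4}^{\sch}=0$ for all such $q>7$.

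\textbf{Small primes $q\neq p$.} For these I will solve $q^{m_q-1}(q-1)e_q=4t_q$ using $e_q\geqslant 2$.

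For $q=7$: here $t_7\geqslant 3$. Then $m_7=1$ forces $\nu_{7,4}^{\sch}\leqslant 1$, and $m_7\geqslant 2$ forces $t_7\geqslant 21$, so the exponent is $0$.

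For $q=5$: here $t_5\geqslant 2$. If $m_5=1$ the exponent is at most $2$. If $m_5\geqslant 2$, then $t_5$ is divisible by $5$, so $t_5\geqslant 5$ and the exponent is $0$.

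For $q=3$: $t_3\in\{1,2\}$ and $3^{m_3-1}\cdot 2e_3=4t_3\leqslant 8$. This forces $m_3=1$, so $\nu_{3,4}^{\sch}\leqslant 4+1=5$ when $t_3=1$, and $\nu_{3,4}^{\sch}\leqslant 2$ when $t_3=2$.

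\textbf{The prime $p$ itself.} The defining equation becomes $p^{m_p-1}(p-1)=2t_p$ with $t_p\mid 4$ (indeed $t_p\leqslant 3$ when $\nu_{p,4}^{\sch}\neq 0$). The solutions are:
\begin{itemize}
\item[] $p=3$: $m_3=1$, $t_3=1$, giving $\nu_{3,4}^{\sch}\leqslant 5$; or $m_3=2$, $t_3=3$, giving $\nu_{3,4}^{\sch}\leqslant 2$.
\item[] $p=5$: $t_5=2$, $m_5=1$, giving exponent $2$.
\item[] $p=7$: $t_7=3$, $m_7=1$, giving exponent $1$.
\end{itemize}
Larger $p$ give $t_p\geqslant 4$ and exponent at most $1$, which occurs only for $p=5$ or $p=3$ in any case.

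\textbf{Assembling.} In every case each factor is bounded by $2^{11}$, $3^5$, $5^2$, $7$ respectively, and no prime above $7$ can contribute. Hence $\nu_4^{\sch}\leqslant 2^{11}\cdot3^5\cdot5^2\cdot7=87\,091\,200$.

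The main obstacle is the careful case analysis for $q=3$ and $q=5$. The paper's earlier pattern applies $e_q\geqslant 2$ there. For prime $13$ I must confirm exclusion: $t_{13}\geqslant 6$ from Lemma~\ref{lemma: divisibility of gcd}, which suffices. This also shows the multiplicative bound of Table~\ref{tabular: tabular d<=21} is improved exactly by removing $13$ and $17$ and reducing exponents of $2$ and $5$.
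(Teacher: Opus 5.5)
Your proof is correct and follows the paper's argument step for step: the same reductions via Lemma~\ref{lemma: d = 4, e_p = 1} and Lemmas~\ref{lemma: boundin of m_2} and~\ref{lemma: divisibility of gcd}, and the same prime-by-prime analysis of $q^{m_q-1}(q-1)e_q=4t_q$; the only cosmetic difference is that you derive $\nu_{3,4}^{\sch}\leqslant 5$ directly from $e_3\geqslant 2$ instead of quoting Lemma~\ref{lemma: rough multiplicative bound}. One slip to fix is that nonvanishing of $\nu_{p,4}^{\sch}$ gives only $t_p\leqslant 4$ (not $t_p\mid 4$, nor $t_p\leqslant 3$), but this is harmless: $t_p=4$ would require $p^{m_p-1}(p-1)=8$, which has no prime solution, and for $p\geqslant 11$ one has $t_p\geqslant 5$, so the exponent is $0$ rather than ``at most $1$''.
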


\begin{proof}
    If there is another prime $q>2$ with $e_q = 1$, then we are done by Lemma~\ref{lemma: d = 4, e_p = 1}. Therefore, we can assume that for any other prime $q>2$ we have $e_q \geqslant 2$. If $\xi_4 \in K$, then $$\nu_{4}^{\sch} \leqslant  11\,520\,000 < 10^{8}$$ by Lemma~\ref{lemma: d = 4, e_p = 1}. If $\xi_4 \not\in K$, then by Lemma~\ref{lemma: boundin of m_2} we have $m_2 \leqslant 3$ and by Lemma~\ref{lemma: divisibility of gcd} for any prime $q > 2$ other than $p$ we have $$t_q \geqslant \frac{q-1}{2}.$$ In particular, we have $\nu_{q,4}^{\sch} = 0$ for any $q > 7$ other than $p$ and $\nu_{2,4}^{\sch} \leqslant 11$. Let us find bounds for small odd primes. 

    If $q = 7$ other than $p$, then the equality $$7^{m_7 - 1}\cdot6\cdot e_7 = 4 \cdot t_7$$ shows that $m_7 = 1$, $t_7 \geqslant 3$ or $m_7 \geqslant 2$, $t_7 \geqslant 21$. In both cases we have $\relpenalty=10000 \nu_{7,4}^{\sch} \leqslant 1$. If $q = 5$ other than~$p$, then the equality $$5^{m_5 - 1}\cdot4\cdot e_5 = 4 \cdot t_5$$ shows that $m_5 = 1$, $t_5 \geqslant 2$ or $m_5 \geqslant 2$, $t_5 \geqslant 10$. In both cases, we have~$\nu_{5,4}^{\sch} \leqslant 2$. If $q = 3$ other than~$p$, then the same analysis does not provide a better bound than in Lemma~\ref{lemma: rough multiplicative bound}, i.e. we have~$\relpenalty=10000 \nu_{3,4}^{\sch} \leqslant 5$. 

    It is left to estimate $\nu_{p,4}^{\sch}$. And the bound comes from the standard equality: $$p^{m_p -1}\cdot(p-1) \cdot 2 = 4 \cdot t_p.$$ Note that this equality is very restrictive. Since $m_p$ and $t_p$ are positive integer numbers, clearly this equality could not be satisfied for some primes $p>2$. Straightforward computations provide the following bounds: 
    \begin{enumerate}
        \item $\nu_{p,4}^{\sch} \leqslant 1$, if $p = 7$;
        \item $\nu_{p,4}^{\sch} \leqslant 2$, if $p = 5$;
        \item $\nu_{p,4}^{\sch} \leqslant 5$, if $p = 3$;
        \item $\nu_{p,4}^{\sch} = 0$ for all other possible values of $p > 2$.
    \end{enumerate}
    Summarizing all these estimates, we obtain bounds  depending on the actual value of $p$. The worst one we can obtain corresponds to the cases $p = 7$, $5$ or $3$: $$\nu_4^{\sch} \leqslant 2^{11} \cdot 3^5 \cdot 5^2 \cdot 7 = 87\,091\,200 < 10^{8}.$$
\end{proof}

Now we are ready for final improvements of our bounds in cases of degree $6$ and $4$.

\begin{lemma}\label{lemma: d = 6 final}
    Let $K$ be a number field of degree $6$. Then $$\nu_{4}^{\sch} \leqslant 1\,132\,185\,600 < 10^{10}.$$
\end{lemma}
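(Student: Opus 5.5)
The plan mirrors the proof of Lemma~\ref{lemma: rank=3, d = 12 final}. We split according to the smallest value of $e_p$ over odd primes $p$.

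If some odd prime $p$ has $e_p = 1$, or $\xi_4 \in K$ and some odd $p$ has $e_p = 2$, then Lemma~\ref{lemma: d = 6, e_p = 1} gives $\nu_4^{\sch} \leqslant 221\,276\,160$. If some odd $p$ has $e_p = 2$, then Lemma~\ref{lemma: d = 6, e_p = 2} gives exactly the claimed bound $1\,132\,185\,600$. So it remains to treat the case $e_p \geqslant 3$ for every odd prime $p$. Here we start from the multiplicative bound of Remark~\ref{rem: multiplicative bound}, namely $B_{4,6} = 2^{11}\cdot3^9\cdot5^2\cdot7^4\cdot13^2\cdot19$ from Table~\ref{tabular: tabular d<=21}. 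We then cut down the exponents prime by prime using the standard relation
$$p^{m_p-1}(p-1)e_p = 6t_p.$$

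\textbf{Large primes.} For $p = 19$ the relation reads $19^{m_{19}-1}\cdot 18\cdot e_{19} = 6t_{19}$. With $e_{19}\geqslant 3$ this forces $t_{19} \geqslant 9 > 4$, so $\nu_{19,4}^{\sch} = 0$. For $p = 13$ we get $t_{13} = 2e_{13}\cdot 13^{m_{13}-1} \geqslant 6 > 4$, so $\nu_{13,4}^{\sch} = 0$. For $p = 7$ we get $t_7 = 7^{m_7-1}e_7 \geqslant 3$, which gives $\nu_{7,4}^{\sch} \leqslant 1$ when $m_7 = 1$. If $m_7 \geqslant 2$, then $t_7 \geqslant 21$ and the exponent is $0$.

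\textbf{Prime $5$.} The relation gives $t_5 = \tfrac{2}{3}\cdot 5^{m_5-1}e_5$, so $3 \mid e_5$ and $t_5 \geqslant 2$. Hence $\nu_{5,4}^{\sch} \leqslant 2$ if $m_5 = 1$, and $t_5 \geqslant 10$ (exponent $0$) if $m_5 \geqslant 2$.

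\textbf{Prime $3$.} The relation is $3^{m_3-1}e_3 = 3t_3$. If $m_3 = 1$, then $t_3 = e_3/3 \geqslant 1$ and the exponent is at most $2\cdot 1 + 1 = 3$. If $m_3 = 2$, then $t_3 = e_3 \geqslant 3$, so the exponent is at most $2\lfloor 4/3\rfloor = 2$. If $m_3 \geqslant 3$, then $t_3 \geqslant 9$ and the exponent is $0$. In all cases $\nu_{3,4}^{\sch} \leqslant 3$.

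\textbf{Prime $2$.} We keep $\nu_{2,4}^{\sch} \leqslant 11$ from Lemma~\ref{lemma: rough multiplicative bound}. It can be refined with Lemma~\ref{lemma: boundin of m_2} if necessary, since for example $e_3$ odd forces $\xi_4\notin K$ and $m_2 \leqslant 2$.

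Combining the exponents gives
$$\nu_4^{\sch} \leqslant 2^{11}\cdot 3^3\cdot 5^2\cdot 7 = 9\,676\,800,$$
which is well below $1\,132\,185\,600$.

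The main obstacle is the case analysis for $p=3$ (and to a lesser extent $p=5$). Since $t_3\in\{1,2\}$, the term $\lfloor 4/(3t_3)\rfloor$ can contribute, and one must check carefully that $e_3 \geqslant 3$ really rules out $m_3 \geqslant 3$ with small $t_3$. If that check fails, a fallback is to use Lemma~\ref{lemma: divisibility of gcd} with $p=3$ to force larger $t_5$ and $t_7$, which compensates by killing those primes.
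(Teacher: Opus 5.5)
Your overall strategy matches the paper's. You use the same three-way split ($e_p=1$; $e_p=2$; all $e_p\geqslant 3$) and then apply $p^{m_p-1}(p-1)e_p=6t_p$ prime by prime. Your treatment of $2$, $5$, $7$, $13$ and $19$ is correct.

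The step for $p=3$, however, is false as written. With $m_3=1$ you get $t_3=e_3/3$. Since $t_3=[K(\xi_3):K]\in\{1,2\}$, this forces $(t_3,e_3)\in\{(1,3),(2,6)\}$. In the first case
$$\nu_{3,4}^{\sch}=m_3\left\lfloor\tfrac{4}{1}\right\rfloor+\left\lfloor\tfrac{4}{3}\right\rfloor=5,$$
not at most $3$. Your count $2\cdot 1+1$ combines $\lfloor 4/t_3\rfloor=2$, which needs $t_3=2$, with $\lfloor 4/(3t_3)\rfloor=1$, which needs $t_3=1$.

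This case genuinely occurs under your hypotheses. For $K=\QQ(\xi_3,\sqrt[3]{2})$ one has $t_3=m_3=1$ and $e_3=3$, and $e_p=6$ for every $p\geqslant 5$. So the bound $9\,676\,800$ is not justified. Your stated worry points the wrong way: $m_3\geqslant 3$ is harmless, since then $t_3=3^{m_3-2}e_3\geqslant 9$. The real issue is $m_3=t_3=1$.

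The repair is immediate. The correct bound is $\nu_{3,4}^{\sch}\leqslant 5$, and combined with your other exponents it gives
$$\nu_4^{\sch}\leqslant 2^{11}\cdot 3^{5}\cdot 5^{2}\cdot 7=87\,091\,200<1\,132\,185\,600.$$
So the lemma still follows, and the fallback via Lemma~\ref{lemma: divisibility of gcd} is unnecessary.

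With that fix, your argument is a slight streamlining of the paper's. After killing $13$ and $19$, the paper singles out the subcase $\nu_{7,4}^{\sch}\neq 0$, using Lemmas~\ref{lemma: boundin of m_2} and~\ref{lemma: divisibility of gcd}; there it also gets the exponent $5$ for the prime $3$. In the subcase $\nu_{7,4}^{\sch}=0$ it leaves the prime $3$ alone and settles for $2^{11}\cdot 3^9\cdot 5^2=1\,007\,769\,600$, just under the target. Your independent per-prime bounds avoid the subcases and give a smaller number in the case $e_p\geqslant 3$. The constant in the statement is dictated by the $e_p=2$ case either way.
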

\begin{proof}
If there exists a prime number $p > 2$ with $e_p = 1$, then we have $$\nu_{4}^{\sch} \leqslant  221\,276\,160 < 10^9$$ by Lemma~\ref{lemma: d = 6, e_p = 1}. If there exists a prime number $p > 2$ with $e_p = 2$, then we have $$\nu_{4}^{\sch} \leqslant 1\,132\,185\,600 < 10^{10}$$ by Lemma~\ref{lemma: d = 6, e_p = 2}. Then assume that for any prime number $p > 2$ we have~$\relpenalty=10000 e_p \geqslant 3$. 

Immediately note, that we have $\nu^{\sch}_{2,4} \leqslant 11$ by Lemma~\ref{lemma: rough multiplicative bound}. For odd primes consider the standard equation 
\begin{equation}\label{eq: d = 6 final}    
p^{m_p -1}\cdot(p-1) \cdot e_p = 6 \cdot t_p.\end{equation} Due to Table~\ref{tabular: tabular d<=21} we are interested only in primes: $3$, $5$, $7$, $13$ and $19$. Substituting these primes and solving equation, we see that $t_{19}$, $t_{13} > 4$, which implies $\nu_{19,4}^{\sch} = \nu_{13,4}^{\sch} = 0$. 

\begin{comment}
Also, it follows that $$m_{17}\cdot\left\lfloor\frac{4}{t_{17}}\right\rfloor \leqslant \left\lfloor\frac{4}{t_{17}}\right\rfloor, \; m_{13}\cdot\left\lfloor\frac{4}{t_{13}}\right\rfloor \leqslant \left\lfloor\frac{4}{t_{13}}\right\rfloor, \; m_{11}\cdot\left\lfloor\frac{4}{t_{11}}\right\rfloor \leqslant \left\lfloor\frac{4}{t_{11}}\right\rfloor, \;m_{7}\cdot\left\lfloor\frac{4}{t_{7}}\right\rfloor \leqslant \left\lfloor\frac{4}{t_{7}}\right\rfloor,$$ $$m_{5}\cdot\left\lfloor\frac{4}{t_{5}}\right\rfloor \leqslant 2\left\lfloor\frac{4}{t_{5}}\right\rfloor.$$
\end{comment}

Suppose that $\nu_{7,4}^{\sch} \neq 0$, then we have $t_{7} \leqslant 4$. Therefore, there are two solutions of $$7^{m_{7} - 1}\cdot6\cdot e_{7} = 6\cdot t_{7}.$$ Either $m_{7} = 1$ and $e_{7} = t_{7} = 3$ or $m_{7} = 1$ and $e_{7} = t_{7} = 4$. In both cases, we have~$\relpenalty=10000 \nu_{7,4}^{\sch} = 1$. In the former case, applying Lemma~\ref{lemma: boundin of m_2} for $p = 7$ we find out that $\xi_4 \not\in K$ (since $e_{7}$ is odd) and $m_2 \leqslant 2$. Therefore, we have $\nu_{2,4}^{\sch} \leqslant 9$. Also, applying Lemma~\ref{lemma: divisibility of gcd} for $p = 7$ we obtain inequality $t_{5} \geqslant 4$. Solving~\eqref{eq: d = 6 final} for primes $3$ and $5$ we obtain a bound $$\nu_{4}^{\sch} \leqslant 2^{9} \cdot 3^5 \cdot 5 \cdot 7 = 4\,354\,560 <10^{7}.$$ 
In the latter case, applying Lemma~\ref{lemma: divisibility of gcd} we obtain inequality $t_{5} \geqslant 2$. Solving~\eqref{eq: d = 6 final} for primes $3$ and $5$ we obtain a bound $$\nu_4^{\sch} \leqslant 2^{11} \cdot 3^5 \cdot 5^2 \cdot 7 = 87\,091\,200 < 10^{8}.$$ Further, assume that $\nu_{7,4}^{\sch} = 0.$

Under all assumptions we already have a bound $$\nu_{4}^{\sch} \leqslant 2^{11} \cdot 3^9 \cdot 5^2  = 1\,007\,769\,600 < 10^{10}$$ by Lemma~\ref{lemma: rough multiplicative bound} and the lemma is proved.
\end{proof}

\begin{lemma}\label{lemma: d = 4 final}
    Let $K$ be a number field of degree $4$. Then  $$\nu_{4}^{\sch} \leqslant  87\,091\,200 < 10^{8}.$$ 
\end{lemma}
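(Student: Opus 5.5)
The plan follows the template of Lemma~\ref{lemma: d = 6 final}. We split by the smallest value of $e_p$ over odd primes $p$. If some odd prime $p$ has $e_p = 1$, Lemma~\ref{lemma: d = 4, e_p = 1} gives $\nu_4^{\sch} \leqslant 11\,520\,000$. If some odd prime $p$ has $e_p = 2$, Lemma~\ref{lemma: d = 4, e_p = 2} gives $\nu_4^{\sch} \leqslant 87\,091\,200$, which is exactly the claimed bound. So we may assume $e_p \geqslant 3$ for every odd prime $p$. In this regime we work with the standard equation
$$p^{m_p-1}(p-1)e_p = 4t_p,$$
and with the multiplicative bound $B_{4,4} = 2^{15}\cdot 3^5\cdot 5^4\cdot 7\cdot 13\cdot 17$ from Table~\ref{tabular: tabular d<=21}. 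That table tells us the only primes that need attention are $2,3,5,7,13,17$.

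First we eliminate the large primes. For $p = 17$ or $p = 13$, the conditions $e_p \geqslant 3$ and $t_p \leqslant 4$ force $p^{m_p-1}(p-1)e_p \geqslant 3(p-1) > 16 \geqslant 4t_p$, which is impossible. Hence $t_{17}, t_{13} > 4$, and so $\nu_{17,4}^{\sch} = \nu_{13,4}^{\sch} = 0$. For $p = 7$, a nonzero contribution requires $6e_7 \cdot 7^{m_7-1} = 4t_7 \leqslant 16$. With $e_7 \geqslant 3$ this is impossible as well, so $\nu_{7,4}^{\sch} = 0$.

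Next we handle $5$ and $3$. For $p = 5$ the equation reads $5^{m_5-1}e_5 = t_5$, and we need $t_5 \leqslant 4$ for a nonzero contribution. Since $e_5 \geqslant 3$, this forces $m_5 = 1$ and $t_5 = e_5 \in \{3,4\}$, so $\nu_{5,4}^{\sch} \leqslant 1$. For $p = 3$ the equation is $3^{m_3-1}\cdot 2e_3 = 4t_3$ with $e_3 \geqslant 3$. The solutions with $t_3 \leqslant 4$ are $t_3 = 3$ with $m_3 = 2$, $e_3 = 2$, which is excluded, and $t_3 = 3$ with $m_3 = 1$, $e_3 = 6$, and so on. In every admissible case this gives a small exponent, certainly $\nu_{3,4}^{\sch} \leqslant 5$. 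It remains to treat the prime $2$. The rough estimate only gives $\nu_{2,4}^{\sch} \leqslant 15$, which is too weak, so we invoke Lemma~\ref{lemma: boundin of m_2} with $p = 3$: it gives $m_2 \leqslant \nu_2(e_3)+1$ or $\nu_2(e_3)+2$. Since $e_3$ divides $4t_3/2 \leqslant 8$, we have $\nu_2(e_3) \leqslant 2$. Alternatively, with $p = 5$: when $e_5$ is odd, $\xi_4 \notin K$ and $m_2 \leqslant 2$, which yields $\nu_{2,4}^{\sch} \leqslant 9$.

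The main obstacle is controlling $\nu_{2,4}^{\sch}$ jointly with the exponents of $3$ and $5$. The target is $2^{11}\cdot 3^5\cdot 5^2\cdot 7$, but after the steps above the $7$-part has vanished, so we have spare room of roughly a factor $7\cdot 5$ to absorb a larger power of $2$. Even the crude combination $2^{15}\cdot 3^5\cdot 5 = 39\,813\,120$ is below $87\,091\,200$. So the key point is just to verify carefully that, once $e_p \geqslant 3$ for all odd $p$, the exponents satisfy $\nu_{5,4}^{\sch} \leqslant 1$, $\nu_{3,4}^{\sch} \leqslant 5$, and that $7$, $13$, $17$ disappear. With these, the product is at most $2^{15}\cdot 3^5\cdot 5 < 87\,091\,200$, and the lemma follows.
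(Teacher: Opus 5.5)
Your proof is correct and follows the paper's argument essentially step for step. You reduce via the $e_p=1$ and $e_p=2$ lemmas, then use the standard equation with $e_p\geqslant 3$ to kill $7$, $13$, $17$ and force $\nu_{5,4}^{\sch}\leqslant 1$, and keep the rough exponents $15$ and $5$ for $2$ and $3$, giving $2^{15}\cdot 3^5\cdot 5=39\,813\,120$. The detour through Lemma~\ref{lemma: boundin of m_2} is unnecessary, as your own last paragraph concludes. Your enumeration for $p=3$ also lists impossible cases, since $t_3\mid 2$; in fact $e_3\geqslant 3$ forces $m_3=1$, $t_3=2$, $e_3=4$. Neither slip affects the final bound.
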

\begin{proof}
If there exists a prime number $p > 2$ with $e_p = 1$, then we have $$\nu_{4}^{\sch} \leqslant  11\,520\,000 < 10^{8}$$ by Lemma~\ref{lemma: d = 4, e_p = 1}. If there exists a prime number $p > 2$ with $e_p = 2$, then we have $$\nu_{4}^{\sch} \leqslant  87\,091\,200 < 10^{8}$$ by Lemma~\ref{lemma: d = 4, e_p = 2}. Then assume that for any prime number $p > 2$ we have~$\relpenalty=10000 e_p \geqslant 3$. 

Immediately note, that we have $\nu^{\sch}_{2,4} \leqslant 15$ by Lemma~\ref{lemma: rough multiplicative bound}. For odd primes consider the standard equation 
\begin{equation}\label{eq: d = 4 final}    
p^{m_p -1}\cdot(p-1) \cdot e_p = 4 \cdot t_p.\end{equation} Due to Table~\ref{tabular: tabular d<=21} we are interested only in primes: $3$, $5$, $7$, $13$ and $17$. Substituting these primes and solving equation, we see that $t_{17}$, $t_{13}$ $t_{7} > 4$, which implies~$\relpenalty=10000 \nu_{17,4}^{\sch} = \nu_{13,4}^{\sch} = \nu_{7,4}^{\sch} = 0$. Therefore, we already have a bound $\nu_4^{sch} \leqslant 2^{15} \cdot 3^{5} \cdot 5^4$ by Lemma~\ref{lemma: rough multiplicative bound}. Let us decrease the power of $5$ in this bound. Consider~\ref{eq: d = 4 final} for $p = 5$: $$5^{m_5 - 1}\cdot 4 \cdot e_5 = 4 \cdot t_5.$$ Since $e_5 \geqslant 3$, then for all solutions we have $\nu_{5,4}^{\sch} \leqslant 1$. This leads to the bound $$\nu_{4}^{\sch} \leqslant 2^{15} \cdot 3^5 \cdot 5 = 39\,813\,120 < 10^{8}$$ and the lemma is proved.
\end{proof}

Now everything is prepared to prove Proposition~\ref{proposition: bound of schur GL_4 <= 7}

\begin{proof}[of Proposition~\ref{proposition: bound of schur GL_4 <= 7}]
    For $d = 6$ and $d = 4$ result follows from Lemma~\ref{lemma: d = 6 final} and Lemma~\ref{lemma: d = 4 final}. For $d = 1, 2, 5, 7$ result follows from Remark~\ref{rem: multiplicative bound} and  Table~\ref{tabular: tabular d<=21}. For $d = 3$ result follows Corollary~\ref{corollary: nu(L) <= nu(K)} applied to the extension $K \subset K(\sqrt{2})$ and Lemma~\ref{lemma: d = 6 final}.
\end{proof}

Also, we need the following rough bounds.

\begin{lemma}\label{lemma: corollary: bound of schur GL_4 <= 2}
     Let $K$ be a number field of degree $d \leqslant 2$. Then $$\nu^{\sch}_4 \leqslant 87\,091\,200 < 10^{8}.$$
\end{lemma}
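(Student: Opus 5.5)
The plan is to read off the bound from the ``multiplicative'' estimate of Remark~\ref{rem: multiplicative bound}, i.e. from Lemma~\ref{lemma: rough multiplicative bound}, together with Table~\ref{tabular: tabular d<=21}. The key fact is that $B_{4,d}$ depends only on $d$. So it suffices to compare $B_{4,1}$ and $B_{4,2}$ and take the larger one.

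For $d=1$ the table gives $B_{4,1}=2^{9}\cdot 3^{3}\cdot 5=69\,120$. For $d=2$ it gives $B_{4,2}=2^{11}\cdot3^{5}\cdot5^{2}\cdot7=87\,091\,200$. I will double-check the $d=2$ entry prime by prime from the formulas of Lemma~\ref{lemma: rough multiplicative bound} with $n=4$.

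\textbf{Prime $p=2$.} Since $d$ is even, $B_{4,2,2}=4(\nu_2(2)+1)+\lfloor 4/2\rfloor+\lfloor 4/4\rfloor=8+2+1=11$.

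\textbf{Prime $p=3$.} Here $\gcd(2,2)=2$, so $(p-1)/\gcd(p-1,d)=1$. The first term is $(0+1)\cdot 4=4$, and $\lfloor 4/3\rfloor=1$, giving $B_{4,2,3}=5$.

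\textbf{Prime $p=5$.} Here $(p-1)/\gcd(4,2)=2$, so $B_{4,2,5}=\lfloor 4/2\rfloor=2$.

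\textbf{Prime $p=7$.} Here $6/\gcd(6,2)=3$, so $B_{4,2,7}=\lfloor 4/3\rfloor=1$.

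\textbf{Primes $p\ge 11$.} Here $(p-1)/\gcd(p-1,2)\ge 5>4$, so $B_{4,2,p}=0$, since $\lfloor 4/p\rfloor=0$ as well.

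Multiplying these factors gives $87\,091\,200$. This dominates the $d=1$ value, so $\nu_4^{\sch}\le\max(B_{4,1},B_{4,2})=87\,091\,200<10^8$.

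There is no real obstacle here. The only care needed is in the $p=2$ case: one must use the even-$d$ branch of Lemma~\ref{lemma: rough multiplicative bound}, since $\xi_4$ may lie in a quadratic field, for instance $\QQ(\xi_4)$.
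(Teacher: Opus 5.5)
Your proposal is correct and is the paper's argument: the paper also deduces the bound directly from Remark~\ref{rem: multiplicative bound} and the entries $B_{4,1}$, $B_{4,2}$ of Table~\ref{tabular: tabular d<=21}. Your prime-by-prime recomputation of $B_{4,2}=2^{11}\cdot 3^{5}\cdot 5^{2}\cdot 7$ from Lemma~\ref{lemma: rough multiplicative bound} checks that table entry correctly.
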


\begin{proof}
    Result follows from Remark~\ref{rem: multiplicative bound} and  Table~\ref{tabular: tabular d<=21}. 
\end{proof}

\begin{lemma}\label{lemma: corollary: bound of schur GL_3 <= 2}
     Let $K$ be a number field of degree $d \leqslant 2$. Then $$\nu^{\sch}_3 \leqslant 2\,903\,040 < 10^{7}.$$
\end{lemma}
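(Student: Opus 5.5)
The plan is to argue exactly as in Lemma~\ref{lemma: corollary: bound of schur GL_4 <= 2}, by reading the bound off Remark~\ref{rem: multiplicative bound} and Table~\ref{tabular: tabular GL_3(<=15)}. By Remark~\ref{rem: multiplicative bound} we have $\nu^{\sch}_3 \leqslant B_{3,d}$, and $B_{3,d}$ depends only on $d=[K:\QQ]$. So it suffices to show that $B_{3,1}$ and $B_{3,2}$ are at most $2\,903\,040$. The table gives $B_{3,1} = 2^5\cdot 3^2 = 288$ and $B_{3,2} = 2^7\cdot 3^4\cdot 5\cdot 7 = 362\,880$, and both are smaller than $2\,903\,040 = 2^{10}\cdot 3^4\cdot 5\cdot 7$.

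Since the whole argument rests on the table, I would re-derive the two relevant rows from Lemma~\ref{lemma: rough multiplicative bound} with $n=3$.

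For $p=2$:
\begin{itemize}
\item If $d=1$ (odd), then $B_{3,1,2} = 3 + 2\cdot 1 + 0 = 5$.
\item If $d=2$ (even), then $B_{3,2,2} = 3\cdot(\nu_2(2)+1) + \lfloor 3/2\rfloor = 6+1 = 7$.
\end{itemize}

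For odd $p$, the term $(\nu_p(d)+1)\lfloor 3\gcd(p-1,d)/(p-1)\rfloor$ vanishes unless $(p-1)/\gcd(p-1,d) \leqslant 3$.
\begin{itemize}
\item If $d=1$, this leaves only $p=3$, with $B_{3,1,3} = \lfloor 3/2\rfloor + \lfloor 3/3\rfloor = 2$.
\item If $d=2$, this leaves $p=3,5,7$. We get $B_{3,2,3} = 3+1 = 4$, $B_{3,2,5} = 1$ and $B_{3,2,7} = 1$.
\end{itemize}
These reproduce the two table entries.

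No step is a real obstacle; the only care needed is in evaluating the floors correctly. The stated constant $2\,903\,040$ is deliberately rough (compare Remark~\ref{roughness of bounds for <=2}), so there is ample slack.
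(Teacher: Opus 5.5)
Your proposal is correct and follows the paper's proof, which likewise just reads the bound off Remark~\ref{rem: multiplicative bound} and Table~\ref{tabular: tabular GL_3(<=15)}. Your recomputation of $B_{3,1}=2^5\cdot 3^2=288$ and $B_{3,2}=2^7\cdot 3^4\cdot 5\cdot 7=362\,880$ from Lemma~\ref{lemma: rough multiplicative bound} is accurate, and both values are well below the stated constant $2\,903\,040$.
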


\begin{proof}
    Result follows from Remark~\ref{rem: multiplicative bound} and  Table~\ref{tabular: tabular GL_3(<=15)}. 
\end{proof}

Finally, we need the following bounds for $\nu^{\se}$.

\begin{lemma}\label{lemma: serr for PGL_3(<=2)}
Let $K$ be a number field of degree $d \leqslant 2$.
Then $$\nu_{3}^{\se} \leqslant 2\,620\,800 < 10^{7}.$$
\end{lemma}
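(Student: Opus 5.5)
The plan is to bound each $p$-adic exponent $\nu^{\se}_{p,3}=m_p\left\lfloor\frac{2}{\varphi(t_p)}\right\rfloor+\nu_p(2!)$ separately and then multiply the results. The target factorization is $2\,620\,800=2^7\cdot3^2\cdot5^2\cdot7\cdot13$, so I expect to show
$$\nu^{\se}_{2,3}\leqslant 7,\quad \nu^{\se}_{3,3}\leqslant 2,\quad \nu^{\se}_{5,3}\leqslant 2,\quad \nu^{\se}_{7,3}\leqslant 1,\quad \nu^{\se}_{13,3}\leqslant 1,$$
and $\nu^{\se}_{p,3}=0$ for every other prime $p$.

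\textbf{Step 1: odd primes.} For odd $p$ we have $\nu_p(2!)=0$. The first summand is nonzero only if $\varphi(t_p)\leqslant 2$, that is, $t_p\in\{1,2,3,4,6\}$. The floor equals $2$ when $t_p\in\{1,2\}$ and $1$ when $t_p\in\{3,4,6\}$.

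We then use the standard relation $p^{m_p-1}(p-1)e_p=d\,t_p$ with $d\leqslant 2$ and $e_p\geqslant 1$. Together with $t_p\leqslant 6$ it gives $p^{m_p-1}(p-1)\leqslant 12$. Hence $p\leqslant 13$, and $m_p=1$ for every $p\geqslant 3$; for $p=3$ this uses $3\cdot 2>2t_3$ whenever $t_3\leqslant 2$. Since $t_p$ divides $p-1$, the prime-by-prime analysis runs as follows:
\begin{itemize}
\item For $p=3$: $t_3\in\{1,2\}$, so $\nu^{\se}_{3,3}\leqslant 2$.
\item For $p=5$: $4e_5=dt_5$ forces $t_5\in\{2,4\}$, so $\nu^{\se}_{5,3}\leqslant 2$.
\item For $p=7$: $6e_7=dt_7$ forces $t_7\in\{3,6\}$, so $\nu^{\se}_{7,3}\leqslant 1$.
\item For $p=11$: $t_{11}\in\{5,10\}$ and $\varphi(t_{11})\geqslant 4$, so the floor vanishes.
\item For $p=13$: $t_{13}\in\{6,12\}$, so $\nu^{\se}_{13,3}\leqslant 1$.
\end{itemize}

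\textbf{Step 2: the prime $2$.} Here $\nu_2(2!)=1$ and $t_2\in\{1,2\}$, so $\nu^{\se}_{2,3}=2m_2+1$, and it suffices to show $m_2\leqslant 3$. If $\xi_4\in K$, then $K=\QQ(\xi_4)$ because $d\leqslant 2$, so $m_2=2$. If $\xi_4\notin K$, then $\xi_{2^{m_2}}+\xi_{2^{m_2}}^{-1}\in K$. By Lemma~\ref{lemma: degree of semi-syclic extension} this element generates a field of degree $2^{m_2-2}$, which must be at most $2$, so $m_2\leqslant 3$. Either way $\nu^{\se}_{2,3}\leqslant 7$.

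Multiplying the five bounds gives exactly $2^7\cdot3^2\cdot5^2\cdot7\cdot13=2\,620\,800$. The main point needing care is the prime $2$, where one has to use the correct branch of Definition~\ref{def: cyclotomic invariants} in each case. It is also necessary to check that no odd prime can have $t_p$ with $\varphi(t_p)\leqslant 2$ beyond those listed; this follows from the divisibility $t_p\mid p-1$ combined with the degree relation.
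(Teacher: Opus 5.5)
Your proposal is correct and follows essentially the same route as the paper: bound each exponent $\nu^{\se}_{p,3}$ prime by prime via the tower relation $p^{m_p-1}(p-1)e_p=d\,t_p$ (together with $m_2\leqslant 3$ at $p=2$), arriving at the same exponents $7,2,2,1,1$ for $2,3,5,7,13$. The only cosmetic differences are that you treat $d=1$ and $d=2$ uniformly rather than computing $d=1$ separately, you enumerate $t_p$ using $t_p\mid p-1$ instead of the paper's divisibility of $t_p$ by $p^{m_p-1}(p-1)/\gcd(p-1,2)$, and you bound $m_2$ via the degree of $\QQ(\xi_{2^{m_2}}+\xi_{2^{m_2}}^{-1})$ rather than the paper's diagram through $K(\xi_4)$.
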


\begin{proof}
    For convenience, let us write down the formula:
    \begin{equation}\label{eq: Serre for PGL3}
   \nu_{p,3}^{\se} = m_p\left\lfloor\frac{2}{\varphi(t_p)}\right\rfloor + \nu_p(2). \end{equation} 
    If $d = 1$ we know actual values of invariants $m_p$ and $t_p$ and straightforward computations gives us an equality:

    $$\nu_{3}^{\se} = 2^5 \cdot 3^2 \cdot 5 \cdot 7 = 10\,080.$$
    
    Consider the case $d = 2$. For $p = 2$ we have a diagram
\begin{center}
\begin{tikzpicture}
    
    \node (Q1) at (0,0) {$\QQ$};
    \node (Q2) at (0 + 4/3,4/3) {$K$};
    \node (Q3) at (0 + 0,8/3) {$K(\xi_4) = K(\xi_{2^{m_2}})$};
    \node (Q4) at (0 -4/3,4/3) {$\QQ(\xi_{2^{m_2}})$};
    \node (Q5) at (-13/10,1/2) {$2^{m_2-1}$};
    \draw (Q1)--(Q2) node [pos=0.9, below,inner sep=0.25cm] {$2$};
    \draw (Q1)--(Q4); % node [pos=0.9, below,inner sep=0.25cm] {$p^{m_p-1}(p-1)$};
    \draw (Q4)--(Q3);% node [pos=0, above,inner sep=0.25cm]{$e_{p}$};
    \draw (Q2)--(Q3) node [pos=0, above,inner sep=0.25cm]{$t_{2}$};

\end{tikzpicture}
\end{center}
    Since $t_2 = 1$ or $2$, then $\varphi(t_2) =1$, and it follows from the diagram, that $m_2-1 \leqslant \nu_2(2t_2)$. Also, $\nu_2(2t_2) \leqslant 2$, which implies the inequality $m_2 \leqslant 3$ and we have $\nu_{2,3}^{\se} \leqslant 7$ by~\eqref{eq: Serre for PGL3}.

        For each prime $p>2$ consider a standard diagram 

\begin{center}
\begin{tikzpicture}
    
    \node (Q1) at (0,0) {$\QQ$};
    \node (Q2) at (0 + 4/3,4/3) {$K$};
    \node (Q3) at (0 + 0,8/3) {$K(\xi_p) = K(\xi_{p^{m_p}})$};
    \node (Q4) at (0 -4/3,4/3) {$\QQ(\xi_{p^{m_p}})$};
    \node (Q5) at (-19/10,1/2) {$p^{m_p-1}(p-1)$};
    \draw (Q1)--(Q2) node [pos=0.9, below,inner sep=0.25cm] {$2$};
    \draw (Q1)--(Q4); % node [pos=0.9, below,inner sep=0.25cm] {$p^{m_p-1}(p-1)$};
    \draw (Q4)--(Q3) node [pos=0, above,inner sep=0.25cm]{$e_{p}$};
    \draw (Q2)--(Q3) node [pos=0, above,inner sep=0.25cm]{$t_{p}$};

\end{tikzpicture}
\end{center}
It follows that $t_p$ is divisible by $$\frac{p^{m_p-1}(p-1)}{\gcd(p^{m_p-1}(p-1),2)} = \frac{p^{m_p-1}(p-1)}{\gcd(p-1,2)}$$ and $$\varphi(t_p) \geqslant \varphi\left( \frac{p^{m_p -1}(p-1)}{\gcd(p-1,2)}\right).$$

If $p \geqslant 17$ or $p = 11$, then the number $\frac{p^{m_p-1}(p-1)}{\gcd(p-1,2)}$ is either equal to $5$ or greater than $6$, which implies $$\varphi(t_p) \geqslant \varphi \left( \frac{p^{m_p-1}(p-1)}{\gcd(p-1,2)}\right) > 2$$ and we have $\nu_{p,3}^{\se} = 0$ by~\eqref{eq: Serre for PGL3}.

If $p=5$, $7$, or $13$, then we have $\nu_p(2) = 0$. If $m_p \geqslant 2$, then $t_p$ is divisible by $p$, which implies inequality $\varphi(t_p) \geqslant \varphi(p) > 2$ and $\nu_{p,3}^{\se} = 0$ by~\eqref{eq: Serre for PGL3}. Therefore, assuming that~$\relpenalty=10000 m_p = 1$, we have an inequality $$\varphi(t_p) \geqslant \varphi \left( \frac{p-1}{\gcd(p-1,2)}\right),$$ which guarantees that $\varphi(t_p) \geqslant 2$ for $p = 7$ or $13$. Substituting this in~\eqref{eq: Serre for PGL3} we obtain inequalities~$\nu_{13,3}^{\se} \leqslant 1$, $\nu_{7,3}^{\se} \leqslant 1$ and $\nu_{5,3}^{\se} \leqslant 2$.

If $p = 3$, then $\nu_3(2) = 0$. Let us consider all possible values of $m_3$. If~$m_3 \geqslant 3$, then~$t_3$ is divisible by $9$, which implies inequality $\varphi(t_3) \geqslant \varphi(9) = 6$ and~$\relpenalty= 10000 \nu_{3,3}^{\se} = 0$ by~\eqref{eq: Serre for PGL3}. If~$m_3 =2$, then $t_3$ is divisible by $3$, which implies inequality $\varphi(t_3) \geqslant \varphi(3) = 2$ and~$\relpenalty= 10000 \nu_{3,3}^{\se}\leqslant 2$ by~\eqref{eq: Serre for PGL3}. If~$m_3 = 1$, then we clearly have $\nu_{3,3}^{\se} \leqslant 2$ by~\eqref{eq: Serre for PGL3}. Thus, in all cases we have a bound~$\nu_{3,3}^{\se} \leqslant 2$.

Finally, we have $$\nu_3^{\se} \leqslant 2^7\cdot 3^2\cdot 5^2 \cdot 7 \cdot 13 =2\,620\,800 < 10^{7}.$$
\end{proof}

\begin{lemma}\label{lemma: serr for PGL_4(<=2)}
    Let $K$ be a number field of degree $d \leqslant 2$. Then $$\nu_4^{\se} \leqslant 943\,488\,000 < 10^{9}.$$
\end{lemma}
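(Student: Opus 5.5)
The plan is to follow the proof of Lemma~\ref{lemma: serr for PGL_3(<=2)} verbatim with $n=4$, where
\begin{equation*}
\nu_{p,4}^{\se} = m_p\left\lfloor\frac{3}{\varphi(t_p)}\right\rfloor + \nu_p(3!),
\end{equation*}
so the additive term is $1$ for $p=2,3$ and $0$ otherwise. I aim for the target factorization $943\,488\,000 = 2^{10}\cdot 3^4\cdot 5^3\cdot 7\cdot 13$, bounding each prime separately. As in the $\PGL_3$ case, I will use the standard diagram $\QQ\subset K\subset K(\xi_p)=K(\xi_{p^{m_p}})\supset \QQ(\xi_{p^{m_p}})$. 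Since $d\leqslant 2$, it shows that $t_p$ is divisible by $p^{m_p-1}(p-1)/\gcd(p-1,2)$ for odd $p$. I will treat $d=1$ and $d=2$ together, since this divisibility holds for both.

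For $p=2$: as before, $\varphi(t_2)=1$ and $m_2\leqslant 3$. This gives $\nu_{2,4}^{\se}\leqslant 3\cdot 3+1=10$.

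For $p=3$ I split by $m_3$:
\begin{itemize}
\item if $m_3\geqslant 3$, then $9\mid t_3$, so $\varphi(t_3)\geqslant 6$ and the bound is $1$;
\item if $m_3=2$, then $3\mid t_3$, so $\varphi(t_3)\geqslant 2$ and the bound is $2\cdot 1+1=3$;
\item if $m_3=1$, the bound is $3+1=4$.
\end{itemize}
Hence $\nu_{3,4}^{\se}\leqslant 4$.

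For $p\geqslant 5$: if $m_p\geqslant 2$, then $p\mid t_p$, so $\varphi(t_p)\geqslant p-1\geqslant 4>3$ and the contribution is $0$. So assume $m_p=1$. Then $\nu_{p,4}^{\se}=\lfloor 3/\varphi(t_p)\rfloor$ with $\frac{p-1}{\gcd(p-1,2)}\mid t_p$, and I check the primes in turn:
\begin{itemize}
\item $p=5$: trivially $\nu_{5,4}^{\se}\leqslant 3$;
\item $p=7$: $3\mid t_7$, so $\varphi(t_7)\geqslant 2$ and $\nu_{7,4}^{\se}\leqslant 1$;
\item $p=13$: $6\mid t_{13}$, so $\varphi(t_{13})\geqslant 2$ and $\nu_{13,4}^{\se}\leqslant 1$;
\item $p=11$: $5\mid t_{11}$, so $\varphi(t_{11})\geqslant 4$ and the contribution is $0$;
\item $p\geqslant 17$: $(p-1)/2\geqslant 8$ divides $t_p$; since $\varphi(n)\leqslant 3$ forces $n\leqslant 6$, we get $\varphi(t_p)\geqslant 4$ and the contribution is $0$.
\end{itemize}

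Multiplying the prime-by-prime bounds gives $\nu_4^{\se}\leqslant 2^{10}\cdot 3^4\cdot 5^3\cdot 7\cdot 13 = 943\,488\,000$. Every step is a routine divisibility check. The only point needing care is the elementary fact that $\varphi(n)\leqslant 3$ implies $n\leqslant 6$, which is what disposes of all large primes.
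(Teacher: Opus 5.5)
Your proof is correct and follows the paper's argument. Both do the same prime-by-prime analysis of $m_p\lfloor 3/\varphi(t_p)\rfloor+\nu_p(6)$, using $m_2\leqslant 3$ and the divisibility $\frac{p^{m_p-1}(p-1)}{\gcd(p-1,2)}\mid t_p$, and both reach the same exponents $2^{10}\cdot 3^4\cdot 5^3\cdot 7\cdot 13$. The only cosmetic difference is that you treat $d=1$ together with $d=2$, whereas the paper computes the $d=1$ value $2^7\cdot 3^4\cdot 5\cdot 7$ separately.
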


\begin{proof}
    For convenience, let us write down the formula:
     \begin{equation}\label{eq: Serre for PGL4}
   \nu_{p,4}^{\se} = m_p\left\lfloor\frac{3}{\varphi(t_p)}\right\rfloor + \nu_p(6). \end{equation} 
    If $d = 1$ we know actual values of invariants $m_p$ and $t_p$ and straightforward computations gives us an equality:

    $$\nu_{4}^{\se} = 2^7 \cdot 3^4 \cdot 5 \cdot 7 = 362\,880.$$
    
    Consider the case $d = 2$. For $p = 2$ we have a diagram
\begin{center}
\begin{tikzpicture}
    
    \node (Q1) at (0,0) {$\QQ$};
    \node (Q2) at (0 + 4/3,4/3) {$K$};
    \node (Q3) at (0 + 0,8/3) {$K(\xi_4) = K(\xi_{2^{m_2}})$};
    \node (Q4) at (0 -4/3,4/3) {$\QQ(\xi_{2^{m_2}})$};
    \node (Q5) at (-13/10,1/2) {$2^{m_2-1}$};
    \draw (Q1)--(Q2) node [pos=0.9, below,inner sep=0.25cm] {$2$};
    \draw (Q1)--(Q4); % node [pos=0.9, below,inner sep=0.25cm] {$p^{m_p-1}(p-1)$};
    \draw (Q4)--(Q3);% node [pos=0, above,inner sep=0.25cm]{$e_{p}$};
    \draw (Q2)--(Q3) node [pos=0, above,inner sep=0.25cm]{$t_{2}$};

\end{tikzpicture}
\end{center}
    Since $t_2 = 1$ or $2$, then $\varphi(t_2) =1$, and it follows from the diagram, that $m_2-1 \leqslant \nu_2(2t_2)$. Also, $\nu_2(2t_2) \leqslant 2$, which implies the inequality $m_2 \leqslant 3$ and we have $\nu_{2,4}^{\se} \leqslant 10$ by~\eqref{eq: Serre for PGL4}.

        For each prime $p>2$ consider a standard diagram 

\begin{center}
\begin{tikzpicture}
    
    \node (Q1) at (0,0) {$\QQ$};
    \node (Q2) at (0 + 4/3,4/3) {$K$};
    \node (Q3) at (0 + 0,8/3) {$K(\xi_p) = K(\xi_{p^{m_p}})$};
    \node (Q4) at (0 -4/3,4/3) {$\QQ(\xi_{p^{m_p}})$};
    \node (Q5) at (-19/10,1/2) {$p^{m_p-1}(p-1)$};
    \draw (Q1)--(Q2) node [pos=0.9, below,inner sep=0.25cm] {$2$};
    \draw (Q1)--(Q4); % node [pos=0.9, below,inner sep=0.25cm] {$p^{m_p-1}(p-1)$};
    \draw (Q4)--(Q3) node [pos=0, above,inner sep=0.25cm]{$e_{p}$};
    \draw (Q2)--(Q3) node [pos=0, above,inner sep=0.25cm]{$t_{p}$};

\end{tikzpicture}
\end{center}
It follows that $t_p$ is divisible by $$\frac{p^{m_p-1}(p-1)}{\gcd(p^{m_p-1}(p-1),2)} = \frac{p^{m_p-1}(p-1)}{\gcd(p-1,2)}$$ and $$\varphi(t_p) \geqslant \varphi\left( \frac{p^{m_p -1}(p-1)}{\gcd(p-1,2)}\right).$$

If $p \geqslant 17$ or $p = 11$, then the number $\frac{p^{m_p-1}(p-1)}{\gcd(p-1,2)}$ is either equal to $5$ or greater than $6$, which implies $$\varphi(t_p) \geqslant \varphi \left( \frac{p^{m_p-1}(p-1)}{\gcd(p-1,2)}\right) > 3$$ and we have $\nu_{p,4}^{\se} = 0$ by~\eqref{eq: Serre for PGL4}.

If $p=5$, $7$, or $13$, then we have $\nu_p(6) = 0$. If $m_p \geqslant 2$, then $t_p$ is divisible by $p$, which implies inequality $\varphi(t_p) \geqslant \varphi(p) > 3$ and $\nu_{p,4}^{\se} = 0$ by~\eqref{eq: Serre for PGL4}. Therefore, assuming that~$\relpenalty=10000 m_p = 1$, we have an inequality $$\varphi(t_p) \geqslant \varphi \left( \frac{p-1}{\gcd(p-1,2)}\right),$$ which guarantees that $\varphi(t_p) \geqslant 2$ for $p = 7$ or $13$. Substituting this in~\eqref{eq: Serre for PGL4} we obtain inequalities~$\nu_{13,4}^{\se} \leqslant 1$, $\nu_{7,4}^{\se} \leqslant 1$ and $\nu_{5,4}^{\se} \leqslant 3$.

If $p = 3$, then $\nu_3(6) = 1$. Let us consider all possible values of $m_3$. If~$m_3 \geqslant 3$, then~$t_3$ is divisible by $9$, which implies inequality $\varphi(t_3) \geqslant \varphi(9) = 6$ and~$\relpenalty= 10000 \nu_{3,4}^{\se}\leqslant 1$ by~\eqref{eq: Serre for PGL4}. If~$m_3 =2$, then $t_3$ is divisible by $3$, which implies inequality $\varphi(t_3) \geqslant \varphi(3) = 2$ and~$\relpenalty= 10000 \nu_{3,4}^{\se}\leqslant 3$ by~\eqref{eq: Serre for PGL4}. If~$m_3 = 1$, then we clearly have $\nu_{3,4}^{\se} \leqslant 4$ by~\eqref{eq: Serre for PGL4}. Thus, in all cases we have a bound~$\nu_{3,4}^{\se} \leqslant 4$.

Finally, we have $$\nu_3^{\se} \leqslant 2^{10}\cdot 3^4\cdot 5^3 \cdot 7 \cdot 13 = 943\,488\,000 < 10^{9}.$$
\end{proof}

\end{document}